\definecolor{rouge}{rgb}{0.85,0.1,.4}
\definecolor{bleu}{rgb}{0.1,0.2,0.9}
\definecolor{violet}{rgb}{0.7,0,0.8}
\newcommand{\bra}{{\langle}}
\newcommand{\ket}{{\rangle}}
\newcommand{\Lam}{\Lambda}
\newcommand{\cprime}{$'$}
\newcommand{\on}{\operatorname}
\newcommand{\+}{\mathop{\oplus}}
\renewcommand{\*}{{\otimes}}
\newcommand{\mc}{\mathcal}
\newcommand{\mf}{\mathfrak}
\newcommand{\fing}{\mf{g}}
\newcommand{\affg}{\widehat{\mf{g}}}
\newcommand{\isomap}{{\;\stackrel{_\sim}{\to}\;}}
\newcommand{\Z}{\mathbb{Z}}
\newcommand{\C}{\mathbb{C}}
\newcommand{\N}{\mathbb{N}}
\newcommand{\W}{\mathcal{W}}  
\newcommand{\ra}{\rightarrow}
\newcommand{\lam}{\lambda}
\newcommand{\bs}{\boldsymbol}
\def\g{\mathfrak{g}}
\def\l{\mathfrak{l}}
\def\z{\mathfrak{z}}
\def\h{\mathfrak{h}}
\def\n{\mathfrak{n}}
\def\reg{{\rm reg}}
\def\J{{J}_{G}}
\def\O{\mathbb{O}}
\def\SS{\mathbb{S}}
\def\Slo{\mathscr{S}}
\def\IS{{I}}
\def\JJ{\mathscr{J}}
\def\P{\mathscr{P}}
\def\sl{\mathfrak{sl}}
\def\so{\mathfrak{so}}
\def\eps{\varepsilon}
\def\leq{\leqslant}
\def\geq{\geqslant}
\DeclareMathOperator{\End}{End}
\DeclareMathOperator{\Spec}{Spec}
\DeclareMathOperator{\gr}{gr}
\DeclareMathOperator{\ad}{ad}
\DeclareMathOperator{\Hom}{Hom}
\theoremstyle{theorem}
\newtheorem{Th}{Theorem}[section]
\newtheorem{Pro}[Th]{Proposition}
\newtheorem{Lem}[Th]{Lemma}
\newtheorem{lemma}[Th]{Lemma}
\newtheorem{Co}[Th]{Corollary}
\newtheorem{Conj}{Conjecture}
\theoremstyle{remark}
\newtheorem{Rem}[Th]{Remark}
\newtheorem{Claim}{Claim}[section]
\newtheorem{ques}{Question}
\title{Sheets and associated varieties of affine vertex algebras}
\subjclass[2010]{17B67, 17B69, 81R10}
\keywords{sheet, nilpotent orbit, associated variety, affine Kac-Moody algebra,
 affine vertex algebra, affine $W$-algebra}
\author{Tomoyuki Arakawa}
\address{Research Institute for Mathematical Sciences, Kyoto University,
 Kyoto 606-8502 JAPAN}
\email{arakawa@kurims.kyoto-u.ac.jp}
\author{Anne Moreau}
\address{Laboratoire Painlev\'{e}, CNRS U.M.R. 8524, 
   59655 Villeneuve d'Ascq Cedex, France}
\email{anne.moreau@math.univ-lille1.fr}
\begin{document}

 \begin{abstract}
  We show that sheet closures
appear as associated varieties of  affine vertex algebras.
 Further, we give new examples of non-admissible affine vertex algebras
 whose associated variety is  contained in the nilpotent cone. 
 We also prove some conjectures from our previous paper and give new examples of lisse affine $W$-algebras.
 \end{abstract}

\maketitle

\section{Introduction}
It is known \cite{Li05} that  every vertex algebra $V$ is canonically filtered 
and therefore it can be considered as a quantization of  its associated graded Poisson vertex algebra
$\gr V$.
The generating  subring  $R_V$ of $\gr V$ is called the {\em Zhu $C_2$-algebra} 
of $V$ \cite{Zhu96}
and has the structure of a Poisson algebra.
Its spectrum
\begin{align*}
\tilde{X}_V=\Spec R_V
\end{align*}
is called 
the  {\em associated scheme} 
of $V$ and
the corresponding reduced scheme
$X_V=\on{Specm}R_V$ is called the {\em associated variety}
of $V$ (\cite{Ara12,Apisa}).
Since it is Poisson, 
the coordinate ring of its arc space $J_{\infty}\tilde{X}_V$ has
a natural structure of a Poisson vertex algebra (\cite{Ara12}),
and there is a natural surjective homomorphism
$\C[J_{\infty}\tilde{X}_V]\ra \gr V$, which is in many cases an isomorphism.
We have \cite{Ara12} that $\dim \Spec (\gr V)=0$ if and only if
 $\dim X_V=0$, and in this case
$V$ is called {\em lisse} or {\em $C_2$-cofinite}. 


Recently,
associated varieties  of vertex algebras  have  caught attention of physicists
since 
it turned out that 
the associated variety of a  vertex algebra coming \cite{BeeLemLie15} from a {\em four dimensional} $N=2$ superconformal field theory
should coincide with the {\em Higgs branch} of the corresponding  four dimensional theory (\cite{R}).

In the case that $V$ is the simple affine vertex algebra
$V_k(\fing)$ associated with a finite-dimensional simple Lie algebra $\fing$ at level $k\in\C$,
$X_V$ is a Poisson subscheme of $\fing^*$
which is $G$-invariant and conic, where $G$ is the adjoint group of $\g$.
Note that on the contrary to 
the associated variety of a primitive ideal of $U(\fing)$, 
the variety $X_{V_k(\fing)}$ is not necessarily
contained in the nilpotent cone $\mc{N}$ of $\fing$.
In fact, $X_{V_k(\fing)}=\fing^*$ for a generic $k$.
On the other hand, $X_{V_k(\fing)}=\{0\}$
 if and only if $V_k(\fing)$ is integrable, that is,
 $k$ is a non-negative integer.
Except for a few cases, 
 the description of $X_V$ is fairly open
 even for $V=V_k(\fing)$,
 despite of
 its connection with 
 four dimensional superconformal field theories.

 In \cite{Ara09b}, the first named author showed that $X_{V_k(\fing)}$
  is  the closure of some nilpotent orbit
 of $\fing^*$ in the case that $V_k(\fing)$ is {\em admissible} \cite{KacWak89}.

 In the previous article \cite{AM15}, we showed that
 $X_{V_k(\fing)}$  is  the minimal nilpotent orbit closure
 in the case that $\fing$ belongs to the Deligne exceptional series 
 \cite{De96} and $k=-h^{\vee}/6-1$,
 where $h^{\vee}$ is the dual Coxeter number of $\fing$.
 Note that the level  $k=-h^{\vee}/6-1$  is not admissible  for the types
 $D_4$, $E_6$, $E_7$, $E_8$.

 In all the above cases, $X_{V_k(\fing)}$
 is a closure of a nilpotent orbit $\mathbb{O} \subset \mc{N}$,   
 or $X_{V_k(\fing)}= \fing^*$. 
Therefore it is natural to ask the following. 
 \begin{ques}\label{ques2}
  Are there cases
  when 
  $
X_{V_k(\fing)} \not\subset \mc{N}$
  and $X_{V_k(\fing)}$ is a proper subvariety of $ \fing^*$? 
  For example, are there cases
  when $X_{V_k(\fing)}$ is  the closure of a non-nilpotent {\em 
  Jordan class} (cf.~\S \ref{sec:sheet})? 
\end{ques}

Identify $\g$ with $\g^*$ through a non-degenerate bilinear form of $\g$. 

Given $m \in \N$, let $\g^{(m)}$ be the set of elements $x\in\g$ such that $\dim \g^x=m$, 
with $\g^x$ the centralizer of $x$ in $\g$. A subset $\SS \subset \g$ is called a {\em sheet} 
of $\g$ if it is an irreducible component of one of the locally closed sets $\g^{(m)}$. 
It is $G$-invariant and conic and  by \cite{ImHof}, and it is smooth if $\g$ is classical. 
The sheet closures are the closures of certain Jordan classes 
and they are parameterized by the  
$G$-conjugacy classes of pairs $(\l,\O_\l)$ where $\l$ is a Levi subalgebra 
of $\g$ and $\O_\l$ is a {\em rigid} nilpotent orbit of $\l$, 
i.e., which cannot be properly induced in the sense of 
Lusztig-Spaltenstein \cite{BK,Borh} (see also \cite[\S39]{TY}). 
The pair $(\l,\O_\l)$ is called the {\em datum} of the corresponding sheet. 
When $\O_\l$ is zero, the sheet is called {\em Dixmier}, 
meaning that it contains a semisimple element \cite{Dix75,Dix76}. 
We will denote by $\SS_\l$ the sheet with datum $(\l,\{0\})$. 
We refer to \S\ref{sec:sheet} for more details about this topic.

It is known that sheets appear in the representation theory of 
finite-dimensional Lie algebras,
see,~e.g.,~\cite{BorhBry82,BorhBry85,BorhBry89},  and more recently of
finite $W$-algebras, \cite{PT14,Pre14}.

Since the sheet closures 
are $G$-invariant conic algebraic
varieties which are not necessarily contained in $\mc{N}$, 
one may expect that
there are simple affine vertex algebras
whose
associated variety
is the closure of some sheet.
This  is indeed the case.
 \begin{Th}\label{Th:main1}
 \begin{enumerate}
  \item For $n\geq 4$,
\begin{align*}
 \tilde{X}_{V_{-1}(\mf{sl}_n)}\cong \overline{\SS_{\l_1}}
\end{align*}
	as schemes,
	where $\l_1$ is the 
			standard Levi subalgebra of $\mf{sl}_n$ generated by 
all simple roots except $\alpha_{1}$.
	Moreover
	$V_{-1}(\mf{sl}_n)$ is a quantization of 
		the infinite jet scheme 
	$J_{\infty}\overline{\SS_{\l_1}}$ of $\overline{\SS_{\l_1}}$ 
	in the sense that 
	\begin{align*}
	SS(V_{-1}(\mf{sl}_n)) \cong J_{\infty}\overline{\SS_{\l_1}} 
		\end{align*}
		as topological spaces, that is, $SS(V_{-1}(\mf{sl}_n))_{\on{red}}\cong (J_{\infty}\overline{\SS_{\l_1}})_{\on{red}}.$ 
	  \item For $m\geq 2$,
\begin{align*}
\tilde{ X}_{V_{-m}(\mf{sl}_{2m})}\cong \overline{\SS_{\l_0}}
\end{align*}
	as schemes,
		where $\l_0$ is the
		standard Levi subalgebra of $\mf{sl}_{2m}$ generated by 
all simple roots except $\alpha_{m}$.
 \end{enumerate}
 \end{Th}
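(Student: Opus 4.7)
The plan is to handle both parts by realizing the simple affine vertex algebra inside a suitable $\beta\gamma$-type free-field vertex algebra, using this to pin down the associated scheme via the classical moment map, and then promoting the identification to a jet-scheme statement via a graded-character comparison. Both of the levels in question, $k=-1$ for $\mathfrak{sl}_n$ and $k=-m$ for $\mathfrak{sl}_{2m}$, are ``collapsing'' levels, which is where such realizations are expected to cut out precisely the desired sheet closure classically.

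For part (1), I would use the standard vertex algebra map $V^{-1}(\mathfrak{sl}_n) \to \mathcal{D}^{\mathrm{ch}}(\C^n)$ into the rank-$n$ $\beta\gamma$-system, induced from the natural action of $\mathfrak{sl}_n$ on $\C^n$. The first task is to check that this factors through an embedding of the simple quotient $V_{-1}(\mathfrak{sl}_n)$, which amounts to exhibiting explicit singular vectors in $V^{-1}(\mathfrak{sl}_n)$ whose classical symbols generate the defining ideal of $\overline{\SS_{\l_1}}$ in $S(\mathfrak{sl}_n) = \C[\mathfrak{sl}_n^*]$. The corresponding classical picture is the moment map $T^*\C^n \to \mathfrak{sl}_n^*$, $(p,q) \mapsto pq^T - \tfrac{1}{n}\langle p,q\rangle I$, whose scheme-theoretic image is precisely $\overline{\SS_{\l_1}} = \{x \in \mathfrak{sl}_n : \mathrm{rank}(x+cI)\le 1 \text{ for some } c \in \C\}$, a variety of dimension $2n-1$. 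Combining these observations, matching dimensions, and using that the Zhu $C_2$-algebra of a vertex subalgebra of $\mathcal{D}^{\mathrm{ch}}(\C^n)$ is dominated by the image in the classical moment-map scheme, one then obtains $\tilde{X}_{V_{-1}(\mathfrak{sl}_n)} \cong \overline{\SS_{\l_1}}$. Part (2) runs analogously with $\mathfrak{sl}_{2m}$ acting on $\mathrm{Mat}_{2m \times m}$ (possibly combined with a $\mathrm{GL}_m$-coset or BRST-type reduction), whose classical moment-map image identifies with $\overline{\SS_{\l_0}} = \{x \in \mathfrak{sl}_{2m} : x^2 \in \C I\}$.

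For the Poisson vertex algebra statement, the canonical filtration always produces a surjection $\C[J_\infty \overline{\SS_{\l}}] \twoheadrightarrow \gr V_k(\mathfrak{sl}_N)$, so the question is to prove injectivity. I would do this by comparing graded characters on both sides — on the vertex side using the $\beta\gamma$-realization (e.g.\ via a Howe-type decomposition), and on the jet side from the determinantal-type defining equations of $\overline{\SS_{\l}}$ — or equivalently by showing that $J_\infty \overline{\SS_{\l}}$ is reduced. The latter is the main obstacle: arc spaces of singular varieties are generically non-reduced, so this is a genuinely nontrivial point. The way forward is to exploit the particular determinantal shape of these two sheet closures (rank-$\le 1$ after a scalar shift for part~(1); square-equal-to-a-scalar for part~(2)), together with the vertex-algebraic singular vectors lifting their classical defining equations, to exhibit a system of generators whose jet prolongation stays reduced — either by invoking known results on arc spaces of classical determinantal varieties, or by a direct Gr\"obner-type degeneration argument.
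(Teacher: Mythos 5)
Your proposal takes a genuinely different route from the paper and leaves the hardest steps unaddressed.

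For part (1), the free-field embedding $V_{-1}(\mf{sl}_n)\hookrightarrow \mc{D}^{ch}_{\C^n}$ and the classical moment map $\mu\colon T^*\C^n\to\mf{sl}_n^*$ only give the inclusion $\tilde X_{V_{-1}}\supseteq\overline{\SS_{\l_1}}$ of schemes: the composite $\C[\g^*]\to R_{V_{-1}(\g)}\to R_{\mc{D}^{ch}_{\C^n}}=\C[T^*\C^n]$ is $\mu^*$, so the defining ideal of $\tilde X_{V_{-1}}$ is contained in the prime $\ker\mu^*$. The reverse inclusion is the actual content of the theorem, and neither ``matching dimensions'' nor saying that $R_{V_{-1}}$ is ``dominated by'' the moment-map image establishes it: the map $R_V\to R_W$ on $C_2$-algebras induced by a vertex subalgebra inclusion $V\subset W$ need not be injective, so $\C[T^*\C^n]$ gives no a priori upper bound on $\tilde X_{V_{-1}}$. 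The paper establishes equality by two separate nontrivial steps: (i) that the singular vector $\sigma(v_1)$ generates the \emph{entire} maximal submodule of $V^{-1}(\mf{sl}_n)$ (Theorem~\ref{Th:irreduciblity}), proved via BRST reduction to the minimal $W$-algebra and the identification $H^{\frac{\infty}{2}+0}_{f_\theta}(\tilde V_{-1}(\g))\cong M(1)$; and (ii) that the ideal $I_{W_1}$ is prime (Proposition~\ref{Pro:prime}), using Ginzburg's exactness for the Slodowy-slice functor $H_f$ together with the Casimir nilpotency argument of Lemma~\ref{lem:prime-criterion}. Neither step appears in your outline.

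For part (2), the proposed realization on $\mathrm{Mat}_{2m\times m}$ is not consistent with the target: the image of $T^*\mathrm{Mat}_{2m\times m}$ under the $\mf{sl}_{2m}$-moment map has dimension $3m^2$ and strictly contains $\overline{\SS_{\l_0}}$, which has dimension $2m^2+1$; and a $\mathrm{GL}_m$-reduction at the zero fiber collapses the image into the nilpotent locus $\{x : x^2=0\}$, which is strictly smaller. No specific coset or BRST-reduction cutting out exactly $\overline{\SS_{\l_0}}$ is given, and the paper does not use any free-field realization in this case; it instead proves simplicity of $V^{-m}(\mf{sl}_{2m})/U(\affg)\sigma(v_0)$ via $W$-algebra reduction at a short nilpotent element of type $(2^m)$ (Theorems~\ref{Th:short} and~\ref{Th:W-is-Visasoro}) together with a Zhu-algebra characteristic-variety computation (Proposition~\ref{pro:weight-0}), and then argues primality as in part~(1).

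For the Poisson vertex algebra statement, you correctly identify the nontrivial point (upgrading $\C[J_\infty\overline{\SS_\l}]\twoheadrightarrow\gr V$ to an isomorphism) but propose to resolve it by a graded-character comparison or by proving reducedness of the arc space of a determinantal variety. The paper does something formally simpler: it derives the isomorphism from the sandwich $J_\infty\overline{\SS_\l}=SS(V)_{\mathrm{red}}\subset SS(V)\subset J_\infty\tilde X_V=J_\infty\overline{\SS_\l}$, where the first equality (Theorem~\ref{Th:arc}, Corollary~\ref{Co:singular_support}) follows from $J_\infty G$- and $J_\infty\C^*$-invariance of $SS(V)$ together with Kolchin's irreducibility theorem, and the last equality uses the scheme identification already proved in the first part of the theorem. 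No separate arc-space reducedness analysis is invoked.
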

  
 Further,
 we show that
 the Zhu algebras 
 of  the above vertex algebras are
 naturally embedded  into the algebra
 of global differential operators on $Y=G/[P,P]$,
 where $P$ is the
connected parabolic subgroup of $G$ corresponding 
to a parabolic subalgebra $\mf{p}$ having the above Levi 
subalgebra
as Levi factor, see Theorem \ref{Th:Zhu-1} and 
Theorem \ref{Th:Zhu-2}.

 The vertex algebra $V_{-1}(\mf{sl}_n)$ has appeared in
 the work of Adamovi{\'c} and Per{\v{s}}e \cite{AdaPer14},
 where they studied the fusion rules and the complete reducibility of
 $V_{-1}(\mf{sl}_n)$-modules. It would be very interesting to know
 whether the vertex algebra $V_{-m}(\mf{sl}_{2m})$ has similar properties.

%
%

    \begin{Th} \label{Th:reducible}
     Let $r$ be an odd integer, 
     and let
    $\l_r$ be the Levi subalgebras of $\fing=\mf{so}_{2r}$
generated by the simple roots $\alpha_1,\ldots,\alpha_{r-2},\alpha_{r-1}$. 
Denote by $\SS_{\l_r}$ the corresponding Dixmier sheet. 
Then  
 \begin{align*}
X_{V_{2-r}(\mf{so}_{2r})}=\overline{\SS_{\l_{r}}}.
   \end{align*}
  \end{Th}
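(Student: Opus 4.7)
The proof splits into establishing $X_{V_{2-r}(\mf{so}_{2r})} \subseteq \overline{\SS_{\l_r}}$ and $X_{V_{2-r}(\mf{so}_{2r})} \supseteq \overline{\SS_{\l_r}}$, along the same lines as Theorem \ref{Th:main1}.

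For the upper bound, I would begin by locating a nontrivial singular vector $v$ in the universal affine vertex algebra $V^{2-r}(\mf{so}_{2r})$ whose image vanishes in the simple quotient $V_{2-r}(\mf{so}_{2r})$. At the non-admissible level $k=2-r$ (for $r$ odd with $r\geq 5$), such a singular vector is expected to sit in a specific finite-dimensional $\g$-isotypic component of $V^{k}(\g)$, analogously to the Adamovi{\'c}--Per{\v s}e singular vectors exploited in Theorem \ref{Th:main1}. Its leading symbol $\bar{v}\in S(\mf{so}_{2r})=\C[\g^{*}]$, together with its $G$-translates, generates a $G$-stable ideal $I\subset\C[\g^{*}]$ with $X_{V_{2-r}(\mf{so}_{2r})}\subseteq V(I)$.

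The next step is to identify $V(I)$ with $\overline{\SS_{\l_r}}$. For the Levi $\l_r$ of type $A_{r-1}$ obtained by removing $\alpha_{r}$ from the Dynkin diagram of $D_{r}$, the centre $\z(\l_r)$ is one-dimensional, spanned by the element $h_{0}$ whose eigenvalues on the standard representation are $+1$ and $-1$, each of multiplicity $r$. The Dixmier sheet $\SS_{\l_r}$ equals $G\cdot\z(\l_r)^{\reg}$, and its closure contains the Richardson orbit, which, for $r$ odd, necessarily has partition $[2^{r-1},1^{2}]$ (the $D_{r}$ parity condition on even parts rules out $[2^{r}]$ precisely when $r$ is odd). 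I would then either compare $I$ with an explicit set of defining equations for $\overline{\SS_{\l_r}}$ (expressible via rank conditions on $x^{2}-s\cdot\mathrm{Id}$), or show $V(I)\subseteq\overline{\SS_{\l_r}}$ by checking vanishing at a generic point of the sheet, and conclude equality via irreducibility together with a dimension count.

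For the lower bound, it suffices, by conicity, $G$-invariance, and irreducibility of $\overline{\SS_{\l_r}}$, to exhibit a single generic semisimple element of $\SS_{\l_r}$ in $X_{V_{2-r}(\mf{so}_{2r})}$. I would produce this by constructing a simple $V_{2-r}(\mf{so}_{2r})$-module whose associated variety contains $h_{0}$, for instance a parabolically induced module from a suitable character of $\mf{p}\supset\l_r$ at the given level, or by invoking Joseph-type primitive ideal information for $U(\mf{so}_{2r})$ at $k=2-r$. The main obstacle is the first step: producing the relevant singular vector, computing its symbol, and verifying that the ideal it generates is exactly large enough to cut out $\overline{\SS_{\l_r}}$. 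The odd parity of $r$ plays a decisive combinatorial role in matching the singular-vector variety with the sheet closure, and the corresponding scheme-theoretic refinement (an analogue of $\tilde{X}_{V}\cong\overline{\SS_{\l_r}}$ together with an arc-space identification) is correspondingly not asserted here.
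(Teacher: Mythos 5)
Your overall skeleton — (i) an explicit degree-two singular vector cuts out a variety containing $X_{V_{2-r}(\g)}$, (ii) identify that variety with the sheet closure, (iii) show the sheet closure is in fact attained — is the correct one, and the upper bound part of your argument is essentially what the paper does: the singular vector is Per{\v s}e's $w_{1}=\sum_{i=2}^{r}e_{\eps_{1}-\eps_{i}}e_{\eps_{1}+\eps_{i}}$, generating a copy of $L_{\g}(\theta+\theta_{1})$, and its zero set is analyzed by separating nilpotent points (a Slodowy-slice computation showing $V(I_{W_{1}})\cap\mc N\subset\overline{\O_{(2^{r-1},1^{2})}}$) from semisimple points (a combinatorial characteristic-variety computation). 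One refinement you should flag: the paper's Proposition~\ref{Pro:Dzero} shows $V(I_{W_{1}})=\overline{\SS_{\l^{I}}}\cup\overline{\SS_{\l^{II}}}$ for \emph{all} $r\geq 5$; it is only because $\varpi_{r-1}$ and $-\varpi_{r}$ are Weyl-conjugate when $r$ is odd that $\l^{I}$ and $\l^{II}$ are $G$-conjugate and the two sheets collapse to the single sheet $\SS_{\l_{r}}$. Your parity remark is pointing at the right phenomenon, but the mechanism is this conjugacy of the two centres, not just the collapse $[2^{r}]\rightsquigarrow[2^{r-1},1^{2}]$ of the Richardson partition.

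Where you diverge from the paper is the lower bound. You propose to exhibit a single generic semisimple point in $X_{V_{2-r}(\g)}$ by explicitly building a $V_{2-r}(\g)$-module, e.g.\ by parabolic induction from a character of $\mf{p}\supset\l_{r}$. This is a legitimate alternative route but is harder to close: you would need to verify that the induced module is genuinely a module over the \emph{simple} quotient $V_{2-r}(\g)$ (not just a smooth level-$(2-r)$ $\affg$-module) and then control its associated variety. The paper instead uses a much more indirect and lighter argument: by Adamovi\'c--Per{\v s}e, $V_{2-r}(\g)$ has \emph{infinitely many} simple objects in category $\mc O$ when $r$ is odd; by \cite[Corollary~5.3]{AM15} this forces $X_{V_{2-r}(\g)}\not\subset\mc N$, and then the already-established inclusion $X_{V_{2-r}(\g)}\subset\overline{\SS_{\l_{r}}}=G.\C^{*}\varpi_{r}\cup\overline{\O_{(2^{r-1},1^{2})}}$ forces a nonzero semisimple point of $\z(\l_{r})$ into $X_{V_{2-r}(\g)}$, whence $\overline{\SS_{\l_{r}}}\subset X_{V_{2-r}(\g)}$ by conicity and $G$-invariance. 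Your construction, if carried out, would in effect reprove the Adamovi\'c--Per{\v s}e input; the paper's route avoids that by invoking their classification as a black box. Either way, as written your proposal is a sketch: the explicit singular vector, the two key lemmas (nilpotent and semisimple loci of $V(I_{W_{1}})$), and the verification in the lower bound are all left open.
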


  The vertex algebra
  $V_{2-r}(\mf{so}_{2r})$
  has been studied by 
  Per{\v{s}}e \cite{Per13}
  for all $r$,
  and by
   Adamovi{\'c}
   and Per{\v{s}}e
   \cite{{AdaPer14}}
   for odd $r$.
   The proof of Theorem \ref{Th:reducible}
   uses the fact proved in \cite{{AdaPer14}} that,
   for odd $r$, 
   $V_{2-r}(\mf{so}_{2r})$
   has infinitely many simple objects in the category $\mc{O}$.
   
     Remarkably, it turned out that    the structure of the vertex algebra
  $V_{2-r}(\mf{so}_{2r})$
  substantially differs depending on the parity of $r$.
  \begin{Th}\label{Th:Dr-for-even-r}
 Let $r$ be an even integer such that $r\geq 6$.
 Then 
\begin{align*}
\overline{\mathbb{O}_{min}}\subsetneqq X_{V_{2-r}(\mf{so}_{2r})}
 \subset\overline{ \mathbb{O}_{(2^{r-2},1^4)}},
\end{align*} 
where $\O_{\min}$ is the minimal nilpotent orbit of $\mf{so}_{2r}$ 
and $\mathbb{O}_{(2^{r-2},1^4)}$ is the nilpotent orbit of $\mf{so}_{2r}$ 
associated with the partition $(2^{r-2},1^4)$ of $2r$. 

In particular, $X_{V_{2-r}(\mf{so}_{2r})}$
 is contained in $ \mc{N}$,
 and hence, there are only finitely many simple $V_{2-r}(\mf{so}_{2r})$-modules
 in the category $\mc{O}$.
\end{Th}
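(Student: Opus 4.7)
The plan is to establish the two inclusions separately and then to deduce the finiteness of simple objects in category $\mc{O}$ from the upper bound.

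For the upper inclusion $X_{V_{2-r}(\mf{so}_{2r})}\subset\overline{\O_{(2^{r-2},1^4)}}$, the starting point is that $X_{V_{2-r}(\mf{so}_{2r})}$ lies in the zero locus in $\mf{so}_{2r}^*$ of the image in $R_{V_{2-r}(\mf{so}_{2r})}\subset S(\mf{so}_{2r})$ of any family of singular vectors generating the maximal ideal of the universal affine vertex algebra $V^{2-r}(\mf{so}_{2r})$. I would extract such singular vectors at level $2-r$ from Per{\v{s}}e's explicit construction in \cite{Per13}, which is available for every $r$, and pass to their $C_2$-symbols. These symbols furnish $\mathrm{SO}_{2r}$-invariant polynomial relations on $\mf{so}_{2r}^*\simeq\mf{so}_{2r}$, and identifying the resulting subvariety as contained in $\overline{\O_{(2^{r-2},1^4)}}$ reduces to matching these relations against the standard defining equations of this orbit closure in the defining representation, namely rank conditions on powers of matrices combined with the symmetry constraints coming from the invariant form.

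For the strict lower inclusion $\overline{\O_{min}}\subsetneqq X_{V_{2-r}(\mf{so}_{2r})}$, the idea is that the singular vectors used above do not cut the variety down to $\overline{\O_{min}}$ but only to a strictly larger $G$-invariant closed subvariety of $\mf{so}_{2r}^*$. Concretely, the additional relations characterizing $\overline{\O_{min}}$ (for instance those coming from the Joseph ideal of $U(\mf{so}_{2r})$) should not appear among the images of singular vectors of $V^{2-r}(\mf{so}_{2r})$ at this level, so $X_{V_{2-r}(\mf{so}_{2r})}$ contains at least one non-minimal nilpotent orbit. This can be made rigorous either by a low-degree Hilbert series comparison of $R_{V_{2-r}(\mf{so}_{2r})}$ with $\C[\overline{\O_{min}}]$, or by exhibiting a simple $V_{2-r}(\mf{so}_{2r})$-module whose associated variety strictly contains $\overline{\O_{min}}$.

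The final assertion is then immediate: the upper bound places $X_{V_{2-r}(\mf{so}_{2r})}\subset\mc{N}$. Zhu's algebra $A(V_{2-r}(\mf{so}_{2r}))$ is a finitely generated quotient of $U(\mf{so}_{2r})$ whose associated variety is still contained in $\mc{N}$; such a quotient has only finitely many simple modules (a nilpotent associated variety forces a finite set of primitive-ideal candidates), which in turn accounts for the finitely many simple $V_{2-r}(\mf{so}_{2r})$-modules in category $\mc{O}$.

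The main obstacle is the upper bound: translating Per{\v{s}}e's singular vectors into explicit polynomial relations on $\mf{so}_{2r}^*$ and controlling them uniformly in $r$ to verify that the resulting ideal cuts out a subvariety of $\overline{\O_{(2^{r-2},1^4)}}$ rather than something larger, or corresponding to a different nilpotent orbit of the same dimension. The fact that the theorem only states inclusions and not equalities already signals that pinning down the associated variety exactly is delicate at this level.
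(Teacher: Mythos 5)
Your proposal has a genuine gap at the upper bound, which is where the real work of the theorem lies.

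You propose that passing to the $C_2$-symbols of Per{\v{s}}e's singular vector at level $2-r$ already cuts the associated variety down into $\overline{\O_{(2^{r-2},1^4)}}$. That is false. The $C_2$-symbol of Per{\v{s}}e's singular vector is $w_1 = \sum_{i=2}^{r} e_{\eps_1-\eps_i} e_{\eps_1+\eps_i}$, which generates the $\mf{g}$-module $W_1 \cong L_{\g}(\theta+\theta_1)$ in $S^2(\g)$, and the zero locus of $I_{W_1}$ is the union of two Dixmier sheet closures $\overline{\SS_{\l^{I}}} \cup \overline{\SS_{\l^{II}}}$ (Proposition~\ref{Pro:Dzero} in the paper). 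These sheet closures contain plenty of non-nilpotent (semisimple) elements, namely the lines $\C\varpi_{r-1}$ and $\C\varpi_{r}$, so $V(I_{W_1})$ is not contained in $\mc{N}$, let alone in a single nilpotent orbit closure. Indeed for $r$ odd, $X_{V_{2-r}(\g)}$ really does equal a sheet closure and is not in $\mc{N}$ (Theorem~\ref{Th:reducible}); there is no way to distinguish even from odd $r$ at the level of singular vectors of degree $2$. The additional input the paper needs for even $r$ is Theorem~\ref{Th:image-type-D}: using the short nilpotent element $f$ with $h = 2\varpi_r$, a Gelfand--Kirillov dimension estimate via Matumoto's embeddings of scalar generalized Verma modules shows that $H^{\frac{\infty}{2}+0}_{f}(V_{2-r}(\g)) = 0$ for $f \in \O_{(2^r)}^{I} \cup \O_{(2^r)}^{II}$. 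By Theorem~\ref{Th:W-algebra-variety}(3), this means neither of the two very-even orbits $\O_{(2^r)}^{I}$, $\O_{(2^r)}^{II}$ lies in $X_{V_{2-r}(\g)}$. Since any non-nilpotent point of $X_{V_{2-r}(\g)} \subset \overline{\SS_{\l^{I}}} \cup \overline{\SS_{\l^{II}}}$ would force one of the full sheet closures (hence one of those orbits) into $X_{V_{2-r}(\g)}$, the only possibility left is $X_{V_{2-r}(\g)} \subset \mc{N}$, and then $X_{V_{2-r}(\g)} \subset \overline{\O_{(2^r)}^{I}} \cap \overline{\O_{(2^r)}^{II}} = \overline{\O_{(2^{r-2},1^4)}}$. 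Your plan of matching $C_2$-symbols against rank equations for $\overline{\O_{(2^{r-2},1^4)}}$ cannot succeed because the symbols simply do not cut out a subvariety of $\mc{N}$.

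For the strict lower bound, the paper's argument is different from and cleaner than what you sketch: $H^{\frac{\infty}{2}+0}_{f_{\theta}}(V_{2-r}(\g)) = \W_{2-r}(\g,f_{\theta}) \ne 0$ (Theorem~\ref{Th:minimal}), so $\overline{\O_{min}}\subset X_{V_{2-r}(\g)}$ by Theorem~\ref{Th:W-algebra-variety}(3); and $\W_{2-r}(\g,f_{\theta})$ is not lisse by \cite[Theorem 7.1]{AM15}, so by Theorem~\ref{Th:W-algebra-variety}(4) the inclusion must be strict. A Hilbert series comparison or explicit construction of a module with larger associated variety would require substantially more work and is not needed. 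Your last paragraph (finiteness of simple modules) is conceptually fine and matches the paper's citation of \cite[Corollary 5.3]{AM15}.
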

The above theorem gives new examples
of non-admissible affine vertex algebras
whose associated varieties are contained in the nilpotent cone (cf.\
\cite{AM15}).
 In fact we conjecture\footnote{This conjecture is now confirmed in
 our paper \cite{AM17}.} 
 that
 $ X_{V_{2-r}(\mf{so}_{2r})}
 =\overline{ \mathbb{O}_{(2^{r-2},1^4)}}$.
This conjecture is confirmed for
$r=6$, see  Theorem \ref{Th:r-6}.
Notice that for $r=4$, 
$X_{V_{-2}(\mf{so}_8)}
    =\overline{\mathbb{O}_{min}}=\overline{\mathbb{O}_{(2^2,1^4)}}$
    by \cite{AM15}. So the conjecture  also holds for $r=4$.
    
Our proof of the above stated results is
based on the analysis of singular
vectors
of degree $2$ \cite{AM15} and the theory of $W$-algebras
\cite{KacRoaWak03,KacWak04,Ara05,Ginz,Ara08-a,Ara09b}.
 This method  works for some other types as well,  in particular in types $B$ and $C$,
which will be studied in our subsequent work. 

\smallskip

We conjecture that
$X_{V_k(\fing)}$  is  always equidimensional,
and that 
$X_{V_k(\fing)}$  is  
irreducible provided that
$X_{V_k(\fing)}\subset \mc{N}$, see Conjecture \ref{Conj:equidim}.

\smallskip

By \cite[Theorem 4.23]{Ara09b} (cf.~Theorem \ref{Th:W-algebra-variety}) 
we know that the simple $W$-algebra
$\W_k(\fing,f)$
 associated with $(\g,f)$ 
at level $k$ (\cite{KacRoaWak03})
 is lisse if $X_{V_k(\g)}=\overline{G.f}$. 
For a minimal nilpotent element $f \in \O_{min}$, we prove that the converse is also
true provided that $k\not\in \Z_{\geq 0}$. 
\begin{Th} \label{Th:lisse-rig}  
Suppose that
$k\not\in \Z_{\geq 0}$, and let  $f \in \O_{min}$. 
Then
the minimal $W$-algebra ${\W}_k(\g,f)$ is lisse if and only if $
X_{V_k(\g)}=\overline{\O_{min}}$. 
\end{Th}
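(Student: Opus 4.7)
The ``if'' direction is exactly Theorem \ref{Th:W-algebra-variety} applied with $\overline{G.f}=\overline{\O_{\min}}$, so only the converse needs work: assuming $\W_k(\g,f)$ is lisse and $k\notin\Z_{\geq 0}$, the aim is to show $X_{V_k(\g)}=\overline{\O_{\min}}$. The plan rests on two ingredients, a geometric lemma forcing $X_{V_k(\g)}$ to contain $\overline{\O_{\min}}$, and the Slodowy-slice identification of the associated variety of the $W$-algebra, which together pin down $\dim X_{V_k(\g)}$.

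First I would prove the geometric lemma: any nonzero $G$-invariant conic closed subvariety $Z$ of $\g^{*}\cong\g$ contains $\overline{\O_{\min}}$. Projectivizing, $(Z\setminus\{0\})/\C^{*}$ is a nonempty $G$-invariant closed subvariety of $\mathbb{P}(\g)$, and by Borel's fixed-point theorem it contains a closed $G$-orbit. Since $\g$ is simple, hence irreducible as a $G$-module, the unique closed $G$-orbit in $\mathbb{P}(\g)$ is $\mathbb{P}(\O_{\min})$, the image of the highest-root line, whence $\O_{\min}\subset Z$ and $\overline{\O_{\min}}\subset Z$. Because $k\notin\Z_{\geq 0}$ forces $V_k(\g)$ not to be integrable, $X_{V_k(\g)}\neq\{0\}$, and the lemma yields $\overline{\O_{\min}}\subset X_{V_k(\g)}$; in particular $f\in X_{V_k(\g)}$.

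Next, by Theorem \ref{Th:W-algebra-variety} one has the identification $X_{\W_k(\g,f)}=X_{V_k(\g)}\cap\Slo_{f}$, where $\Slo_{f}=f+\g^{e}$ is the Slodowy slice at $f$. Lisse-ness forces $\dim\bigl(X_{V_k(\g)}\cap\Slo_{f}\bigr)=0$. Because $\Slo_{f}$ is transverse at $f$ to the $G$-orbit $\O_{\min}$ and $G.f\subset X_{V_k(\g)}$, transversality yields
\begin{equation*}
\dim_{f} X_{V_k(\g)}=\dim\O_{\min}+\dim_{f}\bigl(X_{V_k(\g)}\cap\Slo_{f}\bigr)=\dim\O_{\min}.
\end{equation*}
Every irreducible component of $X_{V_k(\g)}$ is $G$-invariant (as $G$ is connected) and conic, so by the geometric lemma each such component is either $\{0\}$, hence absorbed into $\overline{\O_{\min}}$, or contains $\overline{\O_{\min}}$; a component strictly containing $\overline{\O_{\min}}$ would have dimension $>\dim\O_{\min}$, contradicting the displayed equation. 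Hence every component equals $\overline{\O_{\min}}$ and $X_{V_k(\g)}=\overline{\O_{\min}}$.

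The main obstacle is justifying the Slodowy-slice identification in the generality required: Theorem \ref{Th:W-algebra-variety} as cited only gives lisse of $\W_k(\g,f)$ from $X_{V_k(\g)}=\overline{G.f}$, whereas the converse needs at least the inequality $\dim X_{\W_k(\g,f)}\geq\dim X_{V_k(\g)}-\dim\O_{\min}$ whenever $f\in X_{V_k(\g)}$. This should follow from the exactness of the minimal Drinfeld--Sokolov $+$-reduction applied to the canonical filtration, which identifies $\Spec\gr\W_k(\g,f)$ with a scheme supported on $\tilde{X}_{V_k(\g)}\cap\Slo_{f}$ at the level of associated graded Poisson algebras; the remaining steps are routine.
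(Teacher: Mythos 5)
Your argument is correct and takes a genuinely different route from the paper. The paper's proof first uses Theorem \ref{Th:minimal}(2) to identify $\W_k(\g,f_\theta)$ with $H^{\frac{\infty}{2}+0}_{f_\theta}(V_k(\g))$ for $k\notin\Z_{\geq 0}$, extracts from earlier results the two facts $\overline{\O_{min}}\subset X_{V_k(\g)}$ and $\dim\bigl(X_{V_k(\g)}\cap\Slo_{f_\theta}\bigr)=0$, and then runs a case analysis on an arbitrary closed point $x$: Charbonnel--Moreau's theorem identifies $\overline{G.\C^*x}\cap\mc{N}$ with the closure of a Lusztig--Spaltenstein induced orbit, and Ginzburg's codimension formula [Ginz, Corollary~1.3.8(iii)] forces first that orbit to be $\O_{min}$ or $\{0\}$, then $x_s=0$, hence $x\in\overline{\O_{min}}$. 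You replace the induction machinery and the case analysis with a structural lemma --- every nonzero $G$-invariant closed cone in $\g$ contains $\overline{\O_{min}}$, proved via Borel's fixed-point theorem and the irreducibility of $\g$ as a $G$-module --- combined with a componentwise application of the same slice-codimension formula, which pins each irreducible component of $X_{V_k(\g)}$ to have dimension exactly $\dim\O_{min}$ and hence to equal $\overline{\O_{min}}$. This avoids the Lusztig--Spaltenstein and cone-closure inputs entirely and is arguably more elementary. As for the ``main obstacle'' you flag at the end: it closes immediately and you did not need the detour through the canonical filtration. Theorem \ref{Th:minimal}(2) gives that $H^{\frac{\infty}{2}+0}_{f_\theta}(V_k(\g))$ is already simple for $k\notin\Z_{\geq 0}$, hence equals $\W_k(\g,f_\theta)$; Theorem \ref{Th:W-algebra-variety}(2) then identifies $\tilde{X}_{\W_k(\g,f_\theta)}$ with the scheme-theoretic intersection $\tilde{X}_{V_k(\g)}\times_{\g^*}\Slo_{f_\theta}$, giving $X_{\W_k(\g,f_\theta)}=X_{V_k(\g)}\cap\Slo_{f_\theta}$ on the reduced level --- this is precisely the opening line of the paper's own proof.
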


We also show the following results, which were conjectured in \cite{AM15}. 

 \begin{Th}\label{Th:G2}
  Let $\fing$ of type $G_2$.
  Then $\W_k(\fing,f_{\theta})$ is lisse if and only if
$k$ is admissible with denominator $3$,
or an integer equal to or greater than $-1$.
 \end{Th}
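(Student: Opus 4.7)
The plan is to invoke Theorem~\ref{Th:lisse-rig}, which for $k \notin \Z_{\geq 0}$ equates lissity of $\W_k(\fing,f_\theta)$ with the condition $X_{V_k(\fing)} = \overline{\O_{min}}$. The proof thus amounts to: (a) verifying $X_{V_k(G_2)} = \overline{\O_{min}}$ for every admissible $k$ of denominator $3$ and for $k=-1$; conversely (b) showing that no other $k \notin \Z_{\geq 0}$ produces this associated variety; and finally handling $k \in \Z_{\geq 0}$ separately via integrability of $V_k(\fing)$.

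For the ``if'' direction: if $k \in \Z_{\geq 0}$, then $V_k(\fing)$ is integrable, so $X_{V_k(\fing)} = \{0\}$, and the general principle that $X_{\W_k(\fing,f_\theta)}$ is controlled by the intersection of $X_{V_k(\fing)}$ with the Slodowy slice $\mc{S}_{f_\theta}$ immediately yields lissity of the minimal $W$-algebra. If $k$ is admissible with denominator $3$, the description of associated varieties of admissible affine vertex algebras in \cite{Ara09b}, together with the fact that $G_2$ has lacing number $3$ (so denominator $3$ is the ``co-principal'' case), identifies $X_{V_k(G_2)}$ with $\overline{\O_{min}}$, and Theorem~\ref{Th:lisse-rig} applies. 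For $k = -1$, which is not admissible, one constructs a singular vector of low conformal weight in $V_{-1}(G_2)$ following the strategy of~\cite{AM15}, and checks that the Poisson ideal generated by its image in Zhu's $C_2$-algebra $R_{V_{-1}(G_2)}$ cuts out precisely $\overline{\O_{min}}$ in $\fing^*$; non-triviality together with $G_2$-invariance then forces $X_{V_{-1}(G_2)} = \overline{\O_{min}}$, and Theorem~\ref{Th:lisse-rig} applies.

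For the ``only if'' direction: suppose $\W_k(G_2, f_\theta)$ is lisse. If $k \in \Z_{\geq 0}$ the conclusion is immediate. Otherwise Theorem~\ref{Th:lisse-rig} forces $X_{V_k(G_2)} = \overline{\O_{min}}$, which means the maximal proper ideal of $V_k(G_2)$ contains a singular vector whose Poisson symbol cuts out this orbit closure. The Kac--Kazhdan determinant formula (applied to the relevant $\fing$-weights of singular vectors detecting $\overline{\O_{min}}$) restricts the possible levels to precisely the admissible levels with denominator $3$ together with $k=-1$; any other non-admissible $k$ fails to admit the requisite vanishing.

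The chief technical obstacle is the non-admissible case $k=-1$: because no general theorem identifies $X_{V_{-1}(G_2)}$ for non-admissible levels, one must produce an explicit singular vector and verify that its Poisson symbol defines the Joseph-type ideal cutting out $\overline{\O_{min}}$. The matching ``only if'' classification is likewise delicate, as it must rule out sporadic coincidences at other non-admissible negative levels, which ultimately reduces to a finite but intricate check of singular vector loci in $V_k(G_2)$.
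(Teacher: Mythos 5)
Your approach is fundamentally different from the paper's, and it contains genuine gaps.

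The paper's proof is a one-liner: it applies Theorem~\ref{Th:lisse} (proved in this same section), which states that for $\fing$ not of type $A$, the minimal $W$-algebra $\W_k(\fing,f_\theta)$ is lisse if and only if all the levels $k_i^\natural$ of the simple summands of the embedded affine vertex subalgebra $\bigotimes_{i\geq 0} V^{k_i^\natural}(\fing_i)\hookrightarrow \W^k(\fing,f_\theta)$ are non-negative integers. For $G_2$, the centralizer $\fing^\natural$ has a single simple summand isomorphic to $\mf{sl}_2$, with $k_1^\natural = 3k+5$ by \cite[Table 4]{AM15}, so the criterion becomes $3k+5\in\Z_{\geq 0}$, which one checks is equivalent to the stated list. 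Nothing about $X_{V_k(G_2)}$ is computed. Your approach instead routes everything through Theorem~\ref{Th:lisse-rig} and the computation of the associated variety $X_{V_k(G_2)}$, which is both more circuitous and, as written, incomplete.

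Three concrete problems. First, your treatment of $k\in\Z_{\geq 0}$ is wrong: for such $k$, the quantized Drinfeld--Sokolov reduction $H^{\frac{\infty}{2}+0}_{f_\theta}(V_k(\fing))$ is \emph{zero} (because $\overline{G.f_\theta}\not\subset X_{V_k(\fing)}=\{0\}$, cf.~Theorem~\ref{Th:W-algebra-variety}(3)), so it does not equal the simple quotient $\W_k(\fing,f_\theta)$; moreover $X_{V_k(\fing)}\cap \Slo_{f_\theta}=\{0\}\cap\Slo_{f_\theta}=\emptyset$ since $0\notin\Slo_{f_\theta}$, so the intersection picture gives no information. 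The paper handles $k\in\Z_{\geq 0}$ via the separate identification $\W_k(\fing,f_\theta)\cong H^{\frac{\infty}{2}+0}(L(s_0\circ k\Lam_0))$ of Remark~\ref{rem;remark-for-positive-integer-level} (or, in the proof at hand, simply by appealing to Theorem~\ref{Th:lisse}), and you make no use of this. Second, the non-admissible case $k=-1$ is not actually proved; you describe what would need to be shown (that a degree-two singular vector generates an ideal cutting out $\overline{\O_{min}}$ and that it suffices to determine $X_{V_{-1}(G_2)}$) but supply neither the singular vector, the ideal-theoretic computation, nor the argument that no smaller associated variety can occur. Third, the ``only if'' direction via the Kac--Kazhdan determinant formula is also a sketch: the determinant formula locates weights at which Verma modules degenerate, but ruling out all other non-admissible levels requires knowing $X_{V_k(G_2)}$ (or at least its dimension) for those levels, which is not a finite check that follows from the determinant formula alone. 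In short, the essential content of the theorem is carried by Theorem~\ref{Th:lisse} in the paper, which your proposal does not invoke and whose role you do not replace with a completed argument.
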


Thus, we obtain a new family of lisse minimal $W$-algebras
   $\W_k(G_2,f_{\theta})$, for $k=-1,0,1,2,3\dots$.
    \begin{Th}  \label{Th:classification}
   Suppose that $\fing$ is not of type $A$.
   Then
Conjecture 2 of \cite{AM15} holds, 
   that is,
   \begin{align*}
X_{V_k(\fing)}
    =\overline{\mathbb{O}_{min}}
   \end{align*}
   if and only if the one of the following conditions holds:
      \begin{enumerate}
    \item $\fing$ is of type $C_r$ $(r\geq 2)$ or 
	  $F_4$, and $k$ is admissible with denominator $2$, 
    \item $\fing$ is of type $G_2$, and $k$ is admissible with denominator $3$,
	  or $k=-1$, 
    \item $\fing$ is of type $D_4$, $E_6$, $E_7$ or $E_8$ and
	  $k$ is an integer such that
	  \begin{align*}
	   -\frac{h^{\vee}}{6}-1\leq k\leq -1, 
	  \end{align*}
	      \item $\fing$ is of type $D_r$ with $r\geq 5$, and $k=-2,-1$.
   \end{enumerate}
  \end{Th}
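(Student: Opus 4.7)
The plan is to use Theorem~\ref{Th:lisse-rig} to reduce the statement to a classification of levels at which the minimal $W$-algebra $\W_k(\fing,f_{\theta})$ is lisse, and then to assemble the pieces already established in the paper (and in \cite{AM15}) to read off the list $(1)$--$(4)$. Since $V_k(\fing)$ is integrable for $k\in\Z_{\geq 0}$, one has $X_{V_k(\fing)}=\{0\}\neq \overline{\O_{\min}}$ in that range, so one may restrict to $k\notin\Z_{\geq 0}$. Theorem~\ref{Th:lisse-rig} then makes the equivalence $X_{V_k(\fing)}=\overline{\O_{\min}} \iff \W_k(\fing,f_{\theta})$ is lisse; the assumption $\fing\not\cong\sl_n$ is used to ensure that the minimal nilpotent orbit behaves uniformly under the Kac--Wakimoto classification (in particular $\O_{\min}$ is the orbit associated to the short coroot).

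For the ``if'' direction, one treats the four cases in turn. Cases $(1)$ and the admissible part of $(2)$ follow from \cite[Main Thm.]{Ara09b}: for admissible $k$ with denominator $q$, $X_{V_k(\fing)}$ is the closure of a specific nilpotent orbit $\O_q$ whose partition is dictated by $q$ and the type; inspection shows $\O_q=\O_{\min}$ exactly when $\fing$ is of type $C_r$ or $F_4$ with $q=2$, and of type $G_2$ with $q=3$. The non-admissible cases are then covered by already-established results: the remaining level $k=-1$ in type $G_2$ is Theorem~\ref{Th:G2} combined with Theorem~\ref{Th:lisse-rig}; the endpoint $k=-h^\vee/6-1$ of case $(3)$ is the main theorem of \cite{AM15}; the intermediate integer levels in case $(3)$, as well as $k=-1,-2$ in case $(4)$, are obtained by the same weight-$2$ singular vector analysis used to prove Theorems~\ref{Th:main1}, \ref{Th:reducible}, \ref{Th:Dr-for-even-r}, which produces a generator of the maximal ideal cutting out $\overline{\O_{\min}}$ in $S(\fing)^G$ and hence forces $X_{V_k(\fing)}\subset\overline{\O_{\min}}$, while the reverse inclusion is automatic as long as $V_k(\fing)$ is not integrable.

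For the ``only if'' direction one must rule out every remaining level. For admissible $k$ with denominator $q$ not in the above list, the orbit $\O_q$ strictly contains $\O_{\min}$, so $X_{V_k(\fing)}\supsetneq\overline{\O_{\min}}$. For non-admissible $k$ at which no suitable singular vector of degree $2$ exists, the $G$-invariant ideal $I(X_{V_k(\fing)})\subset S(\fing)$ cannot contain any polynomial vanishing on $\overline{\O_{\min}}$ beyond those forced by weight reasons, so $X_{V_k(\fing)}$ is strictly larger than $\overline{\O_{\min}}$ (generically it is all of $\fing^*$). The exceptional integer levels not in the stated list are excluded by explicitly computing the conformal weight of the would-be singular vector in the Weyl module and checking that it lies outside the allowed range, which in the Deligne types coincides with the bound $-h^\vee/6-1\leq k\leq -1$.

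The main obstacle is the ``only if'' step for the non-admissible integer levels: one has to rule out, type by type, the existence of any integer $k$ outside the stated windows for which $\W_k(\fing,f_{\theta})$ is lisse. This is precisely the kind of singular-vector bookkeeping carried out in the proofs of Theorems~\ref{Th:reducible} and \ref{Th:Dr-for-even-r}, and its uniform execution across all non-type-$A$ simple Lie algebras is where the bulk of the work lies.
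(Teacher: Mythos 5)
Your reduction via Theorem~\ref{Th:lisse-rig} to classifying the levels at which $\W_k(\fing,f_\theta)$ is lisse is exactly the paper's starting point, and noting that $k\in\Z_{\geq 0}$ gives $X_{V_k(\fing)}=\{0\}$ is also right. But the crucial tool that you have not used is Theorem~\ref{Th:lisse}: for $\fing$ not of type $A$, $\W_k(\fing,f_\theta)$ is lisse \emph{if and only if} $k_i^{\natural}\in\Z_{\geq 0}$ for all $i\geq 1$, where the $k_i^{\natural}$ are the levels (tabulated in \cite{AM15}) of the affine vertex subalgebras of $\W^k(\fing,f_\theta)$ associated with the simple summands of $\fing^{\natural}$. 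Once you have this, \emph{both} directions of the classification collapse to an elementary numerical verification against those tables, supplemented by Theorem~\ref{Th:G2} (which is itself deduced from Theorem~\ref{Th:lisse}) and \cite[Theorem 7.1]{AM15}. In particular, the ``only if'' direction, which you flag as the hard step requiring a uniform type-by-type singular-vector analysis, is immediate from Theorem~\ref{Th:lisse}: one simply checks that some $k_i^{\natural}\notin\Z_{\geq 0}$.

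Your proposed fallback of reusing ``the same weight-$2$ singular vector analysis used to prove Theorems~\ref{Th:main1}, \ref{Th:reducible}, \ref{Th:Dr-for-even-r}'' is a misfit: those theorems establish that the associated variety is a Dixmier sheet closure or a larger nilpotent orbit closure, not $\overline{\O_{\min}}$, so they do not supply a generator of a defining ideal of $\overline{\O_{\min}}$. Moreover the assertion that for the remaining non-admissible levels ``the $G$-invariant ideal cannot contain any polynomial vanishing on $\overline{\O_{\min}}$ beyond those forced by weight reasons'' is not a workable argument — you would need to control the maximal submodule of $V^k(\fing)$ at every such level, which is exactly the hard analysis Theorem~\ref{Th:lisse} lets you bypass. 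The one part of your plan that is sound beyond the reduction is the observation that for $k\notin\Z_{\geq 0}$ (so $V_k(\fing)$ not integrable) the inclusion $\overline{\O_{\min}}\subset X_{V_k(\fing)}$ is automatic, and that the admissible cases can alternatively be read off from \cite{Ara09b}; but the paper's proof is essentially a two-line deduction from Theorem~\ref{Th:lisse-rig}, Theorem~\ref{Th:G2}, and \cite[Theorem 7.1]{AM15}, and your plan does not reach that efficiency because it omits the numerical lisse criterion of Theorem~\ref{Th:lisse}.
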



The rest of the paper is organized as follows.
In \S \ref{sec:sheet} we recollect some results concerning sheets
that will be needed later
and we give a description of Dixmier sheets of rank one.
In \S \ref{section:Ginzburg}, we use Slodowy slices and Ginzburg's results on 
finite $W$-algebras to state some useful lemmas.
In \S \ref{sec:associated-variety} we state 
results and conjectures on the associated
variety of a vertex algebra.
In \S \ref{sec:Zhu} we recall some fundamental results on Zhu's algebras of vertex
algebras.
In \S \ref{sec:affine-W-algebras} we recall and state some fundamental results on $W$-algebras.
In \S \ref{sec:A-theta1} we study level $-1$ affine vertex algebras of type $A_{n-1}$, $n\geq 4$,
and prove Theorem \ref{Th:main1} (1).
In \S \ref{sec:A-theta}  we study level $-m$ affine vertex algebras of type $A_{2m-1}$, $m\geq 2$,
and prove Theorem \ref{Th:main1} (2). 
In \S \ref{sec:BD} we study level $2-r$ affine vertex algebras of type $D_{2r}$, $r\geq 5$,
and prove Theorem \ref{Th:reducible} and Theorem \ref{Th:Dr-for-even-r}.
In \S \ref{sec:others}, we prove Theorem \ref{Th:lisse-rig}, 
Theorem \ref{Th:G2} and Theorem \ref{Th:classification} and some other results related to 
our previous work \cite{AM15}. 
In particular we obtain a new family of lisse minimal $W$-algebras.

\subsection*{Notations}
As a rule, for $U$ a $\g$-submodule of $S(\g)$, 
we shall denote by $\IS_U$ the ideal of $S(\g)$ 
generated by $U$, and for $I$ an ideal in $S(\g)\cong \C[\g^*]$, 
we shall denote by $V(I)$ the zero locus of $I$ in $\g^*$.  

{Let $$\affg=\fing[t,t^{-1}]\+ \C K \+ \C D$$be the affine Kac-Moody Lie algebra
associated with $\fing$ {and the inner product }
$(~|~)=1/2h^{\vee}\times $ Killing form (see \S \ref{sec:associated-variety}). }
For $\lambda \in \h^*$ (resp.~$\hat{\h}^*$), 
$L_\g(\lambda)$ (resp.~$L(\lambda)$) 
denotes the irreducible 
highest weight representation of $\g$ (resp.~$\hat{\g}$)  
with highest weight $\lambda$ where $\h$ is a Cartan subalgebra of $\g$ 
and $\hat{\h}=\h \oplus \C K\+ \C D$.


\subsection*{Acknowledgments} 
We are indebted to the CIRM  Luminy for its hospitality during our stay 
as ^^ ^^ Research in pairs'' in October, 2015.
Some part of the work was done while we were at the conference
``Representation Theory XIV'',  Dubrovnik,
June 2015. We are grateful to the organizers of the conference.

Results in this paper were presented by 
the first named author
in part in ``Vertex operator algebra and related topics'',
Chendgu,
September, 2015.
He thanks
the organizers of this  conferences.
He is also grateful  to
Universidade de  S\~{a}o Paulo for its hospitality during his stay
in November and December, 2015.
His research is supported 
 by JSPS KAKENHI Grant Number 17H01086,
 17K18724.

The second named author would like to thank Micha\"{e}l Bulois 
for helpful comments about sheets. 
Her research is supported by the ANR Project GeoLie Grant number ANR-15-CE40-0012. 

We are very thankful to David Vogan for indicating us an error 
in \cite[Lemma 7.3.2(ii)]{CMa} that induced a mistake in the preliminary version of this paper 
(see Section \ref{sec:BD} for more details). 

\section{Jordan classes and sheets}  \label{sec:sheet}
Most of results presented in this section come from \cite{BK, Borh} or \cite{Kat}. 
Our main reference for basics about Jordan classes and sheets is \cite[\S39]{TY}.

Let $\fing$ be a simple Lie algebra over $\C$ and 
$(~|~)=1/2h^{\vee}\times $ Killing form,  as in the introduction.
We often identify $\fing$ with $\fing^*$ via $(~|~)$. 

For $\mf{a}$ a subalgebra of $\g$, denote by $\z(\mf{a})$ its center. 
For $Y$ a subset of $\g$, denote by $Y^{\rm reg}$ the set 
of $y \in Y$ for which $\g^y$ has the minimal dimension 
with $\g^y$ the centralizer of $y$ in $\g$. 
In particular, if $\l$ is a Levi subalgebra of $\g$, then 
$$\z(\l)^{\reg}:=\{y \in \g \; |\; \z(\g^{y}) = \z(\l) \},$$
and $\z(\l)^{\reg}$ is a dense open subset of $\z(\l)$. 
For $x\in\fing$, denote by $x_s$ and $x_n$ the semisimple 
and the nilpotent components of $x$ respectively. 

The {\em Jordan class} of $x$ is
$$\J(x):=G.(\z(\g^{x_s})^{\reg} + x_n).$$
It is a $G$-invariant, irreducible, and locally closed subset of $\g$. 
To a Jordan class $J$, we associate its {\em datum}  
which is the pair $(\l,\O_\l)$ defined as follows. 
Pick $x \in J$. Then $\l$ is the Levi subalgebra $\g^{x_s}$ 
and $\O_\l$ is the nilpotent orbit in $\l$ of $x_n$. 
The pair $(\l,\O_\l)$ does not depend on $x\in J$ up to 
$G$-conjugacy, and there is a one-to-one correspondence 
between the set of pairs $(\l,\O_\l)$ as above, up to $G$-conjugacy, 
 and the  set of Jordan classes. 

A {\em sheet} is an irreducible component of the subsets 
$$\g^{(m)} = \{x\in \g \;| \; \dim \g^{x} = m\},\qquad m \in \N.$$ 
It is a finite disjoint union of Jordan classes. 
So a sheet $\SS$ contains a unique dense open Jordan class $J$ 
and we can define the {\em datum} of $\SS$ as the datum $(\l,\O_\l)$ 
of the Jordan class $J$. We have 
$$\overline{\SS} = \overline{J} \quad \text{ and }\quad \SS= (\overline{J})^{\reg}.$$
A sheet is called {\em Dixmier} if it contains a semisimple element 
of $\g$. 
A sheet $\SS$ with datum $(\l,\O_\l)$ is Dixmier if and only if $\O_\l=\{0\}$. 
We shall simply denote by $\SS_\l$ the Dixmier sheet with datum $(\l,\{0\})$. 

A nilpotent orbit is called {\em rigid} if  
it cannot be properly induced in the sense of 
Lusztig-Spaltenstein. 
A closure of a Jordan class with datum $(\l,\O_\l)$ is a sheet closure if and only if $\O_\l$ 
is rigid in $\l$. So we get a one-to-one correspondence 
between the set of pairs $(\l,\O_\l)$, up to $G$-conjugacy, 
with $\l$ a Levi subalgebra 
of $\g$ and $\O_\l$ a rigid nilpotent orbit of $\l$, and the  
set of sheets. 

Each sheet contains a unique nilpotent orbit. 
Namely, if $\SS$ is a sheet with datum $(\l,\O_\l)$ 
then the induced nilpotent orbit ${\rm Ind}_{\l}^\g(\O_\l)$ of $\g$ 
from $\O_\l$ in $\l$ is the unique nilpotent orbit 
contained in $\SS$. 
Note that a nilpotent orbit $\O$ is itself a sheet if and only if $\O$ is rigid. 
For instance, outside the type $A$, the minimal nilpotent orbit 
$\O_{min}$ is always a sheet since it is rigid. 

The {\em rank} of a sheet $\SS$ with datum $(\l,\O_\l)$ is by definition 
$${\rm rank}(\SS) := \dim \SS -\dim {\rm Ind}_{\l}^\g(\O_\l) = \dim \z(\l).$$
If $\SS=\SS_\l$ is Dixmier, then $\O_\l=0$ and we have 
\begin{align*}
\overline{\SS} = G.[\mf{p},\mf{p}]^\perp= G.(\z(\l)+\mf{p}_u) \quad \text{ and } 
\quad 
\SS = (G.[\mf{p},\mf{p}]^\perp)^{\reg}. 
\end{align*}
where $\mf{p}=\l\oplus\mf{p}_u$ is a parabolic subalgebra of $\g$ with Levi factor $\l$ and 
nilradical $\mf{p}_u$ (cf.~\cite[Proposition 39.2.4]{TY}). 

Let $\h$ be a Cartan subalgebra of $\fing$. 
For $\SS$ a sheet with datum $(\l,\O_\l)$, one can assume without loss  
of generality that 
$\h$ is a Cartan subalgebra of $\l$. In particular, $\z(\l) \subset \h$. 
 
\begin{Lem}\label{Lem:sheet}
   \begin{enumerate}
\item Let $\mathbb{S}_{\mf{l}}$ be  a
	 Dixmier sheet of rank one, that is, 
$\z(\l)= \C \lambda$ for some $\lam\in \h \setminus\{0\}$. 
Then 
\begin{align*}
&\overline{\SS_\l} = \overline{G.\C^*\lambda} = G.(\C \lambda+\mf{p}_u) 
=  G.\C^*\lambda \cup \overline{{\rm Ind}_\l^\g(0)},  
\end{align*}
and 
\begin{align*}
\SS_\l = G.\C^*\lambda \cup {\rm Ind}_\l^\g(0). &
\end{align*}

    \item 
	 Let  $\mathbb{S}_{\mf{l}_1},\dots, \SS_{\l_n}$ be  
	 Dixmier sheets of rank one, that is, 
	  $\mf{z}(\mf{l}_i)=\C \lam_i$ for some $\lam_i\in \h \setminus\{0\}$,
	  such that
$\dim \on{Ind}^\g_{\l_i} (0)=\dim \on{Ind}^\g_{\l_j}(0)$ for all $i,j$.
Let $X$ be a $G$-invariant, conic, Zariski closed subset
  of $\fing^*$
  such that
  \begin{align*}
   X\cap \mf{h}=\bigcup_{i=1}^n\C \lam_i,\quad X\cap
   \mc{N}\subset \bigcup_{i=1}^n\overline{\on{Ind}_{\mf{l}_i}^\fing(0)}.
  \end{align*}
  Then $X=\bigcup_{i=1}^n\overline{\mathbb{S}_{\mf{l}_i}}$.
   \end{enumerate}
  \end{Lem}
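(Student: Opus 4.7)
For part (1), I start from the closure formula $\overline{\SS_\l} = G.(\z(\l) + \mf{p}_u) = G.(\C\lam + \mf{p}_u)$ recalled just before the lemma and split $\C\lam + \mf{p}_u = (\C^*\lam + \mf{p}_u) \sqcup \mf{p}_u$. For each $t \in \C^*$ the operator $\ad(t\lam)$ is invertible on $\mf{p}_u$ (a sum of root spaces for roots nonzero on $\z(\l) = \C\lam$), so the unipotent group $\exp(\mf{p}_u)$ acts transitively on the affine space $t\lam + \mf{p}_u$; hence $G.(\C^*\lam + \mf{p}_u) = G.\C^*\lam$. Combining with $G.\mf{p}_u = \overline{\on{Ind}_\l^\g(0)}$ yields $\overline{\SS_\l} = G.\C^*\lam \cup \overline{\on{Ind}_\l^\g(0)}$. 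Since $G.\C^*\lam$ is irreducible of dimension $\dim G - \dim \l + 1 = 2\dim \mf{p}_u + 1 = \dim \SS_\l$, the equality $\overline{\SS_\l} = \overline{G.\C^*\lam}$ follows. Finally, $\on{Ind}_\l^\g(0)$ has dimension $2\dim \mf{p}_u$, so its points have centralizer of dimension $\dim \l$, as do those of $G.\C^*\lam$, and therefore lie in $\SS_\l$, while strict boundary points of $\overline{\on{Ind}_\l^\g(0)}$ have strictly larger centralizer and are excluded, yielding the announced decomposition of $\SS_\l$.

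For part (2), the inclusion $\bigcup_i \overline{\SS_{\l_i}} \subset X$ is immediate: each $\lam_i$ lies in $X \cap \h$, and since $X$ is $G$-invariant, conic and closed, $\overline{G.\C^*\lam_i} \subset X$, which equals $\overline{\SS_{\l_i}}$ by part (1). For the reverse inclusion I argue irreducible-component-wise; let $X_j$ be an irreducible component of $X$ (automatically $G$-invariant and conic). Given $x \in X$, letting $\{e,h,f\}$ be a Jacobson--Morozov $\sl_2$-triple in $\g^{x_s}$ with $e = x_n$, the relations $[h,x_s]=0$ and $[h,x_n]=2x_n$ give $x_s + t^2 x_n \in G.x$ for every $t \in \C^*$, whence $x_s \in \overline{G.x} \subset X$. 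After $G$-conjugation $x_s \in X \cap \h = \bigcup_i \C\lam_i$. Via the Chevalley morphism $\chi : \g \to \h/W$, this shows $\chi(X) \subset \bigcup_i \pi(\C\lam_i)$, a subset of $\h/W$ of dimension at most $1$. By irreducibility either $\chi(X_j) = \{0\}$, i.e.\ $X_j \subset \mc{N}$, or $\chi(X_j) = \pi(\C\lam_i)$ for a unique index $i$.

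In the first case $X_j \subset X \cap \mc{N} \subset \bigcup_{i'}\overline{\on{Ind}_{\l_{i'}}^\g(0)}$, so by irreducibility $X_j \subset \overline{\on{Ind}_{\l_{i'}}^\g(0)} \subset \overline{\SS_{\l_{i'}}}$ for some $i'$. In the second case, a generic $x \in X_j$ may be conjugated so that $x = t\lam_i + x_n$ with $t \in \C^*$ and $x_n \in \l_i = \g^{x_s}$ nilpotent; by irreducibility of $X_j$ the $L_i$-orbit of $x_n$ is a fixed nilpotent orbit $\mc{O} \subset \mc{N}(\l_i)$ for generic $x$. The map $G \times_{L_i}(\C^*\lam_i + \mc{O}) \to \g$ has generically finite fibers (uniqueness of Jordan decomposition forces the conjugating element to fix $t\lam_i$ and therefore lie in $L_i$, up to the finite stabilizer of $\C\lam_i$ in $W$), so $\dim X_j = d + 1 + \dim \mc{O}$, where $d = \dim G - \dim L_i = \dim \on{Ind}_{\l_i}^\g(0)$. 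The main obstacle is ruling out $\mc{O} \neq \{0\}$: otherwise $\dim \mc{O} \geq 2$ (nilpotent orbits of a reductive Lie algebra are even-dimensional), so $\dim X_j \geq d + 3$; since $\chi|_{X_j}$ has one-dimensional image, every non-empty fiber has dimension at least $\dim X_j - 1$, giving $\dim(X_j \cap \mc{N}) = \dim(\chi|_{X_j})^{-1}(0) \geq d + 2$. This contradicts $X_j \cap \mc{N} \subset X \cap \mc{N} \subset \bigcup_{i'}\overline{\on{Ind}_{\l_{i'}}^\g(0)}$, which has dimension $d$ by the common-dimension hypothesis. Hence $\mc{O} = \{0\}$ and $X_j = \overline{G.\C^*\lam_i} = \overline{\SS_{\l_i}}$.
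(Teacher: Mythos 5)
Your proof is correct, but it takes a genuinely different route from the paper on both parts. For part (1), the paper deduces $\C\lam + \mf{p}_u \subset G.\C^*\lam \cup \overline{\on{Ind}_\l^\g(0)}$ from upper semi-continuity of $\dim\g^x$ on $\overline{\SS_\l}$: if $x=c\lam+y$ with $c\neq 0$, then $\dim\g^x\geq\dim\g^\lam$ together with $\g^x=(\g^{x_s})^{x_n}$ forces $x_n=0$. You instead note that $\exp(\mf{p}_u)$ acts transitively on each affine slice $t\lam+\mf{p}_u$, $t\neq 0$ (the orbit of $t\lam$ is open because $\ad(\lam)$ is invertible on $\mf{p}_u$, and closed by Rosenlicht's theorem on unipotent orbits), and then match dimensions. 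Both are clean; the paper's is slightly more self-contained.

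For part (2) the departure is more substantial. The paper takes $x\in X$ with $x_s\neq 0$, uses explicit one-parameter subgroups $\gamma(t)$ and $t^2\gamma(t^{-1})$ to show $\C^*\lam_i+x_n\subset X$, deduces $\overline{\J(x)}\subset X$, and then invokes $\on{Ind}_{\l_i}^\g(\O_{\l_i,x_n})\subset\overline{\J(x)}$ together with a codimension count to force $x_n=0$. You instead work component-by-component via the adjoint quotient $\chi:\g\to\h/W$: the hypothesis $X\cap\h=\bigcup_i\C\lam_i$ confines $\chi(X_j)$ to a single line for any non-nilpotent component $X_j$, the Jordan-class stratification plus generic finiteness of $G\times_{L_i}(\C^*\lam_i+\mc{O})\to\g$ gives $\dim X_j = d+1+\dim\mc{O}$, and Chevalley's fiber-dimension theorem applied to $\chi|_{X_j}$ over $0$ shows that $\dim\mc{O}>0$ would push $\dim(X_j\cap\mc{N})$ strictly above $d$, contradicting the hypothesis on $X\cap\mc{N}$. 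This avoids the explicit use of Lusztig--Spaltenstein induction into $\overline{\J(x)}$ at the cost of leaning on several standard-but-unspoken facts (finiteness of the Jordan stratification, finiteness of $N_G(\l_i)/L_i$, the fiber-dimension theorem). Two small polish items: the claim that the $L_i$-orbit of $x_n$ is a \emph{fixed} orbit $\mc{O}$ for generic $x\in X_j$ deserves a sentence (it follows because $X_j$ meets only finitely many Jordan classes, hence a unique one densely), and $\dim\mc{O}\geq 1$ already gives the contradiction, so the parity appeal to $\dim\mc{O}\geq 2$ is unnecessary.
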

 
Part (1) of the lemma is probably well-known. 
We give a proof for the convenience of the reader.

    \begin{proof}
(1) The equalities $\overline{\SS_\l} = \overline{G.\C^*\lambda} = G.(\C \lambda+\mf{p}_u)$ 
are clear by \cite[Corollaries~39.1.7 and~39.2.4]{TY}. 
Let us prove that $\overline{\SS_\l} = G.\C^*\lambda \cup \overline{{\rm Ind}_\l^\g(0)}$. 
The inclusion $G.\C^*\lambda \cup \overline{{\rm Ind}_\l^\g(0)} \subset 
\overline{\SS_\l}$ 
is known \cite[Proposition 39.3.5]{TY} (and its proof). So it suffices to prove that 
$\C \lambda+\mf{p}_u \subset G.\C^*\lambda \cup \overline{{\rm Ind}_\l^\g(0)}$ 
since $\overline{\SS}$ is $G$-invariant. 

Let $x= c \lam + y\in \C \lambda+\mf{p}_u $ with $c \in \C$ and $y \in \mf{p}_u$. 
Assume that $c \in \C^*$. Then $x_s$ and $c \lam$ are $G$-conjugate. 
Since $x \in \overline{\SS_\l}$, $\dim \g^{x} \geq \dim \g^\lam$. 
But $\dim \g^{x} \geq \dim \g^\lam$ if and only if $x_n=0$ since 
$\g^{x}= (\g^{x_s})^{x_n}$. Hence $x$ is $G$-conjugate to $c \lam$,  
and so $x \in G.\C^*\lam$. 
If $c=0$, then $x \in \mf{p}_u$ and so $x$ is nilpotent. 
But $(\C \lambda+\mf{p}_u )\cap \mc{N} \subset \overline{\SS_\l} \cap \mc{N}$ 
and $\overline{\SS_\l} \cap \mc{N}=
\overline{{\rm Ind}_\l^\g(0)}$ by \cite[\S\S39.2.6 and 39.3.3]{TY}, 
whence the statement. 

It remains to prove that $\SS_\l = G.\C^*\lambda \cup {\rm Ind}_\l^\g(0)$. 
We have $\SS_\l=(G.(\C \lambda+\mf{p}_u))^\reg$, 
and the inclusion $G.\C^*\lambda \cup {\rm Ind}_\l^\g(0) \subset \SS_\l$ 
is clear. So it suffices to prove that 
$(\C \lambda+\mf{p}_u)^\reg \subset G.\C^*\lambda \cup {\rm Ind}_\l^\g(0)$ 
since $\overline{\SS_\l}$ and $\SS_\l$ are $G$-invariant. 
The above argument shows that for $x \in (\C \lambda+\mf{p}_u)^\reg 
\setminus \mf{p}_u$, $x \in G.\C^* \lam$. And if $x \in (\mf{p}_u)^\reg$, 
then $x \in 
(\mf{p}_u)^\reg \cap \mc{N} \subset \SS_\l \cap \mc{N}= {\rm Ind}_\l^\g(0)$, 
whence the statement. 

(2) The inclusion
    $\bigcup_i \overline{\SS_{\l_i}} \subset X$ is clear.
Conversely, let $x\in X$. 
    If $x$ is nilpotent, then 
    $x \in
 \bigcup_i \overline{\SS_{\l_i}}$ by the assumption. 
Assume that $x$ is not nilpotent, that is $x_s \not=0$. 
Since $X$ is $G$-stable, we can assume 
that $x_s \in \h$.  If $x_n=0$ then 
$x_s \in X \cap \h \subset \bigcup_{i=1}^n \SS_{\l_i}$ by hypothesis. 

Assume that $x_n\not=0$ and 
let $(e,h,f)$ be an $\mf{sl}_2$-triple of $\g$ with $e=x_n$. 
We can assume that $h \in \h$ so that $[h,x_s]=0$. 
Let $\gamma : \C^* \to G$ be the one-parameter subgroup generated by $h$. 
Since $X$ is $G$-invariant, for any $t\in \C^*$, the element 
$$\gamma(t).x= x_s + t^{2} x_n$$
belongs to $X$. Since $X$ is closed, 
we deduce that $x_s \in X$. 
     So, by the assumption, $x_s=  c \lam_i$ for some $i$ and
     $c\in \C^*$.  
Therefore, because $X$ is a  cone, we can assume 
     that $x_s =\lam_i$.
     Thus 
     $\l_i=\fing^{x_s}$ and $x_n\in \l_i$.

For any $t\in \C^*$, 
the element 
$$t^{2} \gamma(t^{-1}).x= t^{2} (\lam_i+ t^{-2} x_n) = t^{2} \lam_i + x_n $$
belongs to $X$. 
This shows that $\C^* \lam_i+ x_n \subset X$. 
Then  
$$G.(\C^* \lam_i+ x_n) =G.(\z(\l_i)^{\rm reg} + x_n) = \J(x) \subset 
X,$$
whence $ \overline{\J(x)} \subset X$ because 
    $X$ is closed.
    
    Let $\O_{\l_i,x_n}$ be  the nilpotent orbit of $x_n$ in $\l_i$.
    One knows that 
     $\on{Ind}^\g_{\l_i} (\mathbb{O}_{\l_i,x_n})\subset \overline{\J(x)}$ \cite{Borh}. 
So  $\on{Ind}^\g_{\l_i} (\mathbb{O}_{\l_i,x_n}) \subset X$,
and the assumption gives that 
    $\on{Ind}^\g_{\l_i}(\mathbb{O}_{\l_i,x_n}) \subset
    \bigcup_j \on{Ind}^\g_{\l_j}(0)$. 
     In particular,
    $$\dim \on{Ind}^\g_{\l_i}(\mathbb{O}_{\l_i,x_n}) = \dim \on{Ind}^\g_{\l_j}(0)$$ 
    for any $j$ (this makes sense by our assumption on the Levi subalgebras 
    $\l_j$), whence $\on{codim}_{\l_i}(\mathbb{O}_{\l_i,x_n})=\on{codim}_{\l_j}(0)
    =\on{codim}_{\l_i}(0)=\dim \l_i$ by the properties of induced nilpotent orbits. 
    So $\mathbb{O}_{\l_i,x_n}=\{0\}$, that is $x_n=0$, 
    and $x=\lam_i \in \overline{\SS_{\l_i}}$. 
    \end{proof}

Let $P$ be the  
connected parabolic subgroup of $G$ with Lie algebra $\mf{p}=\mf{l}\+ \mf{p}_u$. 
The $G$-action on
\begin{align*}
Y:=G/[P,P], 
\end{align*}
 where $[P,P]$ is the commutator-subgroup of $P$, 
induces an algebra homomorphism 
\begin{align*}
 \psi_Y \colon U(\g) \to \mc{D}_Y
\end{align*}
from $U(\g)$ to the algebra $\mc{D}_Y$ of global differential operators on $Y$. 
Let 
$$\JJ_{Y}:=\ker \psi_Y$$ 
be the kernel of this homomorphism. It is a two-sided ideal of $U(\g)$. 
It has been shown by
Borho and Brylinski \cite[Corollary 3.11 and Theorem 4.6]{BorhBry82} 
that
$\sqrt{{\rm gr} \JJ_Y}$ is the defining ideal of the Dixmier sheet
closure determined by $P$, 
that is, $\overline{\SS_{\l}}$. 
Furthermore, 
$$\JJ_Y = \bigcap_{\lam \in \z(\l)^*} {\rm Ann}\,U(\g) \otimes_{U(\mf{p})} \C_\lam.$$
Here, for $\lam \in\mf{p}^*$, $\C_\lam$ stands for the one-dimensional representation of $\mf{p}$ 
corresponding to $\lam$, and we extend a linear form $\lam \in \z(\l)^*$ to $\mf{p}^*$ 
by setting $\lam(x)=0$ for $x \in [\l,\l]\oplus \mf{p}_u$. 
Identifying $\g$ with $\g^*$ through $(~|~)$, $\z(\l)^*$ identifies with $\z(\l)$. 
In particular, if $\z(\l)=\C \lam$ for some nonzero semisimple element $\lam \in\g$, 
we get 
 $$\JJ_Y = \bigcap_{t \in \C} {\rm Ann}\,U(\g) \otimes_{U(\mf{p})} \C_{t \lam}.$$
In fact 
\begin{align}
\JJ_Y = \bigcap_{t \in Z} {\rm Ann}\,U(\g) \otimes_{U(\mf{p})} \C_{t
 \lam}.
 \label{eq:enough-for-zariski-dense}
\end{align}
 for
any Zariski dense subset $Z$ of $\C$ (\cite{BorJan77}). 
 
\smallskip

 In this paper, we shall consider sheets in Lie algebras of 
 classical types $A_{r }$ and $D_{r }$.  
 Let us introduce more specific notations. 
Let $n\in \N^{*}$, and denote by $\P(n)$ the set of partitions of $n$. 
As a rule, we write an element $\bs{\lambda}$ of $\P(n)$ 
as a decreasing sequence $\bs{\lam}=(\lambda_1,\ldots,\lam_s)$ 
omitting the zeroes. 

\smallskip 

\noindent
{\bf Case $\sl_n$.} 
According to \cite[Theorem 5.1.1]{CMa}, nilpotent orbits of $\sl_{n}$ are 
parametrized by $\P(n)$. 
For $\bs{\lam}\in \P(n)$, we denote by 
$\O_{\bs{\lam}}$ the corresponding nilpotent orbit of $\sl_n$. 
In $\sl_n$, all sheets are Dixmier and each nilpotent orbit is contained 
in exactly one sheet. 
The Levi subalgebras of $\sl_n$, 
and so the (Diximer) sheets, are parametrized by compositions of $n$. 
More precisely, if $\bs{\lam}\in \P(n)$, then the (Dixmier) sheet associated 
with $\bs{\lam}$ is the unique sheet containing 
$\O_{^t \! \bs{\lam}}$ where $^t\!\bs{\lam}$ is the dual partition of $\bs{\lam}$. 

\smallskip 

\noindent
{\bf Case $\so_n$.} 
Set 
$$
\P_{1}(n):=\{\bs{\lam} \in \P(n)\; ; \; \text{number of parts of each even 
number is even}\}.
$$ 
According to \cite[Theorem 5.1.2 and Theorem 5.1.4]{CMa}, nilpotent orbits of $\so_{n}$ 
are parametrized by $\P_{1}(n)$, with the exception that each {\em very even} 
partition $\bs{\lam} \in\P_{1}(n)$ (i.e., $\bs{\lam}$ has only even parts) 
corresponds to two nilpotent orbits.
For $\bs{\lam}\in \P_1(n)$, not very even, we denote by 
$\O_{\bs{\lam}}$ the corresponding nilpotent orbit of $\so_n$. 
For very even $\bs{\lambda}\in \P_1(n)$, we denote by $\O_{\bs{\lam}}^{I}$ 
and $\O_{\bs{\lambda}}^{I\!I}$ the two corresponding nilpotent orbits of $\so_n$.  
In fact, their union form a single ${\rm O}_{n}$-orbit. 

Contrary to the $\sl_n$ case, it may happen in the $\so_n$ 
case that a given nilpotent orbit 
belongs to different sheets, and not all sheets are Dixmier. 

\section{Some useful lemmas}\label{section:Ginzburg}
Let $f$ be a nilpotent element of $\fing$ that we embed into an $\sl_2$-triple $(e,h,f)$ 
of $\g$ and let 
$$\Slo_f := \chi + (\g^{f})^*$$ 
be the {\em Slodowy slice associated with $(e,h,f)$} where 
$$\chi := (f | \,\cdot \,) \in \g^*.$$
Denote by $\g(h,i)$ the $i$-eigenspace of ${\rm ad}(h)$ for $i\in\Z$.
Choose a Lagrangian subspace 
$\mathfrak{L} \subset \g(h,1)$ and set 
$$\mf{m} := \mathfrak{L} \oplus \bigoplus_{i \geq 2} \g(h,i), \qquad 
J_\chi :=  \sum_{x \in \mf{m}} \C[\g^*](x-\chi(x)).$$
Let $M$ be the unipotent subgroup of $G$ corresponding to $\mf{m}$.

Let
\begin{align*}
 \mu:\fing^*\ra \mf{m}^*
\end{align*}
be the moment map for the $M$-action, which is just a restriction map.
By \cite{GanGin02}, the adjoint action map gives the isomorphism
\begin{align*}
 M\times \Slo_f \stackrel{\sim\,}{\longrightarrow} \mu^{-1}(\chi),
\end{align*}
and thus,
\begin{align*}
 \Slo_f\cong \mu^{-1}(\chi)/M.
\end{align*}
In particular,
\begin{align*}
 \C[\Slo_f]\cong \C[\mu^{-1}(\chi)]^M=\left(\C[\fing^*]/J_{\chi}\right)^M.
\end{align*}

Let $\overline{\mc{HC}}$ be the
category of finitely generated
$(\C[\fing^*],G)$-modules,
that is,
the category of  finitely generated $\C[\fing^*]$-modules $K$
equipped with the $G$-module structure such that
$g .(f.m)=(g(f)). (g.m)$ for $g\in G$, $f\in \C[\fing^*]$, $m\in K$.

 \begin{Th}[\cite{Ginz}, see also \cite{A2012Dec}]\label{Th:ginzburg}
  \begin{enumerate}
   \item
	The functor
\begin{align*}
H_f\colon\overline{\mc{HC}}\ra \C[\Slo_f]\on{-mod},\quad
K\mapsto (K/{J_{\chi} K})^M, 
\end{align*}is exact.
   \item For any $K \in \overline{\mc{HC}}$, $\on{supp}_{\C[\Slo_f]}H_f(K)=
	 (\on{supp}_{\C[\fing^*]}K)
	 \cap \Slo_f$.
  \end{enumerate}
 \end{Th}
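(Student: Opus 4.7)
The plan is to reduce both statements to a descent / trivial torsor argument using the Gan--Ginzburg isomorphism $M\times \Slo_f \isomap \mu^{-1}(\chi)$. First I would rewrite the functor in a more geometric way: for $K\in\overline{\mc{HC}}$,
\begin{align*}
K/J_\chi K \;=\; K\otimes_{\C[\fing^*]}\C[\fing^*]/J_\chi \;=\; K\otimes_{\C[\fing^*]}\C[\mu^{-1}(\chi)],
\end{align*}
so $H_f(K)=(K\otimes_{\C[\fing^*]}\C[\mu^{-1}(\chi)])^M$. Thus $H_f$ is the composition of the restriction functor $K\mapsto K|_{\mu^{-1}(\chi)}$ on $(\C[\fing^*],G)$-modules, followed by the functor of $M$-invariants on $M$-equivariant coherent sheaves on $\mu^{-1}(\chi)$.

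The key observation is that under $M\times\Slo_f\isomap \mu^{-1}(\chi)$, the map $\mu^{-1}(\chi)\to \Slo_f$ is a trivial principal $M$-bundle; in particular $M$ acts freely. Hence the category of $M$-equivariant quasi-coherent sheaves on $\mu^{-1}(\chi)$ is equivalent, via descent/pullback along the trivial section $\Slo_f\hookrightarrow \mu^{-1}(\chi)$, to the category of quasi-coherent sheaves on $\Slo_f$. Under this equivalence, $M$-invariants correspond to pullback along the section, which is obviously exact. This equivalence also gives a canonical identification of $\C[\Slo_f]$-modules
\begin{align*}
H_f(K)\;\cong\; K\otimes_{\C[\fing^*]}\C[\Slo_f].
\end{align*}
From this identification, (1) follows: the functor $H_f$ is obtained as pullback along the smooth (in fact flat) transversal inclusion $\Slo_f\hookrightarrow \fing^*$, which is exact since $\fing^*$ meets $\Slo_f$ transversally along the $\g$-adjoint flow directions. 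Equivalently, already at the level of the two steps above, the first (restriction to $\mu^{-1}(\chi)$) is exact by the transversality result of \cite{GanGin02}, and the second (taking $M$-invariants) is exact by the descent argument.

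For (2), once we know $H_f(K)\cong K\otimes_{\C[\fing^*]}\C[\Slo_f]$ as $\C[\Slo_f]$-modules, we compute
\begin{align*}
\on{supp}_{\C[\Slo_f]}H_f(K) \;=\; V\bigl(\on{Ann}_{\C[\Slo_f]}(K\otimes_{\C[\fing^*]}\C[\Slo_f])\bigr) \;=\; \bigl(\on{supp}_{\C[\fing^*]}K\bigr)\cap \Slo_f,
\end{align*}
using the compatibility of support with flat base change along $\Slo_f\hookrightarrow \fing^*$ (more elementary: $\on{supp}(K\otimes_A B)=\on{supp}(K)\cap \Spec B$ for a quotient or flat map of finitely generated modules).

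The main technical obstacle is the first step, namely the claim that for $M$-equivariant coherent modules on a trivial principal $M$-bundle the functor of $M$-invariants is exact, even though $M$ is only a unipotent (not reductive) group. This fails for arbitrary rational $M$-modules but holds here because the $M$-action on $\mu^{-1}(\chi)$ is \emph{free}; concretely, choosing the trivialization $\mu^{-1}(\chi)\cong M\times \Slo_f$, an $M$-equivariant coherent module on $\mu^{-1}(\chi)$ is of the form $\C[M]\otimes N$ with $N$ an $\C[\Slo_f]$-module on which the higher group cohomology of $M$ vanishes by the standard induction/Shapiro-type argument, so $M$-invariants recovers $N$ and is exact. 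This is exactly where the unipotent subgroup $M$ of Gan--Ginzburg, together with transversality of $\Slo_f$, does the essential work.
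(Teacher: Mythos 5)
The paper does not actually prove Theorem~\ref{Th:ginzburg}; it is stated as a citation to \cite{Ginz} and \cite{A2012Dec} with no argument supplied. Your proof follows the standard route of the cited source: the Gan--Ginzburg isomorphism $M\times\Slo_f\isomap\mu^{-1}(\chi)$, the equivalence between $M$-equivariant modules on $\mu^{-1}(\chi)$ and modules on $\Slo_f$ (free action, trivial torsor), and the resulting identification $H_f(K)\cong K\otimes_{\C[\fing^*]}\C[\Slo_f]$, from which both the exactness and the support formula follow. This is essentially Ginzburg's argument, so I regard the proposal as correct.

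One small inaccuracy worth fixing: the closed immersion $\Slo_f\hookrightarrow\fing^*$ is not flat (a proper closed immersion never is), so the parenthetical ``(in fact flat)'' is wrong as stated. The true content, which you do eventually invoke, is a $\on{Tor}$-vanishing statement: for $K\in\overline{\mc{HC}}$, the generators of $J_\chi$ act as a $K$-regular sequence because $\Slo_f$ is transversal to all $G$-orbits, and this is precisely what makes the restriction $K\mapsto K/J_\chi K$ exact on the Harish--Chandra category even though it is only right exact in general. The support formula in (2) does not need flatness at all: for a finitely generated $A$-module $K$ and a quotient $A\to A/I$, $\on{supp}(K\otimes_A A/I)=\on{supp}(K)\cap V(I)$ by Nakayama, which is what you use.
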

 \begin{lemma}\label{lem:pre-criterion}
  Let $K\in \overline{\mc{HC}}$.
  Then
  $\overline{G.f}\subset \on{supp}_{\C[\fing^*]}K$
  if and only if
    $K\ne {J_{\chi} K}$.
 \end{lemma}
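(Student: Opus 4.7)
The proof splits naturally into the two implications, each extracted from Theorem~\ref{Th:ginzburg}. Under the identification $\fing\cong\fing^*$ via $(~|~)$, the element $f$ corresponds to $\chi\in\Slo_f$, so the question is how membership of $\chi$ (equivalently of $\overline{G.f}$) in $\on{supp}_{\C[\fing^*]}K$ compares with the non-vanishing of $H_f(K)=(K/J_\chi K)^M$.

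For the forward implication, if $\overline{G.f}\subset\on{supp}_{\C[\fing^*]}K$ then in particular $\chi\in\on{supp}_{\C[\fing^*]}K\cap\Slo_f$, which by Theorem~\ref{Th:ginzburg}(2) equals $\on{supp}_{\C[\Slo_f]}H_f(K)$; hence $H_f(K)\neq 0$, and a fortiori $K\neq J_\chi K$.

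For the reverse implication, assume $K\neq J_\chi K$. The Gan--Ginzburg isomorphism $M\times\Slo_f\isomap\mu^{-1}(\chi)$ trivialises the $M$-action on $\C[\mu^{-1}(\chi)]=\C[\fing^*]/J_\chi$, so any non-zero finitely generated $M$-equivariant $\C[\mu^{-1}(\chi)]$-module has non-zero $M$-invariants; applied to $K/J_\chi K$, this yields $H_f(K)\neq 0$, and Theorem~\ref{Th:ginzburg}(2) then gives $\on{supp}_{\C[\fing^*]}K\cap\Slo_f\neq\emptyset$. To upgrade this non-emptiness to $\chi\in\on{supp}_{\C[\fing^*]}K$, I would invoke the Kazhdan $\C^*$-action on $\fing^*$ built from the cocharacter of $G$ attached to $h$ together with the natural dilation: it preserves $\Slo_f$ and contracts it to $\chi$. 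Since $\on{supp}_{\C[\fing^*]}K$ is $G$-invariant and conic (the conic structure coming from the standard grading carried by objects of $\overline{\mc{HC}}$), it is stable under this $\C^*$-action; the intersection $\on{supp}_{\C[\fing^*]}K\cap\Slo_f$ is therefore a non-empty closed $\C^*$-stable subset of $\Slo_f$, and so must contain the attracting fixed point $\chi$. Closedness and $G$-invariance of the support then give $\overline{G.f}=\overline{G.\chi}\subset\on{supp}_{\C[\fing^*]}K$.

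The one step that goes beyond a direct reading of Theorem~\ref{Th:ginzburg} is this final Kazhdan contraction, and it is where the conic structure on $\on{supp}_{\C[\fing^*]}K$ (rather than just $G$-invariance) is genuinely needed. Fortunately, the objects of $\overline{\mc{HC}}$ arising from associated varieties of vertex algebras automatically carry a compatible grading, so the argument applies in the setting of the paper. The remaining ingredient---that the unipotent group $M$ has non-zero invariants on the trivialised $M$-module $K/J_\chi K$---is standard and can also be read off from the exactness part of Theorem~\ref{Th:ginzburg}.
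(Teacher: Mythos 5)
Your argument mirrors the paper's: the forward implication is read off Theorem~\ref{Th:ginzburg}(2), and the reverse combines the non-vanishing implication $K\ne J_\chi K \Rightarrow H_f(K)\ne 0$ with the Kazhdan contraction of $\Slo_f$ onto $\chi$. Where the paper cites Ginzburg's Proposition~3.3.6 for the non-vanishing step, you unfold it via the Gan--Ginzburg trivialisation of $\mu^{-1}(\chi)$, which is essentially the same content.

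The genuine gap is in the justification of the contraction step. You rightly note that $G$-invariance of $\on{supp}_{\C[\fing^*]}K$ alone is not enough --- the Kazhdan action also involves a dilation, so the support must be conic --- but you then attribute the required conic structure to ``the standard grading carried by objects of $\overline{\mc{HC}}$''. The paper defines $\overline{\mc{HC}}$ simply as the category of finitely generated $(\C[\fing^*],G)$-modules, with no grading hypothesis, and such a module need not have conic support. For instance, with $x$ regular semisimple, $K=\C[G.x]$ lies in $\overline{\mc{HC}}$ and satisfies $K\ne J_\chi K$ (since $G.x\cap\Slo_f\ne\emptyset$), yet $\on{supp}_{\C[\fing^*]}K=G.x$ is a closed semisimple orbit that does not contain $\overline{G.f}$. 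So the grading cannot be taken for granted, and your caveat that it holds ``automatically'' for objects arising from vertex algebras is in effect a restriction of the lemma rather than a proof of the statement as given. The paper's one-line proof passes over this point, leaning on the Ginzburg and Arakawa references to carry the graded compatibility of $H_f$ with the Kazhdan action; a complete argument should invoke that structure explicitly rather than assert a grading on all of $\overline{\mc{HC}}$.
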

  \begin{proof}
   Since $\Slo_f$ admits a $\C^*$-action contracting to $f$,
 Theorem \ref{Th:ginzburg} (2) implies that 
  $\overline{G.f}\subset \on{supp}_{\C[\fing^*]}K$
  if and only if
    $H_f(K)\ne 0$.
   However
   $H_f(K)\ne 0$ if and only if
   $    K/{J_{\chi} K}\ne 0$ by
      \cite[Proposition 3.3.6]{Ginz}.
  \end{proof}
Let $I$ be an $\ad \fing$-invariant ideal of $\C[\fing^*]$, 
  so that
$\C[\fing^*]/I\in \overline{\mc{HC}}$.
Applying Lemma~\ref{lem:pre-criterion}
to $K=\C[\fing^*]/I$ we obtain the following assertion.
\begin{lemma}\label{lem:criterion}
 Let $I$ be an $\ad \fing$-invariant ideal of $\C[\fing^*]$.
 Then $\overline{G.f} \not\subset V(I)$
 if and only if
 \begin{align*}
\C[\fing^*]=I+ J_{\chi}
 \end{align*}
where $V(I)$ is the zero locus of $I$ in $\fing^*$.
\end{lemma}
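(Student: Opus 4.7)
The plan is to simply apply Lemma~\ref{lem:pre-criterion} to the module $K=\C[\g^*]/I$. Since $I$ is $\ad\g$-invariant, $\C[\g^*]/I$ naturally carries a $G$-module structure compatible with its $\C[\g^*]$-module structure and is finitely generated, so $K\in\overline{\mc{HC}}$. Moreover, the support of $K$ as a $\C[\g^*]$-module is exactly the zero locus $V(I)$.

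By Lemma~\ref{lem:pre-criterion}, $\overline{G.f}\subset V(I)$ if and only if $K\ne J_\chi K$. The key step is the identification
\begin{align*}
J_\chi K = J_\chi\cdot (\C[\g^*]/I) = (J_\chi+I)/I,
\end{align*}
so $K=J_\chi K$ is equivalent to $\C[\g^*]=J_\chi+I$. Taking the contrapositive gives the desired equivalence: $\overline{G.f}\not\subset V(I)$ if and only if $\C[\g^*]=I+J_\chi$.

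There is no real obstacle here; the entire content is contained in Lemma~\ref{lem:pre-criterion} (and hence ultimately in Theorem~\ref{Th:ginzburg}), and the lemma is just the specialization to cyclic Harish-Chandra modules of the form $\C[\g^*]/I$. The only thing to double-check is the harmless verification that $\C[\g^*]/I$ indeed lies in $\overline{\mc{HC}}$, which follows from the $\ad\g$-invariance of $I$ (equivalently, $G$-invariance, since $G$ is connected).
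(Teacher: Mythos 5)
Your proof is correct and is essentially identical to the paper's: the paper also derives this lemma directly by applying Lemma~\ref{lem:pre-criterion} to $K=\C[\g^*]/I$, noting that the $\ad\g$-invariance of $I$ places $K$ in $\overline{\mc{HC}}$. The identification $J_\chi K = (J_\chi+I)/I$ you spell out is the (immediate) content left implicit in the paper.
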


Let $\l$ be a Levi subalgebra of $\g$ 
and $\h$ a Cartan subalgebra of $\g$ 
contained in $\l$. Thus $\z(\l) \subset \h$. 
Let $\mf{p}$ be a parabolic subalgebra of $\g$ with Levi factor $\l$ 
and nilradical $\mf{p}_u$.  
Assume that $e \in (\mf{p}_u)^{\rm reg}$ and 
$h\in\h$. Identifying $\g$ with $\g^*$ through $(~|~)$, we get 
$$\Slo_f \cong f + \g^{e},$$ 
and by \cite[Lemma 3.2]{Kat} (see also \cite[Proposition 3.2]{Bu}), 
we have 
$$\SS_\l = G.(f+\z(\l)).$$ 
Note that we have the following decomposition:  
$$\g(h,0) =[f,\g(h,2)] \oplus (\g(h,0) \cap \g^{e}),$$ 
and since ad$(f)$ induces a bijection from $\g(h,2)$ to $[f,\g(h,2)]$, 
for any $x\in\g(h,0)$ there is a well-defined element 
$\eta(x) \in \g(h,2)$ such that 
$$x - [f,\eta(x)] \in \g(h,0) \cap \g^{e}.$$ 

\begin{lemma} \label{lem:Kat} 
Assume that $\z(\l)$ is generated by a nonzero element $\lambda$ 
of $\h$ and that $\g(h,i)=0$ for $i >2$. 
\begin{enumerate}
\item The set $\{ \exp(\ad \eta(t \lam)) (f+t \lam) \; |\; t \in\C\}$ 
is an irreducible component of $\SS_\l \cap \Slo_f$. 
Moreover, if $\g$ is classical, then $\SS_\l \cap \Slo_f$ is irreducible and 
$\SS_\l \cap \Slo_f =\overline{\SS_\l} \cap \Slo_f = \{ \exp(\ad \eta(t \lam)) (f+t \lam) \; |\; t \in\C\}.$
\item If $\g$ is classical then, as schemes,  
$\overline{\SS_\l} \cap \Slo_f \cong \Spec \C[z]$. 
Here we endow $\overline{\SS_\l}$ with its natural structure of irreducible 
reduced scheme. 
\end{enumerate}
\end{lemma}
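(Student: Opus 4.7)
The plan is to first produce the curve $C := \{\exp(\ad \eta(t\lam))(f+t\lam) \,:\, t \in \C\}$ explicitly inside $\SS_\l \cap \Slo_f$, and then deduce all remaining statements by a mixture of dimension counting, the description of $\overline{\SS_\l}$ from Lemma~\ref{Lem:sheet}(1), and the known geometry of sheets in classical types. From the discussion preceding the lemma, $\SS_\l = G.(f+\z(\l)) = G.(f+\C\lam)$, so $f + t\lam \in \SS_\l$ for every $t$; and since $\eta(t\lam) \in \g(h,2)$ is nilpotent, $\exp(\ad \eta(t\lam)) \in G$, whence $C \subset \SS_\l$. To see that $C$ lies in $\Slo_f = f+\g^{e}$, observe that the assumption $\g(h,i) = 0$ for $i>2$ truncates the exponential series at $(\ad \eta(t\lam))^{2}$, giving
\[\exp(\ad \eta(t\lam))(f+t\lam) = f + \bigl(t\lam - [f,\eta(t\lam)]\bigr) + \bigl([\eta(t\lam),t\lam] + \tfrac{1}{2}[\eta(t\lam),[\eta(t\lam),f]]\bigr).\]
The degree-zero correction belongs to $\g^{e}$ by the very definition of $\eta$; the degree-two correction also lies in $\g^{e}$, as follows from an $\sl_2$-calculation, or alternatively from the uniqueness clause of the Gan--Ginzburg isomorphism $M \times \Slo_f \isomap \mu^{-1}(\chi)$ applied to $f + t\lam \in \mu^{-1}(\chi)$.

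Since $\Slo_f$ is a transverse slice to $G.f$ at $f$ and $\SS_\l$ is an irreducible $G$-invariant subvariety of $\g$ containing $G.f$, every irreducible component of $\SS_\l \cap \Slo_f$ has dimension $\dim \SS_\l - \dim G.f = \dim \z(\l) = 1$. The curve $C$, being the image of $\C$ under a polynomial map, is one-dimensional and irreducible, hence an irreducible component; this proves the first assertion of (1). When $\g$ is classical, $\SS_\l$ is smooth and the classical-type analysis of sheet intersections with Slodowy slices carried out in \cite{Kat} and \cite{Bu} ensures that $\SS_\l \cap \Slo_f$ is itself irreducible, forcing $\SS_\l \cap \Slo_f = C$. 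For the closure, Lemma~\ref{Lem:sheet}(1) gives $\overline{\SS_\l} = \SS_\l \cup \overline{G.f}$; intersecting with $\Slo_f$ and noting that the Kazhdan $\C^{*}$-action contracting $\Slo_f$ to $f$ forces $\overline{G.f} \cap \Slo_f$ to be zero-dimensional and hence equal to $\{f\}$ (which is already the point $t=0$ of $C$), we conclude $\overline{\SS_\l} \cap \Slo_f = C$ as sets.

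For (2), the polynomial parametrization $\mathbb{A}^{1} \to \overline{\SS_\l} \cap \Slo_f$, $t \mapsto \exp(\ad \eta(t\lam))(f+t\lam)$, is bijective on closed points, so the task reduces to showing that $\overline{\SS_\l} \cap \Slo_f$ is reduced and regular as a scheme. Away from $f$ this is routine: the intersection lies in the smooth locus of $\overline{\SS_\l}$ (since $\SS_\l$ itself is smooth in classical types) and Slodowy transversality gives reducedness and smoothness there. The main obstacle is the local analysis at the point $f$, where one must verify that the Zariski tangent space to $\overline{\SS_\l} \cap \Slo_f$ is one-dimensional, equivalently that the parametrization is a closed immersion at the origin. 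This step crucially uses the classical-type description of the sheet closure near its unique nilpotent orbit $G.f$, and combined with the information off $f$ yields the desired isomorphism $\overline{\SS_\l} \cap \Slo_f \cong \Spec \C[z]$.
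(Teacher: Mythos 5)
Your argument for part (1) is broadly in the spirit of the paper's: the explicit membership of the curve $C$ in $\Slo_f$ via the truncated exponential or the uniqueness in the Gan--Ginzburg isomorphism is fine, the equidimensionality of $\SS_\l \cap \Slo_f$ is exactly the content of the results of Im~Hof, Bulois and Katsylo that the paper cites (so the ``dimension counting'' is not as free as you make it sound, but the statement is correct), and the closure computation via Lemma~\ref{Lem:sheet}(1) together with the contracting $\C^*$-action is the same as in the paper.

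The proof of part (2), however, has a genuine gap. You correctly identify that the whole difficulty lives at the point $f$: one must show that the scheme-theoretic intersection $\overline{\SS_\l}\cap\Slo_f$ is reduced (equivalently, that the Zariski tangent space at $f$ is one-dimensional, so that the parametrization $\mathbb{A}^1 \to \overline{\SS_\l}\cap\Slo_f$ is a closed immersion at the origin). But you then merely assert that ``this step crucially uses the classical-type description of the sheet closure near its unique nilpotent orbit'' without saying what that description is or how it yields the tangent-space bound. That is not a proof; it is a restatement of the problem. The paper resolves precisely this point by invoking \cite[Corollary 1.3.8(1)]{Ginz}: since $\overline{\SS_\l}$ is a \emph{reduced}, $G$-invariant closed subvariety, its scheme-theoretic intersection with the Slodowy slice $\Slo_f$ is automatically a reduced complete intersection in $\overline{\SS_\l}$. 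Combined with the set-theoretic identification of $\overline{\SS_\l}\cap\Slo_f$ with $C$ from part (1), this immediately gives $\overline{\SS_\l}\cap\Slo_f \cong \Spec\C[z]$. Without this (or an equivalent local computation you have not supplied), the claim remains unproved.
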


\begin{proof}
(1) The first assertion results from \cite[Lemma 2.9]{ImHof} 
and the proof of \cite[Lemma 3.4]{Bu} (which is a reformulation of 
\cite[Lemma 3.2 and Lemma~5.1]{Kat}). 
In the case that $\g$ is classical, $\SS_\l \cap \Slo_f$ 
is irreducible (see \cite[Theorem~5.2 and Theorem~6.2]{ImHof}) 
and so 
$\SS_\l \cap \Slo_f = \{ \exp(\ad \eta(t \lam)) (f+t \lam) \; |\; t \in\C\}$. 
On the other hand, by Lemma \ref{Lem:sheet}(1), 
\begin{align*}
\overline{\SS_\l} = \overline{G.\C^*\lambda} = G.\C^*\lambda  \cup 
\overline{G.f}
\end{align*}
and 
\begin{align*}
{\SS_\l} = G.\C^*\lambda \cup {G.f}.
\end{align*}
So, 
$\overline{\SS_\l} \cap \Slo_f = \{ \exp(\ad \eta(t \lam)) (f+t \lam) \;
 |\; t \in\C\}$
since $\overline{G.f} \cap \Slo_f = \{f\}$ and 
$f \in \{ \exp(\ad \eta(t \lam)) (f+t \lam) \; |\; t \in\C\}$. 
In particular, $\overline{\SS_\l} \cap \Slo_f$ is an irreducible 
variety of dimension one. 

(2) According to \cite[Corollary 1.3.8(1)]{Ginz}, 
$\overline{\SS_\l} \cap \Slo_f$ is a reduced complete intersection in 
$\overline{\SS_\l}$ since $\overline{\SS_\l}$ is reduced, 
whence $\overline{\SS_\l} \cap \Slo_f
\cong \Spec \C[z]$ as a scheme by~(1).  
\end{proof}

\begin{Rem}  \label{rem:Kat} 
Assume that $\g$ is classical. 
Define a one-parameter subgroup $\tilde{\gamma} : \C^* \to G$ 
by: 
$$\forall\, t\in \C^*, \; \forall\, x \in \g, \qquad \tilde{\gamma}(t).x : = t^2 \gamma(t).x$$
where $\gamma(t)$ is the one-parameter of $G$ generated by ${\rm ad}(h)$. 
In the notation of Lemma~\ref{lem:Kat}, 
the sets $\SS_\l$ and $\Slo_{f}$ are both stabilized by $\tilde{\gamma}(t)$, 
and we have: 
$$\overline{\SS_\l} \cap \Slo_{f}=\SS_\l \cap \Slo_{f} = \overline{\{\tilde{\gamma}(t).\mu \; |\; t \in \C^*\}},$$ 
for any nonzero semisimple element $\mu$ in $\SS_\l \cap \Slo_{f}$, 
for example 
$$\mu = \exp(\ad \eta(z)) (f+\lambda).$$ 
\end{Rem}

Let $\Omega$ be the Casimir element of 
$\g$ and denote by $I_\Omega$ the ideal of $\C[\g^*]$ 
generated by $\Omega$. 

 \begin{lemma} \label{lem:prime-criterion} 
 Assume that $\g$ is classical. 
  Let $I$ be a homogeneous $\ad \fing$-invariant ideal of $\C[\fing^*]$,
 $\SS_{\l}$ a Dixmier sheet of rank one,
  $\mf{p}$ a parabolic subalgebra of $\g$ with Levi factor $\l$
  and nilradical $\mf{p}_u$.
  Let $(e,h,f)$ be an $\mf{sl}_2$-triple of $\g$ such that $e\in
  (\mf{p}_u)^{reg}$. \color{black}
  Further, assume 
  that the following conditions are satisfied: 
  \begin{enumerate}
   \item $\g(h,i)=0$ for $i>2$,
\item $\on{supp}_{\C[\g^*]}(\C[\g^*]/I) = \overline{\SS_\l}$,  \label{old1}
\item $I+ I_{\Omega}$ is the defining ideal of $\overline{G.f}$, \label{old2}
\item {$H_f(\C[\g^*]/I) \cong \C[z]$}  {as algebras}, \label{old3} 
\item $\Omega(\lambda) \not=0$. \label{old4}
  \end{enumerate}
Then $I$ is prime, that is, $I=\sqrt{I}$. 
\end{lemma}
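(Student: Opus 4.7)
My plan is as follows. Since $\overline{\SS_\l}$ is irreducible, $\sqrt{I}$ is a prime ideal, so it will suffice to show that $N:=\sqrt{I}/I$ vanishes. I would endow this finitely generated $(\C[\g^*],G)$-module with two incompatible properties relative to the Casimir $\Omega$: nilpotence, extracted from Ginzburg's functor $H_f$ together with the Slodowy slice geometry of $\overline{\SS_\l}\cap\Slo_f$; and surjectivity, extracted from hypothesis~(3) together with the primality of $\sqrt{I}$. Together these will force $N=0$.

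For the nilpotence, I would apply the exact functor $H_f$ from Theorem~\ref{Th:ginzburg}(1) to the short exact sequence $0\to N\to\C[\g^*]/I\to \C[\g^*]/\sqrt{I}\to 0$ to produce
\[
0\to H_f(N)\to H_f(\C[\g^*]/I)\to H_f(\C[\g^*]/\sqrt{I})\to 0.
\]
Hypothesis~(4) identifies the middle term with $\C[z]$, and Lemma~\ref{lem:Kat}(2) (which applies because $\g$ is classical and $\sqrt{I}$ defines the reduced scheme $\overline{\SS_\l}$) identifies the right-hand term $\C[\overline{\SS_\l}\cap\Slo_f]$ with $\C[z]$ as well. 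A surjective $\C$-algebra map $\C[z]\twoheadrightarrow \C[z]$ is automatically an isomorphism, since a nontrivial kernel would make the target finite-dimensional; hence $H_f(N)=0$, and Theorem~\ref{Th:ginzburg}(2) then gives $\on{supp}(N)\cap \Slo_f=\emptyset$. Next I would decompose $\on{supp}(N)$, a $G$-invariant closed subvariety of $\overline{\SS_\l}=G.\C^*\lambda\cup\overline{G.f}$ by Lemma~\ref{Lem:sheet}(1). Hypothesis~(5) gives $\lambda\ne 0$, Lemma~\ref{lem:Kat}(1) shows each orbit $G.(c\lambda)$ with $c\in\C^*$ meets $\Slo_f$, and $f\in G.f\cap\Slo_f$ by construction, so none of these orbits can appear in $\on{supp}(N)$. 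The conclusion would be $\on{supp}(N)\subset \overline{G.f}\setminus G.f\subset \mathcal{N}\subset V(\Omega)$, making $\Omega$ act nilpotently on the finitely generated module $N$.

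For the surjectivity, the inclusion $\overline{G.f}\subset\overline{\SS_\l}$ and hypothesis~(3) together give $\sqrt{I}\subset I+(\Omega)$. Given $a\in\sqrt{I}$, write $a=b+\Omega c$ with $b\in I$; then $\Omega c=a-b\in\sqrt{I}$, and primality of $\sqrt{I}$ combined with hypothesis~(5) (ensuring $\Omega(\lambda)\ne 0$, hence $\Omega\notin\sqrt{I}$) would force $c\in\sqrt{I}$. Reducing modulo $I$ yields $\bar a=\Omega\bar c$ with $\bar c\in N$, i.e.\ $N=\Omega\cdot N$; iterating, $N=\Omega^n N$ for every $n$. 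Combined with $\Omega^n N=0$ for some $n$ from the preceding step, this gives $N=0$ and therefore $I=\sqrt{I}$. The most delicate point in this plan is the scheme-theoretic identification $H_f(\C[\g^*]/\sqrt{I})\cong \C[z]$, which rests on the reduced complete intersection property of $\overline{\SS_\l}\cap\Slo_f$ from Lemma~\ref{lem:Kat}(2); this is precisely where the classical-type assumption on $\g$ enters essentially.
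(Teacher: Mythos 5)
Your proposal is correct and follows essentially the same route as the paper: set $N=\sqrt{I}/I$, use the exactness of Ginzburg's functor $H_f$ together with conditions (4) and Lemma~\ref{lem:Kat}(2) to get $H_f(N)=0$, deduce that $\on{supp}N\subset\mc{N}$ so that $\Omega$ acts nilpotently on $N$, and combine this with the inclusion $\sqrt{I}\subset I+I_\Omega$ and the primality of $\sqrt{I}$ (via $\Omega\notin\sqrt{I}$) in an iteration argument to conclude $N=0$. The only cosmetic differences are that you phrase the vanishing of $\Omega^n N$ via $\on{supp}N\cap\Slo_f=\emptyset$ from Theorem~\ref{Th:ginzburg}(2) rather than the paper's direct appeal to Ginzburg's Proposition~3.3.6, and you iterate via $N=\Omega N$ instead of on individual elements; both are equivalent.
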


Condition (\ref{old1}) implies 
that $\sqrt{I}$ is defining ideal of $ \overline{\SS_\l}$ 
since $ \overline{\SS_\l}$ is irreducible. 
In particular, $\sqrt{I}$ is prime. 
Also, 
condition (\ref{old2}) means that $I+I_\Omega$ is prime.  
Note that conditions (\ref{old1}) and (\ref{old2}) imply that  
$$\sqrt{I} \subset I+I_\Omega.$$ 
since $\overline{G.f} \subset \overline{\SS_\l}$. 

 \begin{proof}
Set $J:=\sqrt{I}$. Then $J/I \in \overline{\mc{HC}}$. 
Since the sequence 
$$0 \to J/I \to \C[\g^*]/I \to \C[\g^*]/J \to 0$$ 
is exact, we get an exact sequence 
$$0 \to H_f (J/I)  \to H_f ( \C[\g^*]/I) \to 
H_f (\C[\g^*]/J )\to 0$$
by Theorem \ref{Th:ginzburg} (1). 
Furthermore by Theorem \ref{Th:ginzburg} (2), 
$$\Spec H_f (\C[\g^*]/J )= \Spec (\C[\g^*]/J ) 
\cap \Slo_{f} = \Spec \C[z].$$ 
The last equality comes from Lemma \ref{lem:Kat}~(2) 
since $J$ is the defining ideal of $\overline{\SS_\l}$. 
Hence by condition (\ref{old3}), we get 
$$ H_f(J/I)= 0.$$
By  \cite[Proposition 3.3.6]{Ginz}, 
$H_f(J/I)\ne 0$ if and only if
   $\on{supp}_{\C[\g^*]} (J/I) \supset \overline{G.f}$. 
However, $\on{supp}_{\C[\g^*]} (J/I) \subset 
\on{supp}_{\C[\g^*]} (\C[\g^*]/I) = \overline{\SS_\l}$ 
and any $G$-invariant closed cone of $\overline{\SS_\l}$ 
which strictly contains $G.f$ contains $\overline{G.f}$.  
Therefore, 
$$\on{supp}_{\C[\g^*]} (J/I) \subset G.f$$
since $\on{supp}_{\C[\g^*]} (J/I)$ 
is a $G$-invariant closed cone of $\g$ 
and $\overline{\SS_\l}=G.\C^* \lam \cup \overline{{\rm Ind}_{\l}^{\g}(0)}$ 
by Lemma \ref{Lem:sheet} (1), with $\lam \in \z(\l) \setminus \{0\}$.  
In particular, $\on{supp}_{\C[\g^*]} (J/I)$ is contained in the nilpotent 
cone $\mc{N}$. Since $\Omega$ is a nonzero homogeneous 
  element in 
  the defining ideal of $\mc{N}$, we deduce 
that $\Omega$ acts nilpotently on $J/I$. 
Hence for $n$ sufficiently large, 
\begin{align} \label{eq:Omega}
\Omega^n J/I = 0. 
\end{align}

We can now achieve the proof of the lemma. 
We have to show that $J \subset I$. 
Let $a\in J$. Since $J \subset I+I_\Omega$, for some $b_1 \in 
I$ and $f_1 \in \C[\g^*]$, we have 
$$a= b_1+ \Omega f_1.$$ 
Since $J$ is prime and $\Omega \not\in J$ by condition (\ref{old4}), $f_1\in J$. 
Applying what foregoes to the element $f_1$ of $J$, 
we get that for some $b_2 \in I$ and $f_2 \in J$, 
$$a = b_1 + \Omega (b_2+ \Omega f_2)= 
c_2+ \Omega^2 f_2,$$
with $c_2:= b_1+ \Omega b_2 \in I$. 
A rapid induction shows that for any $n\in \Z_{>0}$, 
there exist $c_n \in I$ and $f_n \in J$ such that
$$a = c_n +  \Omega^n f_n.$$ 
But $c_n +  \Omega^n f_n \in I$ for $n$ big enough by 
(\ref{eq:Omega}), whence $a \in I$.  
 \end{proof}

\section{Associated variety and singular support of affine vertex
 algebras}
\label{sec:associated-variety}
Let $$\affg=\fing[t,t^{-1}]\+ \C K \+ \C D$$be the affine Kac-Moody Lie algebra
associated with $\fing$ and $(~|~)$, with the commutation relations
\begin{align*}
 [x(m),y(n)]=[x,y](m+n)+ m(x|y)\delta_{m+n,0}K,\quad
 [D,x(m)]=mx(m),\quad
 [K,\affg]=0,
\end{align*}
for $m,n\in\Z$ and $x,y \in \g$, where $x(m)=x\otimes t^m$.
For $k\in \C$, set
\begin{align*}
 V^k(\fing)=U(\affg)\*_{U(\fing[t]\+ \C K \+ \C D)}\C_k,
\end{align*}
where 
$\C_k$ is the one-dimensional representation of
$\fing[t]\+ \C K {\+ \C D} $ on which $\fing[t]\+ \C D$ acts trivially and $K$ 
acts as multiplication by $k$.
The space $V^k(\fing)$ is naturally a vertex algebra, 
and it is called the {\em universal affine vertex algebra associated with
$\fing$ at level $k$}. 
By the PBW theorem, $V^k(\fing) \cong U(\fing[t^{-1}]t^{-1})$ as $\C$-vector spaces. 

The vertex algebra $V^k(\fing)$ is naturally graded:
\begin{align*}
V^k(\fing)
 =\bigoplus_{d\in \Z_{\geq 0}}V^k(\fing)_d,\quad V^k(\fing)_d=\{a\in
 V^k(\fing)\mid { D a = - da}\},
\end{align*} 
Let $V_k(\fing)$ be the unique simple graded quotient of $V^k(\fing)$.
As a $\affg$-module, $V_k(\fing)$ is isomorphic to the irreducible highest weight
representation of $\affg$ with highest weight $k\Lam_0$, 
where $\Lam_0$ is the dual element of $K$. 

A $V^k(\fing)$-module is the same as a smooth $\affg$-module of level
$k$,
where a $\affg$-module $M$ is called smooth if $x(n)m=0$ for $n\gg 0$ for all $x\in \g$, $m\in M$.

As in the introduction, let $X_V$ be the {\em associated variety} \cite{Ara12} of a vertex algebra $V$,
which is the maximum 
spectrum of the {\em Zhu $C_2$-algebra} 
$$ R_V := V/C_2(V)$$
of $V$.
In the case that $V$ is a quotient of $V^k(\fing)$,
$V/C_2(V)=V/\fing[t^{-1}]t^{-2} V$ 
and we have a surjective Poisson algebra homomorphism
\begin{align*}
 \C[\fing^*]=S(\fing)\twoheadrightarrow R_V=V/\fing[t^{-1}]t^{-2}V,\quad
x\mapsto \overline{x(-1)}+\fing[t^{-1}]t^{-2}V,
\end{align*}
where $\overline{x(-1)}$ denotes the image of $x(-1)$ in the quotient 
$V$. 
Then $X_V$ is just the zero locus of the kernel of the above map in
$\fing^*$. It is $G$-invariant and conic.

   \begin{Conj}\label{Conj:equidim}
    Let $V{=\oplus_{d \geq 0} V_d}$ be a simple, finitely strongly generated (i.e., $R_V$ is finitely generated), positively graded 
    conformal vertex algebra such that $V_0 \cong \C$.
    \begin{enumerate}
     \item $X_V$ is equidimensional. 
     \item
	      Assume that $X_V$ has finitely many symplectic leaves. 
    Then $X_V$  is irreducible.
    In particular $X_{V_k(\fing)}$ is irreducible if $X_{V_k(\fing)}\subset \mc{N}$.

    \end{enumerate}
   \end{Conj}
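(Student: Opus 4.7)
The plan is to exploit the interplay between the simplicity of $V$ and the Poisson structure on $R_V$ and on $\gr V$, using the conformal grading as a key tool.

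For part (1), I would first use the conformal grading to equip $\tilde X_V = \Spec R_V$ with a contracting $\C^*$-action whose unique fixed point corresponds to the augmentation ideal coming from $V_0 \cong \C$. This already rules out some pathologies. Suppose $X_V$ is not equidimensional, and let $Z_1,\dots,Z_r$ be the top-dimensional irreducible components. Each defining ideal $\mf{p}_i \subset R_V$ is a graded prime Poisson ideal (Poisson because irreducible components of a Poisson variety are preserved by Hamiltonian flows, and graded by $\C^*$-invariance). Their intersection $I = \mf{p}_1\cap\cdots\cap\mf{p}_r$ is then a graded Poisson ideal strictly contained in the nilradical $\sqrt{0} \subset R_V$. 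I would then seek to lift such a graded Poisson ideal of $R_V$ through the surjection $\C[J_\infty \tilde X_V] \twoheadrightarrow \gr V$ to a graded Poisson vertex ideal of $\gr V$, and ultimately to a proper nonzero graded ideal of $V$, contradicting simplicity.

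For part (2), assume $X_V$ has finitely many symplectic leaves. Then $X_V$ is the disjoint union of these leaves, each one locally closed, and each irreducible component of $X_V$ is the closure of a unique symplectic leaf of maximal dimension in it (equidimensionality of leaves within a component is automatic in this finite situation). Since there are only finitely many leaves, the contracting $\C^*$-action from the conformal grading must preserve each leaf setwise, so in particular each irreducible component is $\C^*$-stable. The same lifting strategy as in part (1), applied to the defining ideals of the distinct irreducible components, should force all but one to give back the unit ideal, yielding irreducibility. For the particular case $V = V_k(\g)$ with $X_{V_k(\g)} \subset \mathcal{N}$, the hypothesis is automatic because $\mathcal{N}$ has finitely many coadjoint $G$-orbits, which coincide with its symplectic leaves in $\g^*$, and every $G$-invariant Poisson subvariety of $\mathcal{N}$ is a union of orbit closures.

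The main obstacle is the lifting step: turning a graded Poisson ideal of $R_V$ into a vertex-algebra ideal of $V$. In the classical enveloping-algebra setting there is a clean correspondence between two-sided ideals of $U(\g)$ and their symbols in $S(\g)$, but the vertex-algebra analogue is much more delicate because an ideal of $V$ is generated by singular vectors whose images in $R_V$ need not cut out the full Poisson ideal one starts from. Making this precise would require either a finer understanding of the canonical filtration (so that Poisson ideals of $R_V$ always arise from ideals of $V$), or else a direct construction of singular vectors detecting multiple irreducible components or embedded components of $X_V$. This is essentially why the conjecture remains open in general and is likely the reason the authors state it as a conjecture rather than a theorem.
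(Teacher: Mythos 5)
This statement is labeled as a Conjecture in the paper, and the paper gives no proof of it — it is presented as open. Your write-up correctly recognizes this in its final paragraph, and it correctly identifies the central obstruction, namely that there is no general mechanism for lifting a graded Poisson ideal of $R_V$ (or of $\gr V$) back to a vertex-algebra ideal of $V$: the simplicity of $V$ controls ideals of $V$, but the surjections $\C[J_\infty\tilde X_V]\twoheadrightarrow\gr V$ and $\gr V\twoheadrightarrow R_V$ do not transport Poisson ideals of the target to ideals of $V$ in any obvious way. That is indeed the reason this remains a conjecture, and it is what the authors point to implicitly by not offering a proof.

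A few inaccuracies in the sketch are worth flagging. In part (1), the inclusion is reversed: with $\mf{p}_1,\dots,\mf{p}_r$ the defining ideals of the top-dimensional components, one has $\sqrt{0}\subsetneq I=\mf{p}_1\cap\cdots\cap\mf{p}_r$ when $X_V$ fails to be equidimensional (the nilradical is the intersection over \emph{all} minimal primes, so dropping some makes the intersection larger, not smaller). Your overall contradiction scheme survives the correction — you want to produce from $I$ a proper nonzero graded ideal of $V$ — but as written the containment is wrong. In part (2), the claim that the lifting strategy "should force all but one to give back the unit ideal" is not coherent as stated: all components pass through the unique $\C^*$-fixed point (the augmentation ideal), so each defining ideal is a proper graded ideal, and nothing in the contracting-action picture distinguishes one component from the others; you would need additional structure to single out which component survives. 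Your observation that the "in particular" clause follows because $\mc{N}$ has finitely many symplectic leaves (the nilpotent $G$-orbits) is correct and is precisely the reason the authors phrase it that way.

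In short, there is no paper proof to compare against, your sketch is an honest strategy outline rather than a proof, and you identify the genuine gap — but tighten the inclusion direction in part (1) and rethink how, in part (2), the argument is supposed to select a unique surviving component.
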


   For a scheme $X$ of finite type, let $J_m X$ be the $m$-th jet scheme of $X$, 
and $J_\infty X$ the infinite jet scheme of $X$ (or the arc space of $X$). 
Recall that the scheme $J_m X$ is determined by its functor of points: 
for every $\C$-algebra $A$, there is a bijection 
$$\Hom(\Spec A, J_m X) \cong \Hom (\Spec A[t]/(t^{m+1}),X).$$
If $m >n$, we have a natural morphism $J_m X \to J_n X$. 
This yields a projective system $\{J_m X\}$ of schemes, 
and the infinite jet scheme $J_\infty X$ is the projective limit $\varprojlim J_m X$ 
in the category of schemes. 
Let  $\pi_{m} \colon  J_{m} X \to X$, $m >0$, and  
$\pi_{\infty} \colon  J_{\infty} X \to X$  be
the natural morphisms.  

If $X$ is an affine Poisson scheme then
its coordinate ring
$\C[J_{\infty}X]$ is naturally a Poisson vertex algebra (\cite{Ara12}).

Let $V=F^0V\supset F^1 V\supset \dots $ be the canonical decreasing filtration 
of the 
vertex algebra $V$ defined by Li \cite{Li05}.
The associated graded algebra $\gr V=\bigoplus_{p\geq 0}F^pV/F^{p+1}V$
is naturally a Poisson vertex algebra.
In particular,
 it has the structure of a commutative algebra.
 We have  $F^1 V=C_2(V)$ by definition, 
 and by restricting the Poisson vertex algebra structure of $\gr V$
 we obtain the Poisson structure of $R_V=V/F^1 V\subset \gr V$. 
 {There is} a surjection
 \begin{align*}
  \C[J_{\infty}X_V]\ra \gr V
 \end{align*}
 of  Poisson vertex  algebras
(\cite{Li05,Ara12}).
 By definition \cite{Ara12},
 the  {\em singular support} of $V$
is the subscheme 
\begin{align*}
 SS(V)=\on{Spec}(\gr V)
\end{align*}
of $J_{\infty}\tilde X_V$.

\begin{Th}  \label{Th:arc}
 Let $V$  be a quotient
 vertex algebra $V^k(\fing)$.
 Suppose that 
 $X_V=\overline{G.\C^* x}$ for some $x\in \fing$ 
 and that $J_{\infty} \C^*.x$ is contained in the reduced singular 
 support $SS(V)_{\on{red}}$, where the action of $J_{\infty} \C^*$ 
 on $J_{\infty} \g$ is 
 induced by the $\C^*$-action on $\g$. 
 Then
$$SS(V)=J_{\infty}
 X_V=J_{\infty}\overline{G.\C^*x}=
 \overline{J_{\infty}G.\C^*x},$$
 as topological spaces. 
 In particular, $SS(V)_{\on{red}}=(J_{\infty}
 X_V)_{\on{red}}$. 
\end{Th}

\begin{proof}
By \cite[Lemma 3.3.1]{Ara12}, 
 $\tilde{X}_V = \pi_{\infty} (SS(V))$.
 We know that
 $SS(V) \subset J_\infty \tilde{X}_V$,
 so that   $SS(V)_{\on{red}} \subset (J_\infty X_V)_{\on{red}}$.
Let us prove the other inclusion. 

Set $X=X_V$ and $U=G.\C^*x$. 
Since $U$ is an irreducible open dense subset of 
$X$, $\pi_{m}^{-1}(U) = J_{m} U$ for any $m >0$ \cite[Lemma 2.3]{EM}, 
and $\overline{\pi_{m}^{-1}(U)} = \overline{\pi_m^{-1}(X_{\rm reg})}$ 
is an irreducible component of $J_m X$. 
Hence $$J_\infty X=
\overline{\pi_\infty^{-1}(U)}=\overline{J_\infty U }$$ 
because $J_\infty X$ is irreducible \cite{Kol} {and closed}. 
Therefore, it is enough to prove that $SS(V)_{\rm red}$ contains $J_{\infty} U$ 
since $SS(V)_{\rm red}$ is closed. 

The map 
$$\mu \colon G \times (\C^* x) \to G.\C^*x,\quad  (g,tx ) \mapsto g . (t x)$$
is a submersion at each point, so it is smooth (cf.~\cite[Ch.~III, 
Proposition 10.4]{Ha}). 
Hence by \cite[Remark 2.10]{EM}, we get that the induced map 
$$\mu_\infty \colon J_\infty G \times J_\infty (\C^* x)  \to J_\infty (G.\C^*x)$$ 
is surjective and formally smooth, and so
$$\mu_\infty (J_\infty G \times J_\infty (\C^*.x)) \cong J_\infty (G.\C^*x).$$
Since $SS(V)_{\rm red}$ is $J_\infty G$-invariant and contains $J_\infty \C^*.x$,  
we deduce that $J_\infty (G. \C^* x) = J_\infty U $ is  contained in $SS(V)_{\rm red}$. 
\end{proof}



\section{Zhu's algebra of affine Vertex algebras}   \label{sec:Zhu}
For a $\Z_{\geq 0}$-graded vertex algebra $V=\bigoplus_{d}V_d$, 
let $A(V)$ be  the Zhu algebra of $V$, 
\begin{align*}
A(V)=V/V\circ V,
\end{align*}
where $V\circ V$ is the $\C$-span of the vectors
\begin{align*}
a\circ b:=\sum_{i\geq 0}\begin{pmatrix}
			 \Delta\\ i
			\end{pmatrix}a_{(i-2)}b
\end{align*}for $a\in V_{\Delta}$, $\Delta\in \Z_{\geq 0}$, $b \in V$,
and
$V\ra (\End V)[[z,z^{-1}]]$,  $a\mapsto \sum_{n\in \Z}a_{(n)}z^{-n-1}$,
denotes the state-field correspondence.
The space $A(V)$ is a unital associative algebra
with respect to the multiplication defined by
\begin{align*}
 a * b:=\sum_{i\geq 0}\begin{pmatrix}
			 \Delta\\ i
			\end{pmatrix}a_{(i-1)}b
\end{align*}
for $a\in V_{\Delta}$, $\Delta\in \Z_{\geq 0}$, $b \in V$.

Let $M=\bigoplus_{d\geq d_0}M_d$, $M_{d_0}\ne 0$,
be a positive energy representation of $V$. 
Then $A(V)$ naturally acts on its top weight space
$M_{top}:=M_{d_0}$,
and
the correspondence $M\mapsto M_{top}$ defines a bijection between
isomorphism
classes of
simple positive energy representations of $V$ and simple $A(V)$-modules (\cite{Zhu96}).

The vertex algebra $V$ is called a {\em chiralization} of an algebra $A$ if
$A(V)\cong A$.

For instance, 
consider the universal affine vertex algebra $V^k(\fing)$.
The Zhu algebra
 $A(V^k(\fing))$ is naturally isomorphic to $U(\fing)$ (\cite{FreZhu92},
 see also \cite[Lemma 2.3]{A12-2}),
and hence, 
$V^k(\fing)$ is a chiralization of $U(\fing)$.

Let $\widehat{\mc{J}}_k
$ be the unique maximal ideal of $V^k(\fing)$, so that 
\begin{align*}
 V_k(\fing)=V^k(\fing)/\widehat{\mc{J}}_k.
\end{align*}
Since the functor $A(?)$ is
right exact,
$A(V_k(\fing))$ is the quotient of $U(\fing)$ by the image
$\mc{I}_k$
of $\widehat{\mc{J}}_k$ in $A(V^k(\fing))=U(\fing)$:
\begin{align*}
 A(V_k(\fing))=U(\fing)/\mc{I}_k.
\end{align*}

Fix a triangular decomposition  $\fing=\mf{n}_-\+\mf{h}\+\mf{n}_+$ of
$\fing$.
Then $\widehat{\mf{h}}=\mf{h}\+ \C K {\+ \C D}$ is a Cartan subalgebra of
$\affg$.
A weight $\lam\in \widehat{\h}^*$ is called of level $k$
if $\lam(K)=k$.
The top degree component 
of $L(\lam)$ is 
 $L_{\fing}(\bar \lam)$,
where $\bar \lam $ is the 
restriction of $\lam$ to $\mf{h}$.
Hence,
by Zhu's Theorem,
the level $k$ representation 
$L(\lam)$ is a $V_k(\fing)$-module if and only if
$\mc{I}_kL_{\fing}(\bar\lam)=0$.

Set $U(\g)^\h := \{u \in U(\g) \; | \; [h,u]=0 \text{ for all } h \in \h \}$ and 
let 
$$\Upsilon \colon U(\g)^\h \to U(\h)$$
be the {\em Harish-Chandra projection map} which is the 
restriction of the projection map $U(\g)=U(\h) \oplus (\n_- U(\g) + U(\g)\n_+) \to U(\h)$ 
to $U(\g)^\h$. It is known that $\Upsilon$ is an algebra homomorphism. 
For a two-sided ideal $I$ of $U(\g)$, the {\em characteristic variety of $I$} 
(without $\rho$-shift) is defined as 
$$\mc{V}(I) = \{\lambda \in \h^* \; | \; p(\lam)=0 \text{ for all } p\in \Upsilon(I^\h)\}$$
where $I^\h =I\cap U(\g)^\h$, cf.~\cite{Jos}. 
Identifying $\g^*$ with $\g$ through $(~|~)$, and thus $\h^*$ with $\h$, 
we view $\mc{V}(I)$ as a subset of $\h$.

 \begin{Pro}[{\cite[Proposition 2.5]{A12-2}}]\label{Pro:ch-variety-vs-Zhu}
  For a level $k$ weight $\lam\in \widehat{\mf{h}}^*$, 
  $L(\lam)$ is a $V_k(\fing)$-module if and only if
  $\bar \lam\in \mc{V}(\mc{I}_k)$.
\end{Pro}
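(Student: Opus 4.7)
The plan is to reduce the statement, via Zhu's theorem, to an explicit computation on a highest weight vector. By the Zhu theory recalled above, $L(\lam)$ is a $V_k(\g)$-module if and only if $\mc{I}_k \cdot L_\g(\bar\lam) = 0$, and since $\mc{I}_k$ is a two-sided ideal and $L_\g(\bar\lam) = U(\g) v_{\bar\lam}$ for a highest weight vector $v_{\bar\lam}$, one has $\mc{I}_k L_\g(\bar\lam) = \mc{I}_k v_{\bar\lam}$. So it is enough to prove that $\mc{I}_k v_{\bar\lam} = 0$ if and only if $\bar\lam \in \mc{V}(\mc{I}_k)$. The basic tool is the identity
\[
u \cdot v_{\bar\lam} = \Upsilon(u)(\bar\lam) \, v_{\bar\lam}, \qquad u \in U(\g)^\h,
\]
which follows by expanding $u$ in the PBW basis $F_i H_i E_i$ (with $F_i \in U(\n_-)$, $H_i \in U(\h)$, $E_i \in U(\n_+)$): terms with $E_i \neq 1$ annihilate $v_{\bar\lam}$, and the $\ad \h$-weight-zero constraint forces $F_i = 1$ in the remaining terms.

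The forward direction is then immediate: if $\mc{I}_k v_{\bar\lam} = 0$ then $\mc{I}_k^\h v_{\bar\lam} = 0$, so $\Upsilon(u)(\bar\lam) = 0$ for every $u \in \mc{I}_k^\h$, i.e., $\bar\lam \in \mc{V}(\mc{I}_k)$. For the converse, I decompose any $a \in \mc{I}_k$ by $\ad \h$-weight, $a = \sum_\mu a_\mu$; since $\mc{I}_k$ is $\ad \h$-stable (being two-sided), each $a_\mu$ lies in $\mc{I}_k$. When $\mu$ is not of the form $-\beta$ for $\beta$ a nonnegative integral combination of positive roots, $\bar\lam + \mu$ is not a weight of $L_\g(\bar\lam)$ and $a_\mu v_{\bar\lam} = 0$ for weight reasons. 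For $\mu = 0$, the identity above together with the hypothesis $\bar\lam \in \mc{V}(\mc{I}_k)$ gives $a_0 v_{\bar\lam} = 0$. For $\mu = -\beta$ with $\beta \ne 0$, the critical input of two-sidedness is that $E a_\mu \in \mc{I}_k^\h$ for every $E \in U(\n_+)$ of weight $\beta$, so the hypothesis yields $E(a_\mu v_{\bar\lam}) = \Upsilon(E a_\mu)(\bar\lam) v_{\bar\lam} = 0$.

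The main obstacle, and the only place where the irreducibility of $L_\g(\bar\lam)$ is really needed, is to promote this vanishing for every $E \in U(\n_+)$ of weight $\beta$ into the conclusion $a_\mu v_{\bar\lam} = 0$. I would argue by contradiction: if $w := a_\mu v_{\bar\lam}$ were a nonzero weight vector of weight $\bar\lam - \beta$, then $w$ could not be annihilated by all of $\n_+$, since the only $\n_+$-singular line in the irreducible module $L_\g(\bar\lam)$ is $\C v_{\bar\lam}$. Choosing simple root vectors $e_{\alpha_i}$ iteratively so that $e_{\alpha_r} \cdots e_{\alpha_1} w \neq 0$, and noting that any $e_\alpha$ whose action would send the current weight outside the weight diagram of $L_\g(\bar\lam)$ must vanish on that vector (so $\alpha_1 + \cdots + \alpha_r$ stays inside the cone below $\beta$), the process must terminate at the one-dimensional top weight space, producing an $E \in U(\n_+)$ of weight $\beta$ with $E w \neq 0$, contradicting the vanishing established above. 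Therefore $a_\mu v_{\bar\lam} = 0$ for every $\mu$, so $a v_{\bar\lam} = 0$, and the converse direction follows.
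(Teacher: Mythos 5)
The paper does not include a proof of this proposition; it cites it from \cite[Proposition 2.5]{A12-2}, so there is no in-text argument to compare against. Your proof is correct and self-contained. The reduction via Zhu's theorem to the statement $\mc{I}_k v_{\bar\lam}=0$ (using two-sidedness of $\mc{I}_k$ to collapse $\mc{I}_k L_\g(\bar\lam)$ to $\mc{I}_k v_{\bar\lam}$), the Harish-Chandra identity $u\cdot v_{\bar\lam}=\Upsilon(u)(\bar\lam)\,v_{\bar\lam}$ for $u\in U(\g)^\h$, and the $\ad\h$-weight decomposition $a=\sum_\mu a_\mu$ with each $a_\mu\in\mc{I}_k$ are all sound, and the key use of two-sidedness to place $Ea_\mu$ in $\mc{I}_k^\h$ is exactly the right move. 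The final step can be phrased a bit more cleanly: if $w=a_\mu v_{\bar\lam}\ne 0$ with $\mu=-\beta$, then $U(\n_+)w$ is a nonzero finite-dimensional subspace of $L_\g(\bar\lam)$ (its weights are squeezed between $\bar\lam-\beta$ and $\bar\lam$, and each weight space is finite-dimensional), hence contains a nonzero vector of maximal weight, which is automatically $\n_+$-singular; by irreducibility this must lie on the line $\C v_{\bar\lam}$, so the element of $U(\n_+)$ that produced it has weight exactly $\beta$ and does not kill $w$. Your iterative construction with simple root vectors reaches the same conclusion, just with a little more bookkeeping; the result is the same contradiction, so $a_\mu v_{\bar\lam}=0$ for all $\mu$ and the converse follows.
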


The Zhu algebra $A(V)$ is related with 
 the Zhu $C_2$ algebra $R_V$
as follows.
The grading of $V$ induces a filtration of $A(V)$ 
and the associated graded algebra $\gr A(V)$ is naturally a Poisson algebra 
(\cite{Zhu96}).
There is a natural surjective homomorphism
\begin{align*}
\eta_V: R_V\twoheadrightarrow \gr A(V)
\end{align*}
of Poisson algebras (\cite[Proposition 2.17(c)]{De-Kac06}, \cite[Proposition 3.3]{ALY}). 
In particular, $\Spec (\gr A(V))$ is a subscheme of $\tilde X_V$.

For $V=V_k(\fing)$ this implies the following.
We have
\begin{align*}
 V(\gr \mc{I}_k)
 \subset X_{V_k(\fing)},\end{align*}
where $V(\gr \mc{I}_k)$ is the zero locus of $\gr \mc{I}_k\subset
\C[\fing^*]$
in $\fing^*$.

The
map $\eta_{V_k(\fing)}$ is not necessarily an isomorphism. However, 
conjecturally \cite{A2012Dec} we have
$ V(\gr \mc{I}_k)
= X_{V_k(\fing)}$.

\section{Affine $W$-algebras}\label{sec:affine-W-algebras}
For a nilpotent element  $f$ of $\fing$,
let $\W^k(\fing,f)$ be the $W$-algebra associated with
$(\fing,f)$ at level $k$,
defined by the generalized quantized Drinfeld-Sokolov reduction
\cite{FF90,KacRoaWak03}:
\begin{align*}
 \W^k(\fing,f)=H^{\frac{\infty}{2}+0}_f
 (V^k(\fing)).
\end{align*}
Here $H^{\frac{\infty}{2}+0}_f(M)$ is the corresponding BRST
cohomology with coefficients in a smooth $\affg$-module $M$.

Let $(e,h,f)$ be an $\mf{sl}_2$-triple associated with $f$.
The $W$-algebra $\W^k(\fing,f)$ is conformal provided that
$k\ne -h^{\vee}$, which we assume in this paper,
and
the central charge $c_f(k)$ of $\W^k(\fing,f)$
is given by
\begin{align*}
 c_f(k)=\dim \fing(h,0)-\frac{1}{2}\dim \fing(h,1)
 -12\left|\frac{\rho}{\sqrt{k+h^{\vee}}}-\sqrt{k+h^{\vee}}\frac{
 h}{2} \right|^2,
\end{align*}
where $\rho$ is the half sum of the positive roots of $\fing$.
We have
\begin{align*}
 \W^k(\fing,f)&=\bigoplus_{\Delta\in \frac{1}{2}\Z_{\geq
 0}}\W^k(\fing,f)_{\Delta},\\& \W^k(\fing,f)_0=\C,\quad 
 \W^k(\fing,f)_{1/2}=0,\quad 
 \W^k(\fing,f)_1
 \cong \fing^{\natural}, 
\end{align*}
where
$\fing^{\natural}$ is the centralizer in $\fing$ of the $\mf{sl}_2$-triple
$(e,h,f)$.

We have \cite{De-Kac06,Ara09b} a natural isomorphism
$ R_{\W^k(\fing,f)}\cong \C[\Slo_{f}]$ of Poisson algebras, so that
\begin{align*}
 \tilde{X}_{\W^k(\fing,f)}= \Slo_{f}.
\end{align*}
Let $\W_k(\fing,f)$ be the unique simple quotient of
$\W^k(\fing,f)$.
Then
$X_{\W_k(\fing,f)}$
 is a $\C^*$-invariant Poisson
subvariety of $\Slo_f$.
Since it is $\C^*$-invariant,
$\W_k(\fing,f)$ is lisse if and only if
$X_{\W_k(\fing,f)}=\{f\}$.

Let $\mc{O}_k$ be the category $\mc{O}$ of
$\affg$ at level $k$.
We have a functor
\begin{align*}
 \mc{O}_k\ra \W^k(\fing,f)\on{-Mod}
 ,\quad M\mapsto
 H^{\frac{\infty}{2}+0}_f(M),
\end{align*}
where
$\W^k(\fing,f)\on{-Mod}$ denotes the category
of $\W^k(\fing,f)$-modules.

Let $\on{KL}_k$ be the full subcategory of $\mc{O}_k$ consisting of
objects $M$ on which $\fing$ acts  locally finitely.
Note that $V^k(\fing)$ and $V_k(\fing)$ are objects of $\on{KL}_k$.
 \begin{Th}[{\cite{Ara09b}, $k$, $f$ arbitrary}]\label{Th:W-algebra-variety}
 \begin{enumerate}
  \item $H_{f}^{\frac{\infty}{2}+i}(M)=0$ for all $i\ne 0$, $M\in
	\on{KL}_k$.
	In particular, the functor
	$\on{KL}_k\ra \W^k(\fing,f)\on{-Mod}$, $M\mapsto
	H_{f}^{\frac{\infty}{2}+0}(M)$, is exact.

\item
     For any quotient $V$ of $V^k(\fing)$ we have
\begin{align*}
R_{H_{f}^{\frac{\infty}{2}+0}(V)}\cong H_{f}(R_{V}),
\end{align*}
     where $H_{f}(R_{V})$ is defined in
     Theorem \ref{Th:ginzburg}.
     Hence,
     $\tilde{X}_{H_f^{\frac{\infty}{2}+0}(V)}$ is isomorphic to the
     scheme theoretic intersection
     $\tilde{X}_{V}\times_{\fing^*} \Slo_f$.
     \label{item:intersection}
     \item
	  $H_{f}^{\frac{\infty}{2}+0}(V)
	  \ne 0$ if and only if
	  $\overline{G.f}\subset X_V$.
  \item
       $H_{f}^{\frac{\infty}{2}+0}(V)$ is lisse if
       $X_V=       \overline{G.f}$.
 \end{enumerate}
\end{Th}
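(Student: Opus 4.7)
The plan is to prove the four parts in order, since (2)--(4) build on the vanishing/exactness in (1).

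For part (1), I would follow the BRST/spectral-sequence strategy of \cite{Ara09b}. Realize $H_f^{\frac{\infty}{2}+\bullet}$ via the complex $(C(M),d)$ where $C(M) = M \otimes F^{\rm ch}(\mf{m}_{\rm aff})$, with $F^{\rm ch}$ the charged-ghost vertex superalgebra attached to $\mf{m}\otimes\C[t,t^{-1}]$, and the differential encoding the character $\chi$ together with the $\mf{m}$-action. Equip $M\in\on{KL}_k$ with a good Kazhdan-type filtration compatible with the conformal grading; this induces a decreasing filtration on $C(M)$ preserved by $d$. On the $E_0$-page the differential degenerates to a classical Koszul-type differential for a loop-subalgebra (essentially $\mf{m}\otimes\C[t,t^{-1}]/\mf{m}\otimes\C[t]$) twisted by $\chi$, acting on $\gr M$. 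The local $\fing$-finiteness of $M$ makes this complex split into a direct sum of finite Koszul pieces in each weight, each of which is acyclic away from degree $0$. A convergence argument then collapses the spectral sequence and gives $H_f^{\frac{\infty}{2}+i}(M)=0$ for $i\neq 0$; exactness of $H_f^{\frac{\infty}{2}+0}$ on $\on{KL}_k$ follows from the vanishing via the long exact sequence.

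For part (2), I would use the Li filtration on $V$ and transport it to $C(V)$, checking compatibility both with $d$ and with the filtration of part (1). The associated graded complex becomes a classical Koszul-type complex on $\C[J_\infty\tilde X_V]\otimes(\text{ghosts})$ computing the Hamiltonian reduction of $\gr V$. Passing to $F^1$-quotients gives precisely the finite-dimensional data $R_V/J_\chi R_V$ with the residual $M$-action, so the degree-zero cohomology identifies with $(R_V/J_\chi R_V)^M = H_f(R_V)$ by definition. Vanishing from (1) ensures that cohomology commutes with taking the associated graded, yielding $R_{H_f^{\frac{\infty}{2}+0}(V)}\cong H_f(R_V)$ as Poisson algebras. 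Since $\C[\Slo_f]\cong (\C[\fing^*]/J_\chi)^M$ by Gan--Ginzburg, this rewrites as the scheme-theoretic fiber product $\tilde X_V\times_{\fing^*}\Slo_f$.

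For part (3), combine (2) with Lemma \ref{lem:pre-criterion}: the reduction $H_f^{\frac{\infty}{2}+0}(V)$ is a quotient of the nonzero vertex algebra $\W^k(\fing,f)$ (by exactness), and standard Zhu-type arguments give that a nonzero positively graded quotient of $\W^k(\fing,f)$ with one-dimensional top piece has nonzero $R$. Therefore $H_f^{\frac{\infty}{2}+0}(V)\ne 0$ iff $R_{H_f^{\frac{\infty}{2}+0}(V)}\ne 0$, iff $H_f(R_V)\ne 0$ by (2), iff $\overline{G.f}\subset \on{supp}_{\C[\fing^*]}R_V=X_V$ by Lemma \ref{lem:pre-criterion}. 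For part (4), apply (2) scheme-theoretically: if $X_V=\overline{G.f}$, then $\tilde X_{H_f^{\frac{\infty}{2}+0}(V)}\subset \overline{G.f}\cap\Slo_f=\{f\}$ by transversality of the Slodowy slice to $G.f$, so the reduction is lisse.

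The main obstacle is part (1). Producing a filtration on $C(M)$ that is simultaneously compatible with the conformal grading, the Kazhdan filtration of $M$, the BRST differential $d$, and the infinite-dimensional ghost system --- and for which the $E_0$-complex is verifiably an acyclic Koszul complex in nonzero degrees --- requires careful bookkeeping of the semi-infinite structure. Convergence of the resulting spectral sequence in the infinite-dimensional setting is the delicate point, and is where the $\on{KL}_k$ hypothesis (local $\fing$-finiteness) is essential, as it reduces each graded piece to a finite Koszul complex.
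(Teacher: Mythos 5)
The present paper does not prove this theorem; it simply recalls it from \cite{Ara09b}, so there is no in-paper proof to compare against. Your sketch does track the general strategy of the cited source: vanishing on $\on{KL}_k$ via a filtered BRST complex whose $E_0$-page is a classical Koszul-type complex, with convergence secured by the local $\fing$-finiteness (finite-dimensional conformal weight spaces) of objects in $\on{KL}_k$; Li's filtration transported to the BRST complex so that its degree-zero layer computes $H_f(R_V)$; and parts (3) and (4) obtained formally from (2) together with Lemma~\ref{lem:pre-criterion} and the transversality of $\Slo_f$ to $G.f$. Two cautions are worth flagging. First, the BRST complex of \cite{KacRoaWak03,Ara09b} is built from the full positive part $\bigoplus_{j>0}\g(h,j)$ together with a neutral free superfermion system for $\g(h,1)$, rather than directly from the Lagrangian-shifted subalgebra $\mf{m}$; identifying the $C_2$-algebra of that reduction with $(\C[\g^*]/J_\chi)^M$ in part (2) therefore requires comparing these two models of Hamiltonian reduction (Arakawa and Gan--Ginzburg both handle this), and you should not conflate them. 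Second, in part (3) the step ``$H_f^{\frac{\infty}{2}+0}(V)\neq 0$ iff $R_{H_f^{\frac{\infty}{2}+0}(V)}\neq 0$'' silently uses that the vacuum of a nonzero positively graded vertex algebra never lies in $C_2$; this is true but should be made explicit. Apart from these points, the proposal is consistent with the argument of the cited reference.
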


Let $f_{\theta}$ be  a root vector of the highest root $\theta$ of $\g$,
which is a minimal nilpotent element so that $f \in \O_{\min}$ 
where $\O_{min}$ is the minimal nilpotent orbit of $\g$.
For the first statement
of  Theorem \ref{Th:W-algebra-variety},
we have the following stronger statement
in the case that $f=f_{\theta}$.
\begin{Th}[{\cite{Ara05}, $f=f_{\theta}$}]\label{Th:minimal}
\begin{enumerate}
 \item $H^{\frac{\infty}{2}+i}_{f_{\theta}}(M)
       =0$ for all $i\ne 0$,
       $M\in \mc{O}_k$.
       	In particular, the functor
	$\mc{O}_k\ra \W^k(\fing,f)\on{-Mod}$, $M\mapsto
	H_{f}^{\frac{\infty}{2}+0}(M)$, is exact.
 \item $H^{\frac{\infty}{2}+0}_{f_{\theta}}(L(\lam))
       \ne 0$ if and only
       if $\lam(\alpha_0^{\vee})\not\in \Z_{\geq 0}$.
       If this is the case, 
       $H^{\frac{\infty}{2}+0}_{f_{\theta}}(L(\lam))$ is a simple
       $\W^k(\fing,f)$-module,
       where $\alpha_0^{\vee}=-\theta^{\vee}+K$.
\end{enumerate} 
\end{Th}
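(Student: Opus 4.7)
The plan is to analyze the semi-infinite BRST complex computing $H^{\frac{\infty}{2}+\bullet}_{f_\theta}(M)$ via a spectral sequence tailored to the minimal grading. Recall that for $f=f_\theta$, the $\ad h$-grading on $\g$ is short: $\g=\g(h,-2)\oplus\g(h,-1)\oplus\g(h,0)\oplus\g(h,1)\oplus\g(h,2)$ with $\g(h,\pm 2)$ one-dimensional. The BRST complex $C^{\bullet}(M)=M\otimes F^{ch}\otimes F^{ne}$ involves a charged ghost system built from $\hat{\g}_{>0}^{h}$ and a neutral system from the symplectic piece $\g(h,1)$, and the differential decomposes as $Q=d_{st}+\chi$.

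For part (1), I would introduce a filtration of $C^{\bullet}(M)$ by the conformal weight (or equivalently by an auxiliary charge coming from $h$) compatible with $d_{st}$, in which the associated graded differential reduces to a Chevalley--Eilenberg-type differential computing the Lie algebra cohomology $H^{\bullet}(\hat{\mathfrak{n}}_{+};M')$ for a nilpotent subalgebra $\hat{\mathfrak{n}}_{+}\subset\hat{\g}$ acting on a twist $M'$ of $M$ by the ghost vacuum. Since every object of $\mathcal{O}_k$ has finite-dimensional weight spaces on which every positive affine root vector acts locally nilpotently, the higher cohomology on the $E_1$-page vanishes degree by degree. The finite-dimensionality of weight spaces also ensures convergence, which gives the concentration of $H^{\frac{\infty}{2}+\bullet}_{f_\theta}(M)$ in degree zero and the exactness of the functor.

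For part (2), I would combine the exactness from (1) with the Euler--Poincar\'{e} principle to compute $\on{ch}\, H^{\frac{\infty}{2}+0}_{f_\theta}(L(\lambda))$ directly from $\on{ch}\, L(\lambda)$. The combinatorial input is that the character of the reduction can be written in terms of the $\hat{\mathfrak{sl}}_2$-subalgebra spanned by $e_\theta(n)$, $f_\theta(n)$ and $\alpha_0^{\vee}=-\theta^{\vee}+K$; the cohomology is forced to vanish precisely when $L(\lambda)$ is $\alpha_0$-integrable, i.e.\ when $\lambda(\alpha_0^{\vee})\in\Z_{\geq 0}$. For non-integrable $\lambda$, the character identity yields a non-zero answer that matches the character of a simple highest-weight $\W^k(\g,f_\theta)$-module. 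Simplicity then follows because the exact functor sends a strictly increasing chain of submodules of $H^{\frac{\infty}{2}+0}_{f_\theta}(L(\lambda))$ to submodules of $L(\lambda)$, contradicting its simplicity; or alternatively, an analogous exactness argument applied to the Verma cover $M(\lambda)\twoheadrightarrow L(\lambda)$ identifies $H^{\frac{\infty}{2}+0}_{f_\theta}(L(\lambda))$ with the simple quotient of $H^{\frac{\infty}{2}+0}_{f_\theta}(M(\lambda))$, which is a known Verma-type $\W^k$-module.

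The main technical obstacle is the handling of the semi-infinite complex: one must verify that the filtration is exhaustive and bounded on each weight subcomplex so that the spectral sequence converges, and that the twist by the ghost vacuum intertwines the BRST differential with the Chevalley--Eilenberg differential for $\hat{\mathfrak{n}}_{+}$ acting on $M\otimes(\text{vacuum})$. A secondary delicate point is the identification of the non-vanishing character with that of a simple $\W^k(\g,f_\theta)$-module; this requires careful bookkeeping of the contribution of the neutral fermion system $F^{ne}$ coming from $\g(h,1)$, which is only non-trivial because of the minimal grading hypothesis.
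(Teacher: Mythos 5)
The statement is cited in the paper as \cite{Ara05} and is not proved there, so there is no ``paper's own proof'' to compare against; I will evaluate your proposal on its own merits, and against the strategy of \cite{Ara05}.

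Your overall framework (split $Q=d_{st}+\chi$, filter, run a spectral sequence, then use Euler--Poincar\'e plus exactness for irreducibility) is the right shape, but the pivotal step in part (1) is wrong. Filtering by conformal weight is vacuous: the BRST charge $Q=Q_{(0)}$ has conformal weight zero, so it preserves each weight graded piece and the associated spectral sequence carries no information; and conformal weight is \emph{not} equivalent to the $\on{ad}h$-charge. More seriously, even if you choose the $h$-grading (or a Kazhdan-type grading), you have arranged things so that the associated graded differential is the Chevalley--Eilenberg part $d_{st}$, and then you claim the resulting cohomology $H^\bullet(\hat{\mathfrak n}_+;M')$ vanishes in positive degrees because weight spaces are finite-dimensional and the action is locally nilpotent. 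That claim is false: (semi-infinite) Lie algebra cohomology with coefficients in category-$\mathcal{O}$ modules is generically nonzero in many degrees (think of the BGG/Kostant picture). The vanishing in the theorem is not a feature of $\mathcal{O}_k$; it comes from the character $\chi(f_\theta)\neq 0$. The correct strategy, as in \cite{Ara05} (and, in the finite setting, Kostant and Gan--Ginzburg), is to grade so that the \emph{Koszul} piece $d_\chi$ (rather than $d_{st}$) is the leading term; then the $E_1$-page is a Koszul complex whose acyclicity away from a single degree is exactly the fact $\chi(f_\theta)\neq 0$, and one uses that weights of $M\in\mathcal O_k$ are bounded in the appropriate direction to guarantee convergence of the spectral sequence. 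In your write-up, $\chi$ has been pushed to higher order and the would-be $E_1$-vanishing simply does not hold. Finally, in part (2) the simplicity argument is backwards: the reduction functor goes from $\widehat{\mathfrak g}$-modules to $\W^k(\g,f_\theta)$-modules, so submodules of $H^{\frac{\infty}{2}+0}_{f_\theta}(L(\lam))$ do \emph{not} lift to submodules of $L(\lam)$, and the purported contradiction evaporates. The argument in \cite{Ara05} instead uses a direct analysis of the top weight space (it is one-dimensional and generates), together with a comparison of characters with those of the known simple highest-weight $\W^k$-modules, and the Euler--Poincar\'e identity is applied weight-space by weight-space to avoid divergence issues in the semi-infinite complex.
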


Recall that
$f$ is a {\em short} nilpotent element
if
\begin{align*}
\fing=\fing(h,-2)\+\fing(h,0)\+ \fing(h,2).
\end{align*}
If this is the case,
we have $\frac{1}{2}h
\in P^{\vee}$, 
where $P^{\vee}$ is the coroot lattice of $\fing$,
and
 $\frac{1}{2}h$
 defines an element of the extended affine Weyl group
 $\widetilde{W}=W\ltimes P^\vee$ of $\affg$,
which we denote by 
$t_{\frac{1}{2}h}$.
 Here $W$ is the Weyl group of $\fing$.
Let $\tilde{t}_{\frac{1}{2}h}$ be a Tits lifting
of $t_{\frac{1}{2}h}$ to an automorphism of $\affg$.

Set
\begin{align*}
 D_h=D+\frac{1}{2}h,
\end{align*}
and put
$M_{d,h}=\{m\in M\mid D_h m=d m\}$ for
a $\affg$-module $M$.
The operator  $D_h$ extends to the
grading operator of $\W^k(\fing,f)$ (\cite{KacRoaWak03,Ara05}).

The subalgebra
$\tilde{t}_{\frac{1}{2}h}(\fing)
$ acts on 
each homogeneous
component $M_{d,h}$
because $[D_h,t_{\frac{1}{2}h}(\fing)]=0$.
Note that
$\tilde{t}_{\frac{1}{2}h}(\fing)$
is  the subalgebra of $\affg$ generated
by the 
root vectors of the roots
$t_{\frac{1}{2}h}(\alpha)=\alpha-\frac{1}{2}\alpha(h)
\delta$, 
$\alpha\in \Delta$,
where $\delta\in \widehat{\h}^*$ is the dual element of $D$.
In particular,
$\tilde{t}_{\frac{1}{2}h}(\fing)$
 contains
\begin{align*}
\mf{l}:=\fing(h,0)
\end{align*}
 as a Levi subalgebra.

We regard
$M_{d,h}$ as a $\fing$-module through the isomorphism
 $\tilde{t}_{\frac{1}{2}h}(\fing)\cong\fing$.

 Since
 $\alpha(D_h)\geq 0$ for all positive roots  $\alpha$ of $\affg$
by the assumption that $f$ is short,
we have
\begin{align*}
 L(\lam)=\bigoplus_{d\leq \lam(D_h)}L(\lam)_{d,h}.
\end{align*}

Let
$\mc{O}_{k}^{\mf{l}}$ be the full subcategory of category
$\mc{O}_k$ of $\affg$ consisting
of objects on which $\mf{l}$ acts locally finitely.
\begin{Th}\label{Th:short}
 Let $f$ be a short nilpotent element as above.
\begin{enumerate}
 \item  $H^{\frac{\infty}{2}+i}_f(M)=0$
	for all $i\ne 0$, $M\in
	\mc{O}_k^{\mf{l}}$.
	       	In particular, the functor
	$\mc{O}_k^{\mf{l}}\ra \W^k(\fing,f)\on{-Mod}$, $M\mapsto
	H_{f}^{\frac{\infty}{2}+0}(M)$, is exact.
 \item Let $L(\lam)\in \mc{O}_k^{\mf{l}}$.
       Then
    $H^{\frac{\infty}{2}+0}_f(L(\lam))
       \ne 0$ if and only if
       $\on{Dim} L(\lam)_{\lam(D_h),h}=1/2 \dim G.f
       (=\dim \fing(h,2))$, 
       where $\on{Dim} M$ is the Gelfand-Kirillov dimension
       of
       the  $\fing$-module
 $M$.
If this is the case,
       $H^{\frac{\infty}{2}+0}_f(L(\lam))$ is
       almost irreducible over
       $\W^k(\fing,f)$,
       that is,
       any nonzero submodule
       of  $H^{\frac{\infty}{2}+0}_f(L(\lam))$
       intersects
       its top weight {component}
       non-trivially (cf.\ \cite{Ara08-a}). 
 \item Suppose that $\fing$ is of type $A$.
Then, for $L(\lam)\in \mc{O}_k^{\mf{l}}$,
       $H^{\frac{\infty}{2}+0}_f(L(\lam))$ is zero or irreducible.
\end{enumerate}
\end{Th}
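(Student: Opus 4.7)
The plan is to adapt the strategy that yielded Theorem~\ref{Th:minimal} for $f=f_{\theta}$ and the reduction arguments behind Theorem~\ref{Th:W-algebra-variety} for general $f$, now exploiting the very simple structure available when $f$ is short: $\fing=\fing(h,-2)\+\fing(h,0)\+\fing(h,2)$, so the only $\ad h$-degrees contributing to the charged fermions are $\pm 1$ and $\pm 2$. In particular the BRST differential splits into at most two pieces, one of degree $+1$ coming from the Dynkin grading and one of degree $+2$ coming from the character $\chi=(f|\,\cdot\,)$.

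For part (1), I would work with the standard BRST complex $C^{\bullet}_f(M)$ computing $H^{\frac{\infty}{2}+\bullet}_f(M)$ and filter it by the conformal weight $D_h$. Because $f$ is short, each $D_h$-homogeneous piece of the complex is finite-dimensional in the fermionic direction and bounded above by $\lam(D_h)$ when $M$ is a subquotient of a Verma. The associated spectral sequence has $E_1$-page computing a Lie algebra cohomology of $\tilde{t}_{\frac{1}{2}h}(\mf{n}_+)$ with values in the $D_h$-homogeneous components $M_{d,h}$, viewed as $\fing$-modules via $\tilde{t}_{\frac{1}{2}h}(\fing)\cong\fing$. The hypothesis $M\in\mc{O}_k^{\mf{l}}$ guarantees that each $M_{d,h}$ is an object of the usual category $\mc{O}$ for $\fing$ on which the Levi $\mf{l}$ acts locally finitely, so a Kostant-style vanishing argument applies term by term. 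Degeneration at $E_2$ together with separatedness of the filtration on each $D_h$-slice then yields $H^{\frac{\infty}{2}+i}_f(M)=0$ for $i\ne 0$, and exactness of the functor follows from the long exact sequence in cohomology.

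For part (2), granted (1), I would identify the top $D_h$-component of $H^{\frac{\infty}{2}+0}_f(L(\lam))$ with the Whittaker coinvariants of $L(\lam)_{\lam(D_h),h}$, namely with the quotient of $L(\lam)_{\lam(D_h),h}$ by the action of $\sum_{x\in \mf{m}}(x-\chi(x))$, viewed as a $\fing$-module through $\tilde{t}_{\frac{1}{2}h}$. By Ginzburg's functor $H_f$ of Theorem~\ref{Th:ginzburg} applied to the associated graded of this Harish--Chandra module, non-vanishing of these coinvariants is equivalent to $\overline{G.f}$ being contained in the support of $L(\lam)_{\lam(D_h),h}$, and since that module is irreducible, this is equivalent to the Gelfand--Kirillov dimension equality $\on{Dim} L(\lam)_{\lam(D_h),h}=\dim\fing(h,2)=\frac{1}{2}\dim G.f$, using that for short $f$ one has $\dim G.f=2\dim\fing(h,2)$. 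Almost irreducibility is then a formal consequence of (1): any nonzero $\W^k(\fing,f)$-submodule lifts, via the exact functor $H^{\frac{\infty}{2}+0}_f$, to a submodule of $L(\lam)$, which by irreducibility equals $L(\lam)$, forcing the submodule to meet the top component. For part (3), in type $A$ the Levi $\mf{l}$ is a product of general linear algebras, and the top weight component $L(\lam)_{\lam(D_h),h}$ is irreducible over $\mf{l}$, so Skryabin's equivalence (in its finite $W$-algebra incarnation on the top component) upgrades almost irreducibility to irreducibility, and (3) follows at once from (2).

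The main obstacle will be the careful setup of the spectral sequence in part (1): unlike $\on{KL}_k$, where full $\fing$-local finiteness makes convergence automatic, in $\mc{O}_k^{\mf{l}}$ only $\mf{l}$-local finiteness is available, so one must verify by hand that the $D_h$-filtration is separated and exhaustive on each cohomological degree, and that Kostant vanishing for the image of $\tilde{t}_{\frac{1}{2}h}(\mf{n}_+)$ applies to $\mf{l}$-locally finite coefficients. Once this is settled, parts (2) and (3) are formal consequences of (1), Ginzburg's theorem, and the structure of short nilpotents in type $A$.
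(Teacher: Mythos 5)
The paper's proof is a translation, not a computation: because $f$ is short, $\W^k(\fing,f)$ is integer graded, so the Ramond-twisted representations of \cite{Ara08-a} are ordinary representations; identifying $\mc{O}_k^{\mf{l}}$ with the category $\mathcal{O}_{0,k}$ of \cite{Ara08-a} via $x\mapsto\tilde{t}_{\frac{1}{2}h}(x)$ and the functor $H^{\frac{\infty}{2}+0}_f(?)$ with the ``$-$''-reduction functor of \cite[Theorem 5.5.3]{Ara08-a}, all three parts follow by quoting that reference. Your proposal instead attempts a from-scratch re-derivation via a $D_h$-filtration spectral sequence, Ginzburg's exact functor $H_f$ on the associated graded, and Skryabin's equivalence in type $A$ — a genuinely different route.

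The gap is that you treat the ``Kostant-style vanishing'' on the $E_1$-page as a verification, when it is the main technical content of \cite{Ara08-a}. For short $f$ the relevant nilpotent is $\mf{m}=\fing(h,2)=\mf{p}_u$ (not $\tilde{t}_{\frac{1}{2}h}(\mf{n}_+)$ as written), and the ``$-$''-reduction for $\mf{l}$-locally finite (rather than $\fing$-locally finite) coefficients does not reduce to a classical Kostant computation: it requires Verma-flag and BGG-type comparison arguments that are exactly what \cite{Ara08-a} supplies. Parts (2) and (3) in your sketch rest on identifying the top $D_h$-component of $H^{\frac{\infty}{2}+0}_f(L(\lam))$ with the Whittaker (finite $W$-algebra) reduction of $L(\lam)_{\lam(D_h),h}$, which again needs the degeneration from (1); once that is granted, the Ginzburg support/Gelfand--Kirillov dimension equivalence and the type-$A$ irreducibility (via Skryabin, where what must be irreducible is the module over the finite $W$-algebra, i.e.\ the Zhu algebra of $\W^k(\fing,f)$) are the right mechanisms, and these are indeed the ones underlying \cite{Ara08-a}. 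So the outline is structurally consonant with what the cited reference actually does, but as a self-contained proof it leaves precisely the hard vanishing theorem unproved.
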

\begin{proof}
The above theorem is  just a  restatement of main results of \cite{Ara08-a}.
Indeed,
as $f$ is short,
$\W^k(\fing,f)$
is integer graded.
Therefore the
Ramond twisted representations of $\W^k(\fing,f)$ studied in
 \cite{Ara08-a}
 are nothing but the usual representations.
 Considering the action of $\affg$ 
 defined by $x\mapsto \tilde{t}_{\frac{1}{2}h}(x)$,
the category $\mc{O}_{k}^{\mf{l}}$
can be identified with the category 
 $\mathcal{O}_{0,k}$ of  \cite[5.5]{Ara08-a}.
 Moreover,
 the functor
 	$\mc{O}_k^{\mf{l}}\ra \W^k(\fing,f)\on{-Mod}$, $M\mapsto
 H_{f}^{\frac{\infty}{2}+0}(M)$, gets  identified with the
 ``$-$''-reduction functor $H^{\on{BRST}}_0(?)$ in  \cite[Theorem 5.5.3]{Ara08-a}.
\end{proof}

\section{Level $-1$ affine vertex algebra of type $A_{n-1}$, $n\geq 4$}
\label{sec:A-theta1}
We assume in this section that $\g=\sl_n$ with $n
\geq 4$.

Let $$\Delta=\{\eps_i -\eps_j \; | \; i,j= 1,\ldots,n, i\not=j\}$$ be the root 
system of $\g$. Fix the set of positive roots 
$\Delta_+=\{\eps_i -\eps_j \; | \; i,j= 1,\ldots,n, i < j\}$. 
Then the simple roots are $\alpha_i = \eps_i -\eps_{i+1}$ 
for $i=1,\ldots,{n }-1$. The highest root is $\theta=\eps_1-\eps_{n}= 
\alpha_1+\cdots+\alpha_{n-1}$. 
Denote by $(e_i,h_i,f_i)$ the Chevalley generators of $\g$, and 
fix the root vectors $e_\alpha,f_\alpha$, 
$\alpha \in \Delta_+$ as follows. 
\begin{align*} 
e_{\eps_i -\eps_j}  = e_{i,j} \quad 
\text{ and }\quad f_{\eps_i -\eps_j}  = e_{j,i} \quad \text{ for }\quad  i <j, 
\end{align*}
where $e_{i,j}$ is the standard elementary matrix associated with the coefficient $(i,j)$. 
For $\alpha \in \Delta_+$, denote by $h_\alpha =[e_\alpha,f_{\alpha}]$ 
the corresponding coroot. 
In particular, 
\begin{align*} 
h_i=e_{i,i}-e_{i+1,i+1} \quad \text{ for }\quad i=1,\ldots,n-1.
\end{align*} 

Let $\g= \n_- \oplus \h \oplus \n_+$ be the corresponding triangular decomposition. 
Denote by $\varpi_1,\ldots,\varpi_{n-1 }$ the fundamental 
weights of $\g$, 
$$\varpi_{i}:= (\eps_{1} +\cdots + \eps_{i}) - \frac{i}{n}(\eps_{1}+\cdots 
+ \eps_{n}).$$
Identify $\g$ with $\g^*$ through $(~|~)$. Thus, the fundamental 
weights are viewed as elements of $\g$. 

Let $\theta_1$ be the highest root of the root system generated 
by the simple roots perpendicular to $\theta$, i.e.,  
$$\theta_1=\alpha_2+\ldots +\alpha_{n-2}=\eps_2-\eps_{n-1},$$
Set 
$$\beta:= \alpha_1+\cdots+\alpha_{n-2}=\eps_1-\eps_{n-1}, \qquad 
\gamma:= \alpha_{2}+\cdots+\alpha_{n-1}=\eps_2-\eps_{n},$$
and put
$$v_1=e_{\theta} e_{\theta_1} - e_{\beta} e_{\gamma}\in S^2(\fing),$$
where $S^d(\fing)$ denotes the component of degree $d$ in $S(\fing)$ for $d\geq 0$.
Then $v_1$ is a singular vector with respect to the adjoint action
of $\fing$ and generates an irreducible finite-dimensional
representation $W_1$  of $\g$ in $S^2(\fing)$ 
isomorphic to $L_{\fing}(\theta+\theta_1)$. 

Recall that we have a $\fing$-module embedding \cite[Lemma 4.1]{AM15},
\begin{align}
\sigma_d \colon S^d(\fing)\hookrightarrow V^k(\fing)_d, 
\quad x_1\dots x_d\mapsto \frac{1}{d!}\sum_{\sigma\in
 \mathfrak{S}_d}x_{\sigma(1)}(-1)\dots x_{\sigma(d)}(-1),
 \label{eq:sigma}
\end{align}
where $ \mathfrak{S}_d$ is the $d$-order symmetric group.
We will denote simply by $\sigma$ this embedding for $d=2$. 

 \begin{Pro}[\cite{Ada03}] 
  For $l\geq 0$,
  The vector $\sigma(v_1)^{l+1}$ is {a}
  singular vector of
 $V^k(\fing)$ if and only if $k=l-1$.
\end{Pro}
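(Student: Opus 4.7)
\medskip

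\noindent\textbf{Proof plan.}
The plan is to reduce the singular vector condition on $\sigma(v_1)^{l+1}\in V^k(\fing)$ to a single scalar equation in $k$ and $l$, and then to compute both sides.

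First, I would use the embedding $\sigma \colon S^2(\fing)\hookrightarrow V^k(\fing)_2$ of \eqref{eq:sigma}: since $\sigma$ is $\fing$-equivariant and $v_1$ is the highest weight vector of the $\fing$-submodule $W_1\cong L_{\fing}(\theta+\theta_1)$, the vector $\sigma(v_1)$ is a highest weight vector for the horizontal action $\fing=\fing(0)\subset\affg$. Because $e_i(0)$ acts as a derivation on the $(-1)$-st product, $\sigma(v_1)^{l+1}$ is annihilated by $e_i(0)$ for $i=1,\ldots,n-1$ and carries the weight $(l+1)(\theta+\theta_1)$. Hence, by the standard characterization of singular vectors in $\affg$-modules in terms of the simple roots of $\affg$, the vector $\sigma(v_1)^{l+1}$ is singular if and only if it is annihilated by the affine simple generator $e_{\alpha_0}(0)=f_\theta(1)$; the vanishing of the higher positive modes then follows automatically from conformal weight considerations together with the fact that $f_\theta(1)$ and $\fing(0)$ generate (modulo the Heisenberg central element) all of $\widehat{\mf{n}}_+$ via affine Weyl commutators.

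Second, I would compute $f_\theta(1)\sigma(v_1)$ explicitly using the commutation relation $[x(m),y(n)]=[x,y](m+n)+m(x|y)\delta_{m+n,0}K$. A direct calculation with the matrix units $e_\theta=e_{1,n}$, $e_{\theta_1}=e_{2,n-1}$, $e_\beta=e_{1,n-1}$, $e_\gamma=e_{2,n}$ (using that $\theta_1(h_\theta)=0$, $[f_\theta,e_\beta]=f_{\alpha_{n-1}}$, $[f_\theta,e_\gamma]=-f_{\alpha_1}$, and $[f_{\alpha_{n-1}},e_\gamma]=-e_{\theta_1}$) yields
\begin{align*}
f_\theta(1)\sigma(v_1)=(k+1)\,e_{\theta_1}(-1)|0\rangle.
\end{align*}
This already handles the case $l=0$: the condition $k+1=0$ corresponds to $k=l-1=-1$.

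Third, for general $l$ I would establish a recursion for $f_\theta(1)\sigma(v_1)^{l+1}$ by applying the non-commutative Wick formula
\begin{align*}
a_{(1)}(b_{(-1)}c)=b_{(-1)}(a_{(1)}c)+(a_{(0)}b)_{(0)}c+(a_{(1)}b)_{(-1)}c
\end{align*}
with $a=f_\theta$, $b=\sigma(v_1)$, $c=\sigma(v_1)^l$. The first term is handled by induction, the third term contributes $(k+1)\,e_{\theta_1}(-1)\sigma(v_1)^l$, and the middle correction $(f_\theta(0)\sigma(v_1))_{(0)}\sigma(v_1)^l$ has to be analyzed carefully: because $f_\theta(0)\sigma(v_1)$ lies in the $\fing$-module $W_1$ and $\sigma(v_1)^l$ is $\fing(0)$-singular of weight $l(\theta+\theta_1)$, the $0$-th mode action of this correction can be reduced, via root-string arguments in $W_1$, to a scalar multiple of $e_{\theta_1}(-1)\sigma(v_1)^l$ involving the integer $l$. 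Combining the two contributions leads to a scalar equation of the form $(k+1-l)\cdot(\text{nonzero vector})=0$, which vanishes exactly when $k=l-1$.

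The main obstacle is the middle correction term in the inductive step: one must identify precisely the scalar coming from the action of $(f_\theta(0)\sigma(v_1))_{(0)}$ on $\sigma(v_1)^l$, because this is the place where the power $l$ explicitly enters the equation. Once this combinatorial identity is nailed down, the remaining estimates are routine and the equivalence with $k=l-1$ follows.
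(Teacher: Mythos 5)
The paper does not prove this proposition; it is cited directly from Adamovi\'c's paper \cite{Ada03}, so there is no in-text proof to compare against. I will therefore assess your plan on its own terms.

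Your overall strategy is sound and is, as far as I know, the natural one: reduce singularity to annihilation by the single affine simple generator $f_\theta(1)$ (since $\mf n_+(0)$ acts by derivations of the normally ordered product and thus automatically annihilates all powers of $\sigma(v_1)$), verify the base case $f_\theta(1)\sigma(v_1)=(k+1)e_{\theta_1}(-1)\mathbf 1$, and then push $f_\theta(1)$ through the power by commutators. Your base-case computation is correct.

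However, the crucial step of your induction — identifying the middle term $\bigl(f_\theta(0)\sigma(v_1)\bigr)_{(0)}\sigma(v_1)^l$ as a scalar multiple of $e_{\theta_1}(-1)\sigma(v_1)^l$ ``via root-string arguments in $W_1$'' — is a genuine gap, and you flag it yourself. The problem is that $f_\theta(0)\sigma(v_1)$ is a depth-$2$ state, so its $0$-th mode is a quadratic normal-ordered operator, not an element of $\fing$. Root-string data inside the finite-dimensional module $W_1\cong L_\fing(\theta+\theta_1)$ tells you nothing about how such a quadratic operator acts on $\sigma(v_1)^l$; in particular it does not tell you that the result stays proportional to $e_{\theta_1}(-1)\sigma(v_1)^l$ rather than picking up components along other vectors of the same $(\h\oplus\C D)$-weight, and it certainly does not produce the needed $l$-dependent coefficient. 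The required identity \emph{is} true (the scalar turns out to be $-2l$), but you have not supplied a mechanism that would establish it, and without that the induction does not close.

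A clean way to repair the argument is to bypass the vertex-algebra Wick formula entirely and work with the commuting modes directly. The four modes $X=e_\theta(-1)$, $Y=e_{\theta_1}(-1)$, $Z=e_\beta(-1)$, $W=e_\gamma(-1)$ pairwise commute, and
$\sigma(v_1)^{l+1}=P^{l+1}\mathbf 1$ with $P:=XY-ZW$. One then computes the commutators
\begin{align*}
[f_\theta(1),P]&= \bigl(-h_\theta(0)+K\bigr)Y - f_{\alpha_{n-1}}(0)\,W + Z\,f_{\alpha_1}(0)=:Q,\\
[Q,P]&=-2\,PY, \qquad [PY,P]=0,
\end{align*}
using the Lie brackets $[f_\theta,e_\theta]=-h_\theta$, $[f_\theta,e_\beta]=f_{\alpha_{n-1}}$, $[f_\theta,e_\gamma]=-f_{\alpha_1}$ together with $[f_{\alpha_{n-1}},e_\gamma]=-e_{\theta_1}$ and $[f_{\alpha_1},e_\beta]=e_{\theta_1}$. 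Because $[Q,P]$ commutes with $P$, the adjoint action of $f_\theta(1)$ on $P^{l+1}$ truncates at second order, and one finds
\[
[f_\theta(1),P^{l+1}]=(l+1)P^lQ+\tfrac{l(l+1)}{2}P^{l-1}[Q,P].
\]
Evaluating on $\mathbf 1$ (using $Q\mathbf 1=(k+1)Y\mathbf 1$ and $[Q,P]\mathbf 1=-2PY\mathbf 1$) gives
\[
f_\theta(1)\sigma(v_1)^{l+1}=(l+1)(k+1-l)\,e_{\theta_1}(-1)\,\sigma(v_1)^l,
\]
which vanishes precisely when $k=l-1$. This also shows that the scalar you predicted in your final step was missing the factor $l+1$ (harmless for the conclusion, but worth noting). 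I would recommend replacing your vague middle-term discussion with this explicit commutator computation; it supplies exactly the $l$-dependence that your root-string heuristic could not produce.
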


    \begin{Th}\label{Th:irreduciblity}
The vector $\sigma(v_1)$ generates the maximal submodule of
     $V^{-1}(\fing)$,
that is,
$V_{-1}(\fing)=V^{-1}(\fing)/U(\affg)\sigma(v_1)$.
 \end{Th}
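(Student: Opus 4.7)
Set $N:=U(\affg)\sigma(v_1)$ and $\tilde V := V^{-1}(\g)/N$. By the preceding proposition applied with $l=0$, $\sigma(v_1)$ is a nonzero singular vector, so $N$ is a proper $\affg$-submodule of $V^{-1}(\g)$; hence $N \subseteq \widehat{\mc{J}}_{-1}$ and there is a canonical surjection $\tilde V \twoheadrightarrow V_{-1}(\g)$. My task is to show that this surjection has trivial kernel, equivalently that $\tilde V$ is simple.

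The strategy is to pass to Zhu algebras. Since $A(V^{-1}(\g)) = U(\g)$ and the functor $A(?)$ is right-exact, $A(\tilde V) = U(\g)/J$, where $J$ is the two-sided ideal of $U(\g)$ generated by the Zhu image $\bar v_1$ of $\sigma(v_1)$. The surjection $\tilde V \twoheadrightarrow V_{-1}(\g)$ forces $J \subseteq \mc{I}_{-1}$, and the heart of the argument is the reverse inclusion $\mc{I}_{-1} \subseteq J$. For this I would invoke the classification, due to Adamovi\'c and Per\v{s}e \cite{AdaPer14}, of simple objects in category $\mc{O}$ over $V_{-1}(\sl_n)$: by Proposition \ref{Pro:ch-variety-vs-Zhu} their top $\g$-weights comprise the characteristic variety $\mc{V}(\mc{I}_{-1}) \subset \h^*$. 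A direct computation of the Harish--Chandra projection of $\bar v_1$, using its explicit form as the symmetrization of the degree-two expression $e_\theta e_{\theta_1}-e_\beta e_\gamma$, produces an element of $\Upsilon(J^{\h})$ whose zero locus equals this same variety. Thus $\mc{V}(J)=\mc{V}(\mc{I}_{-1})$, and the Duflo-type characterization of two-sided ideals of $U(\g)$ by their characteristic varieties then gives $J = \mc{I}_{-1}$.

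It remains to lift the equality $A(\tilde V) = A(V_{-1}(\g))$ to $\tilde V = V_{-1}(\g)$. If the kernel $M$ of $\tilde V \twoheadrightarrow V_{-1}(\g)$ were nonzero, then, being a nonzero $\affg$-submodule of a smooth category-$\mc{O}$ module, it would contain a $\affg$-highest weight vector $v$ of some affine weight $\lambda$; the cyclic module $U(\affg)v \subseteq \tilde V$ would have top $\g$-component $L_\g(\bar\lambda)$, which is a simple $A(\tilde V) = A(V_{-1}(\g))$-module. By Zhu's theorem, $L(\lambda)$ would then be a simple $V_{-1}(\g)$-module realized as a subquotient of $\tilde V$, contradicting the fact that $v$ dies in the quotient $V_{-1}(\g) = \tilde V/(\widehat{\mc{J}}_{-1}/N)$.

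The main obstacle is the ideal identification $J=\mc{I}_{-1}$: the inclusion $J\subseteq\mc{I}_{-1}$ is automatic, but the reverse inclusion relies on the nontrivial module-theoretic description of simple $V_{-1}(\sl_n)$-modules from \cite{AdaPer14}. Once this is in hand, the Zhu-algebra lifting argument above completes the proof.
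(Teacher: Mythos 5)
Your strategy is genuinely different from the paper's, which proceeds via $W$-algebras: the paper first shows (Theorem~\ref{Th:Heisenberg}) that $H^{\frac{\infty}{2}+0}_{f_{\theta}}(\tilde V_{-1}(\g))$ is the simple Heisenberg vertex algebra $M(1)$, then uses the Adamovi\'c--Per\v{s}e weight constraint from \cite[Proposition~5.5]{AP08} to ensure that a putative singular vector $v$ of weight $\mu$ in $\tilde V_{-1}(\g)$ has $\mu(\alpha_0^\vee)<0$, so that by Theorem~\ref{Th:minimal}(2) its BRST image is nonzero and generates a proper nonzero submodule of $M(1)$ --- a contradiction. Your approach via Zhu algebras and characteristic varieties is not a valid substitute, for two independent reasons.

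First, the ideal identification does not go through as claimed. Computing the Harish--Chandra projection $\Upsilon(\bar v_1)$ of a single generator only produces {\em one} element of $\Upsilon(J^\h)$; this gives $\mc{V}(J)\subset \{p(\bar v_1)=0\}$, not equality. More fundamentally, even a full equality $\mc{V}(J)=\mc{V}(\mc{I}_{-1})$ of characteristic varieties would not imply $J=\mc{I}_{-1}$: characteristic varieties of two-sided ideals of $U(\g)$ do not separate ideals. Duflo's theorem concerns {\em primitive} ideals (annihilators of simple highest weight modules), but neither $J$ nor $\mc{I}_{-1}$ is primitive, and there is no ``Duflo-type'' uniqueness statement in this generality.

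Second, the lifting from Zhu algebras to vertex algebras is not a contradiction as you have stated it. Even granting $A(\tilde V)\cong A(V_{-1}(\g))$, suppose the kernel $\widehat{\mc{J}}_{-1}/N$ contains a singular vector $v$ of affine weight $\lambda$. All one can conclude is that the image of $v$ in $A(\tilde V)$ vanishes, i.e.\ $v\in O(\tilde V)$ --- which is perfectly possible and yields no contradiction. The assertion that $L(\lambda)$ being a simple $V_{-1}(\g)$-module ``contradicts the fact that $v$ dies in the quotient'' does not hold: $L(\lambda)$ appearing in the module category of $V_{-1}(\g)$ is entirely compatible with $v$ generating a nonzero ideal of $\tilde V$. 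In short, Zhu's algebra sees only the top levels of positive-energy modules and does not detect all singular vectors; the paper avoids this obstruction by using the exactness of the BRST functor in category $\mc{O}$ (Theorem~\ref{Th:minimal}), for which non-vanishing of the image is controlled by the weight condition $\mu(\alpha_0^\vee)\notin\Z_{\geq 0}$ rather than by Zhu-algebra data.
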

   Set
   \begin{align*}
  \tilde V_{-1}(\fing) =V^{-1}(\fing)/U(\affg)\sigma(v_1).
  \label{eq:XVtilde}
   \end{align*}
 To prove Theorem \ref{Th:irreduciblity}, 
 we shall use the minimal $W$-algebra $\W^k(\fing,f_{\theta})$.

 Let $\fing^{\natural}$ be the centralizer in $\g$ 
 of the $\mf{sl}_2$-triple $(e_{\theta},
 h_{\theta},f_{\theta})$. 
 Then
 \begin{align*}
  \fing^{\natural}=\fing_0\+\fing_1,
 \end{align*}
 where $\fing_0$ is the one-dimensional center of $\fing^{\natural}$
 and $\fing_1 =[\fing^{\natural},\fing^{\natural}]$.
 Note that
 $\fing_0=\C  (h_1-h_{n-1})$
 and
 $\fing_1=\langle e_{\alpha_i},f_{\alpha_i}\mid i=2,\dots,n-2\rangle
 \cong \mf{sl}_{n-2}$.

 There is an embedding, \cite{KacWak04}, \cite[\S7]{AM15}, 
\begin{align*}
 V^{k_0^{\natural}}(\fing_0)\otimes
 V^{k_1^{\natural}}(\fing_1)\hookrightarrow \W^k(\fing,f_{\theta})
\end{align*}
of vertex algebras,
where $k_0^{\sharp}=k+n/2$ and $k_1^{\natural}=k+1$.
Note that
$V^{k_0^{\natural}}(\fing_0)$ is isomorphic to the rank one Heisenberg
vertex algebra $M(1)$
provided that $k_0^{\natural}\ne 0$.

For $k=-1$,
we have
$k_1^{\natural}=0$ and
$V_{k_1^{\natural}}(\fing_1)=V^{k_1^{\natural}}(\fing)/{U(\widehat{\fing_1})
e_{\theta_1} (-1)\mathbf{1}}
\cong \C$.

By Theorem \ref{Th:minimal}
the exact sequence
$0\ra U(\affg)\sigma(v_1)\ra V^{-1}(\fing)\ra \tilde{V}_{-1}(\fing)\ra
0$ induces an exact sequence
\begin{align}
 0\ra H^{\frac{\infty}{2}+0}_{f_{\theta}}
(U(\affg)\sigma(v_1))\ra
 \W^{-1}(\fing,f_{\theta})
\ra H^{\frac{\infty}{2}+0}_{f_{\theta}}(\tilde{V}_{-1}(\fing))\ra 0.
\end{align}
  \begin{Lem}\label{Lem:singular-vector}
   The image of $\sigma(v_{1})$ in $\W^{-1}(\fing,f_{\theta})$
coincides with the image of the singular vector
  ${e_{\theta_1}(-1)\mathbf{1}}$
   of $V^0(\fing_1)\subset \W^{-1}(\fing,f_{\theta})$.
  \end{Lem}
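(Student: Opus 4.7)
The plan is to verify the equality of classes in $\W^{-1}(\g,f_\theta)$ by a direct BRST computation, using the explicit Kac-Roan-Wakimoto description of the minimal $W$-algebra and its embedding $V^{k_1^\natural}(\g_1)\hookrightarrow\W^k(\g,f_\theta)$.

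The first observation is a pleasant combinatorial accident. Since
$\theta+\theta_1 \;=\; \eps_1+\eps_2-\eps_{n-1}-\eps_n \;=\; \beta+\gamma$
is not a root of $\sl_n$, both pairs $\{e_\theta,e_{\theta_1}\}$ and $\{e_\beta,e_\gamma\}$ commute in $\g$, so the symmetrization in the definition of $\sigma$ is trivial and
\begin{align*}
\sigma(v_1) \;=\; e_\theta(-1)e_{\theta_1}(-1)\mathbf{1} \;-\; e_\beta(-1)e_\gamma(-1)\mathbf{1}.
\end{align*}
Moreover, each summand carries the same $\g^\natural$-weight: the restriction of $\theta_1$ to $\h^\natural$, since $\theta$ vanishes on $\h^\natural$ and $\beta+\gamma=\theta+\theta_1$. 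This matches the weight of $e_{\theta_1}(-1)\mathbf{1}$ inside $V^0(\g_1)$.

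Next, I would invoke the two key features of the BRST complex for the minimal reduction (\cite{KacRoaWak03}). First, the field $e_\theta(z)$ is cohomologous to the constant $\mathbf{1}$: this is the quantum counterpart of the classical identity $e_\theta|_{\Slo_{f_\theta}}=1$, produced by the ghost attached to $f_\theta$. Second, the embedding $V^{k_1^\natural}(\g_1)\hookrightarrow\W^k(\g,f_\theta)$ sends $e_{\theta_1}(-1)\mathbf{1}$ to a generator $J^{\{e_{\theta_1}\}} = e_{\theta_1}(-1)\mathbf{1} + Q$, where $Q$ is a normal-ordered quadratic expression in the fields $e_\alpha$ for $\alpha\in\g(h,1)$. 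A weight count forces $Q$ to be supported on pairs $(\alpha,\alpha')$ in $\g(h,1)$ with $\alpha+\alpha'=\theta+\theta_1$, and the root-theoretic inspection above identifies the unique such pair (up to transposition) as $\{\beta,\gamma\}$. Hence $Q$ is a scalar multiple of $e_\beta(-1)e_\gamma(-1)\mathbf{1}$.

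Putting this together, in $\W^{-1}(\g,f_\theta)$ one has $[\sigma(v_1)] = [e_{\theta_1}(-1)\mathbf{1}] - [e_\beta(-1)e_\gamma(-1)\mathbf{1}]$, while the embedded image of $e_{\theta_1}(-1)\mathbf{1}\in V^0(\g_1)$ is $[e_{\theta_1}(-1)\mathbf{1}] + [Q]$. The lemma thus reduces to showing $[Q] = -[e_\beta(-1)e_\gamma(-1)\mathbf{1}]$, i.e.~that the KRW coefficient at level $k=-1$ equals $-1$. This numerical verification, using the structure constants of $\sl_n$ together with the level-dependent normalization in the KRW formula, is the main — and essentially only — obstacle: a finite and explicit check, but one that requires careful bookkeeping of signs and conventions, and which crucially singles out the level $k=-1$.
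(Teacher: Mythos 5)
Your plan is a direct BRST computation, which is a genuinely different route from the paper's. The paper avoids any explicit cohomology computation by a soft argument: since $\sigma(v_1)$ is singular in $V^{-1}(\g)$, its image in $\W^{-1}(\g,f_\theta)$ is again a singular vector for $\widehat{\g}_1$; its class in $R_{\W^{-1}(\g,f_\theta)}=\C[\Slo_{f_\theta}]$ is computed via Theorem~\ref{Th:W-algebra-variety}(2) as the image of $v_1$ in $\C[\Slo_{f_\theta}]$, and $v_1\equiv e_{\theta_1}\pmod{J_\chi}$; so $\sigma(v_1)$ and $e_{\theta_1}(-1)\mathbf{1}$ agree modulo $C_2(\W^{-1}(\g,f_\theta))$, have the same weight, and are both singular with respect to $\widehat{\g}_1$, which forces equality. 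This transfers all the hard work to the classical $C_2$-level plus a uniqueness argument, eliminating the sign and normalization bookkeeping that your approach would require.

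As written, your proposal has a genuine gap that you yourself flag: the lemma is reduced to verifying that the KRW correction $Q$ satisfies $[Q]=-[e_\beta(-1)e_\gamma(-1)\mathbf{1}]$ at $k=-1$, and this is not carried out. Since this equality is precisely where the level-dependence enters, the argument cannot be considered complete without it. In addition, the framework preceding the reduction needs care. The KRW generator $J^{\{a\}}$ for $a\in\g^\natural$ is corrected by normal-ordered \emph{ghost} bilinears in the BRST complex, not by bilinears in the currents $e_\alpha$ for $\alpha\in\g(h,1)$; the statement that $e_\theta(z)$ is ``cohomologous to $\mathbf{1}$'' likewise requires specifying the complex and the precise sense in which the cohomology class of $e_\theta(-1)e_{\theta_1}(-1)\mathbf{1}\otimes 1$ is identified. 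Your root-theoretic observations ($\theta+\theta_1=\beta+\gamma$ is not a root; $\{\beta,\gamma\}$ is the unique pair in $\g(h_\theta,1)$ summing to $\theta+\theta_1$; each summand has the same $\g^\natural$-weight) are all correct and would indeed be the organizing principle for a direct computation, but between the imprecision of the setup and the missing final check, the proposal does not yet establish the lemma. The paper's use of singularity plus the $C_2$-reduction is exactly designed to sidestep both issues.
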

   \begin{proof}
    Since it
    is singular in $V^{-1}(\g)$, $\sigma(v_1)$ defines a singular vector  of
    $\W^{-1}(\fing,f_{\theta})$.
    Its image in $R_{\W^{-1}(\fing,f_{\theta})}$
    is the image
    of $v_1$ in $\C[\Slo_{f_{\theta}}]=(\C[\fing^*]/J_{\chi})^M$,
   where $\chi=(f_{\theta}|\,\cdot\,)$, see \S\ref{section:Ginzburg}.    
Since
   $v_1\equiv e_{\theta_1}\pmod{J_{\chi}}$,
    we have  $\sigma(v_1)
    \equiv {e_{\theta_1}(-1)\mathbf{1}}
    \pmod{C_2(\W^{-1}(\fing,f_{\theta}))}$.
    Then
    $\sigma(v_1)$ and ${e_{\theta_1}(-1)\mathbf{1}}$
must coincide
since they 
    both
    have the
    same weight
    and are singular vectors
    with respect to the action of
    $\affg_1$.
   \end{proof}
\begin{Th} \label{Th:Heisenberg}
We have
$H^{\frac{\infty}{2}+0}_{f_{\theta}}(\tilde{V}_{-1}(\fing))
\cong
M(1) 
$, the rank one Heisenberg vertex algebra.
 In particular $ H^{\frac{\infty}{2}+0}_{f_{\theta}}(\tilde{V}_{-1}(\fing))$ is simple, and hence,
 isomorphic to $\W_{-1}(\fing,f_{\theta})$. \end{Th}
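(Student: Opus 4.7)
The plan is to identify $H^{\frac{\infty}{2}+0}_{f_{\theta}}(\tilde{V}_{-1}(\fing))$ with an explicit quotient of the universal minimal $W$-algebra and then analyze that quotient through the Kac--Roan--Wakimoto strong-generator structure. Combining the exact sequence preceding the theorem with Lemma~\ref{Lem:singular-vector}, one obtains
\[
H^{\frac{\infty}{2}+0}_{f_{\theta}}(\tilde{V}_{-1}(\fing)) \;\cong\; W := \W^{-1}(\fing,f_\theta)/\mathcal{I},
\]
where $\mathcal{I}$ is the vertex-algebra ideal generated by $e_{\theta_1}(-1)\mathbf{1} \in V^{k_1^\natural}(\g_1) = V^0(\mf{sl}_{n-2}) \subset \W^{-1}(\fing,f_\theta)$. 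Since $e_{\theta_1}(-1)\mathbf{1}$ generates the unique maximal $\widehat{\g_1}$-submodule of $V^0(\g_1)$ with quotient $V_0(\g_1)=\C$, every current of $\g_1 \cong \mf{sl}_{n-2}$ vanishes in $W$.

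Because $k_0^\natural = n/2-1 \ne 0$ for $n \geq 4$, the subalgebra $V^{k_0^\natural}(\g_0) \subset \W^{-1}(\fing,f_\theta)$ is isomorphic to the Heisenberg vertex algebra $M(1)$. Composing with the projection yields a map $M(1) \to W$, which is injective by simplicity of $M(1)$ and non-vanishing of the vacuum. The main task is to promote this injection to a surjection, i.e.\ to show that all other strong generators of $\W^{-1}(\fing,f_\theta)$ vanish in $W$ or reduce into $M(1)$.

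By Kac--Roan--Wakimoto, the remaining strong generators are the weight-$3/2$ primary fields $G^{\{v\}}$ for $v \in \g(h,-1)$ and the conformal vector $L$. Since $\mathcal{I}$ is closed under all mode operators and $x(-1)\mathbf{1}\in\mathcal{I}$ for $x\in\g_1$, quasi-commutativity gives $x_{(0)}\W^{-1}(\fing,f_\theta)\subset\mathcal{I}$. The defining OPE $x_{(0)} G^{\{v\}} = G^{\{x\cdot v\}}$ then forces $G^{\{x\cdot v\}} \in \mathcal{I}$ for every $x \in \g_1$ and $v \in \g(h,-1)$; as $\g(h,-1)\cong V \oplus V^*$ decomposes into two non-trivial irreducible $\mf{sl}_{n-2}$-modules (using $n\geq 4$), one has $\g_1\cdot \g(h,-1)=\g(h,-1)$, and all $G^{\{v\}}$ vanish in $W$. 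A direct evaluation of the formula of Section~\ref{sec:affine-W-algebras} gives $c_{f_\theta}(-1)=1$, matching the Heisenberg central charge, so the image of $L$ in $W$ coincides with a Virasoro field on the Heisenberg subalgebra.

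The main obstacle is the last step: carefully verifying that the weight-$3/2$ generators and the conformal vector really do reduce into $M(1)\subset W$ requires tracking the normal-ordering corrections in the explicit expressions for the strong generators alongside the module-theoretic argument above. Once surjectivity is established, $W \cong M(1)$, and the simplicity of $M(1)$ forces $H^{\frac{\infty}{2}+0}_{f_{\theta}}(\tilde{V}_{-1}(\fing))$ to be simple; as a simple graded quotient of $\W^{-1}(\fing,f_\theta)$, it must then coincide with the unique simple graded quotient $\W_{-1}(\fing,f_\theta)$.
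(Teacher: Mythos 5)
Your proposal follows essentially the same route as the paper: identify the cohomology with a quotient $W$ of $\W^{-1}(\fing,f_\theta)$ via the exact sequence and Lemma~\ref{Lem:singular-vector}, kill the $\fing_1$-currents, use the OPE $J^{\{a\}}_{(0)}G^{\{v\}}=G^{\{[a,v]\}}$ together with the fact that $\fing_{-1/2}$ has no $\fing_1$-invariants to kill the $G^{\{v\}}$, and then argue that only the Heisenberg field and the conformal vector remain. The step concerning the weight-$3/2$ generators is correct as stated.

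However, there is a genuine gap exactly where you flag one: the reduction of the conformal vector $L$ into $M(1)$. Observing that $c_{f_\theta}(-1)=1$ ``matches the Heisenberg central charge'' does not show that the image of $L$ in $W$ is the Sugawara--Heisenberg conformal vector of the subalgebra $M(1)\subset W$. A quotient vertex algebra can very well contain a Heisenberg subalgebra and an ambient conformal vector of central charge $1$ without that conformal vector lying inside $M(1)$; nothing in the strong-generation structure up to this point forces it, and without it you cannot conclude surjectivity of $M(1)\to W$. What is actually needed is the explicit identity coming from the $\Lambda$-bracket $[G^{\{u\}}_\lambda G^{\{v\}}]$ in \cite[Theorem~5.1(e)]{KacWak04}: once all $G^{\{v\}}$ and all $\fing_1$-currents vanish in $W$ and $k+h^\vee\ne 0$, this formula expresses $(k+h^\vee)L$ in terms of the remaining Heisenberg field and thereby \emph{identifies} $L$ with the Heisenberg conformal vector, not merely matches a numerical invariant. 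This is precisely the citation the paper invokes at the corresponding step, and it is the ingredient your argument lacks. A secondary, more minor issue: your injectivity of $M(1)\hookrightarrow W$ relies tacitly on $W\ne 0$; the cleanest justification (and the one the paper uses) is that $\W_{-1}(\fing,f_\theta)=H^{\frac{\infty}{2}+0}_{f_\theta}(V_{-1}(\fing))$ is a nonzero quotient of $W$ by exactness of the reduction functor on category $\mc{O}$.
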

 \begin{proof}
By Theorem \ref{Th:minimal},
$H^{\frac{\infty}{2}+0}_{f_{\theta}}(V_{-1}(\fing))$ is
 isomorphic to $\W_{-1}(\fing,f_{\theta})$.
Since
$V_{-1}(\fing)$ is a quotient of $\tilde{V}_{-1}(\fing)$,
$\W_{-1}(\fing,f_{\theta})$
is a quotient 
of $H^{\frac{\infty}{2}+0}_{f_{\theta}}(\tilde{V}_{-1}(\fing))$
by the exactness result of  Theorem \ref{Th:minimal}.
In particular, 
$H^{\frac{\infty}{2}+0}_{f_{\theta}}(\tilde{V}_{-1}(\fing))$ is nonzero.

Recall that 
$\W^k(\fing,f_{\theta})$ is generated by fields
$J^{\{a\}}(z)$, $a\in \fing^{\natural}$,
$G^{\{v\}}(z)$, $v\in \fing_{-1/2}$,
and $L(z)$ described in \cite[Theorem 5.1]{KacWak04}.
Since $V^0(\fing_1)/U(\affg_1)e_{\theta_1}(-1)\mathbf{1}=\C$,
Lemma \ref{Lem:singular-vector}
implies that 
$J^{\{a\}}(z)$, $a\in \fing_1$, are all zero in 
$H^{\frac{\infty}{2}+0}_{f_{\theta}}(\tilde{V}_{-1}(\fing))$.
Then, since
{$[J^{\{a\}}_{\lam}G^{\{v\}}]=G^{[a,v]}$}
  and 
 $\fing_{-1/2}$ is a direct sum of non-trivial irreducible
 finite-dimensional representations
  of $\fing_1$,
it follows that $G^{(v)}(z)=0$ for all $v\in \fing_{-1/2}$.
Finally,
\cite[Theorem 5.1 (e)]{KacWak04} implies that 
$L$ coincides with the conformal vector of
  {$V^{\frac{n-2}{2}}(\fing_1)\cong M(1)$ } in
  $H^{\frac{\infty}{2}+0}_{f_{\theta}}(\tilde{V}_{-1}(\fing))$.
We conclude that 
$H^{\frac{\infty}{2}+0}_{f_{\theta}}(\tilde{V}_{-1}(\fing))$ is generated by
  $J^{\{a\}}(z)$, $a\in \fing_0$, which proves the assertion.
\end{proof}

 \begin{proof}[Proof of Theorem \ref{Th:irreduciblity}]
  Suppose that $\tilde{V}_{-1}(\fing)$ is not simple.
  Then there is at least one singular vector $v$ of weight, say $\mu$.
Then 
\begin{align*}
 \mu&\equiv s\varpi_1-\Lam_0 \quad \text{ or } 
 \quad s\varpi_{l}-\Lam_0 \quad\text{ for
 some }\quad  s\in \mathbb{N}\pmod{ \C \delta}
\end{align*} 
  by \cite[Proposition 5.5]{AP08}.
  Let $M$ be the submodule of $\tilde{V}_{-1}(\fing)$
  generated by $v$.
  Since $M$ is a $\tilde{V}_{-1}(\fing)$-module,
  $H^{\frac{\infty}{2}+0}_{f_{\theta}}(M)$ is a
  module over   $H^{\frac{\infty}{2}+0}_{f_{\theta}}(\tilde{V}_{-1}(\fing))=M(1)$.
  Because  $\mu(\alpha_0^{\vee})<0$,
  Theorem \ref{Th:minimal} implies that
  the image of $v$ in   $H^{\frac{\infty}{2}+0}_{f_{\theta}}(M)$
  is nonzero,
  and thus,
  it generates
   the irreducible highest weight representation
{$M(1,\frac{\mu(h_1-h_{n-1})}{\sqrt{2(n-2)}} )
 $ } of 
  $M(1)$ with highest weight
  {$ \frac{\mu(h_1-h_{n-1})}{\sqrt{2(n-2)}}$}.
Now the exactness of the functor 
  $H^{\frac{\infty}{2}+0}_{f_{\theta}}(?)$ 
  shows that
  $H^{\frac{\infty}{2}+0}_{f_{\theta}}(M)$ is a submodule of 
  $H^{\frac{\infty}{2}+0}_{f_{\theta}}(\tilde{V}_{-1}(\fing))=M(1)$,
  and therefore, so is  {$M(1,\frac{\mu(h_1-h_{n-1})}{\sqrt{2(n-2)}} )$}.
  But this   contradicts the simplicity of
  $M(1)$.
 \end{proof}
By Theorem \ref{Th:irreduciblity}, 
we have
\begin{align}
 R_{{V}_{-1}(\fing)}=\C[\fing^*]/I_{W_1},\quad \text{so}\quad
\tilde{X}_{{V}_{-1}(\fing)}=\Spec (\C[\fing^*]/I_{W_1}).
\end{align}
 \begin{Co}\label{Co:C[z]}
We have $H_f(\C[\fing^*]/I_{W_1}) \cong \C[z]$.
\end{Co}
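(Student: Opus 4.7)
The plan is to deduce this as an almost immediate corollary of the results already established in this section, chaining the identifications together. The statement $R_{V_{-1}(\g)} = \C[\g^*]/I_{W_1}$ displayed just before Corollary \ref{Co:C[z]} is the starting point, and the target computation of $H_{f_\theta}$ of this algebra is equivalent to identifying $R_W$ for an appropriate $W$-algebra quotient.

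First, I would invoke Theorem \ref{Th:W-algebra-variety}\,(\ref{item:intersection}) applied to $V = V_{-1}(\g)$. This gives the isomorphism
\begin{equation*}
R_{H^{\frac{\infty}{2}+0}_{f_\theta}(V_{-1}(\g))} \;\cong\; H_{f_\theta}(R_{V_{-1}(\g)}) \;=\; H_{f_\theta}(\C[\g^*]/I_{W_1}),
\end{equation*}
reducing the claim to computing the Zhu $C_2$-algebra of the left-hand side. Now Theorem \ref{Th:irreduciblity} asserts $\tilde V_{-1}(\g) = V_{-1}(\g)$, and Theorem \ref{Th:Heisenberg} identifies $H^{\frac{\infty}{2}+0}_{f_\theta}(\tilde V_{-1}(\g))$ with the rank one Heisenberg vertex algebra $M(1)$. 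Hence the question reduces to the identification $R_{M(1)} \cong \C[z]$.

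The last point is standard: $M(1)$ is strongly generated by a single field $a(z)$, so $R_{M(1)}$ is generated as a commutative algebra by the class of $a(-1)\mathbf{1}$, and by the PBW-type basis of $M(1)$ there are no relations among its powers. Thus $R_{M(1)}$ is a polynomial ring in one variable, and combining the displayed isomorphism above with this computation yields $H_{f_\theta}(\C[\g^*]/I_{W_1}) \cong \C[z]$ as algebras.

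There is no substantive obstacle here; the only content is to recognize that the hard analytical work has already been done in proving Theorem \ref{Th:irreduciblity} and Theorem \ref{Th:Heisenberg}, and that Theorem \ref{Th:W-algebra-variety}\,(\ref{item:intersection}) is exactly the bridge between the vertex-algebraic identification with $M(1)$ and the Poisson-geometric statement about $H_{f_\theta}$ of the $C_2$-algebra. The only thing one must be careful about is that the isomorphism $H_{f_\theta}(\C[\g^*]/I_{W_1}) \cong \C[z]$ is meant as an isomorphism of (commutative) algebras, which is precisely the structure transported by Theorem \ref{Th:W-algebra-variety}\,(\ref{item:intersection}).
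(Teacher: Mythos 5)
Your proposal reproduces the paper's own argument essentially verbatim: apply Theorem \ref{Th:W-algebra-variety}\,(\ref{item:intersection}) to identify $H_{f_\theta}(R_{V_{-1}(\g)})$ with $R_{H^{\frac{\infty}{2}+0}_{f_\theta}(V_{-1}(\g))}$, then use Theorem \ref{Th:Heisenberg} (together with Theorem \ref{Th:irreduciblity}, which the paper leaves implicit) to identify that with $R_{M(1)}\cong\C[z]$. The approach and the ingredients are the same; the only cosmetic difference is that you spell out the step $\tilde V_{-1}(\g)=V_{-1}(\g)$ explicitly.
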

   \begin{proof}
    By  Theorem \ref{Th:W-algebra-variety}\eqref{item:intersection},
    we know that 
   $R_{H^{\frac{\infty}{2}+0}_{f_{\theta}}(V_{-1}(\fing))}
   =H_f(R_{V_{-1}(\fing)})$.
The assertion follows since
    $R_{H^{\frac{\infty}{2}+0}_{f_{\theta}}(V_{-1}(\fing))}=R_{M(1)
    }\cong \C[z]$    by Theorem \ref{Th:Heisenberg}.
      \end{proof}

  As in Theorem \ref{Th:main1}, 
let $\l_{1}$ be the standard Levi subalgebra of $\sl_{n}$ 
generated by all simple roots except $\alpha_{1}$, that is,
   \begin{align*}
\l_{1}=\h+\bra e_{\alpha_i}, f_{\alpha_i}\mid i\ne 1\ket.
   \end{align*}
 The center of $\l_1$ is generated by $\varpi_1$.
Thus, the Dixmier sheet closure $\overline{\SS_{\l_1}}$ is given by
\begin{align*}
\overline{\mathbb{S}_{\mf{l}_1}}=\overline{G.\C^*\varpi_1},
\end{align*}
see \S\ref{sec:sheet}.
The unique nilpotent orbit contained in $\SS_{\l_1}$  is 
$\on{Ind}^{\fing}_{\mf{l}_1}(0)=\mathbb{O}_{min}$, 
that is, the $G$-orbit of $f_{\theta}$.

 \begin{lemma} \label{l1:A-theta1} 
We have $V(I_{W_1}) \cap \mc{N}\subset \overline{\O_{min}}$. 
\end{lemma}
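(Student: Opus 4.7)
The plan is to establish the lemma by combining the inclusion $\overline{\O_{min}}\subset V(I_{W_1})$ (obtained via the $W$-algebra reduction of the previous results) with a one-dimensional bound on $V(I_{W_1})\cap\Slo_{f_\theta}$, and then to use a transversality argument to rule out any strictly larger nilpotent orbit.

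First I would record two preliminary facts. Theorem \ref{Th:Heisenberg} yields $H^{\frac{\infty}{2}+0}_{f_\theta}(V_{-1}(\fing))\cong M(1)\ne 0$, so Theorem \ref{Th:W-algebra-variety}(3) gives $\overline{\O_{min}}=\overline{G.f_\theta}\subset V(I_{W_1})$. On the other hand, Corollary \ref{Co:C[z]} combined with Theorem \ref{Th:ginzburg}(2) identifies $V(I_{W_1})\cap\Slo_{f_\theta}$ with $\Spec\C[z]$, a one-dimensional subvariety of $\Slo_{f_\theta}$.

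The heart of the argument is by contradiction. Suppose that $\overline{\O}\subset V(I_{W_1})$ for some nilpotent orbit $\O$ of $\fing=\sl_n$ with $\O\supsetneq\O_{min}$. Because $\O_{min}$ is the unique minimal non-zero nilpotent orbit of $\sl_n$, we have $f_\theta\in\overline{\O_{min}}\subset\overline{\O}$. The transversality of $\Slo_{f_\theta}$ to $G.f_\theta=\O_{min}$, applied to the $G$-invariant irreducible closed subvariety $\overline{\O}$ containing $f_\theta$, gives the local dimension identity
\[\dim_{f_\theta}(\overline{\O}\cap\Slo_{f_\theta}) \;=\; \dim\O-\dim\O_{min},\]
so that $\dim\O-\dim\O_{min}\leq \dim(\overline{\O}\cap\Slo_{f_\theta}) \leq\dim(V(I_{W_1})\cap\Slo_{f_\theta})=1$.

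To derive a contradiction I would inspect the partition parameterization of nilpotent orbits of $\sl_n$: the cover orbits of $\O_{min}=\O_{(2,1^{n-2})}$ in the closure order are $\O_{(2,2,1^{n-4})}$ and $\O_{(3,1^{n-3})}$, of dimensions $4n-8$ and $4n-6$ respectively, while $\dim\O_{min}=2n-2$, so the resulting differences $2n-6$ and $2n-4$ are both at least $2$ when $n\geq 4$; every other non-minimal orbit strictly dominates one of these two in the closure order and hence has even larger dimension. This contradicts the inequality above, whence $V(I_{W_1})\cap\mc{N}\subset\overline{\O_{min}}$. The main delicacy in the argument is the transversality step yielding the displayed dimension identity, but this is standard for Slodowy slices applied to $G$-invariant closed irreducible subvarieties containing $f_\theta$.
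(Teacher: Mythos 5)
Your proof is correct, but it takes a genuinely different route from the paper's. The paper proves Lemma~\ref{l1:A-theta1} by a direct Slodowy-slice computation: it observes that $\O_{(2^2,1^{n-4})}$ is the smallest orbit strictly dominating $\O_{min}$, then takes $f\in\O_{(2^2,1^{n-4})}$ and shows, via the weighted Dynkin diagram, that $v_1=e_\theta e_{\theta_1}-e_\beta e_\gamma$ reduces to a nonzero constant modulo $J_\chi$; Lemma~\ref{lem:criterion} then gives $\C[\g^*]=I_{W_1}+J_\chi$, i.e.\ $\overline{G.f}\not\subset V(I_{W_1})$. Your argument instead feeds the Heisenberg identification (Corollary~\ref{Co:C[z]}) back into a dimension count against the transversality of $\Slo_{f_\theta}$: since $V(I_{W_1})\cap\Slo_{f_\theta}$ is one-dimensional, no orbit closure $\overline{\O}\supsetneq\overline{\O_{min}}$ can fit in $V(I_{W_1})$ because $\dim(\overline{\O}\cap\Slo_{f_\theta})=\dim\O-\dim\O_{min}\geq 2$. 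Both approaches work; the paper's is more uniform with Lemmas~\ref{lem:nil-0} and~\ref{lem:Dnil} (where the explicit $J_\chi$-reduction is run at different nilpotent elements), while yours is slicker once the Heisenberg output is available, at the price of front-loading the $W$-algebra machinery.

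Two small remarks. First, the preliminary fact $\overline{\O_{min}}\subset V(I_{W_1})$ is true but not used in your argument and could be dropped. Second, your statement that $\O_{(2^2,1^{n-4})}$ \emph{and} $\O_{(3,1^{n-3})}$ are both covers of $\O_{min}$ in the closure order is not quite right: for $n\geq 4$ one has $(2^2,1^{n-4})\prec(3,1^{n-3})$ in dominance, so $\O_{(2^2,1^{n-4})}$ is the unique cover. This does not affect your conclusion, since every orbit strictly above $\O_{min}$ dominates $(2^2,1^{n-4})$ and hence has dimension at least $4n-8$, giving $\dim\O-\dim\O_{min}\geq 2n-6\geq 2$ for $n\geq 4$; but the justification ``every other non-minimal orbit strictly dominates one of these two'' should be replaced by ``every non-minimal nonzero orbit dominates $(2^2,1^{n-4})$.''
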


\begin{proof}
First of all, we observe that $\O_{(2^2,1^{n-4})}$ it the smallest nilpotent orbit of $\g$ 
which dominates $\O_{min}=\O_{(2,1^{n-2})}$. 
By this, it means that 
if ${\bs{\lam}} \succ  (2,1^{n-2})$ then ${\bs{\lam}} \succcurlyeq (2^2,1^{n-4})$ 
where $\succcurlyeq$ is the Chevalley order on the set $\P(n)$. 
Therefore, it is enough to show that $V(I_{W_1})$ does not contain 
 $\overline{\O_{(2^2,1^{n-4})}}$.

Let $f \in \O_{(2^2,1^{n-4})}$ that we embed into an $\sl_2$-triple $(e,h,f)$ 
of $\g$. 
Denote by $\g(h,i)$ the $i$-eigenspace of ${\rm ad}(h)$ for $i\in\Z$ 
and by $\Delta_+(h,i)$ the set of 
positive roots $\alpha \in\Delta_+$ 
such that $e_\alpha \in \g(h,i)$. 
We have $f=\sum_{\alpha \in \Delta_+(h,2)} c_\alpha f_\alpha$ 
with $c_\alpha \in\C$. 
We call the set of $\alpha \in \Delta_+(h,2)$ 
such that $c_\alpha \not=0$ the {\em support} of $f$. 
Choose a Lagrangian subspace 
$\mathfrak{L} \subset \g(h,1)$ and set 
$$\mf{m} := \mathfrak{L} \oplus \bigoplus_{i \geq 2} \g(h,i), \qquad 
J_\chi :=  \sum_{x \in \mf{m}} \C[\g^*](x-\chi(x)),$$  
with $\chi = (f | \cdot ) \in \g^*$, as in \S\ref{section:Ginzburg}.
 By Lemma \ref{lem:criterion},
 it is sufficient to show that
 \begin{align*}
  \C[\fing^*]=I_{W_1}+ J_{\chi}.
 \end{align*}
 To see this, we shall use the vector
$$v_{1}=e_{\theta} e_{\theta_1} - e_{\beta} e_{\gamma}\in I_{W_1}.$$
If $n>4$, 
the weighted Dynkin diagram of the nilpotent orbit $G.f$ is 
$$
\begin{Dynkin}
	\Dbloc{\Dcirc\Deast\Dtext{t}{0}}
	\Dbloc{\Dcirc\Dwest\Deast\Dtext{t}{1}}
	\Dbloc{\Dcirc\Dwest\Deast\Dtext{t}{0}}
	\Dbloc{\Ddots}
	\Dbloc{\Dcirc\Dwest\Deast\Dtext{t}{0}}
	\Dbloc{\Dcirc\Dwest\Deast\Dtext{t}{1}}
	\Dbloc{\Dcirc\Dwest\Dtext{t}{0}}
\end{Dynkin}
$$ 
So we can assume that $h= \varpi_2 + \varpi_{n-2}$ 
and we can choose for $f$ the element $f_{\beta}+f_{\gamma}$. 
We see that ${\theta}$, ${\theta_1}$, ${\beta}$ 
and ${\gamma}$ all belong to $\Delta_+(h,2)$. 
Since $\beta$ and $\gamma$ are in the support of $f$, 
but not $\theta$ and $\theta_1$, for some nonzero complex number $c$, 
we have 
$$e_{\theta} e_{\theta_1} - e_{\beta} e_{\gamma} = c \pmod{{J_\chi}}.$$ 
So $v_1 = c \pmod{{J_\chi}}$ and $\IS_{W_1} + {J_\chi}=\C[\g^*]$, 
whence $\C[\g^*] / (\IS_{W_1} +{J_\chi}) = 0$. 

For ${n }=4$, the  
weighted Dynkin diagram of the nilpotent orbit $G.f$ is 
$$
\begin{Dynkin}
	\Dbloc{\Dcirc\Deast\Dtext{t}{0}}
	\Dbloc{\Dcirc\Dwest\Deast\Dtext{t}{2}}
	\Dbloc{\Dcirc\Dwest\Dtext{t}{0}}
\end{Dynkin}
$$ 
and we conclude similarly. 
\end{proof}
\begin{lemma} \label{l2:A-theta1} 
Let $\lam$ be a nonzero semisimple element of $\g$. 
Then, $\lam\in V(I_{W_1})$ if and only if $\lam \in G.\C^*\varpi_1$.  
\end{lemma}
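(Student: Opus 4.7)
The strategy is to produce enough zero-weight elements of $W_1 \cong L_\g(\varpi_2+\varpi_{n-2})$ to characterize the points of $V(I_{W_1}) \cap \h$. The crucial observation is that any element of $S^2(\g)$ evaluated at $\lambda \in \h$ only sees its projection $\pi$ to $S^2(\h) \subset S^2(\g)$, since root vectors $e_\alpha$ satisfy $(e_\alpha\,|\,\lambda)=0$ under the identification $\g\cong\g^*$. Therefore it suffices to exhibit concrete elements of $W_1$ whose zero-weight components in $S^2(\h)$ give enough polynomial equations to pin down $\lambda$.

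For the direction $(\Rightarrow)$, I would construct the zero-weight element
\[
u_1 := [f_{\theta_1},[f_\theta,v_1]] \in W_1
\]
by acting by lowering operators on the highest weight vector $v_1 = e_\theta e_{\theta_1} - e_\beta e_\gamma$. Since $\theta$ and $\theta_1$ are orthogonal one has $[f_{\theta_1},h_\theta]=0$, and $-\theta+\theta_1$ is not a root of $\sl_n$ so $[f_\theta,e_{\theta_1}]=0$; a routine Chevalley bracket computation then gives $u_1 = h_\theta h_{\theta_1} + R$, where $R$ is a sum of products of root vectors. Consequently, for $\lambda=\sum_i \lambda_i \eps_i \in \h$,
\[
u_1(\lambda) = \theta(\lambda)\theta_1(\lambda) = (\lambda_1-\lambda_n)(\lambda_2-\lambda_{n-1}).
\]
If $\lambda \in V(I_{W_1})\cap \h$ then $u_1(\lambda)=0$, and $W=S_n$-invariance of $V(I_{W_1})\cap\h$ (coming from $G$-invariance of $V(I_{W_1})$) yields $(\lambda_i-\lambda_j)(\lambda_k-\lambda_l)=0$ for every $4$-tuple of pairwise distinct indices $i,j,k,l$. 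An elementary combinatorial argument then forces $\lambda$, assumed nonzero with $\sum_i\lambda_i=0$, to have the shape $(a,b,\dots,b)$ up to permutation with $a+(n-1)b=0$: indeed, if $\lambda_1\ne\lambda_2$ then $\lambda_3=\cdots=\lambda_n=:b$, and then $(\lambda_1-\lambda_3)(\lambda_2-\lambda_4)=0$ forces $\lambda_1=b$ or $\lambda_2=b$. Thus $\lambda \in W.\C^*\varpi_1 = G.\C^*\varpi_1\cap\h$.

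For $(\Leftarrow)$, by $G$-invariance and homogeneity of $I_{W_1}$ it suffices to produce a single nonzero element of $V(I_{W_1})\cap \C\varpi_1$, equivalently to verify that $V(I_{W_1})\not\subset \mc{N}$. By Corollary~\ref{Co:C[z]} the scheme $V(I_{W_1})\cap \Slo_{f_\theta}\cong \Spec \C[z]$ is one-dimensional, whereas $\overline{\mathbb{O}_{min}}\cap \Slo_{f_\theta}=\{f_\theta\}$ is zero-dimensional; combined with Lemma~\ref{l1:A-theta1}, which asserts $V(I_{W_1})\cap\mc{N}\subset\overline{\mathbb{O}_{min}}$, this rules out $V(I_{W_1})\subset\mc{N}$. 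Hence $V(I_{W_1})$ contains a non-nilpotent element, whose semisimple part is nonzero and, after $G$-conjugation, lies in $V(I_{W_1})\cap(\h\setminus\{0\})$. Applying the $(\Rightarrow)$ conclusion and then homogeneity of $I_{W_1}$ gives $\C\varpi_1\subset V(I_{W_1})$, and $G$-invariance then gives $G.\C^*\varpi_1\subset V(I_{W_1})$. The main obstacle is the careful bracket computation for $u_1$; once the orthogonality $(\theta\,|\,\theta_1)=0$ and the non-rootness of $-\theta+\theta_1$ kill the two obstructing cross-terms, the surviving zero-weight piece on $\h$ is precisely $h_\theta h_{\theta_1}$ and the remainder of the proof is combinatorial bookkeeping.
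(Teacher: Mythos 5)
Your proof is correct, and it takes a genuinely different route from the paper's in both directions. For $(\Rightarrow)$, the paper pulls the full set of generators of the zero-weight space of $W_1$ from Per{\v{s}}e's computation in \cite{AP08} (the elements $p_{i,j}=h_ih_j$ for $j-i\geq 2$ and $q_i=h_i(h_{i-1}+h_i+h_{i+1})$) and runs a case analysis on the resulting system. You instead produce a single zero-weight element $u_1=[f_{\theta_1},[f_\theta,v_1]]$ by explicit lowering, observe (correctly: the orthogonality $(\theta|\theta_1)=0$ kills $[f_{\theta_1},h_\theta]$, and $-\theta+\theta_1$ not being a root kills $[f_\theta,e_{\theta_1}]$, while the remaining cross-terms all land outside $S^2(\h)$) that its restriction to $\h$ is $\theta\cdot\theta_1$, and then propagate the one equation over the $S_n$-orbit. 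The combinatorial endgame $(\lambda_i-\lambda_j)(\lambda_k-\lambda_l)=0$ for all quadruples of distinct indices does characterize $W.\C^*\varpi_1$ among nonzero traceless diagonals, and it correctly uses $n\geq 4$. Your approach buys independence from the explicit generator list in \cite{AP08}, at the cost of a bracket computation that has to be done carefully.

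For $(\Leftarrow)$, the paper simply substitutes $\varpi_1$ into $p_{i,j}$ and $q_i$ and checks they vanish; you instead argue indirectly that $V(I_{W_1})\not\subset\mc{N}$ by contrasting the one-dimensionality of $V(I_{W_1})\cap\Slo_{f_\theta}$ (from Corollary \ref{Co:C[z]}) with the zero-dimensionality of $\overline{\O_{min}}\cap\Slo_{f_\theta}$, using Lemma \ref{l1:A-theta1} to pass from $\mc{N}$ to $\overline{\O_{min}}$. This is logically sound and not circular --- Corollary \ref{Co:C[z]} and Lemma \ref{l1:A-theta1} are both established before this lemma and do not rely on it --- but it is heavier machinery than the paper's one-line verification; in exchange it avoids needing any explicit zero-weight element at all for this direction. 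One small point worth spelling out: the step from ``$V(I_{W_1})$ contains a non-nilpotent element'' to ``$V(I_{W_1})\cap(\h\setminus\{0\})\neq\emptyset$'' uses that $V(I_{W_1})$, being a $G$-stable closed cone, contains the semisimple part of each of its elements (degenerate by the contracting $\C^*$ generated by an $\ad$-semisimple $h$ centralizing $x_s$, exactly as in the proof of Lemma \ref{Lem:sheet}(2)); you gesture at this but it deserves a sentence.
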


\begin{proof} 
Set for $i,j \in \{1,\ldots,n-2\}$, with $j-i \geq 2$,
$$p_{i,j} := h_i h_j,$$ and for $i \in \{2,\ldots,n-2\}$, 
$$q_i := h_i(h_{i-1}+h_i +h_{i+1}).$$ 
According to \cite{AP08}, the zero weight space of $W_1$ is 
generated by the elements $p_{i,j}$ and $q_i$. 
Clearly, 
$p_{i,j}(\varpi_1)=0$ for $j-i\geq 2$ and $q_i(\varpi_1)=0$ for any $i\in \{2,\ldots,n-2 \}$. 
So $\varpi_1 \in V(I_{W_1})$, and whence 
$G.\C^*\varpi_1 \subset V(I_{W_1})$ since 
$V(I_{W_1})$ is a 
$G$-invariant cone. 
This proves the converse implication. 

For the first implication, 
let $\lam$ be a nonzero semisimple element of $\g$, 
and assume that $\lam \in V(I_{W_1})$. 
Since $V(I_{W_1})$ is $G$-invariant, 
we can assume that $\lam\in\h$. 
Then write 
$$\lam= \sum_{i=1}^{n-1} \lam_i \varpi_i, \qquad \lam_i \in \C.$$
Since $p_{i,j}(\lam)=q_i(\lam)=0$ for all $i,j$, we get 
\begin{align} \label{eq:s} 
&\lam_i \lam_j = 0, \quad   i,j \in \{1,\ldots,n-2 \}, \;  j-i \geq 2, & \\ \label{eq:s2}
&\lam_i(\lam_{i-1}+\lam_i +\lam_{i+1}) =0, \quad  i=2,\ldots,n-2 . & 
\end{align}
Since $\lam$ is nonzero, $\lam_k \not =0$ for some $k \in \{1,\ldots,{n-1 }\}$. 

If $k \not\in \{1,{n-1 }\}$, 
then by (\ref{eq:s}), $\lam_j=0$ for $|j-k|\geq 2$. 
So by (\ref{eq:s2}), either $\lam_{k-1}+\lam_k=0$ or $\lam_k + \lam_{k+1}=0$ 
since $\lam_{k-1}\lam_{k+1}=0$ by (\ref{eq:s}). 

If $k=1$, then by (\ref{eq:s}), $\lam_j=0$ for $j\geq 3$. 
So by (\ref{eq:s2}), $\lam_2(\lam_{1}+\lam_2)=0$. 

If $k={n-1 }$, then by (\ref{eq:s}), $\lam_j=0$ for $j\leq n-3$. 
So by (\ref{eq:s2}), $\lam_{n-2 }(\lam_{n-2 }+\lam_{{n-1}})=0$.

\noindent
 We deduce that
 {\begin{align*}
\lam \in & \; \C^* \varpi_1 \cup \C^* \varpi_{n-1} \cup \bigcup_{1 
\leq j \leq n-2} \C^*(\varpi_i - \varpi_{i+1}) &\\ 
&\qquad  =  
\bigcup_{1 \leq j \leq n-1} \C^*(\eps_1+\cdots + \eps_{i-1} - (n-1) \eps_{i+1}  
+ \eps_{i+2} +\cdots +\eps_{n}).&
  \end{align*}} 
All the above weights are conjugate to $t \varpi_1$ for some $t \in \C^*$ 
under the Weyl group of $(\g,\h)$ 
which is the group of permutations of $\{\eps_1,\ldots,\eps_{n}\}$, 
whence the expected implication.  
\end{proof}

The following assertion follows immediately from
Lemma \ref{Lem:sheet},
Lemma \ref{l1:A-theta1}
and Lemma \ref{l2:A-theta1}.
 \begin{Pro}\label{Pro:{Pro:A-zero-1-pre}}
  We have
  $
  V(I_{W_1})=\overline{\SS_{\l_{1}}}$.
\end{Pro}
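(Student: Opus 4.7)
The plan is to apply Lemma \ref{Lem:sheet}(2) with the single Dixmier sheet $\SS_{\l_1}$, for which $\z(\l_1) = \C\varpi_1$ is one-dimensional and $\on{Ind}_{\l_1}^\fing(0) = \O_{min}$. The hypothesis that $X := V(I_{W_1})$ be $G$-invariant, conic, and Zariski closed is immediate, since $I_{W_1}$ is the homogeneous ideal of $\C[\fing^*] = S(\fing)$ generated by the irreducible $G$-submodule $W_1 \subset S^2(\fing)$.

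The two remaining hypotheses of Lemma \ref{Lem:sheet}(2) are already prepared by the two preceding lemmas. The nilpotent condition $X \cap \mc{N} \subset \overline{\on{Ind}_{\l_1}^\fing(0)} = \overline{\O_{min}}$ is exactly the content of Lemma \ref{l1:A-theta1}. The semisimple condition, which after accounting for the Weyl group action (absorbed by the $G$-invariance of $X$) amounts to $X \cap \h \subset W \cdot \C\varpi_1$, is the forward implication of Lemma \ref{l2:A-theta1}; its proof moreover records the reverse inclusion $\C\varpi_1 \subset V(I_{W_1})$, obtained by plugging $\varpi_1$ into the zero-weight generators $p_{i,j}$ and $q_i$ of $W_1$.

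Invoking Lemma \ref{Lem:sheet}(2) then yields $V(I_{W_1}) = \overline{\SS_{\l_1}}$. There is no substantive obstacle at this step, which is why the paper can call the conclusion ``immediate''. The real work has already been done elsewhere: the Slodowy-slice calculation in Lemma \ref{l1:A-theta1} uses the explicit form of $v_1$ together with the $\mathfrak{sl}_2$-triple $(e,h,f)$ with $f = f_\beta + f_\gamma$ and $h = \varpi_2 + \varpi_{n-2}$ to show that $v_1$ becomes a nonzero constant modulo $J_\chi$, ruling out any nilpotent orbit above $\O_{min}$; and the root-system bookkeeping in Lemma \ref{l2:A-theta1} uses the quadratic relations coming from the zero-weight space of $W_1$ to force any nonzero semisimple zero into the $W$-orbit of $\C^*\varpi_1$. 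The present proposition is simply the packaging of these two inputs through Lemma \ref{Lem:sheet}(2).
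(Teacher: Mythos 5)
Your proposal is correct and takes essentially the same route as the paper: the paper's own proof is the one-line remark that the proposition follows immediately from Lemma \ref{Lem:sheet}, Lemma \ref{l1:A-theta1} and Lemma \ref{l2:A-theta1}, and your write-up simply unpacks how the hypotheses of Lemma \ref{Lem:sheet}(2) are verified. One small caution on phrasing: the hypothesis of Lemma \ref{Lem:sheet}(2) requires the equality $X\cap\h=\bigcup_i\C\lam_i$, and Lemma \ref{l2:A-theta1} shows that $V(I_{W_1})\cap\h$ consists of several lines (the $W(\g,\h)$-conjugates of $\C\varpi_1$), not $\C\varpi_1$ alone; one therefore takes the $\lam_i$ to range over these $W$-conjugates, whose associated Levi subalgebras are all $G$-conjugate to $\l_1$, so the resulting union of sheet closures collapses to $\overline{\SS_{\l_1}}$. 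Your remark that the Weyl action is ``absorbed by $G$-invariance'' gestures at this, but it is cleaner to state it as a choice of finitely many $G$-conjugate Levis in Lemma \ref{Lem:sheet}(2) rather than as a reduction of $X\cap\h$ to a single line.
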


\begin{Rem}  \label{rem:Gar}
The zero locus of the Casimir element $\Omega$ in $V(I_{W_1})$ 
 is $\overline{\O_{min}}$.
Indeed, by Lemma \ref{Lem:sheet} the zeros 
locus of $\Omega$ in $\overline{\SS_{\l_{1}}}$ is contained in the nilpotent cone 
since $\Omega (\varpi_1) \not=0$. 
The statement follows 
since $\overline{\O_{min}}$ has codimension one in $\overline{\SS_{\mf{l}_1}}$.  

As a consequence, the zero locus of the ideal generated by $\C\Omega\+ 
W_1\subset S^2(\fing)$ is $\overline{\O_{min}}$. 
This latter fact is known by \cite{Gar} using a different approach. 
\end{Rem}

\begin{Pro} \label{Pro:prime} 
 The ideal $\IS_{W_1}$ is prime,
 and therefore it is the defining ideal of $\overline{\SS_{\l_1}}$. 
\end{Pro}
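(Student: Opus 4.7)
The plan is to invoke Lemma \ref{lem:prime-criterion} directly with $I = \IS_{W_1}$ and the Dixmier sheet $\SS_{\l_1}$, which has rank one since $\z(\l_1) = \C\varpi_1$. Fix a parabolic $\mf{p} = \l_1 \oplus \mf{p}_u$ with Levi factor $\l_1$. The unique nilpotent orbit of $\SS_{\l_1}$ is $\on{Ind}_{\l_1}^\g(0) = \O_{\min}$, so a regular element $e \in (\mf{p}_u)^{\reg}$ can be chosen conjugate to $e_\theta$; embedding it into an $\sl_2$-triple $(e,h,f)$ with $h \in \h$, the $\ad h$-grading of $\g$ is supported in $\{-2,-1,0,1,2\}$ because $e$ is minimal, so in particular $\g(h,i) = 0$ for $i > 2$. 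This gives hypothesis (1) of the lemma.

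The remaining four hypotheses are already in hand. Hypothesis (2), the equality $\on{supp}_{\C[\g^*]}(\C[\g^*]/\IS_{W_1}) = \overline{\SS_{\l_1}}$, is Proposition \ref{Pro:{Pro:A-zero-1-pre}}. Hypothesis (3), that $\IS_{W_1} + I_{\Omega}$ is the defining ideal of $\overline{\O_{\min}}$, is Garfinkle's theorem \cite{Gar}, reproduced in Remark \ref{rem:Gar}. Hypothesis (4), the identification $H_f(\C[\g^*]/\IS_{W_1}) \cong \C[z]$ as algebras, is Corollary \ref{Co:C[z]}, which itself rests on the identification $\W_{-1}(\g, f_\theta) \cong M(1)$ of Theorem \ref{Th:Heisenberg}. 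Hypothesis (5), $\Omega(\varpi_1) \neq 0$, is immediate because $\Omega$ restricts to the nondegenerate inner product on $\h$ and $\varpi_1 \neq 0$.

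Given these inputs, Lemma \ref{lem:prime-criterion} yields $\IS_{W_1} = \sqrt{\IS_{W_1}}$. Combined once more with Proposition \ref{Pro:{Pro:A-zero-1-pre}}, which asserts that $V(\IS_{W_1}) = \overline{\SS_{\l_1}}$ and hence that $\sqrt{\IS_{W_1}}$ is the (prime) defining ideal of the irreducible variety $\overline{\SS_{\l_1}}$, this gives exactly the stated conclusion: $\IS_{W_1}$ is prime and coincides with the defining ideal of $\overline{\SS_{\l_1}}$.

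In effect the proof is a bookkeeping reduction to Lemma \ref{lem:prime-criterion}; the only non-trivial external input is Garfinkle's result on generators of the defining ideal of $\overline{\O_{\min}}$ needed for hypothesis (3). Everything else has been prepared earlier in the section: the support computation from the weighted Dynkin diagram argument, the semisimple slice analysis through the fundamental weights, the $W$-algebra reduction $\W_{-1}(\g, f_\theta) \cong M(1)$, and the rank-one Dixmier structure of $\SS_{\l_1}$. Should condition (3) ever need an independent proof in a different type, the natural fallback is to compute $\IS_{W_1} + I_\Omega$ modulo $J_\chi$ in the style of Lemma \ref{l1:A-theta1}, but here Garfinkle's theorem makes that detour unnecessary.
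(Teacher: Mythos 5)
Your proof is correct and matches the paper's almost verbatim: reduce to Lemma~\ref{lem:prime-criterion} with the $\sl_2$-triple $(e_\theta, h_\theta, f_\theta)$, then check the five hypotheses via Proposition~\ref{Pro:{Pro:A-zero-1-pre}}, Garfinkle's theorem, Corollary~\ref{Co:C[z]}, and $\Omega(\varpi_1)\neq 0$. One small citation caveat: Remark~\ref{rem:Gar} only records the \emph{set-theoretic} statement that the zero locus of the ideal generated by $\C\Omega\oplus W_1$ is $\overline{\O_{\min}}$, whereas hypothesis~(\ref{old2}) needs the stronger \emph{scheme-theoretic} statement that $\IS_{W_1}+I_\Omega$ is the (radical) defining ideal; that stronger fact is what \cite[Corollary~2 and Theorem~1, Chap.~V]{Gar} supplies, and it is your direct appeal to Garfinkle, not the remark, that carries this step.
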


\begin{proof} 
We apply Lemma \ref{lem:prime-criterion} to the ideal $I:=I_{W_1}$. 
First of all, $\l_1$ and $(e_\theta,h_\theta,f_\theta)$ satisfy the conditions 
of Lemma \ref{lem:Kat} since $\z(\l_1)=\C\varpi_1$ and $\g(h_\theta,i)=0$ 
for $i>2$. It remains to verify that the conditions (1),(2),(3),(4) of Lemma \ref{lem:prime-criterion} 
are satisfied. 

Condition (1) is satisfied by Proposition~\ref{Pro:{Pro:A-zero-1-pre}}. 
According to \cite[Corollary~2 and Theorem~1, Chap.\,V]{Gar}, 
the ideal $I+I_\Omega$ is the defining 
 ideal of $\overline{\O_{min}}$. So condition~(2) is satisfied.
The condition (3) is satisfied too,
 by Corollary \ref{Co:C[z]}.
At last, because $\Omega(\varpi_1)\not=0$, condition (4) is satisfied. 
In conclusion, by Lemma \ref{lem:prime-criterion}, $I=I_{W_1}$ is prime.
\end{proof}

%
Let $\mf{p}=\mf{l}_1\oplus \mf{p}_u$ be a parabolic subalgebra of $\g$ 
with nilradical $\mf{p}_u$. 
It is a maximal parabolic subalgebra of $\fing$.
Let $P$ be the  
connected parabolic subgroup of $G$ with Lie algebra $\mf{p}$. 
Set
\begin{align*}
Y:=G/[P,P].
\end{align*}
As explained in  \S\ref{sec:sheet},
we have
$V(\gr \mc{J}_Y)=\overline{\SS_{\mf{l}_1}}$,
where $\mc{J}_Y=\ker \psi_Y$
and $\psi_Y$ is the algebra homomorphism
$U(\fing)\ra \mc{D}_Y$.

The variety $Y$ is a quasi-affine variety,
isomorphic to $\C^n\backslash \{0\}$.
It follows that the natural embedding
$Y=\C^n\backslash\{0\}\hookrightarrow \C^n$
induces an isomorphism
$\mc{D}_{\C^n}\isomap \mc{D}_Y$.
In this realization,
the map $\psi_Y$ 
is described as follows. 
 \begin{align*}
  \psi_Y:U(\fing)&\ra \mc{D}_{\C^n}=\langle z_1,\dots,
  z_n,\frac{\partial}{\partial z_1}, \dots
  \frac{\partial}{\partial z_n} \rangle,\\
  e_{i,j}&\mapsto -z_j \frac{\partial}{\partial z_i}
  \quad
  (i\ne j),\\
  h_i&\mapsto -z_i \frac{\partial}{\partial z_i}
  +z_{i+1} \frac{\partial}{\partial z_{i+1}}.
 \end{align*}
where $e_{i,j}$ is as before the standard elementary matrix element 
and $h_i=e_{i,i}-e_{i+1,i+1}$.

This has the following chiralization:
Let $\mc{D}^{ch}_{\C^n}$
 be the
$\beta\gamma$-system of rank $n$, 
generated by
fields
$\beta_1(z),\dots ,\beta_n(z)$,
$\gamma_1(z),\dots,\gamma_n(z)$,
satisfying OPE
\begin{align*}
 \gamma_i(z)\beta_j(w)\sim \frac{\delta_{ij}}{z-w},
 \quad \gamma_i(z)\gamma_j(w)\sim 0,\quad \quad \beta_i(z)\beta_j(w)\sim 0,
\end{align*}
see e.g. \cite{Kac98}. 
We have
\begin{align*}
 R_{\mc{D}^{ch}_{\C^n}}\cong \C[T^* \C^n],
 \qquad A(\mc{D}^{ch}_{\C^n})\cong \mc{D}_{\C^n},
\end{align*}
where $\mc{D}_{\C^n}$ denotes the Weyl algebra of rank $n$,
which is identified with the algebra of differential operators on the
affine space $\C^n$.
In particular,
$\mc{D}^{ch}_{\C^n}$ is a chiralization of
$\mc{D}_{\C^n}$.
 \begin{Lem}
  The following gives a vertex algebra homomorphism.
 \begin{align*}
  \hat{\psi}_Y
  : V^{-1}(\fing)&\rightarrow \mc{D}_{\C^n}^{ch},\\
  e_{i,j}(z)&\mapsto -:\beta_j(z)\gamma_i(z): \quad (i\ne j),\\
  h_i(z)&\mapsto -:\beta_i(z)\gamma_i(z):+:\beta_{i+1}(z)\gamma_{i+1}(z):.
 \end{align*}
 \end{Lem}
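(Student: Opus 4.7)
The plan is to invoke the universal property of $V^{-1}(\fing)$: any linear map $\fing \to W$, $x \mapsto X(z)$, with values in the fields of a vertex algebra $W$ extends uniquely to a vertex algebra homomorphism $V^k(\fing) \to W$ provided the OPEs
\begin{align*}
X(z)\, Y(w) \sim \frac{\widehat{[x,y]}(w)}{z-w} + \frac{k(x|y)}{(z-w)^2}
\end{align*}
hold for all $x,y \in \fing$. So it suffices to check these OPEs in $\mc{D}^{ch}_{\C^n}$ with $k=-1$ and the normalized invariant form $(x|y) = \on{tr}(xy)$ on $\fing=\sl_n$ (this normalization being exactly $1/(2h^\vee)$ times the Killing form, since $h^\vee=n$).

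The verification proceeds by Wick's theorem on the generating $\beta\gamma$-fields, splitting into three cases: two off-diagonal matrix units; one off-diagonal and one Cartan generator; and two Cartan generators. In the first case one computes directly
\begin{align*}
{:}\beta_j\gamma_i{:}(z)\,{:}\beta_l\gamma_k{:}(w) \sim \frac{\delta_{il}\,{:}\beta_j\gamma_k{:}(w) - \delta_{jk}\,{:}\beta_l\gamma_i{:}(w)}{z-w} - \frac{\delta_{il}\delta_{jk}}{(z-w)^2},
\end{align*}
and one recognizes the single-pole term as $\hat\psi_Y([e_{i,j}, e_{k,l}])(w)$ via $[e_{i,j}, e_{k,l}] = \delta_{jk}\, e_{i,l} - \delta_{il}\, e_{k,j}$, while the double-pole coefficient equals $(-1)\cdot (e_{i,j}|e_{k,l})$ because $(e_{i,j}|e_{k,l}) = \on{tr}(e_{i,j}e_{k,l}) = \delta_{il}\delta_{jk}$. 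A parallel Wick computation handles the mixed OPE of $\hat\psi_Y(e_{i,j})$ with $\hat\psi_Y(h_k)$: the double pole vanishes since $\on{tr}(e_{i,j}h_k)=0$, and the single-pole residue reproduces $\alpha_k(e_{i,j})\,\hat\psi_Y(e_{i,j})(w)$. Finally, for the Cartan-Cartan OPE, only a double-pole term survives, with coefficient $-(h_i|h_j)=-A_{ij}$ where $A$ is the Cartan matrix of $A_{n-1}$, matching the expected level-$(-1)$ central term.

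The main obstacle is pure bookkeeping: one must carefully track the signs introduced by the minus signs in the definition of $\hat\psi_Y$, the bosonic normal-ordering convention (especially to ensure that the apparent single-pole contributions in the Cartan-Cartan OPE cancel, using ${:}\beta_a\gamma_b{:}={:}\gamma_b\beta_a{:}$ and symmetrization over $a,b$), and the normalization of $(\cdot|\cdot)$. Once the three OPE cases are confirmed, the universal property of $V^{-1}(\fing)$ immediately produces the desired vertex algebra homomorphism $\hat\psi_Y$; passing to $R_V$ one recovers $\psi_Y$ at the level of Zhu's $C_2$-algebras, which provides a useful sanity check.
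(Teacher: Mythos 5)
The paper states this lemma without proof, treating the verification as routine; your argument via the universal property of $V^{-1}(\g)$ and a Wick-theorem check of the three OPE types is precisely the standard (and essentially the only) way to fill in that gap, and your sign bookkeeping is correct, including the identifications $(x|y)=\operatorname{tr}(xy)=\tfrac{1}{2h^\vee}\kappa(x,y)$ on $\sl_n$ and the level-$(-1)$ double-pole coefficient $-\delta_{il}\delta_{jk}=(-1)(e_{i,j}|e_{k,l})$. One small notational slip: in the mixed case you should write the single-pole eigenvalue as $(\eps_i-\eps_j)(h_k)$ rather than $\alpha_k(e_{i,j})$ — the root of $e_{i,j}$ evaluated on the coroot $h_k$ — but the underlying computation is fine.
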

Note that   the map $  \hat{\psi}_Y$ induces 
an algebra  homomorphism
  $U(\fing)=A(V^{-1}(\fing))\ra
  A(\mc{D}_{\C^n}^{ch})=\mc{D}_{\C^n}$,
  which 
  is identical to $\psi_Y$.

  \begin{Th}[\cite{AdaPer14}]\label{Th:AP}
  The map $\hat{\psi}_Y$ factors through the
  vertex algebra embedding
  \begin{align*}
   V_{-1}(\fing)\hookrightarrow \mc{D}_{\C^n}^{ch}.
  \end{align*}
  \end{Th}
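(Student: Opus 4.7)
The plan is to show that the kernel of $\hat{\psi}_Y$ contains the maximal graded ideal of $V^{-1}(\mathfrak{sl}_n)$, and then observe that the induced map on the simple quotient is automatically injective by simplicity.

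By Theorem \ref{Th:irreduciblity}, the maximal graded ideal of $V^{-1}(\mathfrak{sl}_n)$ equals $U(\widehat{\g})\sigma(v_1)$, so it suffices to verify that $\hat{\psi}_Y(\sigma(v_1))=0$ in $\mathcal{D}^{ch}_{\C^n}$. Since $\theta+\theta_1=\eps_1+\eps_2-\eps_{n-1}-\eps_n$ and $\beta+\gamma=\eps_1+\eps_2-\eps_{n-1}-\eps_n$ are not roots of $\mathfrak{sl}_n$, one has $[e_\theta,e_{\theta_1}]=0$ and $[e_\beta,e_\gamma]=0$; applying the bracket relation $[x(-1),y(-1)]=[x,y](-2)$ shows that the corresponding modes commute on $V^{-1}(\mathfrak{sl}_n)$, so after applying the symmetrization $\sigma_2$ of \eqref{eq:sigma} one obtains the clean form
\begin{align*}
\sigma(v_1)=e_\theta(-1)e_{\theta_1}(-1)\mathbf{1}-e_\beta(-1)e_\gamma(-1)\mathbf{1}=e_{1,n}(-1)e_{2,n-1}(-1)\mathbf{1}-e_{1,n-1}(-1)e_{2,n}(-1)\mathbf{1}.
\end{align*}

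Next I would apply the formula for $\hat{\psi}_Y$ term by term. All four root vectors involved are sent to $\pm :\beta_j\gamma_i:$ with indices drawn only from $\{1,2,n-1,n\}$, which are four distinct integers precisely because $n\geq 4$. Consequently each pair among the four composite fields $:\beta_n\gamma_1:,\ :\beta_{n-1}\gamma_2:,\ :\beta_{n-1}\gamma_1:,\ :\beta_n\gamma_2:$ has trivial OPE, since the only nontrivial contractions $\gamma_i(z)\beta_j(w)\sim\delta_{ij}/(z-w)$ among the relevant generators never pair a $\gamma_i$ with a $\beta_i$ inside any of these products. Because the underlying generators pairwise commute, iterated normal ordering is both associative and commutative, so both $(-:\beta_n\gamma_1:)_{(-1)}(-:\beta_{n-1}\gamma_2:)_{(-1)}|0\rangle$ and $(-:\beta_{n-1}\gamma_1:)_{(-1)}(-:\beta_n\gamma_2:)_{(-1)}|0\rangle$ collapse to the single monomial $:\beta_n\beta_{n-1}\gamma_1\gamma_2:$, yielding $\hat{\psi}_Y(\sigma(v_1))=0$.

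The induced vertex algebra homomorphism $V_{-1}(\mathfrak{sl}_n)\to\mathcal{D}^{ch}_{\C^n}$ has kernel equal to a vertex algebra ideal in the simple vertex algebra $V_{-1}(\mathfrak{sl}_n)$; since the vacuum is sent to the vacuum, this ideal is not the whole algebra and is therefore zero, proving injectivity. The main technical obstacle is the second step: one must justify carefully that the two nested normal-ordered expressions really do coincide with the fully normal-ordered four-field monomial, which is a Wick-style collapse that uses the hypothesis $n\geq 4$ in an essential way (for $n=3$ one of the index pairs collides and the computation fails, matching the fact that $V_{-1}(\mathfrak{sl}_3)$ has different structure).
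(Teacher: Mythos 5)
Your proof is correct, and since the paper cites \cite{AdaPer14} for this statement rather than supplying its own argument, you are in effect giving a self-contained proof within the framework of the paper. Your route leverages Theorem~\ref{Th:irreduciblity} (proven earlier via the minimal $W$-algebra $\W^{-1}(\fing,f_\theta)$ and the Heisenberg collapse), so that the whole claim reduces to a single closed computation: $\hat{\psi}_Y(\sigma(v_1))=0$. The two decisive observations are (a)~$[e_\theta,e_{\theta_1}]=[e_\beta,e_\gamma]=0$ because $\theta+\theta_1=\beta+\gamma$ is not a root, so the symmetrization trivializes and $\sigma(v_1)=e_{1,n}(-1)e_{2,n-1}(-1)\mathbf{1}-e_{1,n-1}(-1)e_{2,n}(-1)\mathbf{1}$; and (b)~the four fields $\beta_{n-1},\beta_n,\gamma_1,\gamma_2$ are pairwise mutually commuting exactly because the index sets $\{1,2\}$ and $\{n-1,n\}$ are disjoint when $n\ge 4$, so both nested normal-ordered products flatten to the same degree-two Wick monomial $:\beta_n\beta_{n-1}\gamma_1\gamma_2:$ and their difference vanishes. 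Combined with simplicity of $V_{-1}(\fing)$ (vacuum maps to vacuum, so the induced map's kernel is a proper ideal, hence zero), this gives the embedding. By contrast, the cited proof of Adamovi\'{c}--Per\v{s}e establishes the embedding externally, by analyzing the decomposition of $\mc{D}^{ch}_{\C^n}$ directly as a module over the image of $V^{-1}(\sl_n)$ and a commuting Heisenberg algebra, without access to Theorem~\ref{Th:irreduciblity}. What your version buys is economy: once the maximal-submodule statement is in hand, the embedding is a one-line Wick check, and your footnote on why $n=3$ fails (index collision) is a good sanity test consistent with $\theta_1$ degenerating there.
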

 By Theorem \ref{Th:AP},
 $  \hat{\psi}_Y$  induces a homomorphism
\begin{align}
 A(V_{-1}(\fing))\ra A(\mc{D}_{\C^n}^{ch})=\mc{D}_{\C^n}.
 \label{eq:Zhu-homo}
\end{align}
\begin{Th}\label{Th:Zhu-1}
 \begin{enumerate}
  \item
       The ideal  $\gr \JJ_Y\subset \C[\fing^*]$ 
  is prime and hence 
      it  is the defining ideal
       of $\overline{\mathbb{S}_{\mf{l}_1}}$.
 \item The natural homomorphism
       $R_{V_{-1}(\fing)}\ra \gr A(V_{-1}(\fing))$ is an isomorphism.
        \item  The map $\hat{\psi}_Y$ 
        induces
	an embedding
	\begin{align*}
	  A(V_{-1}(\fing))\hookrightarrow \mc{D}_{\C^n}.
	\end{align*}
 \end{enumerate}\end{Th}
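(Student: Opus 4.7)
The plan is to prove all three parts simultaneously by sandwiching $\gr A(V_{-1}(\fing))$ between two copies of $\C[\fing^*]/I_{W_1}$ and showing the sandwich collapses.

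First, I would apply the Zhu functor to the vertex algebra embedding $V_{-1}(\fing)\hookrightarrow \mc{D}^{ch}_{\C^n}$ of Theorem \ref{Th:AP} to obtain an algebra homomorphism $\bar\varphi\colon A(V_{-1}(\fing))\to A(\mc{D}^{ch}_{\C^n})=\mc{D}_{\C^n}$. Since $A(V_{-1}(\fing))$ is a quotient of $A(V^{-1}(\fing))=U(\fing)$ and the composition $U(\fing)\twoheadrightarrow A(V_{-1}(\fing))\xrightarrow{\bar\varphi}\mc{D}_{\C^n}$ is the map $\psi_Y$ (as noted after \eqref{eq:Zhu-homo}), the image of $\bar\varphi$ is exactly $\psi_Y(U(\fing))=U(\fing)/\JJ_Y$. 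Hence $\bar\varphi$ factors through a surjection $A(V_{-1}(\fing))\twoheadrightarrow U(\fing)/\JJ_Y$.

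Second, passing to associated graded algebras and composing with the canonical Poisson surjection $\eta\colon R_{V_{-1}(\fing)}\twoheadrightarrow \gr A(V_{-1}(\fing))$ from \S\ref{sec:Zhu}, I obtain a chain of surjective graded algebra homomorphisms
\begin{align*}
\C[\fing^*]/I_{W_1}=R_{V_{-1}(\fing)}\twoheadrightarrow \gr A(V_{-1}(\fing))\twoheadrightarrow \gr(U(\fing)/\JJ_Y)=\C[\fing^*]/\gr\JJ_Y.
\end{align*}
Tracking a generator $x\in\fing$ through the chain shows the composite is the tautological quotient of $\C[\fing^*]$, so in particular $I_{W_1}\subseteq \gr\JJ_Y$. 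Conversely, by the Borho--Brylinski result recalled in \S\ref{sec:sheet}, $\sqrt{\gr\JJ_Y}$ is the defining ideal of $\overline{\SS_{\l_1}}$, which equals $I_{W_1}$ by Proposition \ref{Pro:prime}, so $\gr\JJ_Y\subseteq I_{W_1}$. Therefore $\gr\JJ_Y=I_{W_1}$, which is prime, proving~(1).

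Third, since the composition in the chain above is the identity on $\C[\fing^*]/I_{W_1}$, both surjections must be isomorphisms, which yields~(2). In particular $\gr\bar\varphi$ is an isomorphism, and the standard filtered-graded argument (a surjection of filtered algebras whose associated graded map is an isomorphism is itself an isomorphism) forces $\bar\varphi$ to be an isomorphism $A(V_{-1}(\fing))\isomap U(\fing)/\JJ_Y\hookrightarrow \mc{D}_{\C^n}$, giving~(3). The only delicate point is bookkeeping with three different filtrations (on $V_{-1}(\fing)$, on $A(V_{-1}(\fing))$, and the PBW filtration on $U(\fing)$) and verifying that the Poisson homomorphism $\eta$ and the principal-symbol map on $\bar\varphi$ are compatible on the generators $x(-1)\mapsto x$; once this is checked the argument reduces to the commutative-algebra fact above.
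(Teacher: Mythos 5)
Your proposal is correct and follows essentially the same route as the paper. The paper phrases the argument directly as the chain of ideal inclusions $I_{W_1}\subset \gr \JJ_{W_1}\subset \gr \JJ_Y\subset \sqrt{\gr \JJ_Y}=I_{W_1}$ (using $A(V_{-1}(\fing))=U(\g)/\JJ_{W_1}$ and $\gr A(V_{-1}(\fing))=\C[\g^*]/\gr\JJ_{W_1}$), whereas you package the same content as a chain of surjections of quotient algebras $\C[\g^*]/I_{W_1}\twoheadrightarrow \gr A(V_{-1}(\fing))\twoheadrightarrow \C[\g^*]/\gr\JJ_Y$ that collapses; both hinge on Theorem~\ref{Th:AP} to get the map into $\mc D_{\C^n}$, on the Borho--Brylinski identification of $\sqrt{\gr\JJ_Y}$ with the defining ideal of $\overline{\SS_{\l_1}}$, and on Proposition~\ref{Pro:prime} to identify that defining ideal with $I_{W_1}$.
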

\begin{proof}
 We have
 \begin{align*}
  A(V_{-1}(\fing))=U(\fing)/\JJ_{W_1},
 \end{align*}
 where $\JJ_{W_1}$ is 
 the two-sided ideal of $U(\fing)$ generated by
$W_1$.
  By \eqref{eq:Zhu-homo},
 we have the inclusion
 $\mc{J}_{W_1}\subset \mc{J}_Y=\ker \psi_Y$.
Also,
 ${\rm gr}\JJ_{W_1}\supset \IS_{W_1}$ by 
 definition, and
 $\sqrt{\JJ_Y}=I_{W_1}$
 by Proposition \ref{Pro:prime}.
Thus,
 \begin{align*}
  I_{W_1}\subset \gr \JJ_{W_1}\subset \gr \JJ_Y\subset \sqrt{\gr \JJ_Y}
  =I_{W_1}.
 \end{align*}
It follows that all the above inclusions are equalities:
\begin{align*}
  I_{W_1}= \gr \JJ_{W_1}= \gr \JJ_Y= \sqrt{\gr \JJ_Y}.
\end{align*}
 The statements
 (1), (2) and (3) follow from
 the third, the first and
 the second equality, respectively.
\end{proof}

\begin{Rem}
 Adamovi{\'c} and Per{\v{s}}e \cite{AdaPer14}
 showed that
 $\mc{D}_{\C^n}^{ch}$ decomposes into
 a direct sum of simple highest weight representations
 of $V_{-1}(\fing)$.
 From their results, it is possible to obtain an explicit character formula of
 $V_{-1}(\fing)$ as in \cite{KacWak01}. 
 In view of Theorem \ref{Th:main1},
 this gives the Hilbert series of
 $J_{\infty}\overline{\SS_{\l_1}}$. 
\end{Rem}

\begin{Lem} \label{Lem:hypothesis}
Assume that $V=V_{-1}(\mf{sl}_n(\C))$. 
Then the hypothesis of Theorem \ref{Th:arc} are satisfied. 
\end{Lem}
\begin{proof}
Let $x \in \g^* \cong \g$ be such that $X_V= \overline{\mathbb{S}_{min}}=\overline{G.\C^* x}$.  

We have a natural map $\varphi_x \colon \C^* \to X_V$, $t \mapsto t.x$. 
It gives 
a morphism from $R_V$ to $\mc{O}(\C^*)=\C[t,t^{-1}]$ which induces a morphism 
from $J_{\infty} R_V$ to $\mc{O}(J_{\infty} \C^*)=
\C[t_0,t_0^{-1},t_1,t_2,\ldots]$. 
Note that $J_{\infty} \C^*$ is nothing but $(\pi_{\infty,0}^{\C})^{-1}(\C^*)$, 
where $\pi_{\infty,0}^{\C}$ is the canonical projection from $J_{\infty} \C$ 
to $\C$. 


On the other hand, we have a surjective Poisson vertex algebra morphism 
$$\rho \colon J_{\infty} R_V \twoheadrightarrow \gr V.$$ 
The question is to know whether the morphism 
$J_{\infty} \varphi_x^* \colon J_{\infty} R_V \to \mc{O}(J_{\infty} \C^*)$ 
factorizes through $\rho$. Indeed, if so, then it implies that 
$J_{\infty} \C^*.x$ is contained in $SS(V)_{\on{red}}$. 
We will prove that it is true for some $x \in \g$ such that  
$X_V=\overline{G.\C^* x}$. 


Let $\hat{\mu}$ be the morphism
$$\hat{\mu} \colon \gr V \longrightarrow \C[J_{\infty} T^*\C^n]$$ 
induced 
from the embedding from $V=V_{-1}(\sl_n)$ to $\mc{D}^{ch}_{\C^n}$ 
(see Theorem \ref{Th:AP}). Denote 
by $\mu$ the restriction 
to $R_V \subset \gr V$ of $\hat{\mu}$.
Note that $\mu$ is the morphism 
from $$R_V= \C[\g]/I$$ to $\C[T^* \C^n]$ such that the image 
of $e_{i,j} + I  \in R_V$ is $ - \beta_j \gamma_i$ and  the 
image of $h_i +I \in R_V$ is $-\beta_i \gamma_i +\beta_{i+1}\gamma_{i+1}$, 
with $\beta_1,\ldots,\beta_n,\gamma_1,\ldots,\gamma_n$ 
the natural coordinates on $T^* \C^n \cong \C^{2n}$. 
Let 
$W\subset \mc{O}(T^*\C^n)$ be the image of  $\mu$.
Note that we have
$ J_{\infty}W\subset \mc{O}(J_{\infty}T^*\C^n)$.
It suffices to show that for some $x$ such that $X_V=\overline{G.\C^* x}$, 
there is a well-defined morphism $\nu_x$ from $W$ to $\mc{O}(\C^*)$ 
such that $\nu_x \circ \mu= \varphi_{x}^*$, that is, such that the following diagram 
commutes:
\begin{center}
\hspace{2cm} \xymatrix{ & \mc{O}(\C^*) & \\
R_V \ar[ru]^{\varphi_{x}^*} \ar[rr]^\mu  && W 
\ar@{-->}[lu]_{\nu_x} }
\end{center}
Indeed,
assume the statement proven, and pick $x$ satisfying the above conditions. 
We observe that $\hat{\mu} \circ \rho= J_\infty \mu$ 
(its comes from the identification of $J_{\infty} \g^*$ with $S(t^{-1}\g[t^{-1}])$). 
Hence, we get the commutative diagram:
\begin{center}
\hspace{2cm} \xymatrix{J_{\infty} R_V \ar@{-{>>}}[d]_{\rho} \ar[rr]^{J_{\infty} \varphi_x^*} 
\ar[drr]^{J_{\infty} \mu} &&  \mc{O}(J_{\infty} \C^*) \\ 
\gr V \ar[rr] _{\hat{\mu}} && J_{\infty}{W}
 \ar[u]_{J_{\infty} \nu_x}}
\end{center}
Therefore, for such an $x$, $J_\infty \varphi_x^*$ factorizes through $\gr V$: 
we have $\rho  \circ \hat{\mu} \circ J_{\infty} \nu_x= J_\infty \varphi_x^*$. 

To complete the proof, it thus remains to find such an $x$. 
It is enough to show that for some $x$, $\ker \mu \subset \ker \varphi_{x}$. 
Note that $$\ker \varphi^*_{x}=\{f \in  \C[\g]/I \mid f(tx)=0 \text{ for all } 
t \in \C^*\}= \bigcap\limits_{t \in \C^*}\mf{m}_{tx},$$ where $ \mf{m}_{x}$ 
is the maximal ideal of $R_V$ associated with $x$. 
We have $I \subset \ker \mu$ because  $\mu$ is 
well-defined. 
Since $I \subset \ker \mu$, we have $Z_\mu:={\rm Specm}(\C[\g] / \ker \mu) 
\subset {\rm Specm}(\C[\g] /I)= 
\overline{\mathbb{S}_{min}}$. 
Recall now that $\overline{\mathbb{S}_{min}}= G.\C^* \varpi_1 \cup \overline{\mathbb{O}_{min}}$, 
and 
let us now prove that there is $x \in G.\C^* \varpi_1$ such that 
$\ker \mu \subset \mf{m}_{tx}$ for all $t \in \C^*$. 
Since $Z_\mu$ is $G$-invariant and $\C^*$-invariant, 
it is enough to prove that there is $x \in G.\C^* \varpi_1$ such that 
$\ker \mu \subset \mf{m}_{x}$. 
Assume the contrary. We expect a contradiction. 
We have $Z_\mu  \cap G.\C^* \varpi_1 =\varnothing $ by our assumption, and so 
$Z_\mu \subset \overline{\mathbb{O}_{min}}$. 
So the defining ideal of $\overline{\mathbb{O}_{min}}$ would be contained in 
$\ker \mu$. But this is not true,
since the Casimir element,
$$\Omega = \sum_{1 \leq i\ne j\leq n} e_{i,j}e_{j,i} + \sum_{i=1}^n 
h_i \varpi_i,$$
is not in $\ker \mu$. Indeed, the coefficient of $\beta_2 \gamma_1 
\beta_1\gamma_2$ in $\mu(\Omega)\in \C[\beta_i,\gamma_i\mid 
i=1,\dots,n]$ is 
\begin{align*}
&2 - 2 h_1^*(\varpi_1) + h_2^* (\varpi_1) +h_1^*(\varpi_2) \\
&=\begin{cases} 2 - \dfrac{2(n-1)}{n}+\dfrac{n-2}{n}+\dfrac{n-2}{n} 
= \dfrac{2(n-1)}{n}\not=0 & \text{ if } n >4,\\
 \dfrac{3}{2} \ne 0 & \text{ if } n=4.\end{cases}
\end{align*}
This proves the expected statement, and so completes the proof. 
\end{proof}

  \begin{proof}[Proof of Theorem \ref{Th:main1}~(1)]
   The first statement follows immediately from \eqref{eq:XVtilde}
   and Proposition \ref{Pro:prime}.
   The second statement follows Theorem \ref{Th:arc} and Lemma \ref{Lem:hypothesis}. 
  \end{proof}

\begin{Rem}\label{Rem:fusion}
 Let $V_k(\fing)\on{-Mod}^{\fing[t]}
 $ be the full subcategory of
 the category of $V_k(\fing)$-modules consisting of objects that belong
 to $\on{KL}_k$.
 Since $H^{\frac{\infty}{2}+0}_{f_{\theta}}(V_{-1}(\fing))\cong M(1)$,
 we have a functor
 \begin{align*}
 V_{-1}(\fing)\on{-Mod}^{\fing[t]}
  \ra M(1)\on{-Mod}  ,\quad
  M\mapsto H^{\frac{\infty}{2}+0}_{f_{\theta}}(M),
 \end{align*}
 where $M(1)\on{-Mod}$ denotes the category of
 the modules over the Heisenberg vertex algebra  $M(1)$.
 The proof 
 of Theorem \ref{Th:main1}~(1) and \cite[Theorem 6.2]{AdaPer14}  imply that
 this is a {\em fusion functor} (cf.~Question \ref{ques:fusion}).
\end{Rem}

\section{Level $-m$ affine vertex algebra of type $A_{2m-1}$}  \label{sec:A-theta} 
Set
$$v_0 := \sum_{i=1}^{2m-1 } \frac{{m-i}}{m} h_i e_{\theta}
+ \sum_{i=1}^{2m-2} e_{1,{i+1}}  e_{{i+1},{2m}}
\in S^2(\fing).$$ 
Then $v_0$ is a singular vector and
generates
a finite-dimensional irreducible representation $W_0$
of $\g$ in $S^2(\fing)$
isomorphic to 
$L_\g(\theta)$.
We have
\begin{align*} 
 &\sigma(v_0)\\
 & = \sum_{i=1}^{2m-1 } \frac{m-i}{m} h_i(-1)e_{\theta}(-1) 
+ \sum_{i=1}^{2m-2} e_{1,{i+1}} (-1) e_{{i+1},{2m}}(-1) 
- (m-1) e_{ \theta}(-2), &
\end{align*}
where $\sigma=\sigma_2 $ is defined in \S \ref{sec:A-theta1}, 
equality \eqref{eq:sigma}.

\begin{lemma} \label{lem:sing0} 
The vector $\sigma(v_0)$ 
 is a singular vector of $V_k(\g)$ if and only if  
 $k=-m$. 
\end{lemma}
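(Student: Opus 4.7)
My plan is to prove the lemma by reducing the singular vector conditions to a single scalar identity. Since $v_0 \in S^2(\g)$ is a highest weight vector of weight $\theta$ under the adjoint action, and the map $\sigma$ in \eqref{eq:sigma} is $\g$-equivariant for the zero-mode action, we have $e_\alpha(0)\sigma(v_0)=0$ for every positive root $\alpha$. Thus $\sigma(v_0)$ is singular iff $x(n)\sigma(v_0)=0$ for every $x\in\g$ and every $n\geq 1$. As $\sigma(v_0)$ has conformal weight $2$, the relation $x(n)\sigma(v_0)=0$ is automatic for $n\geq 3$, and a short direct computation using the vacuum axioms shows that $x(2)\sigma(v)=0$ for any $v\in S^2(\g)$ and any $k$ (the level-dependent contributions cancel via $([x,y]|u)+([x,u]|y)=0$). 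So the question reduces to the behavior of the linear map
\[
\Phi_k \colon \g \to V^k(\g)_1 \cong \g, \qquad x \mapsto x(1)\sigma(v_0).
\]

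Next I would exploit the twisted equivariance
\[
[e_\alpha,\Phi_k(x)] = \Phi_k([e_\alpha,x]) \qquad (\alpha > 0),
\]
which follows from $[e_\alpha(0),x(1)]=[e_\alpha,x](1)$ together with $e_\alpha(0)\sigma(v_0)=0$. Consequently $\ker\Phi_k$ is $\ad(\n_+)$-stable in $\g$, and since the adjoint representation is irreducible with lowest-weight vector $f_\theta$, one has $U(\n_+)\cdot f_\theta=\g$. Hence $\Phi_k\equiv 0$ if and only if $\Phi_k(f_\theta)=f_\theta(1)\sigma(v_0)=0$, reducing everything to one condition.

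Finally I would compute $f_\theta(1)\sigma(v_0)$ term by term via $[x(1),y(-1)]=[x,y](0)+k(x|y)$ and $[x(1),y(-2)]=[x,y](-1)$. Separately, the three types of summands of $\sigma(v_0)$ yield
\begin{align*}
f_\theta(1)\bigl(h_i(-1)e_\theta(-1)\mathbf{1}\bigr) &= -\theta(h_i)\,h_\theta(-1)\mathbf{1} + k\,h_i(-1)\mathbf{1},\\
f_\theta(1)\bigl(e_{1,i+1}(-1)e_{i+1,2m}(-1)\mathbf{1}\bigr) &= -(h_{i+1}+\cdots+h_{2m-1})(-1)\mathbf{1},\\
f_\theta(1)\bigl(e_\theta(-2)\mathbf{1}\bigr) &= -h_\theta(-1)\mathbf{1},
\end{align*}
using the normalization $(e_\theta|f_\theta)=1$. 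Summing and invoking the type-$A_{2m-1}$ identity $\theta(h_i)=\delta_{i,1}+\delta_{i,2m-1}$, so that $\sum_{i=1}^{2m-1}(m-i)\theta(h_i)=0$, the $h_\theta(-1)$-pieces cancel and one is left with
\[
f_\theta(1)\sigma(v_0) \;=\; \frac{k+m}{m}\Bigl(\sum_{i=1}^{2m-1}(m-i)\,h_i(-1)\Bigr)\mathbf{1}.
\]
Since the element $\sum_{i=1}^{2m-1}(m-i)\,h_i \in \h$ is nonzero, this vanishes precisely when $k=-m$.

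The main obstacle is the bookkeeping in the last step: one must carefully track the central-extension term $k(x|y)$ together with the $e_\theta(-2)$-corrections produced when symmetrizing non-commuting products inside $\sigma$. The coefficients of $v_0$ are tailored so that the cancellation $\sum_{i=1}^{2m-1}(m-i)\theta(h_i)=0$ (specific to type $A_{2m-1}$) takes place and the full expression collapses to a single multiple of $k+m$.
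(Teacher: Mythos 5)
Your argument is correct and supplies the computation that the paper delegates entirely to a citation of Per\v{s}e's \cite[Lemma 4.2]{Per}; in that sense it fills in a gap rather than taking a parallel route. The structural cleanup at the start is a nice touch: since $v_0$ is singular in $S^2(\g)$ and $\sigma$ is $\g$-equivariant, the zero-mode conditions $e_\alpha(0)\sigma(v_0)=0$ are free; the vanishing $x(2)\sigma(v)=0$ for every $v\in S^2(\g)$ is a one-line consequence of the invariance of $(~|~)$; and the $\n_+$-equivariance of $\Phi_k$ reduces all of the $x(1)$-conditions to the single evaluation at $f_\theta$ (which is the same condition one always checks, but you make it precise why it suffices). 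Your three displayed identities for $f_\theta(1)$ acting on the three kinds of summands are all correct (I verified, using $[e_{2m,i+1},e_{i+1,2m}]=-\sum_{j=i+1}^{2m-1}h_j$ and $h_\theta=\sum_{j=1}^{2m-1}h_j$), and the final answer $f_\theta(1)\sigma(v_0)=\frac{k+m}{m}\sum_{j=1}^{2m-1}(m-j)h_j(-1)\mathbf{1}$ checks out.

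One small imprecision in the narration: you say ``the $h_\theta(-1)$-pieces cancel'' citing $\sum_i(m-i)\theta(h_i)=0$. That identity kills only the $h_\theta(-1)$ contribution from the first sum; the $(m-1)h_\theta(-1)$ coming from $-(m-1)e_\theta(-2)$ does not cancel on its own, but rather combines with the second sum $-\sum_{j}(j-1)h_j(-1)$ to give $\sum_j(m-j)h_j(-1)$, which then merges with the level-$k$ term to produce the common factor $(k+m)/m$. Since you state the correct final formula, the arithmetic was clearly done right; just the description of which pieces cancel against which should be sharpened if this is to be written up.
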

\begin{proof} 
 The verifications are identical
 to
  \cite[Lemma 4.2]{Per} 
and  we omit the details\footnote{In 
 \cite{Per}, the author only deals with
 $\mf{sl}_{r+1}$ for even ${r }$ in order to have an admissible 
level $k=-\frac{1}{2}({r }+1)$. Note that there is a change of sign 
because our Chevalley basis slightly differs from that of \cite{Per}.}. 
\end{proof}

  \begin{Th}\label{Th:maximal-sub-typeA-2}
The singular vector $\sigma(v_0)$ generates the maximal submodule
 of $V^{-m}(\fing)$, that is,
$V_{-m}(\fing)=V^{-m}(\fing)/U(\affg)\sigma(v_0)$.
  \end{Th}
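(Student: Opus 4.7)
Following the proof of Theorem \ref{Th:irreduciblity}, the plan is to set $\tilde V_{-m}(\fing) := V^{-m}(\fing)/U(\affg)\sigma(v_0)$, which surjects onto $V_{-m}(\fing)$ by Lemma \ref{lem:sing0}, and to rule out any further proper submodule by analyzing $\tilde V_{-m}(\fing)$ through the exact functor $H^{\frac{\infty}{2}+0}_{f_\theta}$ of minimal Drinfeld--Sokolov reduction (Theorem \ref{Th:minimal}).

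The first step is to identify the image of $\sigma(v_0)$ in $\W^{-m}(\fing,f_\theta)$. Working in $R_{\W^{-m}(\fing,f_\theta)} \cong \C[\Slo_{f_\theta}]$ with the Lagrangian $\mathfrak{L} := \mathrm{span}\{e_{1,j} : 2 \leq j \leq 2m-1\} \subset \fing(h_\theta,1)$, the relations $e_\theta \equiv 1$ and $e_{1,i+1} \equiv 0 \pmod{J_\chi}$ reduce $v_0$ to $\tfrac{1}{m}\sum_{i=1}^{2m-1}(m-i) h_i = \tfrac{1}{m}z$, where $z = (m-1)(E_{1,1}+E_{2m,2m}) - \sum_{i=2}^{2m-1} E_{i,i}$ generates the one-dimensional center $\fing_0$ of $\fing^{\natural} \cong \mathfrak{gl}_{2m-2}$. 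Since $\sigma(v_0)$ has conformal weight $2 - \theta(h_\theta)/2 = 1$ in $\W^{-m}(\fing,f_\theta)$ and the canonical map $\W^{-m}(\fing,f_\theta)_1 \hookrightarrow R_{\W^{-m}(\fing,f_\theta)}$ is injective, paralleling Lemma \ref{Lem:singular-vector} the image of $\sigma(v_0)$ coincides, up to a nonzero scalar, with the generator $J^{\{z\}}(-1)\mathbf{1} \in V^{k^\natural_0}(\fing_0) \subset \W^{-m}(\fing,f_\theta)$.

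Next, compute $H^{\frac{\infty}{2}+0}_{f_\theta}(\tilde V_{-m}(\fing))$ as in Theorem \ref{Th:Heisenberg}. At level $k=-m$ the embedded Heisenberg level is $k^\natural_0 = 0$, so in the quotient the field $J^{\{z\}}(z)$ becomes zero; combined with the OPE $[{J^{\{z\}}}_\lambda G^{\{v\}}] = G^{\{[z,v]\}}$ and the fact that $\ad(z)$ acts on $\fing(h_\theta,-1)$ with nonzero eigenvalues $\pm m$, this forces each field $G^{\{v\}}$ to vanish as well. Then by \cite[Theorem 5.1(e)]{KacWak04}, the Virasoro $L$ reduces to the Sugawara vector of $V^{k^\natural_1}(\fing_1) = V^{1-m}(\sl_{2m-2})$, so $H^{\frac{\infty}{2}+0}_{f_\theta}(\tilde V_{-m}(\fing))$ is realized as a quotient of $V^{1-m}(\sl_{2m-2})$.

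Finally, conclude simplicity of $\tilde V_{-m}(\fing)$ as in Theorem \ref{Th:irreduciblity}: any nonzero singular vector $v \in \tilde V_{-m}(\fing)$ of weight $\mu$ should, by the classification of singular vectors of $V^{-m}(\sl_{2m})$ (analogue of \cite[Proposition 5.5]{AP08}), satisfy $\mu(\alpha_0^\vee) < 0$; then Theorem \ref{Th:minimal}(2) produces a nonzero image in $H^{\frac{\infty}{2}+0}_{f_\theta}(\tilde V_{-m}(\fing))$, generating a nontrivial highest-weight submodule inside the $V^{1-m}(\sl_{2m-2})$-quotient, and matching weights against the known singular-vector structure of $V^{1-m}(\sl_{2m-2})$ should yield the contradiction. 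The main obstacle, in contrast to the $k=-1$ case where the reduction target was the \emph{simple} Heisenberg $M(1)$, is precisely that here $k^\natural_0 = 0$ collapses the Heisenberg into a commutative vertex algebra and $V^{1-m}(\sl_{2m-2})$ is itself non-simple at the non-admissible level $1-m = -h^\vee/2$; so the last step cannot invoke simplicity of the reduction target and requires an explicit weight bookkeeping (comparing the possible weights $\mu$ against the description of the maximal ideal of $V^{1-m}(\sl_{2m-2})$) instead of a clean irreducibility argument.
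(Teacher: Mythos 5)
Your proposal correctly sets up the strategy of pushing the problem through a quantized Drinfeld--Sokolov reduction and checking that no extra singular vector can survive, and your computation of the image of $\sigma(v_0)$ modulo $J_\chi$ (reducing to a generator $z$ of the one-dimensional center $\fing_0 \subset \fing^\natural$) is carried out correctly. However, as you yourself observe in the last paragraph, the reduction by the \emph{minimal} nilpotent $f_\theta$ does not yield a simple vertex algebra at level $k=-m$: the Heisenberg level $k_0^\natural = k + m$ collapses to $0$ and the $\sl_{2m-2}$-level $k_1^\natural = 1-m$ is non-admissible and non-simple, so the clean "simplicity of the target forces a contradiction" argument from Theorem \ref{Th:irreduciblity} is unavailable. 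The proposal therefore stalls exactly where the proof is needed; the suggested "explicit weight bookkeeping" against the maximal ideal of $V^{1-m}(\sl_{2m-2})$ is not carried out and is not obviously feasible.

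The paper resolves this by switching to a different, \emph{short} nilpotent element $f \in \O_{(2^m)}$ (not $f_\theta$), embedded in an $\sl_2$-triple with $h = 2\varpi_m$. For this $f$, the centralizer is $\fing^\natural \cong \sl_m$ with $k^\natural = 2k + 2m = 0$ at $k=-m$, and Lemma \ref{Lem:singular-vector-0} identifies the image of $\sigma(v_0)$ in $\W^{-m}(\fing,f)$ with the singular vector $e_{\theta^\natural}(-1)\mathbf{1}$ of $V^0(\fing^\natural)$. Theorem \ref{Th:W-is-Visasoro} then shows that $H^{\frac{\infty}{2}+0}_f(\tilde V_{-m}(\fing)) \cong \mathrm{Vir}_1$, the simple Virasoro vertex algebra of central charge $1$. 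This restores precisely the simplicity that your target lacks. The rest of the paper's argument also differs from yours in two supporting ingredients: the possible weights $\mu = s\varpi_m - m\Lambda_0$ of a putative singular vector are pinned down by Proposition \ref{pro:weight-0} (via the characteristic variety of $I_{W_0}$), and the non-vanishing of $H^{\frac{\infty}{2}+0}_f(L(\mu))$ is supplied by Proposition \ref{Pro:non-zero} together with the exactness theorem for short nilpotents, Theorem \ref{Th:short}, rather than Theorem \ref{Th:minimal}. In short: the correct move is to trade $f_\theta$ for the short nilpotent of type $(2^m)$, for which the reduction becomes the simple $\mathrm{Vir}_1$ and the contradiction goes through verbatim.
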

Let $(e,h,f)$ be the $\mf{sl}_2$-triple of $\fing$ defined by
\begin{align*}
 e=\sum_{i=1}^{m}e_{i,m+i}, 
& \qquad 
 h=\sum_{i=1}^{m}e_{i,i}-\sum_{i=1}^{m}e_{m+i,m+i}
 =2\varpi_{m}, &\\
 & f=\sum_{i=1}^{m}e_{m+i,i}\in\fing.&
\end{align*}
Then
  $f$ is a nilpotent element of $\fing$
  corresponding to the partition $(2^{m})$. 
  We have
  \begin{align*}
   \fing=\fing(h,-2)\+ \fing(h,0)\+\fing(h,2),
  \end{align*}
  where
  $\fing(h,2)=\on{span}_{\C}\{e_{i,m+j}\mid 1\leq i,j\leq m\}$,
  $\fing(h,-2)=\on{span}_{\C}\{e_{m+i,j}\mid 1\leq i,j\leq
  m\}$,
  {$\fing(h,0)=\on{span}_{\C}\{e_{i,j},e_{m+i,m+j}\mid 1\leq
  i,j\leq m\}
  \cap \fing$}. 
  Thus,
    $f$ is a short nilpotent element.

Let $\fing^{\natural}\subset \fing(h,0)$ be the 
 centralizer in $\fing$ of $(e,h,f)$. 
Then
\begin{align*}
\fing^{\natural}\cong \mf{sl}_{m}.
\end{align*}
By \cite[Theorem 2.1]{KacWak04} we have a vertex algebra embedding
\begin{align*}
 V^{k^\natural}(\fing^{\natural})\hookrightarrow \W^k(\fing,f),
\end{align*}
where
\begin{align*}
 k^{\natural}=2k+2m,
\end{align*}
which
restricts to the isomorphism of spaces of weight $1$
\begin{align*}
 \fing^{\natural}\cong  V^{k^\natural}(\fing^{\natural})_1\isomap \W^k(\fing,f)_1.
\end{align*}
Since
\begin{align*}
 \fing(h,-2)\cong \fing^{\natural}\+ \C 
\end{align*}
as $\fing^\natural$-modules,
there exists \cite{KacWak04} a
$\fing^{\natural}$-submodule
$E$ of $\W^k(\fing,f)_2$
such that
$\W^k(\fing,f)$ is generated by
$\W^k(\fing,f)_1\cong \fing^{\natural}$,
$E$ and the conformal vector $\omega_{W}\in \W^k(\fing,f)_2$, 
provided that $k\ne -2m$.

\begin{Th}\label{Th:W-is-Visasoro}
We have
  \begin{align*}
   H^{\frac{\infty}{2}+0}_{f}(\tilde{V}_{-m}(\fing))
   \cong \on{Vir}_1,
  \end{align*}
  where $ \on{Vir}_1$ is the simple Virasoro vertex algebra of central
  charge $1$.
\end{Th}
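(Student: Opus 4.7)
My plan is to mimic the proof of Theorem~\ref{Th:Heisenberg}, using now the short-nilpotent reduction of Theorem~\ref{Th:short} in place of the minimal reduction of Theorem~\ref{Th:minimal}. Since $f$ is short and $\tilde V_{-m}(\fing)$ lies in $\on{KL}_{-m} \subset \mc O_{-m}^{\mf{l}}$ (as a quotient of $V^{-m}(\fing)$, with $\mf{l}=\fing(h,0)$), applying the exact functor $H^{\frac{\infty}{2}+0}_f$ to
\begin{equation*}
0 \to U(\affg)\sigma(v_0) \to V^{-m}(\fing) \to \tilde V_{-m}(\fing) \to 0
\end{equation*}
yields a surjection $\W^{-m}(\fing,f) \twoheadrightarrow H^{\frac{\infty}{2}+0}_f(\tilde V_{-m}(\fing))$ whose kernel is the vertex algebra ideal generated by the image of $\sigma(v_0)$.

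The next step, the analog of Lemma~\ref{Lem:singular-vector}, is to identify the image of $\sigma(v_0)$ in $\W^{-m}(\fing,f)$ with (a nonzero scalar multiple of) the singular vector $e_{\tilde\theta}(-1)\mathbf{1}$ coming from the embedding $V^{k^\natural}(\fing^\natural)=V^0(\mf{sl}_m) \hookrightarrow \W^{-m}(\fing,f)$, where $\tilde\theta$ is the highest root of $\fing^\natural\cong\mf{sl}_m$. I will verify this by computing both images in $R_{\W^{-m}(\fing,f)}=\C[\Slo_f]=(\C[\fing^*]/J_\chi)^M$: using $\fing(h,2)\subset\mf{m}$ and $\chi(x)=(f|x)$ for the chosen $\mf{sl}_2$-triple, the reduction of $v_0$ modulo $J_\chi$ should collapse onto a highest-weight element of the $\fing^\natural$-action on $\C[\Slo_f]$, which must then coincide with the class of $e_{\tilde\theta}$ since both have the same $\fing^\natural$-weight and satisfy the singular-vector property.

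Once this identification is in hand, the quotient forces $V^0(\mf{sl}_m)\twoheadrightarrow V_0(\mf{sl}_m)=\C$, so every current $J^{\{a\}}(z)$ with $a\in\fing^\natural$ vanishes in $H^{\frac{\infty}{2}+0}_f(\tilde V_{-m}(\fing))$. The remaining degree-$2$ generators $E\subset\W^{-m}(\fing,f)_2$ form a direct sum of non-trivial irreducible $\fing^\natural$-modules, so the $\lambda$-bracket relations from \cite{KacWak04} of the form $[J^{\{a\}}_\lambda W^{\{u\}}]=W^{\{[a,u]\}}+\cdots$ force $W^{\{u\}}=0$ for all $u\in E$ (since $[\fing^\natural,E]$ spans each nontrivial summand of $E$). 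What is left is generated by the conformal vector $\omega_W$ alone, hence is a nonzero quotient of the universal Virasoro at central charge $c_f(-m)$. A direct substitution in the formula of \S\ref{sec:affine-W-algebras} with $h=2\varpi_m$, $\dim\fing(h,0)=2m^2-1$, $\dim\fing(h,1)=0$ and $k+h^\vee=m$ gives $c_f(-m)=1$; simplicity of the resulting Virasoro follows since $\W_{-m}(\fing,f)=H^{\frac{\infty}{2}+0}_f(V_{-m}(\fing))$ is a nonzero further quotient, nonzero by Theorem~\ref{Th:W-algebra-variety}(3) together with Theorem~\ref{Th:main1}(2) (which gives $\overline{G.f}\subset\overline{\SS_{\mf{l}_0}}=X_{V_{-m}(\fing)}$).

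The hardest step is the Slodowy-slice computation for the image of $\sigma(v_0)$: unlike the minimal case, where the reduction modulo $J_\chi$ is essentially transparent, here one must track how the two sums $\sum_i\frac{m-i}{m}h_ie_\theta$ and $\sum_i e_{1,i+1}e_{i+1,2m}$ interact with $J_\chi$ and with the $M$-averaging, and verify that the resulting class in $\C[\Slo_f]$ is genuinely a nonzero highest-weight vector. A secondary technical point is to confirm that $E$ contains no $\fing^\natural$-trivial summand distinct from $\omega_W$, since such a summand would escape the OPE-vanishing argument above.
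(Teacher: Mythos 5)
Your overall strategy is essentially the paper's: apply the exact short-nilpotent reduction to the sequence $0 \to U(\affg)\sigma(v_0) \to V^{-m}(\fing) \to \tilde V_{-m}(\fing) \to 0$, identify the image of $\sigma(v_0)$ in $\W^{-m}(\fing,f)$ with the singular vector $e_{\theta^\natural}(-1)\mathbf{1}$ of $V^{0}(\fing^\natural)$ (this is exactly Lemma \ref{Lem:singular-vector-0}), conclude that $\fing^\natural$ and then $E$ die in the quotient (the paper uses that $\{x_{(0)}\W^{-m}(\fing,f) : x\in\fing^\natural\}$ lies in the ideal, equivalently your OPE argument), hence the quotient is generated by $\omega_W$ with $c_f(-m)=1$, and finish by simplicity of the Virasoro VOA at $c=1$. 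Your central charge computation and the observation that $E\cong\fing^\natural$ contains no trivial summand (because $\fing(h,-2)\cong\fing^\natural\oplus\C$ with the trivial line accounted for by $\omega_W$) are both correct and match what the paper sets up before the theorem.

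However, there is a genuine circularity in your nonvanishing step. You invoke Theorem \ref{Th:main1}(2) (i.e.\ $X_{V_{-m}(\fing)}=\overline{\SS_{\l_0}}$, hence $\overline{G.f}\subset X_{V_{-m}(\fing)}$, hence $H^{\frac{\infty}{2}+0}_f(V_{-m}(\fing))\neq 0$ by Theorem \ref{Th:W-algebra-variety}(3)) to conclude $H^{\frac{\infty}{2}+0}_f(\tilde V_{-m}(\fing))\neq 0$. But in the paper's logical chain, Theorem \ref{Th:main1}(2) rests on Theorem \ref{Th:maximal-sub-typeA-2}, whose proof in turn \emph{uses} Theorem \ref{Th:W-is-Visasoro} (the very statement you are trying to prove), together with Proposition \ref{Pro:non-zero} and Proposition \ref{pro:weight-0}. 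So you cannot appeal to Theorem \ref{Th:main1}(2) here. The paper instead proves the nonvanishing via Proposition \ref{Pro:non-zero}: it checks directly that $H^{\frac{\infty}{2}+0}_f(L(s\varpi_m - m\Lambda_0))\neq 0$ by showing, using Jantzen's simplicity criterion for scalar generalized Verma modules, that the top component $L_\fing(-(m+s)\varpi_m)$ has maximal Gelfand--Kirillov dimension equal to $\tfrac12\dim G.f$, and then invoking Theorem \ref{Th:short}(2). That is the step you should substitute for your reference to Theorem \ref{Th:main1}(2). Also, strictly speaking, nonvanishing of the further quotient does not by itself give simplicity; you additionally need the standard fact that the universal Virasoro vertex algebra at $c=1$ already equals its simple quotient $\on{Vir}_1$ (i.e.\ $c=1$ is not a minimal-model central charge), which both you and the paper use implicitly.
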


Let $e_{\theta^{\natural}}=e_{1,m}+e_{m+1,2m}\in \fing^{\natural}$ be 
a root vector of the highest root $\theta^{\natural}$ of $\fing^{\natural}$.
 \begin{Lem}\label{Lem:singular-vector-0}
 The image of $\sigma(v_0)$ in  $\W^{-m}(\fing,f_0)$ coincides with
  the
  image of the singular vector
  ${e_{\theta^{\natural}}(-1)\mathbf{1}}$ of
  $V^{0}(\fing^{\natural})$ up to nonzero constant multiplication.
 \end{Lem}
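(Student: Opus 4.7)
The plan is to mirror the strategy of Lemma~\ref{Lem:singular-vector}, adapted to the short-nilpotent setting. First, since $\sigma(v_0)$ is singular in $V^{-m}(\fing)$ by Lemma~\ref{lem:sing0} and $f$ is short, the exactness of $H^{\frac{\infty}{2}+0}_f$ on $\mc{O}_{-m}^{\mf{l}}$ (Theorem~\ref{Th:short}) guarantees that the image of $\sigma(v_0)$ in $\W^{-m}(\fing, f)$ is a nonzero singular vector. Its class in $R_{\W^{-m}(\fing, f)} \cong \C[\Slo_f] = (\C[\fing^*]/J_\chi)^M$, with $\chi = (f|\,\cdot\,)$, is simply the image of $v_0 \in S^2(\fing) = \C[\fing^*]$ under this quotient.

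The core computation is then to reduce $v_0$ modulo $J_\chi$. Using $f = \sum_{i=1}^m e_{m+i, i}$, one obtains $\chi(e_{i, m+j}) = \delta_{i,j}$ for $1 \leq i, j \leq m$. In the first sum $\sum_{i=1}^{2m-1} \frac{m-i}{m} h_i e_\theta$, the factor $e_\theta = e_{1, 2m}$ reduces to $\delta_{1, m} = 0$ (since $m \geq 2$), so this entire sum vanishes modulo $J_\chi$. In the second sum $\sum_{i=1}^{2m-2} e_{1, i+1} e_{i+1, 2m}$, precisely two terms survive: when $i+1 = m$ one gets $e_{1, m} \cdot \chi(e_{m, 2m}) = e_{1, m}$, and when $i+1 = m+1$ one gets $\chi(e_{1, m+1}) \cdot e_{m+1, 2m} = e_{m+1, 2m}$. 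Thus $v_0 \equiv e_{1, m} + e_{m+1, 2m} = e_{\theta^\natural} \pmod{J_\chi}$, so $\sigma(v_0) \equiv e_{\theta^\natural}(-1)\mathbf{1}$ modulo $C_2(\W^{-m}(\fing, f))$.

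Finally, to upgrade this congruence to an equality up to a nonzero scalar in $\W^{-m}(\fing, f)$ itself, I would invoke the same uniqueness argument as at the end of the proof of Lemma~\ref{Lem:singular-vector}: both vectors are singular for the action of $\widehat{\fing^\natural}$ (recall $V^0(\fing^\natural) \hookrightarrow \W^{-m}(\fing, f)$ since $k^\natural = 2(-m)+2m=0$), with the same $\fing^\natural$-weight $\theta^\natural$ and matching conformal weight, and within the corresponding Verma-like component the space of such singular vectors is one-dimensional. The main obstacle I anticipate is making this last uniqueness step precise in the $W$-algebra setting, but since it proceeds in direct analogy with the minimal-nilpotent case treated in Lemma~\ref{Lem:singular-vector}, no substantial new difficulty should arise.
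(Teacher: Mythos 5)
Your proof is correct and takes essentially the same route as the paper's: reduce $v_0$ modulo $J_\chi$ to obtain $e_{\theta^\natural}=e_{1,m}+e_{m+1,2m}$, deduce the congruence modulo $C_2(\W^{-m}(\fing,f))$, and then invoke one-dimensionality of the weight-$(1,\theta^\natural)$ space $\W^{-m}(\fing,f)_1\cong\fing^\natural$ to pin the vector down up to scalar. You have in fact spelled out the $J_\chi$-reduction that the paper compresses into ``one finds that.''
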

  \begin{proof}
Let $w$ be the image  of
   $\sigma(v_0)$ in  $\W^{-m}(\fing,f_0)$.
   One finds that
\begin{align*}
 w\equiv {e_{\theta^{\natural}}({-1})\mathbf{1}} 
 \pmod{C_2(\W^{-m}(\fing,f_0))},
\end{align*}
   and hence, it is nonzero and has the same weight as
   ${ e_{\theta^{\natural}}({-1})\mathbf{1}}$.
   Since  it is a singular vector, $w$ is singular vector for
   $V^0(\fing^{\natural})$.
   The assertion follows since
the corresponding weight space is one-dimensional.
  \end{proof}
 \begin{Pro}\label{Pro:non-zero}
  Let $\lam=s\varpi_{m}-m\Lam_0$  with $s\in \Z_{\geq 0}$.
  Then
  $H^{\frac{\infty}{2}+0}_f(L(\lam))\ne 0$.
 \end{Pro}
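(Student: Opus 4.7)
The plan is to apply Theorem \ref{Th:short}~(2), which reduces the nonvanishing $H^{\frac{\infty}{2}+0}_f(L(\lambda))\neq 0$ to the verification that $\on{Dim} L(\lambda)_{\lambda(D_h), h} = \dim \fing(h, 2) = m^2 = \tfrac{1}{2}\dim G.f$. The hypothesis $L(\lambda)\in\mathcal{O}_k^{\mf{l}}$ holds because $\bar\lambda = s\varpi_m$ satisfies $\bar\lambda(\alpha_i^\vee) = s\delta_{im}$ and so vanishes on the simple coroots of $[\mf{l},\mf{l}]$, making $L(\lambda)$ integrable over $[\mf{l},\mf{l}]$ and hence locally $\mf{l}$-finite.

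I would next identify the top $D_h$-weight space as a module over $\tilde{t}_{\frac{1}{2}h}(\fing)\cong\fing$. The highest weight vector $v_\lambda$ has $D_h$-eigenvalue $\lambda(D_h) = sm/2$; being annihilated by $\widehat{\mf{n}}_+$, which contains every operator of strictly positive $D_h$-weight, it lies in $L(\lambda)_{\lambda(D_h), h}$. Under the Tits lifting, the new-Cartan weight of $v_\lambda$ is $\mu = (s+m)\varpi_m$: the new simple coroot $[e_{\alpha_m}(-1), f_{\alpha_m}(1)] = h_{\alpha_m} - K$ evaluates on $v_\lambda$ to $s - k = s+m$. A direct check shows that $v_\lambda$ is annihilated by the opposite nilradical $\fing(h,-2)\otimes t \subset \widehat{\mf{n}}_+$ and, in the simple quotient $L(\lambda)$, by $f_{\alpha_i}(0)$ for $i \neq m$. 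Using the PBW decomposition of $U(\widehat{\fing})$ along the $D_h$-grading and these annihilation relations, we obtain a $\fing$-module surjection
\begin{align*}
M^{\mf{p}_{-}}_\mu := U(\fing) \otimes_{U(\mf{p}_{-})} \C_\mu \twoheadrightarrow L(\lambda)_{\lambda(D_h), h},
\end{align*}
where $\mf{p}_{-}:= \mf{l} \oplus (\fing(h,-2)\otimes t) \subset \tilde{t}_{\frac{1}{2}h}(\fing)$ is the opposite parabolic with Levi $\mf{l}$. The generalized Verma module $M^{\mf{p}_{-}}_\mu$ has Gelfand-Kirillov dimension $\dim(\fing/\mf{p}_{-}) = \dim\fing(h,2) = m^2$ and associated variety $\overline{G\cdot\fing(h,2)} = \overline{\mathbb{O}_{(2^m)}}$.

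The main obstacle is to confirm that the above surjection does not strictly reduce the Gelfand-Kirillov dimension, i.e., that the common value is $m^2$. I would show this by tracking the $D_h$-weights of the singular vectors of $M(\lambda)$ generating the maximal submodule $N(\lambda)$: the finite-type singular vector $f_{\alpha_m}(0)^{s+1}v_\lambda$ has $D_h$-weight $\lambda(D_h) - (s+1)$, strictly below the top, and an explicit weight-counting argument (using that the relevant lowering operators move vectors out of the top $D_h$-component) shows that its $U(\widehat{\fing})$-orbit meets $L(\lambda)_{\lambda(D_h), h}$ only in a $\fing$-submodule whose associated variety is a proper subvariety of $\overline{\mathbb{O}_{(2^m)}}$, hence of strictly smaller dimension. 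Theorem \ref{Th:short}~(3), which guarantees that $H^{\frac{\infty}{2}+0}_f(L(\lambda))$ is either zero or irreducible in type $A$, serves as a consistency check and delivers the conclusion $H^{\frac{\infty}{2}+0}_f(L(\lambda))\neq 0$.
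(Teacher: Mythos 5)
Your reduction via Theorem~\ref{Th:short}~(2) to the equality $\on{Dim}L(\lambda)_{\lambda(D_h),h}=\dim\g(h,2)=m^2$ is exactly the paper's first step, and your identification of the relevant generalized Verma module (up to the usual sign/convention ambiguity coming from the Tits twist --- the paper writes the highest weight as $-(m+s)\varpi_m$ with respect to the standard positive system, which is consistent with your $(m+s)\varpi_m$ read off from the opposite parabolic $\mf{p}_-$) is also correct. The gap is entirely in the final step, where you claim that the kernel of $M^{\mf{p}_-}_\mu\twoheadrightarrow L(\lambda)_{\lambda(D_h),h}$ has strictly smaller associated variety: that assertion is not proved, and your sketch would not prove it. Tracking the $D_h$-weight of one singular vector of the affine Verma module $M(\lambda)$, such as $f_{\alpha_m}(0)^{s+1}v_\lambda$, gives no control over how the $U(\affg)$-submodule it generates intersects the top $D_h$-component: the intersection is computed via lowering operators from strictly lower $D_h$-weight, and there is no a priori bound on its GK dimension. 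Worse, at the non-dominant level $k=-m$ the affine Verma module has many other singular vectors (the affine Weyl orbit of $\lambda$ produces infinitely many), and your argument does not account for them at all. Your appeal to Theorem~\ref{Th:short}~(3) is a red herring --- it says the cohomology is zero or irreducible, which does not distinguish those two cases.

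The missing idea is simpler and avoids any analysis of the affine maximal submodule: because $L(\lambda)$ is a simple $\affg$-module, its top $D_h$-component is automatically an \emph{irreducible} $\g$-module (any $\g$-singular vector there would generate a proper $\affg$-submodule missing $v_\lambda$). So $L(\lambda)_{\lambda(D_h),h}\cong L_\g(\mu)$, and the whole problem reduces to a question about a finite-dimensional simple Lie algebra: is $\on{Dim}L_\g(\mu)=m^2$? The paper settles this by Jantzen's simplicity criterion (\cite[Satz~4]{Jan77}; see also \cite[Theorem~9.13]{Hum08}), which shows that for $\mu=-(m+s)\varpi_m$ with $s\in\Z_{\geq 0}$ the scalar generalized Verma module $U(\g)\otimes_{U(\mf{p})}L_{\l_0}(\mu)$ is \emph{irreducible}, hence equal to $L_\g(\mu)$, and therefore has GK dimension $\dim(\g/\mf{p})=m^2$. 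Without this (or some equivalent criterion), your proof does not close.
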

      \begin{proof}
       Since
      $L(\lam)$ belongs to $\mc{O}_k^{\mf{l}_0}$,
it is sufficient  to show that
      the Gelfand-Kirillov dimension of      
$L(\lam)_{\lam(D_W),h}$
       is maximal
             by Theorem \ref{Th:short}.
  Observe that   $L(\lam)_{\lam(D_W),h}$ is an irreducible highest weight
      representation of $\fing$ with highest weight
       $\mu=-(m+s)\varpi_{m}$.
       It follows from  Jantzen's simplicity criterion
       \cite[Satz 4]{Jan77} (see also \cite[Theorem~9.13]{Hum08})
       that $L_{\fing}(\mu)
       $ is isomorphic
       to the generalized Verma module $U(\fing)\*_{U(\mf{p})}L_{\l_0}(\mu)$,
       where $\mf{p}$ is a parabolic subalgebra of $\fing$ with Levi
       subalgebra
       $\l_0$, and $L_{\l_0}(\mu)$ is the irreducible finite-dimensional
       representation
       of $\l_1$ with highest weight $\mu$.
This completes the proof.
     \end{proof}
   \begin{proof}[Proof of Theorem \ref{Th:W-is-Visasoro}]
    First,
    $H^{\frac{\infty}{2}+0}_{f}(\tilde{V}_{-m}(\fing))\ne 0$ 
since 
    $H^{\frac{\infty}{2}+0}_{f}({V}_{-m}(\fing))\ne 0$ 
by Proposition \ref{Pro:non-zero}.
Second,  the exact sequence
    $0\ra N\ra V^{-m}(\fing)\ra \tilde{V}_{-m}(\fing)\ra 0$,
    where $N=U(\affg)\sigma(v_0)\subset V^{-m}(\fing)$,
    induces 
an exact sequence
    \begin{align*}
     0\ra H^{\frac{\infty}{2}+0}_f(N)
     \ra \W^{-m}(\fing,f)\ra
     H^{\frac{\infty}{2}+0}_f(\tilde{V}_{-m}(\fing))\ra 0.
    \end{align*}
        By Lemma \ref{Lem:singular-vector-0},
 the weight $1$ subspace 
$\W^{-m}(\fing,f)_1(\cong \fing^{\natural})$
  is contained in     $H^{\frac{\infty}{2}+0}_f(N)$.
  Hence,
  ${x_{(0)}\W^{-m}(\fing,f)}\subset H^{\frac{\infty}{2}+0}_f(N)$ 
  for all $x\in  \fing^{\natural}$.
  This gives that $E\subset H^{\frac{\infty}{2}+0}_f(N)$.
It follows that
  $H^{\frac{\infty}{2}+0}(\tilde{V}_{-m}(\fing))
  $ is generated by the single element 
  $\omega_W$.
  Since $\omega_W$ has central charge $1$,
  there is a surjective vertex algebra homomorphism
  $\on{Vir}_1\twoheadrightarrow
  H^{\frac{\infty}{2}+0}(\tilde{V}_{-m}(\fing))$.
The assertion follows as $\on{Vir}_1$ is simple. 
\end{proof}

  \begin{proof}[Proof of Theorem \ref{Th:maximal-sub-typeA-2}]
   Suppose that
   $\tilde{V}_k(\fing)$
   is not simple.
   Then 
   there exists at least one  singular weight
   vector, say $v$,
   which generates a proper submodule of 
   $\tilde{V}_k(\fing)$.
   As  $   A(\tilde{V}_k(\fing))=U(\fing)/\mc{I}_{W_0}$,
   Proposition \ref{pro:weight-0} below
   implies that
   the weight of $v$ has the form 
   $\mu=s\varpi_{m}-m\Lam_0$ with $s\in \Z_{\geq 0}$.
   Consider the submodule $M$ of $\tilde{V}_{k}(\fing)$ generated by
   $v$.
Since 
       $H^{\frac{\infty}{2}+0}_{f}(L(\mu))\ne 0$
   by Proposition \ref{Pro:non-zero},
Theorem \ref{Th:short}(1) implies that
   $H^{\frac{\infty}{2}+0}_{f}(M)$ is a nonzero submodule of
   $H^{\frac{\infty}{2}+0}_{f}(\tilde{V}_{k}(\fing))$.
But by Theorem \ref{Th:W-is-Visasoro},
   $   H^{0}_{f_0}(\tilde{V}_{k}(\fing))\cong \on{Vir}_1$ is
   simple.
   Contradiction.
 \end{proof}

By Theorem \ref{Th:maximal-sub-typeA-2}, 
{we have}
 \begin{align}
R_{\tilde{V}_{-1}(\fing)}=\C[\fing^*]/I_{W_0},\quad \text{and so,} \quad
  X_{\tilde{V}_{-1}(\fing)}
  =\Spec(\C[\fing^*]/I_{W_0}).
  \label{eq:Xtilde-0}
 \end{align}
Thus
 we get the following assertion (see Corollary \ref{Co:C[z]}).
\begin{Co}\label{Co:C[z]2}
We have $H_f(\C[\fing^*]/I_{W_0}) \cong \C[z]$.
\end{Co}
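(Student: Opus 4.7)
The plan is to mimic the proof of Corollary \ref{Co:C[z]} verbatim, now using the type-$A_{2m-1}$ data. The strategy rests on Theorem \ref{Th:W-algebra-variety}\eqref{item:intersection}, which gives, for any quotient $V$ of $V^k(\fing)$, a natural isomorphism
\begin{align*}
R_{H_f^{\frac{\infty}{2}+0}(V)} \cong H_f(R_V)
\end{align*}
of Poisson algebras. I would apply this with $V = \tilde{V}_{-m}(\fing)$, the quotient of $V^{-m}(\fing)$ by the submodule generated by $\sigma(v_0)$.

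First, by \eqref{eq:Xtilde-0}, one has $R_{\tilde{V}_{-m}(\fing)} = \C[\fing^*]/I_{W_0}$. Second, Theorem~\ref{Th:W-is-Visasoro} provides the identification $H^{\frac{\infty}{2}+0}_f(\tilde{V}_{-m}(\fing)) \cong \on{Vir}_1$. Combining these via the displayed isomorphism yields
\begin{align*}
H_f(\C[\fing^*]/I_{W_0}) \;\cong\; R_{H^{\frac{\infty}{2}+0}_f(\tilde{V}_{-m}(\fing))} \;\cong\; R_{\on{Vir}_1}.
\end{align*}

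The only remaining point is to identify $R_{\on{Vir}_1}$ with $\C[z]$. This is standard: at central charge $c=1$ the simple Virasoro vertex algebra coincides with its universal cover (since $c=1$ is not of the form $c_{p,q}$ for admissible $(p,q)$ with $p\neq q$), and the Zhu $C_2$-algebra of the universal Virasoro vertex algebra is the polynomial ring generated by the image of the conformal vector, giving $R_{\on{Vir}_1}\cong \C[z]$. No genuine obstacle is expected; the argument is a direct transcription of the proof of Corollary \ref{Co:C[z]}, with Theorem \ref{Th:Heisenberg} replaced by Theorem \ref{Th:W-is-Visasoro} and the rank one Heisenberg vertex algebra replaced by $\on{Vir}_1$, both of which have one-dimensional $C_2$-quotient isomorphic to $\C[z]$.
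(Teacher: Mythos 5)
Your proof is correct and follows exactly the route the paper intends: the paper itself does not spell out the argument for Corollary~\ref{Co:C[z]2} but instead points to Corollary~\ref{Co:C[z]}, whose proof combines Theorem~\ref{Th:W-algebra-variety}\eqref{item:intersection} with the determination of $H^{\frac{\infty}{2}+0}_f$ of the quotient vertex algebra; you reproduce this with Theorem~\ref{Th:W-is-Visasoro} replacing Theorem~\ref{Th:Heisenberg} and $\on{Vir}_1$ replacing $M(1)$. Your added remark that $\on{Vir}_1=\on{Vir}^1$ (because $c=1$ is not a minimal-model central charge) and hence $R_{\on{Vir}_1}\cong\C[z]$ is a correct and useful clarification of a fact the paper leaves implicit.
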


 As in Theorem \ref{Th:main1}, 
define the Levi subalgebra $\l_0$ of $\fing$ by
   \begin{align*}
\l_{0}=\h+\bra e_{\alpha_i}, f_{\alpha_i}\mid i\ne m\ket.
   \end{align*}
    The center of $\l_0$ is spanned  by $\varpi_m$.
Thus,
\begin{align*}
\overline{\mathbb{S}_{\mf{l}_0}}=\overline{G.\C^*\varpi_m},
\end{align*}
see \S \ref{sec:sheet}.
We have $\on{Ind}^{\fing}_{\mf{l}_0}(0)=\mathbb{O}_{(2^m)}$,
and hence,
 $\mathbb{O}_{(2^m)}$ is the unique nilpotent orbit 
 contained in  
the Diximier sheet $\mathbb{S}_{\mf{l}_0}$.

\begin{lemma} \label{lem:nil-0} 
    $V(I_{W_0}) \cap \mc{N}
\subset \overline{\O_{(2^{m})}}$. 
\end{lemma}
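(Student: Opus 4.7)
My plan follows closely the strategy of Lemma \ref{l1:A-theta1}: apply Lemma \ref{lem:criterion} by producing an $f$ in a well-chosen covering nilpotent orbit such that the explicit singular vector $v_0\in I_{W_0}$ reduces to a nonzero constant modulo $J_\chi$. A direct check on partial sums shows that, among partitions of $2m$ strictly dominating $(2^m)$ in the Chevalley order, $(3,2^{m-2},1)$ is the unique minimal one: if $\bs{\lam}\succ(2^m)$ then $\lam_1\geq 3$, and the constraints $\lam_1+\cdots+\lam_k\geq 2k$ together with $|\bs{\lam}|=2m$ force $\bs{\lam}\succcurlyeq(3,2^{m-2},1)$. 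Hence it suffices to show $\overline{\O_{(3,2^{m-2},1)}}\not\subset V(I_{W_0})$.

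I would then realize $\O_{(3,2^{m-2},1)}$ via an explicit $\sl_2$-triple $(e,h,f)$ whose Jordan blocks are placed on the index sets $\{1,m,2m\}$ (size $3$), $\{i,m+i\}$ for $i=2,\dots,m-1$ (size $2$), and $\{m+1\}$ (size $1$). This choice gives
\[
h=\on{diag}\bigl(2,\underbrace{1,\dots,1}_{m-2},0,0,\underbrace{-1,\dots,-1}_{m-2},-2\bigr),
\qquad
f=e_{m,1}+e_{2m,m}+\sum_{i=2}^{m-1}e_{m+i,i},
\]
so the positive roots $\alpha$ with $(f\,|\,e_\alpha)\neq 0$ are exactly $\eps_1-\eps_m$, $\eps_m-\eps_{2m}$ and $\eps_i-\eps_{m+i}$ for $i=2,\dots,m-1$. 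In particular, $e_\theta=e_{1,2m}$ lies in $\g(h,4)\subset\mf{m}$ and satisfies $(f\,|\,e_\theta)=0$, so $e_\theta\equiv 0\pmod{J_\chi}$; this kills the first sum in the formula defining $v_0$.

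The heart of the argument is then the analysis of $\sum_{j=2}^{2m-1}e_{1,j}e_{j,2m}$ modulo $J_\chi$. Relabeling $j=i+1$ and splitting by the value of $h_{jj}\in\{1,0,-1\}$, one checks that for every $j\neq m$ at least one of $e_{1,j}$ or $e_{j,2m}$ lies in $\g(h,k)\subset\mf{m}$ for some $k\geq 2$ and has vanishing pairing against $f$, so the summand reduces to $0$; whereas for $j=m$ both factors $e_{1,m},e_{m,2m}$ lie in $\g(h,2)\subset\mf{m}$ and pair with $f$ to $1$, so $e_{1,m}e_{m,2m}\equiv 1\pmod{J_\chi}$. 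Altogether $v_0\equiv 1\pmod{J_\chi}$, giving $1\in I_{W_0}+J_\chi$ and therefore $\C[\g^*]=I_{W_0}+J_\chi$; Lemma \ref{lem:criterion} then yields the conclusion. The main obstacle is the bookkeeping in this case split, which requires tracking the grading of each $e_{1,j},e_{j,2m}$ under $\ad h$ and its pairing with $f$; I expect the edge case $m=2$ (where the two ``middle'' indices $m,m+1$ contribute only a single non-$m$ value $j=m+1$) to work by exactly the same calculation.
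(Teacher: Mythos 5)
Your Slodowy-slice computation for $f\in\O_{(3,2^{m-2},1)}$ is carefully done and correct, but the reduction step that precedes it has a real gap. The claim ``it suffices to show $\overline{\O_{(3,2^{m-2},1)}}\not\subset V(I_{W_0})$'' does not follow from $(3,2^{m-2},1)$ being the minimal partition strictly dominating $(2^m)$. What you actually need to rule out is every orbit $\O_{\bs\lam}$ with $\bs\lam\not\preccurlyeq(2^m)$, and these are precisely the $\bs\lam$ with $\lam_1\geq 3$ -- not the $\bs\lam\succ(2^m)$. Unlike the situation in Lemma~\ref{l1:A-theta1}, where $\O_{\min}$ is the unique minimal nonzero orbit (so ``not $\preccurlyeq\O_{\min}$'' and ``$\succ\O_{\min}$'' coincide), here $\O_{(2^m)}$ is not minimal and the two conditions diverge. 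Concretely, for $m\geq 3$ the partition $(3,1^{2m-3})$ satisfies $\lam_1\geq 3$, is \emph{incomparable} to $(2^m)$, and does \emph{not} dominate $(3,2^{m-2},1)$ (since $3+1=4<5$). Thus excluding $\overline{\O_{(3,2^{m-2},1)}}$ from $V(I_{W_0})$ leaves open the possibility that $\O_{(3,1^{2m-3})}\subset V(I_{W_0})$, which would already violate the lemma. (The paper's own remark ``$\O_{(3,2^{m-2},1)}$ is the smallest orbit dominating $\O_{(2^m)}$'' is correct but is not the right minimality statement for this argument.)

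The unique minimal element of $\{\bs\lam:\lam_1\geq 3\}$ in the Chevalley order is in fact $(3,1^{2m-3})$: if $\lam_1\geq 3$ then $\lam_1+\cdots+\lam_k\geq k+2$ for $k$ up to the number of parts, which is exactly the sequence of partial sums of $(3,1^{2m-3})$. So the correct target is $f\in\O_{(3,1^{2m-3})}$. The good news is that the same type of Slodowy-slice computation works there, and is in fact simpler: put the size-$3$ Jordan block on $\{1,m,2m\}$ and singletons elsewhere, so that $h=\on{diag}(2,0,\ldots,0,-2)$, $f=e_{m,1}+e_{2m,m}$, $\mf{m}=\g(h,2)\oplus\g(h,4)$, $\chi(e_{1,k})=\delta_{k,m}$, $\chi(e_{j,2m})=\delta_{j,m}$, $\chi(e_{1,2m})=0$. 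Then $e_\theta\equiv 0\pmod{J_\chi}$ kills the first sum of $v_0$, the term $e_{1,m}e_{m,2m}\equiv 1$, and all other $e_{1,j}e_{j,2m}$ with $j\neq m$ vanish modulo $J_\chi$ since both factors pair to $0$ with $f$. Hence $v_0\equiv 1\pmod{J_\chi}$, and Lemma~\ref{lem:criterion} gives $\overline{\O_{(3,1^{2m-3})}}\not\subset V(I_{W_0})$, from which the lemma follows. You should replace $(3,2^{m-2},1)$ by $(3,1^{2m-3})$ throughout the reduction step and in the slice computation.
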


Lemma \ref{lem:nil-0} is proven in the same way  
as Lemma \ref{l1:A-theta1}, 
and we omit the proof.   
Note that here  
$\O_{(3,2^{m-2},1)}$ it the smallest nilpotent orbit of $\g$ 
which dominates $\O_{(2^{m})}$.

We  now view $W_0$ as a submodule of $U(\g)$ through the identification 
$S(\g) \cong U(\g)$ given by the symmetrization map, 
and shall determine the characteristic variety $\mc{V}(I_{W_0}) $.


Set for $s \in \{1,\ldots,{2m-1 }\}$,
$$\Lambda_s := \{(i_1,\ldots,i_1) \in {\{1,\ldots,2m-1 \}^s} \; | \; i_1 < \cdots < i_s \text{ and } 
\sum_{k=1}^{s} (-1)^k i_k = (-1)^{s} m\}.$$
For example, $\Lambda_1= \{m\}$ and $\Lambda_2=\{1\leq i_1 < i_2 \leq {2m-1 } \; | 
-i_1+i_2 = m\}$. 
\begin{Pro} \label{pro:weight-0}
The characteristic variety $\mc{V}(I_{W_0}) $ of $I_{W_0}$ is the set 
\begin{align*}
\hat{\Xi} :=
\bigcup_{1 \leq s \leq {2m-1 } } \bigcup_{(i_1,\ldots,i_s) \in \Lambda_s} &
\{t \varpi_{i_1} + 
\sum_{j=2}^s (-1)^j (-t + c_{i_j}) \varpi_{i_j} \; ; \; t\in \C\}.&
\end{align*}
where for $j\in\{2,\ldots,s\}$, 
\begin{align*} 
c_j := i_1+2 \sum_{k=2}^{j-1} (-1)^{k+1}i_k +(-1)^{j+1}i_j.
\end{align*} 
 In particular, the only integral
 dominant weights which lie in $\mc{V}(I_{W_0}) $ are those of the 
form $t  \varpi_{m}$ with $t \in \Z_{\geq 0}$. 
\end{Pro}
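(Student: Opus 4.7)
The plan is to identify $\mc{V}(I_{W_0})$ with the common zero locus in $\h^*$ of the Harish-Chandra projections of an explicit basis of the zero-weight subspace of $W_0$ inside $U(\g)$. Since $W_0 \cong L_\g(\theta)$ is the adjoint representation of $\sl_{2m}$, the zero-weight subspace $W_0^0$ has dimension $2m-1$. I would first exhibit a basis $\{w_1, \ldots, w_{2m-1}\}$ of $W_0^0 \subset U(\g)$, obtained by applying suitable compositions of lowering operators $f_\alpha$ to the symmetrized highest-weight vector $\sigma(v_0)$, and then compute the polynomials $p_i := \Upsilon(w_i) \in S(\h)$.

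Each $p_i$ has degree at most $2$ in the coordinates $h_1, \ldots, h_{2m-1}$: the quadratic part arises from projecting the $h$-factors of the lowering-operator images of $\sigma(v_0)$ onto the diagonal copy $S^2(\h) \hookrightarrow S^2(\g)$, while linear corrections come from both the PBW reordering implicit in $\sigma$ and from commutators generated by successive $f_\alpha$-applications. A direct computation in the Chevalley basis of $\sl_{2m}$ produces explicit formulas for the $p_i$'s, with coefficients controlled by the integers $i$ and $m$ in a transparent way.

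The analysis of the zero locus $V(p_1, \ldots, p_{2m-1}) \subset \h^*$, re-expressed in the fundamental-weight basis $\lambda = \sum_i a_i \varpi_i$, should then exhibit the structure of $\hat\Xi$: the support $\{i_1 < \cdots < i_s\}$ of the nonzero coefficients must obey the alternating-sum constraint $\sum_{k=1}^s (-1)^k i_k = (-1)^s m$, arising from a resonance among the vanishing conditions, and once this constraint holds the remaining relations cut out a one-parameter family in which $a_{i_j} = (-1)^{j+1} t + (-1)^j c_{i_j}$ for $t \in \C$. This yields $\mc{V}(I_{W_0}) = \hat\Xi$.

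Finally, for the statement about integral dominant weights, such a $\lambda$ requires all coefficients $a_{i_j}$ to be non-negative integers. For $s = 1$ (so $i_1 = m$) this yields $\lambda = t \varpi_m$ with $t \in \Z_{\geq 0}$. For $s \geq 2$, the coefficient of $\varpi_{i_2}$ is $-t + c_{i_2}$ with $c_{i_2} = i_1 - i_2 < 0$, so the requirements $t \geq 0$ and $-t + c_{i_2} \geq 0$ are jointly impossible, ruling out these families. The main technical obstacle is the explicit Harish-Chandra computation and the translation to the $\varpi_i$-basis, which requires careful bookkeeping with Chevalley commutation relations and with the combinatorics of increasing sequences in $\{1, \ldots, 2m-1\}$ obeying the alternating-sum condition.
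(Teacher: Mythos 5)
Your plan — exhibit generators of $\Upsilon(I_{W_0}^{\h})$, determine their common zero locus, then re-express in the $\varpi_i$-basis — is the same skeleton as the paper, except that you would recompute the Harish-Chandra projections from scratch where the paper cites \cite[Lemma~5.1]{Per} for the explicit elements $\hat{p}_i = h_i\hat{q}_i$. That substitution is legitimate, though it hides a substantial Chevalley-basis computation.

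However, there is a genuine gap in your analysis of the zero locus. You assert that the support $\{i_1<\cdots<i_s\}$ of a solution $\lambda$ must itself satisfy the alternating-sum constraint $\sum_{k=1}^s(-1)^k i_k=(-1)^s m$. This is false. Already for $s=1$: the element $\lambda=-m\,\varpi_{i_1}$ with $i_1\neq m$ satisfies $\hat p_i(\lambda)=0$ for all $i$, yet its one-element support $\{i_1\}$ fails the constraint (which would force $i_1=m$). More generally, from the relations $\hat q_{i_j}(\lambda)=0$ one derives equation (\ref{eq:lam}) of the paper, whose coefficient of $\lambda_{i_1}$ is $\frac{1}{m}\sum_{k=1}^s(-1)^ki_k-(-1)^s$. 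When this is zero, you get the one-parameter family as you describe. When it is nonzero, $\lambda_{i_1}$ — and hence all $\lambda_{i_j}$ via (\ref{eq:hij}) — is \emph{uniquely determined}, and one must still show that this rigid $\lambda$ lies in $\hat\Xi$. The paper does this via Claim~\ref{claim:j}: one finds an index $j\notin\{i_1,\ldots,i_s\}$ to insert, producing a sequence $(i'_1,\ldots,i'_{s+1})\in\Lambda_{s+1}$, and verifies that $\lambda$, with the extra coefficient $\lambda_{i'_l}=0$, satisfies the defining relations for that longer sequence. This combinatorial extension argument, proved by induction on $s$ with a careful parity analysis, is the technical core of the converse inclusion and is entirely missing from your sketch. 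Your closing argument about dominant integral weights is correct as stated, but it rests on knowing $\mc{V}(I_{W_0})=\hat\Xi$, so the gap propagates.
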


\begin{proof}
Set for $i\in \{1,\ldots,{2m-1 }\}$, 
$$\hat{p}_i := h_i \hat{q}_i,$$
where 
$$\hat{q}_i:= \sum_{j=1}^{i-1} \frac{-j}{m} h_j + 
\frac{m-i}{m} h_i + {\sum_{j=i+1}^{2m-1}} \frac{2m-j}{m} h_j 
+m -i.$$
According to \cite[Lemma 5.1]{Per}, $\Upsilon(W_0^\h)$ is 
generated by $\hat{p}_{1},\ldots,\hat{p}_{2m-1 }$. Indeed, this part of Per{\v{s}}e's proof does 
 not use the parity of
 the rank of $\mf{sl}_{n}$
 and all computations hold for $n$ even.

We first verify that any $\lam \in \hat{\Xi}$ is a solution of the 
system of equations $\hat{p}_1(\lam)=0,\ldots,\hat{p}_{{r }}(\lam)=0$. 
The verifications are left to the reader. 

Conversely, let $\lam = \sum_{i=1}^{2m-1 } \lam_i \varpi_i \in\h$ be such that $\hat{p}_i(\lam)=0$ 
for all $i =1,\ldots,{2m-1 }$. Assume that $\lam\not=0$. 
Let us show that $\lam \in \hat{\Xi}$. 
Since $\lam \not= 0$, there exist integers $i_1,i_2,\ldots,i_s$ in $\{1,\ldots,{2m-1 }\}$, with 
$i_1< i_2\cdots < i_s$, such that 
$\lam_{i_j} \not=0$ if $j \in \{1,\ldots,s\}$ and $\lam_k =0$ for all 
$k \not\in \{i_1,\ldots,i_1\}$.  
Thus, $\hat{q}_{i_j}(\lam)=0$ for all $j\in\{1,\ldots,s\}$.

Assume $s=1$. Then 
$$0=\hat{q}_{i_1}(\lam)= 
\frac{m-i_1 }{m} \lam_{i_1}  +m -i_1.$$
Either $i_1= m$ and then $\lam = \lam_m \varpi_{m} \in \hat{\Xi}$,  
or $i_1\not=m$ and then 
$$ \lam_{i_1}  = - m\quad \text{ so }\quad  \lam= - m \varpi_{i_1}.$$ 
Since $i_1 \not= m$, either $i_1 > m$ or $i_1 < m$. 
By symmetry, we can assume that $i_1< m$. 
Indeed, if $i_1 > m$, then $2m - i_1 < m$, 
{but if $\hat{p}_i(\lam')=0$ } 
for all $i =1,\ldots,{2m-1 }$, then $\lam' := \lam_{2m-i_1} \varpi_{2m-i_1}$ 
verifies $\hat{p}_i(\lam')=0$ 
 for all $i =1,\ldots,{2m-1}$, too.
 Now
one can choose $i_2 \in \{i_1+1,\ldots,{2m-1 }\}$ such 
that $-i_1+i_2=m$. 
Then 
\begin{align*}
\lam =  \lam_{i_1} \varpi_{i_1}  = & \lam_{i_1} \varpi_{i_1}  + (- \lam_{i_1} +i_1-i_2) \varpi_{i_2} & \\ 
& \in  \bigcup_{(i_1,i_2) \in \Lambda_2} 
\{ t \varpi_{i_1} + (-t +  i_1- i_2) \varpi_{i_2} \; ; \; t\in \C\} \subset \hat{\Xi}&
\end{align*}
since $\lam_{i_1}  = - m = i_1-i_2$ and $c_{i_2} = i_1-i_2$, and we are done. 

Assume now $s\geq 2$. 
Since $\lam_{i_j}\not=0$ for $j=1,\ldots,s$, we get $\hat{q}_{i_j}(\lam)=0$ 
for $j=1,\ldots,s$. Using the equations $\hat{q}_{i_1}(\lam)-\hat{q}_{i_2}(\lam)=0$, \ldots, 
$\hat{q}_{i_s}(\lam)-\hat{q}_{i_s}(\lam)=0$, we get 
\begin{align*} 
\forall \, j \in\{1,\ldots,s-1\},\qquad 
\lam_{i_{j+1}} + \lam_{i_j} - i_j + i_{j+1}=0.
\end{align*}
By induction on $j$ we obtain that 
\begin{align} \label{eq:hij} 
\forall \, j \in\{2,\ldots,s\},\qquad  \lam_{i_j} = (-1)^j (- \lam_{i_1} + c_j) 
\end{align}
where $c_j$ is defined as in the proposition: 
\begin{align*} 
c_j := i_1+2 \sum_{k=2}^{j-1} (-1)^{k+1}i_k +(-1)^{j+1}i_j, \quad j=2,\ldots,s.
\end{align*} 
Using the equations $\hat{q}_{i_s}(\lam)=0$ and (\ref{eq:hij}), we get 
\begin{align} \label{eq:lam}
0 = & \left(  \frac{1}{m} \sum_{k=1}^s (-1)^k i_k -(-1)^s \right) \lam_{i_1} & \\ \nonumber
&\hspace{1cm} - \frac{1}{m}\sum_{k=2}^s (-1)^k i_k c_k + \frac{m- i_s}{m}(-1)^s c_s 
+m- i_s.&
\end{align}
Either $$\frac{1}{m} \sum_{k=1}^s (-1)^k i_k -(-1)^s =0, 
\quad \text{ that is, } \quad 
\sum_{k=1}^s (-1)^k i_k=(-1)^s m$$
and then $\lam \in \hat{\Xi}$, 
or $$ \sum_{k=1}^s (-1)^k i_k \not =(-1)^s m$$ and then 
$\lam_{i_1}$ is entirely determined by the equation (\ref{eq:lam}), and so 
is $\lam_{i_2},\ldots,\lam_{i_s}$ by~(\ref{eq:hij}). 

\begin{Claim} \label{claim:j} 
Set $i_0:=0$. There exist $l \in \{1,\ldots,s\}$ and 
$j \in \{1,\ldots,2m-1\} \setminus\{i_1,\ldots,i_s\}$ such that 
$i_{l-1} < j< i_{l}$ and 
$$ \sum_{k=1}^{s+1} (-1)^k i'_k =(-1)^{s+1} m$$ 
where $i'_k = i_k$ for $k=1,\ldots,l-1$, $i'_{l} =j$ and 
$i'_{k+1} = i_{k}$ for $k=l,\ldots,s$. 
\end{Claim}

\begin{proof} 
First of all, we observe that the sequence $(1,2,\ldots,2m-1)$ always belongs to 
$\Lambda_{2m-1}$. Hence, if $ \sum_{k=1}^s (-1)^k i_k \not =(-1)^s m$, 
then there exist $l \in \{1,\ldots,s\}$ and 
$j \in \{1,\ldots,2m-1\} \setminus\{i_1,\ldots,i_s\}$ such that 
$i_{l-1} < j< i_{l}$. 
We prove the statement by induction on $s$. 
The claim is known for $s=1$. 
Let us prove it prove for $s=2$. There are two cases:
\begin{itemize}
\item[a)] First case: $-i_1 +i_2 < m$. Then set 
$$j := m-i_1+i_2.$$ 
Possibly replacing $i_1,i_2$ by $2m-i_2,2m-i_2$, we can assume 
that $i_1 < m$. Hence $j > i_2$. 
In addition, $j < 2m$ since $-i_1 +i_2 < m$. So $j$ 
suits the conditions of the claim. 
\item[b)] Second case: $-i_1 +i_2 > m$. 
Then $i_2  > m$, $i_1 <   m$ and we set: 
$$j:= -m +i_1+i_2.$$ 
We have $i_1 < j < i_2$ and $j$ suits the conditions of the claim.  
\end{itemize}

Assume now that $s\geq 2$ and that the claim is true for all strictly 
smaller integers. 
We assume that $s$ is even. 
The case where $s$ is odd is dealt similarly. 
There are two cases: 
\begin{itemize}
\item[a)] First case: $\sum_{k=1}^s (-1)^k i_k < m$.  
Either $$m + \sum_{k=1}^{s-1} (-1)^k i_k >0,$$ 
and  then the integer 
$$j:= m + \sum_{k=1}^{s} (-1)^k i_k > i_s$$ 
suits the conditions of the claims. 
Or  $m + \sum_{k=1}^{s-1} (-1)^k i_k \leq 0$, that is 
$ i_{s-1} \geq \frac{r+1}{1} + \sum_{k=1}^{s-2} (-1)^k i_k$. 
Since $i_{s-1} < 2m -\delta$, with $\delta:=i_s-i_{s-1}$, we get 
$$\sum_{k=1}^{s-2} (-1)^k i_k < m-\delta.$$ 
Apply the induction hypothesis to the sequence $i_1,\ldots,i_{s-1}$ 
and $2m-2\delta-1$, which is an odd integer. Then there exists $j \in \{1,\ldots,2m-2\delta-1\} 
\setminus \{i_1,\ldots,i_{s-2}\}$ such that 
$\sum_{k=1}^{s-1}(-1)^k i'_{k} = -(m-\delta)$  where the sequence 
$i'_1,\ldots,i'_{s-1}$ is defined as in the claim with respect to $i_1,\ldots,i_{s-2}$ 
and $j$. We easily verify that $j < i_{s_1}$, and 
we have 
$$\sum_{k=1}^{s-1}(-1)^k i'_{k} + i_{s-1} - i_s =  -(m-\delta)- \delta = 
-m.$$ 
So $j$ suits the conditions of the claim. 
\item[b)] Second case: $\sum_{k=1}^s (-1)^k i_k > m$.
Then $\sum_{k=1}^{s-2} (-1)^k i_k -i_{s-1}+i_s > m$, that is, 
$ \sum_{k=1}^{s-2} (-1)^k i_k  > m-\delta $ with $\delta:=i_s-i_{s-1}$. 
Applying the induction hypothesis to the sequence $i_1,\ldots,i_{s-1}$ 
and $2m-2\delta-1$, we conclude as in case a). 
\end{itemize}

We illustrate in Figures \ref{fig:j} and \ref{fig2:j} the construction of $j$.  
In Figure \ref{fig:j}, the positive integer $- \sum_{k=1}^s (-1)^k i_k$ corresponds 
to the sum of the lengths of the thick lines while 
the positive integer ${2m + \sum_{k=1}^s (-1)^k i_k} $ 
corresponds to the sum of the lengths of the thin lines. 
In Figure \ref{fig2:j}, the positive integer 
$\sum_{k=1}^s (-1)^k i_k$ corresponds 
to the sum of the lengths of the thick lines while 
the positive integer $2m - \sum_{k=1}^s (-1)^k i_k $ 
corresponds to the sum of the lengths of the thin lines. 

\begin{figure}
{\setlength{\unitlength}{0.25in}
\begin{center}
\begin{picture}(1,1)(0,0)
\linethickness{0.125mm}
\put(-3.95,0){\line(1,0){8}}
\put(-4,0){$|$}
\put(-3,0){$|$}
\put(-2,0){$|$}
\put(-1,0){$|$}
\put(0,0){$|$}
\put(1,0){$|$}
\put(2,0){$|$}
\put(3,0){$|$}
\put(4,0){$|$}

\linethickness{0.5mm}
\put(-3.95,0){\line(1,0){1}}
\put(-0.95,0){\line(1,0){1}}
\put(2.05,0){\line(1,0){1}}

\put(-3.05,0.7){\tiny{$i_1$}}
\put(-1.05,0.7){\tiny{$i_2$}}
\put(-0.05,0.7){\tiny{$i_3$}}
\put(1.95,0.7){\tiny{$i_4$}}
\put(2.95,0.7){\tiny{$i_5$}}

\put(-4,-0.5){\tiny{$0$}}
\put(-3,-0.5){\tiny{$1$}}
\put(-2,-0.5){\tiny{$2$}}
\put(-1,-0.5){\tiny{$3$}}
\put(0,-0.5){\tiny{$4$}}
\put(1,-0.5){\tiny{$5$}}
\put(2,-0.5){\tiny{$6$}}
\put(3,-0.5){\tiny{$7$}}
\put(4,-0.5){\tiny{$8$}}
\end{picture}
\end{center}}
{\setlength{\unitlength}{0.25in}
\begin{center}
\begin{picture}(1,2)(0,0)
\linethickness{0.125mm}
\put(-3.95,0){\line(1,0){8}}
\put(-4,0){$|$}
\put(-3,0){$|$}
\put(-2,0){$|$}
\put(-1,0){$|$}
\put(0,0){$|$}
\put(1,0){$|$}
\put(2,0){$|$}
\put(3,0){$|$}
\put(4,0){$|$}

\linethickness{0.5mm}
\put(-3.95,0){\line(1,0){1}}
\put(-0.95,0){\line(1,0){1}}
\put(1.05,0){\line(1,0){1}}
\put(3.05,0){\line(1,0){1}}

\put(-3.05,0.7){\tiny{$i_1$}}
\put(-1.05,0.7){\tiny{$i_2$}}
\put(-0.05,0.7){\tiny{$i_3$}}
\put(0.95,0.7){\tiny{$\mathbf{j}$}}
\put(1.95,0.7){\tiny{$i_4$}}
\put(2.95,0.7){\tiny{$i_5$}}

\put(-4,-0.5){\tiny{$0$}}
\put(-3,-0.5){\tiny{$1$}}
\put(-2,-0.5){\tiny{$2$}}
\put(-1,-0.5){\tiny{$3$}}
\put(0,-0.5){\tiny{$4$}}
\put(1,-0.5){\tiny{$5$}}
\put(2,-0.5){\tiny{$6$}}
\put(3,-0.5){\tiny{$7$}}
\put(4,-0.5){\tiny{$8$}}
\end{picture}
\end{center}}
\caption{{\footnotesize Construction of $j$ for $m=4$, $s=5$ and $(i_1,i_2,i_3,i_4,i_5)=
(1,3,4,6,7)$}} \label{fig:j}
\end{figure}
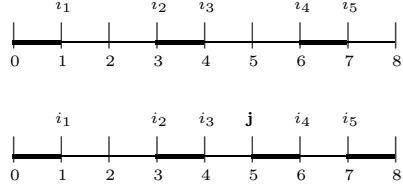
\begin{figure}
{\setlength{\unitlength}{0.25in}
\begin{center}
\begin{picture}(1,1)(0,0)
\linethickness{0.125mm}
\put(-3.95,0){\line(1,0){8}}
\put(-4,0){$|$}
\put(-3,0){$|$}
\put(-2,0){$|$}
\put(-1,0){$|$}
\put(0,0){$|$}
\put(1,0){$|$}
\put(2,0){$|$}
\put(3,0){$|$}
\put(4,0){$|$}

\linethickness{0.5mm}
\put(-2.95,0){\line(1,0){2}}
\put(1.05,0){\line(1,0){1}}

\put(-3.05,0.7){\tiny{$i_1$}}
\put(-1.05,0.7){\tiny{$i_2$}}
\put(0.95,0.7){\tiny{$i_3$}}
\put(1.95,0.7){\tiny{$i_4$}}

\put(-4,-0.5){\tiny{$0$}}
\put(-3,-0.5){\tiny{$1$}}
\put(-2,-0.5){\tiny{$2$}}
\put(-1,-0.5){\tiny{$3$}}
\put(0,-0.5){\tiny{$4$}}
\put(1,-0.5){\tiny{$5$}}
\put(2,-0.5){\tiny{$6$}}
\put(3,-0.5){\tiny{$7$}}
\put(4,-0.5){\tiny{$8$}}
\end{picture}
\end{center}}
{\setlength{\unitlength}{0.25in}
\begin{center}
\begin{picture}(1,2)(0,0)
\linethickness{0.125mm}
\put(-3.95,0){\line(1,0){8}}
\put(-4,0){$|$}
\put(-3,0){$|$}
\put(-2,0){$|$}
\put(-1,0){$|$}
\put(0,0){$|$}
\put(1,0){$|$}
\put(2,0){$|$}
\put(3,0){$|$}
\put(4,0){$|$}

\linethickness{0.5mm}
\put(-2.95,0){\line(1,0){2}}
\put(1.05,0){\line(1,0){1}}
\put(3.05,0){\line(1,0){1}}

\put(-3.05,0.7){\tiny{$i_1$}}
\put(-1.05,0.7){\tiny{$i_2$}}
\put(0.95,0.7){\tiny{$i_3$}}
\put(1.95,0.7){\tiny{$i_4$}}
\put(2.95,0.7){\tiny{$\mathbf{j}$}}

\put(-4,-0.5){\tiny{$0$}}
\put(-3,-0.5){\tiny{$1$}}
\put(-2,-0.5){\tiny{$2$}}
\put(-1,-0.5){\tiny{$3$}}
\put(0,-0.5){\tiny{$4$}}
\put(1,-0.5){\tiny{$5$}}
\put(2,-0.5){\tiny{$6$}}
\put(3,-0.5){\tiny{$7$}}
\put(4,-0.5){\tiny{$8$}}
\end{picture}
\end{center}}
\caption{{\footnotesize Construction of $j$ for $m=4$, $s=4$ and $(i_1,i_2,i_3,i_4)=
(1,3,5,6)$}} \label{fig2:j}
\end{figure}
\end{proof}
 Let $j$ and $i'_1,\ldots,i'_{s+1}$ be as in Claim \ref{claim:j}. 
Possibly replacing the sequence $i_1,\ldots,i_s$ by 
$2m-i_s-1,\ldots,2m-i_s-1$ we can assume that $l \not=1$, that is, 
$j \not= i_1$. 
Set 
$$\lam_{i'_l} : = 0$$
so that 
$$\lam = \sum_{k=1}^s \lam_{i_k} \varpi_{i_k}  = 
\sum_{k=1}^{s+1} \lam_{i'_k} \varpi_{i'_k}.$$
where for $j=1,\ldots,l-1$, $\lam_{i'_k} := \lam_{i_k}$ 
and for $k=l,\ldots,s$, $\lam_{i'_{k+1}} := \lam_{i_{k}}$.
Then one can verify that for all $k \in\{1,\ldots,s+1\}$, 
$$(-1)^k(- \lam_{i'_1} +c_{i'_k}) = \lam_{i'_k}.$$ 
The verifications are left to the reader. 
This proves that $\lam \in \hat{\Xi}$ since 
$(i'_1,\ldots,i'_{s+1}) \in \Lambda_{s+1}$. 
\end{proof}
\begin{lemma} \label{lem:A-zero0} 
Let $\lam$ 
be a nonzero semisimple element of $\g$ 
which lies in $V(I_{W_0})$. 
Then $\lam\in G.\C^* \varpi_{m}$.  
\end{lemma}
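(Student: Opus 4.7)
My plan is to first use $G$-invariance of $V(I_{W_0})$ to reduce to $\lambda \in \mathfrak{h}$, and then turn the vanishing conditions into an explicit polynomial system on $\mathfrak{h}$. Writing $\lambda = \sum_{i=1}^{2m-1} \lambda_i \varpi_i$, I observe that $\lambda$, viewed as an element of $\mathfrak{g}^*$ through $(~|~)$, annihilates every root vector, so the evaluation of any $w \in S(\mathfrak{g})$ at $\lambda$ vanishes automatically when $w$ has nonzero $\mathfrak{h}$-weight and factors for zero-weight $w$ through the natural projection $\pi \colon S^2(\mathfrak{g})^{\mathfrak{h}} \twoheadrightarrow S^2(\mathfrak{h})$. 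Since $I_{W_0} = S(\mathfrak{g}) \cdot W_0$, this reduces the condition $\lambda \in V(I_{W_0})$ to the vanishing at $\lambda$ of all elements of $\pi(W_0^{\mathfrak{h}}) \subset S^2(\mathfrak{h})$.

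Next I will identify $\pi(W_0^{\mathfrak{h}})$ using the computations already made in the proof of Proposition \ref{pro:weight-0}. Under the symmetrization embedding $S^2(\mathfrak{g}) \hookrightarrow U(\mathfrak{g})$, the map $\pi$ is the leading symbol of the Harish-Chandra projection $\Upsilon$. Hence the degree-$2$ component of $\hat{p}_i = h_i \hat{q}_i$, namely $h_i \tilde{q}_i$ with
\begin{align*}
\tilde{q}_i = \sum_{j=1}^{i-1} \tfrac{-j}{m} h_j + \tfrac{m-i}{m} h_i + \sum_{j=i+1}^{2m-1} \tfrac{2m-j}{m} h_j,
\end{align*}
spans $\pi(W_0^{\mathfrak{h}})$. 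Thus $\lambda \in V(I_{W_0}) \cap \mathfrak{h}$ is equivalent to the system $\lambda_i \, \tilde{q}_i(\lambda) = 0$ for $i = 1, \ldots, 2m-1$.

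I then plan to solve this system by the same combinatorial strategy as in the proof of Proposition \ref{pro:weight-0}, but now streamlined by the absence of the linear and constant correction terms in $\hat{q}_i$. Let $I = \{i_1 < \cdots < i_s\}$ be the set of indices for which $\lambda_i \neq 0$. For $a < b$ a direct calculation gives $\tilde{q}_a(\lambda) - \tilde{q}_b(\lambda) = \lambda_a + \lambda_b + 2\sum_{a < l < b} \lambda_l$; applied to consecutive elements of $I$ this forces $\lambda_{i_{j+1}} = -\lambda_{i_j}$, hence $\lambda_{i_j} = (-1)^{j-1} \lambda_{i_1}$. Substituting into $\tilde{q}_{i_1}(\lambda) = 0$ and simplifying yields $\sum_{k=1}^s (-1)^k i_k = (-1)^s m$, that is $(i_1, \ldots, i_s) \in \Lambda_s$.

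Finally, I will show that for any $(i_1, \ldots, i_s) \in \Lambda_s$ the vector $\mu := \sum_{j=1}^s (-1)^{j-1} \varpi_{i_j}$ is $W$-conjugate to $\varpi_m$, which completes the proof via $\lambda = \lambda_{i_1} \mu \in G \cdot \mathbb{C}^* \varpi_m$. Using $\varpi_k = \sum_{l \leq k} \eps_l - \frac{k}{2m}\sum_l \eps_l$, the $\eps_l$-coefficient of $\mu$ equals $(a_l - b_l) - \tfrac{(-1)^{s+1}}{2}$, where $a_l$ (respectively $b_l$) counts odd (resp.\ even) $j \in \{1,\ldots,s\}$ with $l \leq i_j$ and the constant uses the reformulation $\sum_k (-1)^{k-1} i_k = (-1)^{s+1} m$. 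Since $\{j : l \leq i_j\}$ is a terminal segment of $\{1, \ldots, s\}$, a short parity discussion on the parities of that initial index and of $s$ forces this coefficient into $\{-\tfrac12, \tfrac12\}$; as all these coefficients sum to $0$, there are exactly $m$ of each sign, matching the $\eps$-coefficient multiset of $\varpi_m$. Hence $\mu$ is $W = S_{2m}$-conjugate to $\varpi_m$, which gives the claim. The technical heart of the argument is this final parity check ruling out the a priori possibilities $\pm \tfrac32$.
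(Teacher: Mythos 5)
Your argument is correct and follows essentially the same route as the paper's: reduce to $\lambda\in\mathfrak{h}$ by $G$-invariance, observe that evaluation at a Cartan element only sees the image under the Chevalley projection $\pi \colon S(\g)^{\h}\to S(\h)$ (the degree-2 leading terms $p_i = h_i\tilde q_i$ of Per\v se's $\hat p_i$), solve the resulting quadratic system on $\h$ to land on the family $\mu=\sum_j(-1)^{j-1}\varpi_{i_j}$ with $(i_1,\dots,i_s)\in\Lambda_s$, and finally show each such $\mu$ is $\mathfrak S_{2m}$-conjugate to $\varpi_m$ by checking its $\eps_l$-coefficients are all $\pm\tfrac12$. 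The only cosmetic differences are that you rederive the system directly from $\tilde q_a-\tilde q_b = h_a+2\sum_{a<l<b}h_l+h_b$ (the paper cites the structure of the proof of Proposition \ref{pro:weight-0}), and your terminal-segment parity check of the $\eps_l$-coefficient $(a_l-b_l)-\tfrac{(-1)^{s+1}}{2}$ replaces the paper's block-by-block telescoping rewrite in $\eps$-coordinates; both verifications are computationally sound.
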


\begin{proof} 
Set $S(\g)^\h := \{x \in S(\g) \; | \; [h,x]=0 \text{ for all } h \in \h \}$ and 
let 
$$\Psi \colon S(\g)^\h \to S(\h)$$
be the {\em Chevalley projection map} which 
is the restriction of the projection map $S(\g)=S(\h) \oplus (\n_- +\n_+)S(\g) \to S(\h)$ 
to $S(\g)^\h$. It is known that $\Psi$ is an algebra homomorphism. 
We have 
$$V(I_{W_0}) \cap \h^* = \{\lambda \in \h \; | \; p(\lam)=0 \text{ for all } p\in 
\Psi(I_{W_0} \cap S(\g)^\h)\}\subset \h^*\cong \h.$$ 
Since $V(I_{W_0})$ is $G$-invariant, it is enough to prove the lemma 
for nonzero elements $\lam\in V(I_{W_0})\cap \h^*$. 

It follows from the proof of Proposition \ref{pro:weight-0} that 
$V(I_{W_0}) \cap \h^*$ is the zero 
locus in $\h$ of $p_{1},\ldots,p_{2m-1 }$ where 
for $i\in \{1,\ldots,2m-1\}$, 
$$p_i:= 
h_i\left(\sum_{j=1}^{i-1} \frac{-j}{m} h_j + 
\frac{m-i}{m} h_i + \sum_{j=i+1}^{2m-1} \frac{2m-j}{m} h_j\right).$$
Then it also follows from the proof of Proposition \ref{pro:weight-0} that the zero 
locus in $\h$ of $p_{1},\ldots,p_{2m-1 }$ is the set 
\begin{align*}
\Xi := \bigcup_{1 \leq s \leq {2m-1 } } \bigcup_{(i_1,\ldots,i_s) \in \Lambda_s} 
\C (\sum_{j=1}^s (-1)^j \varpi_{i_j} ).
\end{align*}

Let $\lam \in \Xi$. We can assume that $\lam =\sum_{j=1}^s (-1)^j \varpi_{i_j} $ 
for $s \in \{1,\ldots,2m-1\}$ and $(i_1,\ldots,i_s) \in \Lambda_s$. 
We have to show that $\lam$ is conjugate to $\varpi_{m}$ 
under the Weyl group 
 $W(\g,\h)$ of $(\g,\h)$ which is the group of permutations of
 $\{\eps_1,\ldots,\eps_{2m}\}$. 
Observe that 
$$\varpi_{m}= \frac{1}{2}(\sum_{i=1}^{m} {\eps_{i}}
- \sum_{i=m+1}^{2m} {\eps_{i}}).$$ 
So it suffices to show that $\lam$ can be written as 
$$\lam = \sum_{i=1}^{2m} \sigma_i \eps_i$$ 
with $\sigma_i \in \{-\frac{1}{2},\frac{1}{2}\}$ for all $i \in\{1,\ldots,2m\}$ 
and $${\rm card}(\{i \; |\; \sigma_i =\frac{1}{2} \})= {\rm card}(\{i \; |\; \sigma_i =-\frac{1}{2} \})=m.$$ 

If $s$ is even, we have 
\begin{align*}
\lam =\sum_{j=1}^s (-1)^j \varpi_{i_j} =& \sum_{j=1}^{s/2} 
(\eps_{i_{2j-1}+1}+\cdots + \eps_{i_{2j}}) - \frac{\sum_{j=1}^s (-1)^{j} i_j}{2m} 
(\eps_1+\cdots +\eps_{2m}) &\\
=& \sum_{j=1}^{s/2} 
(\eps_{i_{2j-1}+1}+\cdots + \eps_{i_{2j}}) - \frac{1}{2}  
(\eps_1+\cdots +\eps_{2m}) &\\
=& \frac{1}{2}( -(\eps_{1} +\cdots +\eps_{i_1})+ \sum_{j=1}^{(s-2)/2} 
(\eps_{i_{2j-1}+1}+\cdots + \eps_{i_{2j}}) - (\eps_{i_{2j}+1} +\cdots + \eps_{i_{2j+1}}) &\\
& \quad + (\eps_{i_{s-1}}+\cdots +\eps_{i_s}) - (\eps_{i_{s}+1}+\cdots+\eps_{2m})). &
\end{align*}
since $(i_1,\ldots,i_s) \in \Lambda_s$. 
We are done because $\sum_{j=1}^s (-1)^j i_j = m$. 

If $s$ is odd, we have 
\begin{align*}
\lam =\sum_{j=1}^s (-1)^j \varpi_{i_j} =& \sum_{j=1}^{(s-1)/2} 
(\eps_{i_{2j-1}+1}+\cdots + \eps_{i_{2j}}) - (\eps_1+\cdots+\eps_{i_s})
- \frac{\sum_{j=1}^s (-1)^{j} i_j}{2m} 
(\eps_1+\cdots +\eps_{2m}) &\\
=& - (\eps_1+\cdots+\eps_{i_1}) - \sum_{j=1}^{(s-1)/2} 
(\eps_{i_{2j}+1}+\cdots + \eps_{i_{2j+1}}) + \frac{1}{2}  
(\eps_1+\cdots +\eps_{2m}) &\\
=& \frac{1}{2}( -(\eps_{1} +\cdots +\eps_{i_1})+ \sum_{j=1}^{(s-1)/2} 
(\eps_{i_{2j-1}+1}+\cdots + \eps_{i_{2j}}) - (\eps_{i_{2j}+1} +\cdots + \eps_{i_{2j+1}}) &\\
& \quad + (\eps_{i_{s}+1}+\cdots+\eps_{2m})). &
\end{align*}
since $(i_1,\ldots,i_s) \in \Lambda_s$. 
We are done because $\sum_{j=1}^s (-1)^j i_j = -m$. 
\end{proof}
The following assertion follows immediately from
Lemma \ref{Lem:sheet},
Lemma \ref{lem:nil-0},
and Lemma \ref{lem:A-zero0}.
\begin{Pro} \label{Pro:A-zero0}
We have $V(I_{W_0})=\overline{\SS_{\l_{0}}}$.
\end{Pro}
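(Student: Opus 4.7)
The plan is to apply Lemma \ref{Lem:sheet}(2) directly to $X := V(I_{W_0})$, taking as data the Dixmier sheet $\SS_{\l_0}$ together with the Weyl translates of its defining central line. Since $W_0 \subset S^2(\fing)$ is an $\ad\fing$-submodule consisting of homogeneous elements, $I_{W_0}$ is a homogeneous $\ad\fing$-invariant ideal of $\C[\fing^*]$, so $X$ is a $G$-invariant, conic, Zariski closed subset of $\fing^*$; this verifies the ambient hypothesis of Lemma \ref{Lem:sheet}(2). Recall also that $\z(\l_0) = \C\varpi_m$ and that $\on{Ind}^\fing_{\l_0}(0) = \mathbb{O}_{(2^m)}$ is the unique nilpotent orbit contained in $\SS_{\l_0}$.

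The nilpotent condition in Lemma \ref{Lem:sheet}(2) is supplied directly by Lemma \ref{lem:nil-0}, giving $X \cap \mc{N} \subset \overline{\mathbb{O}_{(2^m)}} = \overline{\on{Ind}^\fing_{\l_0}(0)}$. For the Cartan condition, one inclusion is immediate from Lemma \ref{lem:A-zero0}: every nonzero semisimple element of $X$ lies in $G.\C^*\varpi_m$, and since $X \cap \mf{h}$ is $W$-stable (because $X$ is $G$-invariant), it is contained in the finite union $\bigcup_{i=1}^n \C\lam_i$, where $\lam_1=\varpi_m$ and $\lam_2,\ldots,\lam_n$ are its Weyl translates. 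The reverse inclusion amounts to showing $\varpi_m \in V(I_{W_0})$, which is a direct verification using Per\v{s}e's generators $\hat{p}_1,\ldots,\hat{p}_{2m-1}$ of $\Upsilon(W_0^{\mf h})$ recalled in the proofs of Proposition \ref{pro:weight-0} and Lemma \ref{lem:A-zero0}: since $h_i(\varpi_m) = \delta_{i,m}$, each $\hat{p}_i$ vanishes at $\varpi_m$, hence so does every element of $I_{W_0} \cap S(\fing)^{\mf h}$ after Chevalley projection. This gives the equality $X \cap \mf{h} = \bigcup_{i=1}^n \C\lam_i$.

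The Levi subalgebras $\l_i$ associated with the $\lam_i$ are all $G$-conjugate to $\l_0$, so all the $\overline{\SS_{\l_i}}$ coincide with $\overline{\SS_{\l_0}}$, and the equal-dimension hypothesis on $\on{Ind}^\fing_{\l_i}(0)$ required by Lemma \ref{Lem:sheet}(2) is automatic. Applying that lemma then yields $X = \bigcup_{i=1}^n \overline{\SS_{\l_i}} = \overline{\SS_{\l_0}}$, which is the desired equality. I do not foresee any serious obstacle: the two substantive ingredients, namely the nilpotent confinement (Lemma \ref{lem:nil-0}) and the classification of semisimple points (Lemma \ref{lem:A-zero0}), have already been established, and the remaining work is only the routine bookkeeping needed to feed them into Lemma \ref{Lem:sheet}(2).
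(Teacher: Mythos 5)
Your proposal is correct and takes the same route as the paper, which simply declares the result to follow immediately from Lemma~\ref{Lem:sheet}(2), Lemma~\ref{lem:nil-0}, and Lemma~\ref{lem:A-zero0}; you have merely spelled out the bookkeeping, including the reverse Cartan inclusion $\C\varpi_m \subset V(I_{W_0})$ that the paper leaves implicit. One small imprecision worth noting: since $V(I_{W_0})$ is the zero locus of a commutative ideal in $S(\fing)$, the relevant generators are the Chevalley-projection elements $p_i$ from the proof of Lemma~\ref{lem:A-zero0}, not the Harish-Chandra-projection elements $\hat p_i$ from Proposition~\ref{pro:weight-0}; the computation nonetheless comes out the same, because $h_i(\varpi_m)=\delta_{im}$ and the constant $m-i$ in $\hat q_i$ vanishes at the only relevant index $i=m$.
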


\begin{Pro} \label{Pro:prime0} 
 The ideal $\IS_{W_0}$ is prime,
 and therefore, it is 
 the defining ideal of $\overline{\SS_{\l_0}}$.
\end{Pro}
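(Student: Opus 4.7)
The natural approach is to mimic the proof of Proposition~\ref{Pro:prime} and apply Lemma~\ref{lem:prime-criterion} to the ideal $I := I_{W_0}$, with Dixmier sheet $\SS_{\l_0}$, parabolic subalgebra $\mf{p}$ having $\l_0$ as Levi factor, and the $\mf{sl}_2$-triple $(e,h,f)$ introduced in this section (so that $f \in \O_{(2^m)}$, $h = 2\varpi_m$, and $e \in (\mf{p}_u)^{\reg}$). Since $\g = \g(h,-2)\oplus\g(h,0)\oplus\g(h,2)$, the grading hypothesis $\g(h,i)=0$ for $i>2$ of Lemma~\ref{lem:Kat} and Lemma~\ref{lem:prime-criterion} is satisfied, and $\z(\l_0) = \C\varpi_m$ is one-dimensional as required.

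It then suffices to verify the four numbered conditions of Lemma~\ref{lem:prime-criterion} for the choice $\lam = \varpi_m$. Condition~(1), $\on{supp}_{\C[\g^*]}(\C[\g^*]/I_{W_0}) = \overline{\SS_{\l_0}}$, is exactly Proposition~\ref{Pro:A-zero0}. Condition~(3), $H_f(\C[\g^*]/I_{W_0}) \cong \C[z]$ as algebras, is Corollary~\ref{Co:C[z]2} combined with the identification from Theorem~\ref{Th:W-algebra-variety}\eqref{item:intersection} and Lemma~\ref{lem:Kat}(2). Condition~(4), $\Omega(\varpi_m)\ne 0$, is immediate: after the identification $\g^*\cong\g$, the Casimir restricts on $\h$ to the non-degenerate form $(\,|\,)$, and $(\varpi_m|\varpi_m)\ne 0$.

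The main obstacle is condition~(2), namely that $I_{W_0}+I_\Omega$ is the (radical) defining ideal of $\overline{\O_{(2^m)}}$. In Proposition~\ref{Pro:prime} this was settled by invoking Garfinkle's description \cite{Gar} of the defining ideal of $\overline{\O_{\min}}$ in type $A$ as the ideal generated by $\C\Omega\oplus W_1$. Here one needs the analogous statement for $\O_{(2^m)}$: its defining ideal is generated by $\C\Omega\oplus W_0$. The set-theoretic equality is easy, since by Lemma~\ref{Lem:sheet}(1) and condition~(4),
\begin{align*}
V(I_{W_0}+I_\Omega) \;=\; \overline{\SS_{\l_0}}\cap V(\Omega) \;=\; \overline{\SS_{\l_0}}\cap \mc{N} \;=\; \overline{\O_{(2^m)}},
\end{align*}
so $\sqrt{I_{W_0}+I_\Omega}$ is the defining ideal of $\overline{\O_{(2^m)}}$; the delicate point is to show that $I_{W_0}+I_\Omega$ is already radical. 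I would obtain this either by direct computation on weight vectors (using the explicit generators of $W_0$ given via the singular vector $v_0$ and the techniques of Per{\v s}e \cite{Per} that were already employed to determine $\Upsilon(W_0^\h)$), or by a complete-intersection/Cohen--Macaulay argument exploiting that $\O_{(2^m)}$ is the Richardson orbit of $\mf{p}$ and that $\overline{\O_{(2^m)}}$ is a divisor in the normal variety $\overline{\SS_{\l_0}}$ cut out by the single Casimir $\Omega$.

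Once condition~(2) is verified, Lemma~\ref{lem:prime-criterion} yields that $I_{W_0} = \sqrt{I_{W_0}}$, and combined with Proposition~\ref{Pro:A-zero0} and the irreducibility of $\overline{\SS_{\l_0}}$, this identifies $I_{W_0}$ with the defining ideal of $\overline{\SS_{\l_0}}$, completing the proof. Theorem~\ref{Th:main1}(2) then follows from~\eqref{eq:Xtilde-0} together with the jet-scheme argument of Corollary~\ref{Co:singular_support}(2), exactly as in the derivation of Theorem~\ref{Th:main1}(1) from Proposition~\ref{Pro:prime}.
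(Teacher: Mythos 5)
You set up the argument correctly: apply Lemma~\ref{lem:prime-criterion} to $I = I_{W_0}$ with the Dixmier sheet $\SS_{\l_0}$, the $\mf{sl}_2$-triple $(e,h,f)$ with $h = 2\varpi_m$, and $f \in \O_{(2^m)}$. Conditions (1), (3) and (4) are handled exactly as in the paper. You are also right that condition (2) --- that $I_{W_0} + I_\Omega$ is the \emph{radical} defining ideal of $\overline{\O_{(2^m)}}$ --- is the genuinely new point (in $\S\ref{sec:A-theta1}$ it came for free from Garfinkle's theorem for $\overline{\O_{\min}}$).

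However, you do not actually establish condition (2), and both of the routes you sketch have problems. The Cohen--Macaulay/divisor argument is circular in two ways: first, it requires knowing that $\overline{\SS_{\l_0}}$ is a variety with coordinate ring $\C[\g^*]/I_{W_0}$, which presupposes that $I_{W_0}$ is radical --- the very thing being proved; second, even granting normality of $\overline{\SS_{\l_0}}$, cutting this reduced variety by $\Omega$ only tells you something about $\sqrt{I_{W_0}}+I_\Omega$, not about $I_{W_0}+I_\Omega$, which is the ideal Lemma~\ref{lem:prime-criterion}(2) actually needs. The ``direct computation on weight vectors'' suggestion is too vague to assess. The paper's proof instead invokes Weyman's theorem \cite[Theorem~1]{We2}: the defining ideal $I_0$ of $\overline{\O_{(2^m)}}$ in $\sl_{2m}$ is generated by the entries of $X^2$, hence by homogeneous elements of degree $2$. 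Since $I_{W_0}+I_\Omega \subset I_0$ and $S^2(\g)=L_\g(2\theta)\oplus W_0\oplus L_\g(0)\oplus W_1$, a strict inclusion would force $I_0$ to contain either $L_\g(2\theta)$ or $W_1$ as a $\g$-module; but the zero locus of $L_\g(2\theta)$ is $\{0\}$ and that of $W_1$ together with $\Omega$ is $\overline{\O_{\min}}$ (Remark~\ref{rem:Gar}), both strictly smaller than $\overline{\O_{(2^m)}}$ --- a contradiction. This identification $I_{W_0}+I_\Omega = I_0$ is the missing ingredient in your proposal.
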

\begin{proof}
We apply Lemma \ref{lem:prime-criterion} to the ideal $I:=I_{W_0}$. 
First of all, $\l_0$ and $(e,h,f)$ satisfy the conditions 
of Lemma \ref{lem:Kat} for $f \in \O_{(2^{m})}$. 
Indeed, $\z(\l_0)=\C\varpi_{m}$ and $\g(h,i)=0$ 
for $i>2$. It remains to verify that the conditions (1), (2), (3), (4) of Lemma \ref{lem:prime-criterion} 
are satisfied. 

Condition (1) is satisfied by Proposition~\ref{Pro:A-zero0}~(2). 
Let us show that the ideal $I+I_\Omega$ is the defining ideal of $\O_{(2^{m})}$. 
According to Proposition \ref{Pro:A-zero0}, the zero locus of $I+I_\Omega$ in $\g$ 
is $\overline{\O_{(2^{m})}}$ since $\Omega(\varpi_{m})\not=0$. 
On the other hand, by \cite[Theorem~1]{We2}, 
the defining ideal $I_0$ of $\overline{\O_{\bs{\lam}_0}}$ is generated by 
the entries of the matrix $X^{2}$ as functions of $X\in\sl_{2m}(\C)$.
In particular, $I_0$ is 
generated by homogeneous elements of degree 2. 
Assume that $I+I_\Omega$ is strictly contained in $I_0$. 
A contradiction is expected. 
Since $I_0 \supsetneq I+I_\Omega$, it results from the decomposition 
\begin{align*}
 S^2(\fing)=L_{\fing}(2\theta)\+ W_0\+ L_{\fing}(0) \+W_1 
\end{align*}
that either $I_0$ contains a nonzero element of $L_{\fing}(2\theta)$,  
or $I_0$ contains an element of $W_1$. 
Since $I_0$ is $\g$-invariant, either $I_0$ contains $L_{\fing}(2\theta)$ 
or $I_0$ contains $W_1$. 
The zero locus in $\g$ of the ideal generated by $L_{\fing}(2\theta)$ 
is $\{0\}$ since $L_{\fing}(2\theta)$ is generated as a $\g$-module by $(e_\theta)^2$. 
In addition, by Remark \ref{rem:Gar}, the zero locus in $\g$ of the ideal 
generated by $W_1$ and $\Omega$ is $\overline{\O_{min}}$. 
Hence in both cases we go to a contradiction since $\overline{\O_{(2^{m})}}$  
strictly contains $\overline{\O_{min}}$ and $\{0\}$. So $I+I_\Omega=I_0$ is prime. 
Finally, condition~(2) is satisfied.

Condition (3) is satisfied too, by Corollary \ref{Co:C[z]2}.

At last, because $\Omega(\varpi_{m})\not=0$, condition (4) is satisfied. 
In conclusion, by Lemma~\ref{lem:prime-criterion}, $I=I_{W_0}$ is prime.
\end{proof}

   \begin{proof}[Proof of Theorem \ref{Th:main1}~(2)]
    The 
    statement follows from
    \eqref{eq:Xtilde-0}
    and Proposition~\ref{Pro:prime0}.
   \end{proof}
\begin{Rem}
 If $n$ is odd, then
 the level $-n/2$ is admissible for $\widehat{\mf{sl}}_n$,
 and we have
 \begin{align*}
X_{V_{-n/2}(\mf{sl}_n)}=\overline{\mathbb{O}_{(2^{(n-1)/2},1)}}
 \end{align*}
 by \cite{Ara09b}.
\end{Rem}

%
%

Let $\mf{p}=\mf{l}_0 \oplus \mf{p}_u$ be a parabolic 
subalgebra 
of $\g$, and
let $P$ be the  
connected parabolic subgroup of $G$ with Lie algebra $\mf{p}$.
Let $\psi_Y$ be  the algebra homomorphism
$U(\fing)\ra \mc{D}_Y$ as in \S \ref{sec:sheet},
where 
$Y:=G/[P,P]$.
Recall that
$V(\gr \mc{J}_Y)=\overline{\SS_{\mf{l}_0}}$, 
where $\mc{J}_Y=\ker \psi_Y$.
 \begin{Th}\label{Th:Zhu-2}
  \begin{enumerate}
     \item
       The ideal  $\gr \JJ_Y\subset \C[\fing^*]$ 
  is prime and hence 
       it is the defining ideal
       of $\overline{\mathbb{S}_{\mf{l}_0}}$.
 \item The natural homomorphism
       $R_{V_{-m}(\fing)}\ra \gr A(V_{-m}(\fing))$ is an isomorphism.
        \item  The map $\psi_Y$ induces
	an embedding
	\begin{align*}
	  A(V_{-m}(\fing))\hookrightarrow \mc{D}_{Y}.
	\end{align*}
  \end{enumerate}
 \end{Th}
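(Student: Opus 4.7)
The plan is to mirror the proof of Theorem \ref{Th:Zhu-1}. By Theorem \ref{Th:maximal-sub-typeA-2},
\[
A(V_{-m}(\fing)) = U(\fing)/\JJ_{W_0},
\]
where $\JJ_{W_0}$ denotes the two-sided ideal of $U(\fing)$ generated by $W_0$ under the symmetrization isomorphism $S(\g)\cong U(\g)$. All three conclusions will then follow at once from the chain
\[
\IS_{W_0} \subset \gr \JJ_{W_0} \subset \gr \JJ_Y \subset \sqrt{\gr \JJ_Y} = \IS_{W_0},
\]
in which the first inclusion is immediate from the construction of $\JJ_{W_0}$, the final equality results from Proposition \ref{Pro:prime0} combined with the general description of $\sqrt{\gr \JJ_Y}$ recalled in \S\ref{sec:sheet}, and only the middle inclusion requires work.

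To establish the key inclusion $\JJ_{W_0} \subset \JJ_Y$, I note that in the level $-1$ case this came from the explicit vertex algebra embedding of Theorem~\ref{Th:AP}; here no such chiralization is on hand, so I would argue via characteristic varieties instead. By Proposition~\ref{pro:weight-0}, the whole line $\C\varpi_m$ lies inside $\mc{V}(\JJ_{W_0})$, and so Proposition~\ref{Pro:ch-variety-vs-Zhu} guarantees that $L(t\varpi_m - m\Lam_0)$ is a $V_{-m}(\fing)$-module for every $t \in \C$; consequently its top weight component $L_\g(t\varpi_m)$ is annihilated by $\JJ_{W_0}$. For each $t$ in the Zariski dense subset $Z := \{-(m+s) : s \in \Z_{\geq 0}\} \subset \C$, the argument used in the proof of Proposition~\ref{Pro:non-zero} (based on Jantzen's simplicity criterion) identifies $L_\g(t\varpi_m)$ with the generalized Verma module $U(\g)\otimes_{U(\mf{p})} \C_{t\varpi_m}$. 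Hence $\JJ_{W_0} \subset \on{Ann}\bigl(U(\g)\otimes_{U(\mf{p})} \C_{t\varpi_m}\bigr)$ for every $t \in Z$, and formula~\eqref{eq:enough-for-zariski-dense} yields $\JJ_{W_0} \subset \JJ_Y$.

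Once the chain is in place, the three parts are read off exactly as in Theorem~\ref{Th:Zhu-1}: the last equality gives Part~(1), primeness of $\gr \JJ_Y = \IS_{W_0}$; the first equality gives Part~(2), namely that the surjection $R_{V_{-m}(\fing)} \twoheadrightarrow \gr A(V_{-m}(\fing))$ is an isomorphism; and the middle equality $\gr \JJ_{W_0} = \gr \JJ_Y$ lifts by the standard filtered argument to $\JJ_{W_0} = \JJ_Y$, producing the embedding $A(V_{-m}(\fing)) = U(\g)/\JJ_{W_0} \hookrightarrow \mc{D}_Y$ of Part~(3) as the map induced by $\psi_Y$. I expect the only genuine obstacle to be the identification of $L_\g(t\varpi_m)$ with a generalized Verma module on a Zariski dense set of $t$; this, however, is essentially already contained in the proof of Proposition~\ref{Pro:non-zero}.
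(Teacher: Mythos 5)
Your proof is correct and follows essentially the same route as the paper's: both establish $A(V_{-m}(\g))=U(\g)/\JJ_{W_0}$, prove $\JJ_{W_0}\subset\JJ_Y$ by combining Propositions \ref{Pro:ch-variety-vs-Zhu} and \ref{pro:weight-0} with the identification $L_\g(t\varpi_m)\cong U(\g)\otimes_{U(\mf p)}\C_{t\varpi_m}$ on a Zariski-dense set of $t$ and formula \eqref{eq:enough-for-zariski-dense}, and then read off all three parts from the chain $I_{W_0}\subset\gr\JJ_{W_0}\subset\gr\JJ_Y\subset\sqrt{\gr\JJ_Y}=I_{W_0}$. The only difference is that you pin down the dense set via Jantzen's criterion as in the proof of Proposition \ref{Pro:non-zero}, where the paper simply says ``for a generic $t$''; this is a harmless clarification, not a different argument.
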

  \begin{proof}
    We have
 \begin{align*}
  A(V_{-m}(\fing))=U(\fing)/\JJ_{W_0},
 \end{align*}
 where $\JJ_{W_0}$ is 
 the two-sided ideal of $U(\fing)$ generated by
   $W_0$.

  Recall that
  \begin{align*}
\JJ_Y = \bigcap_{t \in Z} {\rm Ann}\,U(\g) \otimes_{U(\mf{p})} \C_{t \varpi_{m}}
\end{align*}
 for
  any Zariski dense subset $Z$ of $\C$,
  see \S \ref{sec:sheet}, equation \eqref{eq:enough-for-zariski-dense}.
  On the other hand, for a generic point $t$ of $\C$ we have
  {$L_{\fing}(t\varpi_{m})\cong U(\g) \otimes_{U(\mf{p})} \C_{t
  \varpi_{m}}$}, 
   and thus,
   $\mc{J}_{W_0}\subset {\on{Ann}L_{\fing}(t\varpi_{m})}$ by
   Proposition \ref{Pro:ch-variety-vs-Zhu} and
  Proposition \ref{pro:weight-0}.
  Therefore $  \mc{J}_{W_0}\subset \mc{J}_Y$.
  This gives 
\begin{align*}
 I_{W_0}\subset  \gr \mc{J}_{W_0}\subset \gr \mc{J}_Y
 \subset \sqrt{\gr \mc{J}_Y}.
\end{align*}
  Since
  $\sqrt{\gr \mc{J}_Y}=I_{W_0}$,
  all above inclusions  are equality.
  \end{proof}

   Since $H^{\frac{\infty}{2}+0}_f(V_{-m}(\fing))\cong \on{Vir}_1$
   by Theorem \ref{Th:W-is-Visasoro},
 we have a functor
 \begin{align}
V_{-m}(\fing)\on{-Mod}^{\fing[t]}
\ra \on{Vir}_1\on{-Mod}  ,\quad
  M\mapsto H^{\frac{\infty}{2}+0}_f(M),
  \label{eq:fusion?}
 \end{align}
 where $\on{Vir}_1\on{-Mod}$ denotes the category of
 $\on{Vir}_1$-modules.
By Proposition \ref{pro:weight-0},
the simple objects of 
$V_{-m}(\fing)\on{-Mod}^{\fing[t]}$ are $L(t\varpi_m-m\Lam_0)
$, $t\in
 \Z_{\geq 0}$.
From Theorem \ref{Th:short} it follows that
\begin{align*}
H^{\frac{\infty}{2}+0}_f(L(t\varpi_m-m\Lam_0))\cong
 L(1,\frac{t(t+m+1)}{4}),
\end{align*}where $L(c,h)$ denotes the irreducible highest weight 
representation of the Virasoro algebra of central charge $c$ and lowest 
weight $h$.
  \begin{ques} \label{ques:fusion}
Is  the functor \eqref{eq:fusion?} fusion,
that is,
$H^{\frac{\infty}{2}+0}_f(M_1\boxtimes M_2)\cong
H^{\frac{\infty}{2}+0}_f(M_1)\boxtimes H^{\frac{\infty}{2}+0}_f(M_2)$?
Here $\boxtimes$ denotes the fusion product.
  \end{ques}

\color{black}
 \section{Level $-(r-2)$ affine vertex algebra of type $D_r$, $r\geq 5$
 } \label{sec:BD}
We assume in this section that $\g=\mf{so}_{2r }$ with $r  \geq 5$. 
Let $$\Delta=\{\pm \varepsilon_i\pm  \varepsilon_j, 
\; |\; 1\leq i< j\leq {r }\}$$ be the root system of $\g$ 
and take $$\Delta_+=\{\varepsilon_i\pm  \varepsilon_j, 
\; |\; 1\leq i < j\leq {r }  \}$$ for the set of positive roots. 

Denote by $(e_i,h_i,f_i)$ the Chevalley generators of $\g$, and 
fix the root vectors $e_\alpha,f_\alpha$, 
$\alpha \in \Delta_+$ so that $(h_i,\,i=1,\ldots,{r }) \cup 
(e_\alpha,f_\alpha, \, \alpha \in \Delta_+)$ is a Chevalley 
basis satisfying the conditions of \cite[Chapter IV, Definition 6]{Gar}. 
For $\alpha \in \Delta_+$, denote by $h_\alpha =[e_\alpha,f_{\alpha}]$ 
the corresponding coroot. 
Let $\g= \n_- \oplus \h \oplus \n_+$ be the corresponding triangular decomposition. 

The fundamental weights are: 
$$
\varpi_i = \varepsilon_1+\cdots +\varepsilon_i \quad (1 \leq i \leq {r }-2), 
$$
$$
{\varpi_{r-1 } = \frac{1}{2}( \varepsilon_1+\cdots +\varepsilon_{{r }-2}+\varepsilon_{r-1 }
-\varepsilon_{{r }}), \qquad 
\varpi_{{r }} = \frac{1}{2}( \varepsilon_1+\cdots +\varepsilon_{{r }-2}+\varepsilon_{r-1 }
+\varepsilon_{{r }})}.
$$

Let
$$w_1:= \sum_{i=2}^r  e_{\eps_1-\eps_i} 
e_{\eps_1+\eps_i}\in S^2(\fing).$$
Then $w_1$ is a singular vector with respect to the adjoint action of
$\fing$
and generates an irreducible finite-dimensional representation $W_1$ 
of $\g$ in $S^2(\fing)$  
isomorphic to $L_{\fing}(\theta+\theta_1)$.

%

 \begin{Pro}[{\cite[Theorem 3.1]{Per13}}]
  The vector
$\sigma(w_1)^{n+1}$ is a singular vector of $V^k(\g)$ if and only if $k=n-r+2 $. 
 \end{Pro}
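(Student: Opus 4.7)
The plan is to reduce $\affg$-singularity to a single scalar vanishing and then compute that scalar as a linear polynomial in $k$, following the strategy used by Adamovi\'c in type $A$ \cite{Ada03}. Since $w_1 \in S^2(\g)$ is $\g$-singular of weight $2\varpi_1$ and $\sigma$ is a $\g$-module embedding, $\sigma(w_1) \in V^k(\g)_2$ is annihilated by $\mathfrak{n}_+(0)$. Zero modes act as derivations of the $(-1)$-product, so the VOA power $\sigma(w_1)^{n+1}$ is likewise $\mathfrak{n}_+(0)$-invariant; its image in $R_{V^k(\g)} = S(\g)$ equals $w_1^{n+1} \neq 0$, so it is nonzero. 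Its $\g$-weight $(2n+2)\varpi_1$ is dominant, so the only remaining condition for $\affg$-singularity is annihilation by $e_0 = f_\theta(1)$.

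For the base case $n=0$, I would apply the commutator
\begin{align*}
[f_\theta(1),\, x(-1)] = [f_\theta, x](0) + (f_\theta \mid x)\, K
\end{align*}
to $\sigma(w_1) = \sum_{i=2}^{r} e_{\varepsilon_1-\varepsilon_i}(-1)\, e_{\varepsilon_1+\varepsilon_i}(-1)\mathbf{1}$ (the two factors commute since $2\varepsilon_1$ is not a root). The $i=2$ summand produces a central contribution $k\cdot e_{\varepsilon_1-\varepsilon_2}(-1)\mathbf{1}$ (as $(f_\theta\mid e_\theta)=1$ and $[f_\theta, e_\theta]=-h_\theta$), while each summand with $i\geq 3$ produces a unit contribution $\pm e_{\varepsilon_1-\varepsilon_2}(-1)\mathbf{1}$ via the double bracket $[[f_\theta, e_{\varepsilon_1-\varepsilon_i}], e_{\varepsilon_1+\varepsilon_i}]$. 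Summing with the appropriate Chevalley signs yields
\begin{align*}
f_\theta(1)\, \sigma(w_1) = (k + r - 2)\, e_{\varepsilon_1-\varepsilon_2}(-1)\mathbf{1},
\end{align*}
vanishing iff $k = 2 - r$.

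For general $n$ I would induct using the Borcherds commutator
\begin{align*}
[f_\theta(1),\, \sigma(w_1)_{(-1)}] = (f_\theta(0)\, \sigma(w_1))_{(0)} + (f_\theta(1)\, \sigma(w_1))_{(-1)},
\end{align*}
the higher modes vanishing because $\sigma(w_1)$ has conformal weight $2$. Writing $f_\theta(1)\, \sigma(w_1)^{n+1} = [f_\theta(1),\, \sigma(w_1)_{(-1)}^{n+1}]\,\mathbf{1}$ and expanding by Leibniz produces a ``main'' contribution $(n+1)(k+r-2)\, e_{\varepsilon_1-\varepsilon_2}(-1)\, \sigma(w_1)^n$ from $(f_\theta(1)\sigma(w_1))_{(-1)}$, plus a ``derivation'' contribution coming from $(f_\theta(0)\sigma(w_1))_{(0)}$, which acts as a derivation of the $(-1)$-product. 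Because $f_\theta(0)\,\sigma(w_1) \in \sigma(W_1)$ remains inside the finite-dimensional submodule $W_1\cong L_\g(2\varpi_1)$, the derivation contribution unfolds to a scalar multiple of $e_{\varepsilon_1-\varepsilon_2}(-1)\, \sigma(w_1)^n$. The coefficient is linear in $k$ (the only source of $k$ being the central term) and should assemble into the form $(n+1)(k-n+r-2)\, u_n$ for a nonzero vector $u_n$, so vanishing occurs precisely at $k = n - r + 2$.

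The main obstacle lies in controlling the derivation contribution explicitly. One must show that $(f_\theta(0)\sigma(w_1))_{(0)}\, \sigma(w_1)^m$, further expanded via the Borcherds identity, reduces modulo terms absorbed into the main contribution to a scalar multiple of $e_{\varepsilon_1-\varepsilon_2}(-1)\, \sigma(w_1)^m$, with a controllable coefficient. The relevant structure constants come from OPEs between elements of $\sigma(W_1)$, which are determined by the $\g$-module structure of $W_1\subset S^2(\g)$ and the VOA structure of $V^k(\g)$; they should be tractable by direct computation, though the bookkeeping is delicate. Alternatively, once linearity in $k$ has been established, the unique root can be pinned down at $k = n - r + 2$ by constructing a singular vector there by other means, e.g., via an affine Jantzen filtration argument at the corresponding non-integral weight.
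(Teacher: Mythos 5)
The paper does not prove this proposition; it cites it directly from Per\v{s}e \cite[Theorem 3.1]{Per13} and treats it as an input, so there is no internal proof to compare against. The only thing to assess is whether your outline would succeed.

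Your strategy — the same Adamovi\'c-style reduction that underlies the analogous $\sl_n$ statements — is the natural one, and the reduction to checking $e_0 = f_\theta(1)$-annihilation is correct. (One small remark: dominance of the $\g$-weight $(2n+2)\varpi_1$ is not what makes $f_\theta(1)$-annihilation the only remaining condition; the point is simply that $\widehat{\n}_+$ is generated as a Lie algebra by $e_0, e_1, \dots, e_r$, and $e_i(0)$-annihilation for $i \geq 1$ is already secured by $\g$-singularity of $w_1$ and the derivation property of zero modes.) The base-case skeleton is also right: the central term from the $i=2$ summand supplies the $k$, and the $i\geq 3$ summands supply a $k$-independent constant via the double bracket $[[f_\theta, e_{\eps_1-\eps_i}], e_{\eps_1+\eps_i}]$. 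But you have not verified that the products of Chevalley structure constants are $+1$ uniformly in $i$; that is precisely what makes the constant equal $r-2$ (rather than, say, $r - 2 - 2j$ for some $j$ from sign cancellations), and in the Chevalley basis of \cite{Gar} used by the paper it is a finite but non-trivial check.

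The larger gap is the inductive step, and you have named it yourself. Linearity of $f_\theta(1)\,\sigma(w_1)^{n+1}$ in $k$ with nonzero leading coefficient does follow from the Leibniz expansion: the $k$-coefficient reduces modulo $C_2(V^k(\g))$ to $(n+1)\,e_{\eps_1-\eps_2} w_1^n \neq 0$ in $S(\g)$, so there is a unique root $k_n$. But pinning $k_n = n-r+2$ requires controlling the contribution of $(f_\theta(0)\sigma(w_1))_{(0)}$, which you leave as "delicate bookkeeping," or else an independent construction of a singular vector at each level $n-r+2$, which you also leave as an alternative sketch. One of these routes must be completed; as written, what you have is a plausible plan for the computation Per\v{s}e carried out, not a proof of the proposition.
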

 Let $\tilde{V}_{2-r}(\fing)=V^{2-r}(\fing)/U(\affg)\sigma(w_1)$.
By definition,
we have
\begin{align}\label{eq:X-and-V-type-D}
X_{\tilde{V}_{2-r}(\fing)}=V(I_{W_1}).
\end{align}


 \smallskip
 
Let $\l^{I}$ and $\l^{II}$ be the standard Levi subalgebras of $\g$ 
generated by the simple roots $\alpha_1,\ldots,\alpha_{r-2},\alpha_{r}$ 
and $\alpha_1,\ldots,\alpha_{r-2},\alpha_{r-1}$, respectively. 
Denote by $\SS_{\l^{I}}$ and $\SS_{\l^{II}}$ the corresponding 
Diximier sheets. 
Let $W(\g,\h) \cong \mathfrak{S}_{r} \rtimes (\Z/2\Z)^{r-1}$ 
denotes the Weyl group of $(\g,\h)$ which is 
the group of permutations and sign changes involving 
only even numbers of signs of the set $\{\eps_1,\ldots,\eps_{r}\}$, 
see,~e.g.,~\cite[\S12.1]{Hum72}. 

If $r$ is odd, then $\varpi_{r-1 }$ and $-\varpi_{r}$ are $W(\g,\h)$-conjutate 
by the above description of $W(\g,\h)$,  
and so the two Levi subalgebras $\l^{I}$ and $\l^{II}$ are $G$-conjugate\footnote{Therefore 
the statement \cite[Lemma 7.3.2(ii)]{CMa} is not correct for odd rank, as was  
mentioned to us by David Vogan.} 
since their centers $\z(\l^{I})$ and $\z(\l^{II})$ are generated by $\varpi_{r-1 }$ and 
$\varpi_{r}$ respectively. Hence 
the two Dixmier sheets $\SS_{\l^{I}}$ and $\SS_{\l^{II}}$ are equal. 
We set $\l_r=\l^{II}$ so that $\SS_{\l_{r}}=\SS_{\l^{I}}=\SS_{\l^{II}}$.  
Note that $\O_{(2^{r-1},1^2)}$ is induced in a unique way from the zero orbit 
in a conjugate of $\l_{r}$ (see e.g.~\cite[Corollary 7.3.4]{CMa}) 
so that $\O_{(2^{r-1},1^2)}={\rm Ind}_{\l_r}^{\g}(0)$ is 
the unique nilpotent orbit 
contained in $\SS_{\l_{r}}$. 

If $r$ is even, then $\l^{I}$ and $\l^{II}$ are not $G$-conjugate 
and so the Dixmier sheets $\SS_{\l^I}$ and $\SS_{\l^{II}}$ are distinct. 
Possible changing the numbering, we can assume that 
$\O_{(2^{r})}^{I}$ is the unique nilpotent orbit 
contained in $\SS_{\l^I}$ and that  
$\O_{(2^{r})}^{II}$ is the unique nilpotent orbit 
contained in $\SS_{\l^{II}}$. 
To see this, first observe that $\O_{(2^{r})}^{I}$ 
and $\O_{(2^{r})}^{II}$ are induced only from the zero 
orbit in a Levi subalgebra conjugate to $\l^{I}$ or $\l^{II}$ 
(see e.g.~\cite[Corollary 7.3.4]{CMa}). 
So the only possible sheets containing $\O_{(2^{r})}^{I}$ 
and $\O_{(2^{r})}^{II}$ are $\SS_{\l^{I}}$ and $\SS_{\l^{II}}$. 
On the other hand, one knows that $\SS_{\l^{I}}$  
contains a unique nilpotent orbit; so it is either $\O_{(2^{r})}^{I}$,  
or $\O_{(2^{r})}^{II}$. Assume for instance that it is $\O_{(2^{r})}^{I}$. 
This implies that $\O_{(2^{r})}^{II}$ must be contained in $\SS_{\l^{II}}$. 
If we had assumed that $\O_{(2^{r})}^{II}$ is contained in $\SS_{\l^{I}}$, 
we get in the same way that $\O_{(2^{r})}^{I}$ must be contained in $\SS_{\l^{II}}$.

\begin{lemma} \label{lem:Dnil}
\begin{enumerate} 
\item If $r$ is odd,  $V(I_{W_1}) \cap \mc{N} \subset \overline{\O_{(2^{r -1},1^2)}}$. 
\item If $r$ is even,  $V(I_{W_1}) \cap \mc{N} \subset \overline{\O_{(2^{r })}^{I}}
\cup \overline{\O_{(2^{r })}^{II}}$. 
\end{enumerate}
\end{lemma}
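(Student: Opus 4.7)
The proof will adapt the strategy of Lemma~\ref{l1:A-theta1}, invoking Lemma~\ref{lem:criterion}: it suffices to show that for every nilpotent orbit $\mathbb{O}$ of $\g$ not contained in the asserted right-hand side, one has $\C[\g^*] = I_{W_1} + J_\chi$ for $f\in\mathbb{O}$ and $\chi = (f\mid\cdot\,)$. Because $V(I_{W_1})\cap \mc{N}$ is a closed $G$-invariant subset of the nilpotent cone, it is enough to verify this for the \emph{minimal} nilpotent orbits strictly above the target. An inspection of the dominance order on partitions in $\P_1(2r)$ strictly above $(2^{r-1},1^2)$ (for $r$ odd), respectively strictly above both $(2^r)$-orbits (for $r$ even), shows that the unique such minimal orbit is $\mathbb{O}_{(3,2^{r-3},1^3)}$ in the odd case (note that $(3,2^{r-2},1)\notin\P_1(2r)$ for $r$ odd, since $2$ would then appear $r-2$ times, an odd number), and $\mathbb{O}_{(3,2^{r-2},1)}$ in the even case (not a very-even partition, hence a single orbit).

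For each such orbit, I would choose an $\sl_2$-triple $(e,h,f)$ with $h\in\h$ determined by the weighted Dynkin diagram. For instance, in the even case the Dynkin labels yield $(\eps_1(h),\ldots,\eps_r(h)) = (2,1,\ldots,1,0)$ with $r-2$ entries equal to $1$; the odd case admits an analogous explicit description. A representative $f = \sum_{\alpha\in S} c_\alpha f_\alpha$ is chosen with support $S\subset \Delta_+(h,2)$ prescribed by the Jordan block data. The heart of the argument is to exhibit an element of $W_1\subset I_{W_1}$ that reduces modulo $J_\chi$ to a nonzero constant. Unlike in Lemma~\ref{l1:A-theta1}, the summands of $w_1 = \sum_{i=2}^r e_{\eps_1-\eps_i} e_{\eps_1+\eps_i}$ can straddle $\g(h,1)$ and $\g(h,3)$ under this grading, so one must either apply a Weyl conjugation of $w_1$ that brings all its factors into $\bigoplus_{i\geq 2}\g(h,i)$, or exploit the full $\g$-module $W_1$ and pass to a different weight vector in $W_1$ whose tensor factors lie in $\mf{m}$. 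Either way, the reduction modulo $J_\chi$ will yield an expression of the form $\sum\chi(e_\alpha)\chi(e_\beta) = \sum (f\mid e_\alpha)(f\mid e_\beta)$, which must be verified to be a nonzero scalar from the explicit description of $S$.

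The main obstacle will be the combinatorial bookkeeping: tracking (a) that the chosen $G$-translate of a generator of $W_1$ has all its root-vector factors in $\bigoplus_{i\geq 2}\g(h,i)$ so that they reduce cleanly mod $J_\chi$, (b) that the evaluations $\chi(e_\gamma)$ select exactly the roots in the support of $f$, and (c) that the resulting scalar sum does not cancel. The odd-rank partition $(3,2^{r-3},1^3)$ makes these verifications slightly more delicate than the even-rank case $(3,2^{r-2},1)$, since its Jordan decomposition involves more independent parameters in the support $S$. A possible alternative is to bypass the direct reduction by using the exactness of Ginzburg's Hamiltonian reduction functor (Theorem~\ref{Th:ginzburg}): it is equivalent to show $H_f(\C[\g^*]/I_{W_1})=0$ for the minimal dominating $f$, and this could in principle be read off from an explicit presentation of the Slodowy slice $\Slo_f$ intersected with $V(I_{W_1})$.
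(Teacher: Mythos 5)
Your reduction is set up correctly and matches the paper: you invoke Lemma~\ref{lem:criterion}, identify the unique minimal nilpotent orbit in $\P_1(2r)$ strictly dominating the target — $\O_{(3,2^{r-3},1^3)}$ for $r$ odd, $\O_{(3,2^{r-2},1)}$ for $r$ even — and aim to prove $\C[\g^*]=I_{W_1}+J_\chi$. The observation that $(3,2^{r-2},1)\notin\P_1(2r)$ for odd $r$ is exactly the relevant parity check.

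However, there is a genuine misunderstanding in how you propose to handle $w_1$. You claim that because the factors of $e_{\eps_1-\eps_i}\,e_{\eps_1+\eps_i}$ ``straddle'' $\g(h,1)$ and $\g(h,3)$, one must replace $w_1$ by a Weyl conjugate or pass to a different weight vector of $W_1$ whose tensor factors all lie in $\mf{m}$. This is not necessary, and in fact trying to do so would add pointless complexity. The key fact is that $\g(h,3)\subset\mf{m}$ and $\chi\vert_{\g(h,3)}=0$ (since $\chi=(f\mid\cdot\,)$ with $f\in\g(h,-2)$). Therefore $e_{\eps_1+\eps_i}\equiv\chi(e_{\eps_1+\eps_i})=0\pmod{J_\chi}$ for any $i$ with $\eps_1+\eps_i\in\Delta_+(h,3)$, and since $J_\chi$ is an ideal, the \emph{entire product} $e_{\eps_1-\eps_i}\,e_{\eps_1+\eps_i}$ lies in $J_\chi$, regardless of where $e_{\eps_1-\eps_i}$ sits (degree $1$, Lagrangian or not). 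So the ``straddling'' terms vanish automatically and harmlessly; only the summands with both factors in $\Delta_+(h,2)$ survive: $i\in\{r-1,r\}$ in the odd case, $i=r$ in the even case.

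The second, and more substantial, gap is that you do not supply the nonvanishing argument. After the reduction one is left with $w_1\equiv\chi(e_{\eps_1-\eps_{r-1}})\chi(e_{\eps_1+\eps_{r-1}})+\chi(e_{\eps_1-\eps_r})\chi(e_{\eps_1+\eps_r})$ (odd case), and one must show this is nonzero for a suitable $f$ in the orbit. The paper does this by writing $e=\sum a_\alpha e_\alpha$, $f=\sum b_\alpha f_\alpha$ over $\alpha\in\Delta_+(h,2)$, setting $c_\alpha=a_\alpha b_\alpha$, and extracting linear equations on the $c_\alpha$ from $[e,f]=h$ (reading off the coefficients of the simple roots in $h$). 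In the odd case this yields $c_{\eps_1-\eps_r}=c_{\eps_1+\eps_r}$, $c_{\eps_1-\eps_{r-1}}=c_{\eps_1+\eps_{r-1}}$, and $c_{\eps_1-\eps_{r-1}}+c_{\eps_1-\eps_r}=1$, so the two relevant products cannot both vanish; in the even case one gets $c_{\eps_1\pm\eps_r}=1$ directly. Your proposal gestures at ``the resulting scalar sum does not cancel'' without identifying this mechanism, which is the actual heart of the proof. Your suggested alternative via $H_f(\C[\g^*]/I_{W_1})=0$ is logically equivalent by Ginzburg's exactness, but it is not a shortcut here: one would still have to carry out the same computation on the Slodowy slice.
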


\begin{proof} 
Let $\bs{{\lam}}$ be the element of $\P_1(2r )$ defined by 
\begin{align*}
\bs{{\lam}} &:= (3,2^{r -3},1^3)  \text{ if } r \text{ is odd},& \\
\bs{{\lam}} &:= (3,2^{r -2},1)  \text{ if } r \text{ is even}.  &
\end{align*}
We observe that $\O_{\bs{{\lam}}}$ it the smallest nilpotent 
orbit of $\g$ which dominates $\O_{(2^{r -1},1^2)}$ 
if $r$ is odd, and both $\O_{(2^{r })}^{I}$ and 
$\O_{(2^{r })}^{II}$ if $r$ is even. 
By this, it means that 
if for ${\bs{\mu}} \in \P_1(2r )$, 
${\bs{\mu}} \succ  (2^{r -1},1^2)$ if $r$ is odd, 
and ${\bs{\mu}} \succ  (2^{r })$ if $r$ is even, 
then ${\bs{\mu}} \succcurlyeq \bs{{\lam}}$ 
where $\succcurlyeq$ is the partial order on the set $\P_1(n)$ 
induced by the Chevalley order.  
Therefore, it is enough to show that $V(I_{W_2})$ does not contain 
$\overline{\O_{\bs{{\lam}}}}$. 

Let $f \in \O_{\bs{{\lam}}}$ that we embed into an $\sl_2$-triple $(e,h,f)$ 
of $\g$. For $i \in \Z$, denote by $\g(h,i)$ the $i$-eigenspace of ${\rm ad}(h)$ 
and by $\Delta_+(h,i)$ the set of 
positive roots $\alpha \in\Delta_+$ 
such that $e_\alpha \in \g(h,i)$. 
Choose a Lagrangian subspace 
$\mathfrak{L} \subset \g(h,1)$ and set 
$$\mf{m} := \mathfrak{L} \oplus \bigoplus_{i \geq 2} \g(h,i), \qquad 
J_\chi :=  \sum_{x \in \mf{m}} \C[\g^*](x-\chi(x)),$$
with $\chi = (f | \cdot ) \in \g^*$,   
as in \S\ref{section:Ginzburg}.
By Lemma \ref{lem:criterion},
 it is sufficient to show that
 \begin{align*}
  \C[\fing^*]=I_{W_1}+ J_{\chi}.
 \end{align*}
To see this, we shall use the vector
$$w_1= \sum_{i=2}^{r } e_{\eps_1-\eps_i} 
e_{\eps_1+\eps_i}.$$

Assume first that $r$ is odd.
It follows from \cite[Lemma 5.3.5]{CMa} that the weighted 
Dynkin diagram of the nilpotent orbit $G.f$ is 
\vspace{-0.25cm}
$$\begin{Dynkin}
\Dspace\Dspace\Dspace\Dspace\Dspace\Dbloc{\Dcirc\Dsouth\Dtext{r}{0}}
\Dskip
\Dbloc{\Dcirc\Deast\Dtext{b}{1}}
\Dbloc{\Dcirc\Dwest\Deast\Dtext{b}{0}}
\Dbloc{\Dcirc\Dwest\Deast\Dtext{b}{0}}
\Dbloc{\Ddots}
\Dbloc{\Dcirc\Dwest\Deast\Dtext{b}{0}}
\Dbloc{\Dcirc\Dwest\Deast\Dnorth\Dtext{b}{1}}
\Dbloc{\Dcirc\Dwest\Dtext{b}{0}}
\end{Dynkin}$$
So we can choose for $h$ the element 
$$h=\varpi_1+\varpi_{r -2}=2 \alpha_1+
\sum_{j=2}^{r -2} (j+1) \alpha_j +\frac{r }{r -1}(\alpha_{r -1} 
+\alpha_{r }).$$ 
We see from the above diagram that ${\eps_1+\eps_i} \in\Delta_+(h,3)$ 
for  $i \in \{2,\ldots,r -2\}$, and 
$$\Delta_+(h,2)=\{\eps_1\pm \eps_{r-1},\, \eps_{1} \pm \eps_r ,\, 
\eps_i+\eps_j,\; {2 \leq i < j < r-2}\}.$$
We can choose $e,f$ so that 
$$e = \sum_{\alpha \in \Delta_+(h,2)} a_\alpha e_\alpha\quad \text{ and } 
\quad f = \sum_{\alpha \in \Delta_+(h,2)} b_\alpha f_\alpha$$
with $a_\alpha,b_\alpha \in\C$ for all $\alpha \in \Delta_+(h,2)$. 
Set for $\alpha \in \Delta_+(h,2)$, 
$$c_\alpha := a_\alpha b_\alpha.$$ 
From the relation $[e,f]=h$, we obtain the equations:
\begin{align} \label{eq:c1}
c_{\eps_1- \eps_{r-1}}+c_{\eps_1+ \eps_{r-1}}+
c_{\eps_1- \eps_{r}}+c_{\eps_1+ \eps_{r}} &= 2 \\ \label{eq:c2}
c_{\eps_1+ \eps_{r-1}}+
c_{\eps_1- \eps_{r}} + \sum_{2\leq i <j < r-2} c_{\eps_i+\eps_j}&= \frac{r-1}{2}\\ 
\label{eq:c3}
c_{\eps_1+ \eps_{r-1}}+
c_{\eps_1+ \eps_{r}} + \sum_{2\leq i <j < r-2} c_{\eps_i+\eps_j}&= \frac{r-1}{2}\\ 
\label{eq:c4}
c_{\eps_1- \eps_{r-1}}+c_{\eps_1+ \eps_{r-1}}+
c_{\eps_1- \eps_{r}} +c_{\eps_1+ \eps_{r}} + 
2 \sum_{2\leq i <j < r-2} c_{\eps_i+\eps_j}&= r-1
\end{align}
by considering the coefficients of $h$ in $\alpha_1$, $\alpha_{r-1}$, 
$\alpha_{r}$ and $\alpha_{r-2}$ respectively. 
By (\ref{eq:c2}) and (\ref{eq:c3}) we get that 
$$c_{\eps_1- \eps_{r}} =c_{\eps_1+ \eps_{r}}.$$ 
Then by (\ref{eq:c4}), (\ref{eq:c2}) and (\ref{eq:c3}) we get 
that $$c_{\eps_1- \eps_{r-1}} =c_{\eps_1+ \eps_{r-1}}.$$ 
So from (\ref{eq:c1}), we obtain: 
$$ c_{\eps_1- \eps_{r}} +c_{\eps_1- \eps_{r-1}}=1.$$ 
In particular, $c_{\eps_1- \eps_{r}} \not=-c_{\eps_1- \eps_{r-1}}$ 
and $c_{\eps_1- \eps_{r-1}}$ and $c_{\eps_1- \eps_{r}}$ cannot 
be both zero. 

Hence for some nonzero complex number $c$, we have 
$$w_1 = \sum_{i=2}^{r -2} e_{\eps_1-\eps_i} 
e_{\eps_1+\eps_i}  +  e_{\eps_1-\eps_{r-1} } 
e_{\eps_1+\eps_{r-1} }+e_{\eps_1-\eps_r } 
e_{\eps_1+\eps_r } = c {\pmod{J_\chi}}$$ 
and so $\IS_{W_2} + {J_\chi}=\C[\g^*]$.  

Assume that $r$ is even.\\
It follows from \cite[Lemma 5.3.5]{CMa} that the weighted 
Dynkin diagram of the nilpotent orbit $G.f$ is 
\vspace{-0.25cm}
$$\begin{Dynkin}
\Dspace\Dspace\Dspace\Dspace\Dspace\Dbloc{\Dcirc\Dsouth\Dtext{r}{1}}
\Dskip
\Dbloc{\Dcirc\Deast\Dtext{b}{1}}
\Dbloc{\Dcirc\Dwest\Deast\Dtext{b}{0}}
\Dbloc{\Dcirc\Dwest\Deast\Dtext{b}{0}}
\Dbloc{\Ddots}
\Dbloc{\Dcirc\Dwest\Deast\Dtext{b}{0}}
\Dbloc{\Dcirc\Dwest\Deast\Dnorth\Dtext{b}{0}}
\Dbloc{\Dcirc\Dwest\Dtext{b}{1}}
\end{Dynkin}$$ 
So we can choose for $h$ the element 
$$h=\varpi_1+\varpi_{r -1}+\varpi_{r } 
= 2\alpha_1+\sum_{j=2}^{r -2} (j+1) \alpha_j +\frac{r }{2}(\alpha_{r -1} 
+\alpha_{r }).$$ 
We see from the above diagram that ${\eps_1+\eps_i} \in\Delta_+(h,3)$ 
for $i \in\{2,\ldots, r -1\}$, and 
$$\Delta_+(h,2)= \{\eps_1\pm \eps_r , \, \eps_i+\eps_j, \; 
2 \leq i < j \leq r -1\}.$$ 
We can choose $e,f$ so that 
$$e = \sum_{\alpha \in \Delta_+(h,2)} a_\alpha e_\alpha\quad \text{ and } 
\quad f = \sum_{\alpha \in \Delta_+(h,2)} b_\alpha f_\alpha$$
with $a_\alpha,b_\alpha \in\C$ for all $\alpha \in \Delta_+(h,2)$. 
Set for $\alpha \in \Delta_+(h,2)$, 
$$c_\alpha := a_\alpha b_\alpha.$$
From the relation $[e,f]=h$, we obtain the equations:
\begin{align} \label{eq:d1} 
c_{\eps_1- \eps_{r}}+c_{\eps_1+ \eps_{r}} &= 2 \\ \label{eq:d2}
c_{\eps_1- \eps_{r}} + \sum_{2\leq i <j \leq r-1} c_{\eps_i+\eps_j}&= \frac{r}{2}\\ 
\label{eq:d3}
c_{\eps_1+ \eps_{r}} + \sum_{2\leq i <j \leq r-1} c_{\eps_i+\eps_j}&= \frac{r}{2}
\end{align}
by considering the coefficients of $h$ in $\alpha_1$, $\alpha_{r-1}$, 
and $\alpha_{r}$ respectively. 
By (\ref{eq:d2}) and (\ref{eq:d3}) we get that 
$c_{\eps_1- \eps_{r}} =c_{\eps_1+\eps_{r}}$. So from 
(\ref{eq:d1}), we get that 
$$c_{\eps_1- \eps_{r}} =c_{\eps_1+\eps_{r}}=1.$$

Hence for some nonzero complex number $c$, we have 
$$w_1 = \sum_{i=2}^{r -1} e_{\eps_1-\eps_i} 
e_{\eps_1+\eps_i}  +e_{\eps_1-\eps_r } 
e_{\eps_1+\eps_r } = c {\pmod{J_\chi}}$$ 
and so $\IS_{W_2} + {J_\chi}=\C[\g^*]$.  
\end{proof}

\begin{lemma} \label{lem:Dss} 
Let $\lam$ be a nonzero semisimple element of $\g$ 
which belongs to $V(I_{W_1})$. Then 
$\lam \in G.\C^* \varpi_{r-1}$ or $\lam \in G.\C^* \varpi_{r}$. 
\end{lemma}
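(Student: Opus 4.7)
The proof will parallel that of Lemma \ref{lem:A-zero0}. Since $V(I_{W_1})$ is $G$-stable, it is enough to determine $V(I_{W_1}) \cap \h$ and then identify its orbits under $W(\g,\h) \cong \mathfrak{S}_r \ltimes (\Z/2\Z)^{r-1}$. As in the $\sl_n$ case, I would use the Chevalley projection map $\Psi \colon S(\g)^\h \to S(\h)$, so that $\lambda \in \h$ lies in $V(I_{W_1})$ iff $p(\lambda) = 0$ for every $p \in \Psi(W_1^\h)$, where $W_1^\h$ is the zero weight space of $W_1 \subset S^2(\g)$.

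To find explicit generators of $\Psi(W_1^\h)$, I would apply suitable compositions of lowering operators to the highest weight vector $w_1$ of weight $2\eps_1$ and project. Concretely, vectors of the form $f_{\eps_1-\eps_j}\,f_{\eps_1+\eps_j}\cdot w_1$ land in $W_1^\h$; further generators are produced either by first acting with stabilizer elements of $\eps_1$ (giving the $D_{r-1}$ Weyl group on indices $2,\dots,r$) or by additional lowerings $f_{\eps_j\pm\eps_k}\,f_{\eps_1\mp\eps_k}\cdots\,w_1$. After expanding in the symmetric algebra and discarding terms in $\n_-\cdot S(\g) + S(\g)\cdot \n_+$, one obtains a manageable list of quadratic polynomials in the $h_i$'s. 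The $D_{r-1}$-invariance under the stabilizer of $\eps_1$ should force these generators, up to obvious symmetry, into a short family of symmetric quadratic expressions in $h_1,\dots,h_r$ (analogous to the $\hat{p}_i, \hat{q}_i$ of the previous section).

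Writing $\lambda = \sum_{i=1}^r a_i \eps_i$ and evaluating on these generators will give a system of quadratic relations on $(a_1,\dots,a_r)$. A short case analysis on which $a_i$ vanish should show that, once $\lambda \neq 0$, all coordinates satisfy $a_i^2 = c$ for a common $c \in \C^*$; equivalently, $\lambda$ is proportional to a vector $\tfrac{1}{2}\sum_{i=1}^r \sigma_i\eps_i$ with $\sigma_i \in \{\pm 1\}$. Since $W(\g,\h)$ consists of signed permutations with an \emph{even} number of sign changes, such a vector is $W(\g,\h)$-conjugate to $\varpi_r = \tfrac{1}{2}(\eps_1+\cdots+\eps_r)$ when the number of negative signs is even, and to $\varpi_{r-1} = \tfrac{1}{2}(\eps_1+\cdots+\eps_{r-1}-\eps_r)$ when it is odd. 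Hence $\lambda \in G.\C^*\varpi_{r-1} \cup G.\C^*\varpi_r$, which is the claim.

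The main obstacle will be carrying out the computation of $\Psi(W_1^\h)$ precisely enough to rule out ``partial'' solutions with some coordinates $a_i=0$; here the $D_r$ case genuinely departs from the $\sl_n$ case because both roots $\eps_i - \eps_j$ and $\eps_i + \eps_j$ are present, producing polynomial constraints that couple all coordinates simultaneously rather than in disjoint intervals. Once this rigidity is established, the final identification of the two Weyl orbits is a direct check using the explicit action of $W(\g,\h)$ on the $\eps_i$'s.
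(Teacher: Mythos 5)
Your overall strategy matches the paper's: reduce to $\h$ by $G$-invariance, use the Chevalley projection $\Psi$ to describe $V(I_{W_1})\cap\h$ by explicit quadratic polynomials, solve the system, and identify the solutions as the $W(\g,\h)$-orbits of $\C^*\varpi_{r-1}$ and $\C^*\varpi_r$ using the fact that these are the only two Weyl orbits through vectors $\tfrac12\sum_i\sigma_i\eps_i$, $\sigma_i\in\{\pm1\}$. Your proposed intermediate normal form (``$a_i^2=c$ for a common $c$'') is indeed equivalent to what the paper's solution set looks like, and the final Weyl-group identification is the same.

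The gap is that the central computational claim is never established, and you say so yourself. You write ``should force'', ``should show'', and flag that the real work is ``carrying out the computation of $\Psi(W_1^\h)$ precisely enough to rule out partial solutions with some $a_i=0$''---but you never produce the generators. The paper short-circuits exactly this step by citing Per{\v{s}}e's proof of \cite[Theorem~3.4]{Per13}, from which one reads off that $\Psi(I_{W_1}\cap S(\g)^\h)$ is generated by
$p_i=h_i\,(h_i+2h_{i+1}+\cdots+2h_{r-2}+h_{r-1}+h_r)$, $i=1,\dots,r$.
Writing $\lambda=\sum_i a_i\eps_i$, the relation $p_i(\lambda)=(a_i-a_{i+1})(a_i+a_{i+1})=a_i^2-a_{i+1}^2=0$ for $i\le r-2$ already forces $a_1^2=\cdots=a_{r-1}^2$, and the last two relations couple $a_r$ to the rest; it is precisely this that rules out the degenerate solutions you worry about. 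Without producing these $p_i$ (either by invoking \cite{Per13} or by actually carrying out the lowering-operator computation on $w_1$), the argument is a plausible outline but not a proof: in particular the assertion that no nonzero $\lambda$ with some $a_i=0$ survives is exactly the point that needs the explicit generators, and it is left as a conjecture.
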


\begin{proof} 
Arguing as in the beginning of the proof of Lemma \ref{lem:A-zero0}, 
we see that it is enough to prove the lemma 
for a nonzero element $\lam$ in $V(I_{W_1})\cap \h$. 

The image $\Upsilon(I_{W_1}^\h)$ of $I_{W_1}^\h$, viewed as an 
ideal of $U(\g)$, was determined by Per{\v{s}}e in the 
proof of \cite[Theorem~3.4]{Per13}. 
 From Per{\v{s}}e's
 proof, we easily deduce that 
$\Psi(I_{W_1} \cap S(\g)^\h)$ is generated by the elements 
\begin{align*}
p_i := h_i (h_i+2 h_{i+1} +\cdots + 2 h_{r-2} +h_{r-1}+h_r), \qquad 
i=1,\ldots,r, 
\end{align*}
and that 
\begin{align*}
V(I_{W_1}) \cap \h=  
\bigcup\limits_{\{i_1,\ldots,i_k\} \subset \{1,\ldots,r-2\} \atop i_1 < \cdots < i_k} 
(\C \left( \sum_{j=1}^k (-1)^{k-j+1}  \varpi_{i_j} +\varpi_{r-1}\right) &\\
\cup \;
\C  \left(\sum_{j=1}^k (-1)^{k-j+1}  \varpi_{i_j} +\varpi_{r} \right) ).
\end{align*}

Thus we are lead to show that for any nonempty sequence 
$(i_1,\ldots,i_k)$ in $\{1,\ldots,r-2\}$, with $i_1 < \cdots < i_k$, 
the element  
$\sum_{j=1}^k(-1)^{k-j+1}  \varpi_{i_j} +\varpi_{r-1}$ 
(respectively $\sum_{j=1}^k (-1)^{k-j+1}  \varpi_{i_j} +\varpi_{r}$) 
is either $W(\g,\h)$-conjugate to $\varpi_{r-1}$, 
or $W(\g,\h)$-conjugate to $\varpi_{r}$. 
(If the sequence is empty, the statement is obvious.) 

Let $(i_1,\ldots,i_k)$ in $\{1,\ldots,r-2\}$, 
with $i_1 < \cdots < i_k$. 
We prove the statement for 
$\sum_{j=1}^k(-1)^{k-j+1}  \varpi_{i_j} +\varpi_{r}$. 
Similar arguments hold for $\sum_{j=1}^k(-1)^{k-j+1}  \varpi_{i_j} +\varpi_{r-1}$. 

If $k$ is even, we have 
\begin{align*}
\sum_{j=1}^k(-1)^{k-j+1}  \varpi_{i_j}  
=\sum_{j=1}^{k-1} (\varpi_{i_j}-\varpi_{i_{j+1}})
= - \sum_{j=1}^{k/2} (\eps_{i_{2j-1}+1}+\cdots+\eps_{i_{2j}}). 
\end{align*}
Hence 
\begin{align*}
\sum_{j=1}^k(-1)^{k-j+1}  \varpi_{i_j} +\varpi_{r} = \frac{1}{2} (
(\eps_{1}+\cdots+\eps_{i_{1}} )- (\eps_{i_1}+\cdots+\eps_{i_2}) \hspace{2cm} \\
+\cdots +  ( \eps_{i_{k-2}}+\cdots+\eps_{i_{k-1}} )
- (\eps_{i_{k-1}+1}+\cdots+\eps_{i_k}) +\eps_{i_k+1}+\cdots+\eps_{i_r})  ),
\end{align*}
and this element is $W(\g,\h)$-conjugate to $\varpi_{r}$ 
or $\varpi_{r-1}$ depending on the parity 
of $(i_2-i_1)+\cdots+(i_k-i_{k-1})$. 

If $k$ is odd, then 
\begin{align*}
\sum_{j=1}^k(-1)^{k-j+1}  \varpi_{i_j}  
&=(-\varpi_{i_1}+\varpi_{i_2})+( -\varpi_{i_3}+\varpi_{i_4})+\cdots 
+ (- \varpi_{i_{k-2}} + \varpi_{i_{k-1}}) - \varpi_{i_k} &\\
&= \sum_{j=1}^{(k-1)/2} (\eps_{i_{2j-1}+1}+\cdots+\eps_{i_{2j}}) 
-(\eps_{1}+\cdots +\eps_{i_k}) &\\
& = -(\eps_{1}+\cdots+\eps_{i_1}) - 
\sum_{j=1}^{(k-1)/2} (\eps_{i_{2j}+1}+\cdots+\eps_{i_{2j}+1}) 
\end{align*} 
and we conclude as in the case where $k$ is even 
that $ \sum_{j=1}^k(-1)^{k-j+1}  \varpi_{i_j}+\varpi_r$ is 
conjugate to $\varpi_{r}$ 
of $\varpi_{r-1}$, depending here on the parity 
of $i_1+(i_3-i_2)+\cdots+(i_k-i_{k-1})$. 
\end{proof}
The following assertion follows immediately from
Lemma \ref{Lem:sheet},
Lemma \ref{lem:Dnil}
and Lemma \ref{lem:Dss} .
\begin{Pro} \label{Pro:Dzero}
We have $V(I_{W_1}) = \overline{\SS_{\l^{I}}} \cup 
 \overline{\SS_{\l^{II}}}$,
 and
hence,
 $ X_{\tilde{V}_{2-r}(\fing)}    =\overline{\SS_{\l^{I}}} \cup 
 \overline{\SS_{\l^{II}}}$.
\end{Pro}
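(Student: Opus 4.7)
The plan is a direct application of Lemma~\ref{Lem:sheet}(2) to the set $X := V(I_{W_1})$, which is $G$-invariant, conic, and Zariski closed (being the zero locus of the $G$-stable homogeneous ideal $I_{W_1}$), taking as data the rank-one Dixmier Levi subalgebras $\l^{I}$ and $\l^{II}$ (which coincide up to $G$-conjugacy when $r$ is odd, reducing to $\l_r$).

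First I would check the hypothesis on $X\cap \h$: Lemma~\ref{lem:Dss} shows that every nonzero semisimple element of $X$ is $G$-conjugate to a scalar multiple of $\varpi_{r-1}$ or $\varpi_r$. Translating this via $G$-invariance, $X\cap \h$ is a finite union of lines, each of the form $\C\mu$ for $\mu$ a $W(\g,\h)$-conjugate of $\varpi_{r-1}$ or $\varpi_r$. For every such $\mu$, the centralizer $\g^\mu$ is a Levi subalgebra $G$-conjugate to $\l^{I}$ or $\l^{II}$, so the associated sheet closure is one of $\overline{\SS_{\l^{I}}}$, $\overline{\SS_{\l^{II}}}$. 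Next I would verify the hypothesis on the nilpotent cone using Lemma~\ref{lem:Dnil}: it gives exactly $X\cap \mc{N}\subset \overline{\on{Ind}_{\l^{I}}^\g(0)}\cup \overline{\on{Ind}_{\l^{II}}^\g(0)}$, since for even $r$ the induced orbits are $\O_{(2^r)}^{I}$ and $\O_{(2^r)}^{II}$, while for odd $r$ both reduce to $\O_{(2^{r-1},1^2)}=\on{Ind}_{\l_r}^{\g}(0)$.

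The remaining hypothesis of Lemma~\ref{Lem:sheet}(2) is the equidimensionality $\dim \on{Ind}_{\l^{I}}^\g(0)=\dim \on{Ind}_{\l^{II}}^\g(0)$, which is automatic: for odd $r$ the two Levi subalgebras are themselves $G$-conjugate, and for even $r$ the orbits $\O_{(2^r)}^{I}$ and $\O_{(2^r)}^{II}$ form a single ${\rm O}_{2r}$-orbit and so share the same dimension. Applying Lemma~\ref{Lem:sheet}(2) then yields $V(I_{W_1}) = \overline{\SS_{\l^{I}}}\cup \overline{\SS_{\l^{II}}}$, which reduces to $\overline{\SS_{\l_r}}$ for odd $r$. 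The second assertion is then immediate from the identification $X_{\tilde V_{2-r}(\g)}=V(I_{W_1})$ recorded in~\eqref{eq:X-and-V-type-D}.

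I do not anticipate any real obstacle here: the substantive work---the Jacobson--Morozov computation behind Lemma~\ref{lem:Dnil} and the Harish-Chandra projection analysis behind Lemma~\ref{lem:Dss}---has already been done. The only small bookkeeping point to monitor is that enlarging $\{\varpi_{r-1},\varpi_r\}$ to their full $W(\g,\h)$-orbits (needed to present $X\cap \h$ in the exact form demanded by Lemma~\ref{Lem:sheet}(2)) does not enlarge the list of sheet closures on the right-hand side, which is clear since a Weyl conjugate of a Levi subalgebra is $G$-conjugate to it and therefore defines the same sheet.
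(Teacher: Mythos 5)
Your proposal is correct and coincides with the paper's argument: the authors simply assert that Proposition~\ref{Pro:Dzero} ``follows immediately from Lemma~\ref{Lem:sheet}, Lemma~\ref{lem:Dnil} and Lemma~\ref{lem:Dss},'' and you have filled in exactly the bookkeeping they leave implicit (the equidimensionality check for even $r$, and the observation that passing to full Weyl-group orbits of $\varpi_{r-1},\varpi_r$ adds no new sheet closures). Nothing further is needed.
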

    We are now in a position to prove Theorem \ref{Th:reducible}.
    \begin{proof}[Proof of Theorem \ref{Th:reducible}] 
    Assume that $r$ is odd. 
     Since $V_{2-r}(\fing)$ is a quotient of $\tilde{V}_{2-r}(\fing)$,
\begin{align}
X_{V_{2-r}(\fing)}
 \subset   X_{\tilde{V}_{2-r}(\fing)} =\overline{\SS_{\l^{I}}} \cup 
 \overline{\SS_{\l^{II}}}  =\overline{\SS_{\l_{r}}}
 \label{eq:contianed-in}
\end{align}
     by Proposition \ref{Pro:Dzero} since $\SS_{\l_{r}}=\SS_{\l^{I}}=\SS_{\l^{II}}$.
        On the other hand,  by   \cite[Theorem 7.2]{AdaPer14},
    one knows that
    $V_{2-r}(\fing)$ has infinitely many simple modules
     in the category $\mathcal{O}$.
     This gives 
     \begin{align} {
 X_{V_{2-r}(\fing)}
 \not\subset  \mc{N} }
 \label{eq:not-contained}
     \end{align}
     by \cite[Corollary 5.3]{AM15}.
     Therefore
     $X_{V_{r-2}(\fing)}
     \cap \h\ne 0$ 
     as $X_{V_{r-2}(\fing)}$ is closed and $G$-invariant.
     Hence, by \eqref{eq:contianed-in},
     either 
     $\C \varpi_{r-1}\subset X_{V_{2-r}(\fing)}$, or
     $\C \varpi_r\subset X_{V_{2-r}(\fing)}$.
Thus,
     $ \overline{\SS_{\l_{r}}}  \subset X_{V_{2-r}(\fing)}$ 
     since $X_{V_{2-r}(\fing)}$ is a $G$-invariant closed cone. 
This completes the proof.
    \end{proof}
    We now wish to prove Theorem \ref{Th:Dr-for-even-r}.
     \begin{Th}\label{Th:image-type-D} 
Assume $r$ is even, and let $f \in \mathbb{O}_{(2^r)}^{I} 
\cup \mathbb{O}_{(2^r)}^{II}$.
 Then
 $H^{\frac{\infty}{2}+0}_{f}(V_{2-r}(\fing))=0$.
\end{Th}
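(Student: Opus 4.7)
The plan is to invoke Theorem~\ref{Th:short}, the short nilpotent reduction. By Theorem~\ref{Th:W-algebra-variety}~(3), the vanishing $H^{\frac{\infty}{2}+0}_{f}(V_{2-r}(\fing))=0$ is equivalent to $\overline{G.f} \not\subset X_{V_{2-r}(\fing)}$, and I would obtain this by controlling the Gelfand--Kirillov dimension of the top $D_{h}$-weight component of $V_{2-r}(\fing)$.

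First, I would verify that every $f \in \O^{I}_{(2^{r})} \cup \O^{I\!I}_{(2^{r})}$ is short. Taking $h = 2\varpi_{r}$ for $f \in \O^{I}_{(2^{r})}$ (and $h = 2\varpi_{r-1}$ for $f \in \O^{I\!I}_{(2^{r})}$), the element $h$ acts on the defining representation $\C^{2r}$ with eigenvalues $\pm 1$, so on $\fing \cong \Lambda^{2}\C^{2r}$ the eigenvalues of $\ad h$ lie in $\{-2,0,2\}$. Hence $\fing = \fing(h,-2) \oplus \fing(h,0) \oplus \fing(h,2)$, with $\l := \fing(h,0) \cong \mf{gl}_{r}$ and $\dim \fing(h,2) = r(r-1)/2 = \tfrac{1}{2}\dim G.f$. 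The case $\O^{I\!I}_{(2^{r})}$ reduces to the case $\O^{I}_{(2^{r})}$ via the outer automorphism of $\so_{2r}$, which exchanges the two orbits and preserves $V_{2-r}(\fing)$. Since $V_{2-r}(\fing) \in \on{KL}_{2-r} \subset \mc{O}^{\l}_{2-r}$, Theorem~\ref{Th:short}~(2) then reduces the problem to the estimate
\[
\on{Dim}\, V_{2-r}(\fing)_{0,h} \; < \; \dim \fing(h,2) \; = \; \tfrac{r(r-1)}{2}.
\]

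Next, I would transport the top space to a $\fing$-module via the Tits lifting $\tilde{t}_{h/2}$. A direct PBW analysis of $D_{h}$-weights shows that $V^{2-r}(\fing)_{0,h}$ is spanned by the monomials $e_{\alpha_{1}}(-1)\cdots e_{\alpha_{k}}(-1)\mathbf{1}$ with all $\alpha_{i} \in \Delta(h,2)$, and that under the twisted action it realizes a (level-deformed) generalized Verma module $M = \on{Ind}_{\l \oplus \fing(h,-2)}^{\fing}\C_{0}$ of associated variety $\overline{G \cdot \fing(h,2)} = \overline{\O^{I}_{(2^{r})}}$ and Gelfand--Kirillov dimension exactly $r(r-1)/2$. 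The image $V_{2-r}(\fing)_{0,h}$ is a quotient of $M$, so the task reduces to exhibiting relations in this quotient that strictly cut the associated variety inside $\overline{\O^{I}_{(2^{r})}}$.

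Such relations are produced from the singular vector $\sigma(w_{1})$, of $D_{h}$-weight $-1$, by applying any $D_{h}$-weight $+1$ element of $\affg$ (namely $e_{\alpha}(0)$ for $\alpha \in \Delta(h,2)$, $h_{i}(-1)$, $e_{\alpha}(-1)$ for $\alpha \in \Delta(h,0)$, or $e_{\alpha}(-2)$ for $\alpha \in \Delta(h,-2)$), yielding vectors in $V^{2-r}(\fing)_{0,h}$ that vanish in $V_{2-r}(\fing)$. The main obstacle, and the crux of the argument, is to verify that this family of relations suffices to force the associated variety of the quotient to be a proper closed $G$-invariant subset of $\overline{\O^{I}_{(2^{r})}}$. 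I expect the parity of $r$ to be essential here: since for even $r$ the sheets $\overline{\SS_{\l^{I}}}$ and $\overline{\SS_{\l^{I\!I}}}$ are distinct, the simple vertex algebra $V_{2-r}(\fing)$ is necessarily strictly smaller than $\tilde{V}_{2-r}(\fing)$, so its maximal proper submodule contains additional singular vectors beyond $\sigma(w_{1})$; an explicit identification of such vectors in the spirit of \cite{Per13, AdaPer14} should yield the required drop in Gelfand--Kirillov dimension.
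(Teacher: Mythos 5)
The reduction to the Gelfand--Kirillov dimension estimate via Theorem~\ref{Th:short}(2), and the identification $\l=\fing(h,0)\cong\mf{gl}_r$, $\dim\fing(h,2)=r(r-1)/2=\tfrac12\dim G.f$, are all correct and match the paper's setup. The gap is in the final step, where you actually have to prove
$\on{Dim}\,V_{2-r}(\fing)_{0,h}<r(r-1)/2$. You propose to get this by producing relations in the top component from $\sigma(w_1)$ together with further, as-yet-unidentified singular vectors; you correctly note (via the comparison with $\tilde V_{2-r}(\fing)$ and Proposition~\ref{Pro:Dzero}) that $\sigma(w_1)$ alone cannot suffice, but you then leave the rest to a hoped-for ``explicit identification of such vectors.'' That is precisely the hard part, and without it the argument does not close.

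The paper's proof avoids this entirely by using the simplicity of $V_{2-r}(\fing)$ at the level of the top component: since $V_{2-r}(\fing)$ is the simple graded quotient, its top $D_h$-weight space is the \emph{irreducible} highest weight $\fing$-module $L_{\fing}((2-r)\varpi_r)$, a quotient of the scalar generalized Verma module $M_{\l}((2-r)\varpi_r)$. Then Matumoto's criterion \cite[Theorem~3.2.3(2c)]{Mat06} gives a nontrivial embedding $M_{\l}(-r\varpi_r)\hookrightarrow M_{\l}((2-r)\varpi_r)$, so $M_{\l}((2-r)\varpi_r)$ is reducible and hence $\on{Dim}L_{\fing}((2-r)\varpi_r)<\on{Dim}M_{\l}((2-r)\varpi_r)=r(r-1)/2$. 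No vertex-algebra singular vectors beyond the passage to the simple quotient are needed. You should replace the speculative search for extra singular vectors with this observation: irreducibility of the top space (from simplicity of $V_{2-r}(\fing)$) plus a Jantzen/Matumoto-type reducibility criterion for the scalar generalized Verma module.
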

   \begin{proof}
    By symmetry,
    we may assume that 
the weighted Dynkin diagram of $f$ is given by 
\vspace{-0.25cm}
$$\begin{Dynkin}
\Dspace\Dspace\Dspace\Dspace\Dspace\Dbloc{\Dcirc\Dsouth\Dtext{r}{0}}
\Dskip
\Dbloc{\Dcirc\Deast\Dtext{b}{0}}
\Dbloc{\Dcirc\Dwest\Deast\Dtext{b}{0}}
\Dbloc{\Dcirc\Dwest\Deast\Dtext{b}{0}}
\Dbloc{\Ddots}
\Dbloc{\Dcirc\Dwest\Deast\Dtext{b}{0}}
\Dbloc{\Dcirc\Dwest\Deast\Dnorth\Dtext{b}{0}}
\Dbloc{\Dcirc\Dwest\Dtext{b}{2}}
\end{Dynkin}$$
    and $h=2\varpi_r$.
Set $\mf{l}=\mf{l}^{II}=\fing(h,0)$.

    Since
    $f$ is a short nilpotent element,
    it is sufficient to show that 
\begin{align*}
 \on{dim} V_{2-r}(\fing)_{0,h}
 <\frac{1}{2}\dim \mathbb{O}_{(2^r)}^{II}
\end{align*}
    by Theorem \ref{Th:short}.
    Observe that,
    as a $\fing$-module,
    $ V_{2-r}(\fing)_{0,h}$ is isomorphic to
    $L_{\fing}((2-r)\varpi_r)$,
    which is a quotient of the {\em scalar generalized Verma module}
 $M_{\mf{l}}((2-r)\varpi_r)$.
Here
\begin{align*}
M_{\mf{l}}(a\varpi_r)=U(\fing)\*_{U(\mf{l}\+ \fing(h,2))}\C_{a\varpi_r}
\end{align*}
    for $a\in \C$,
    where 
    $\C_{a\varpi_r}$ is a one-dimensional representation
    of $\mf{l}\+ \fing(h,2)$ on which
    $[\mf{l},\mf{l}]\+ \fing(h,2)$ acts trivially and
    $h_r\in \mf{z}(\mf{l})$ acts as multiplication
    by $a$.
    By \cite[Theorem 3.2.3 (2c)]{Mat06},
    one knows that 
    there is an embedding
    $M_{\mf{l}}((-a-2r-2)\varpi_r)\hookrightarrow M_{\mf{l}}(a\varpi_r)$
    for $a\in \Z$, $a\geq 2-r$.
In particular,
    $M_{\mf{l}}(-r\varpi_r)\subset M_{\l}((2-r)\varpi_r)$.
Therefore, 
    \begin{align*}
     \on{Dim} V_{2-r}(\fing)_{0,h}=
     \on{Dim} L_{\fing}((2-r)\varpi_r)&\leq \on{Dim}
     (M_l((2-r)\varpi_r)/M_\l(-r\varpi_r))\\
     &<\on{Dim} (M_\l((2-r)\varpi_r))=
     \frac{1}{2}\dim \mathbb{O}_{(2^r)}^{II}. 
    \end{align*}
     This completes the proof.
   \end{proof}

     \begin{proof}[Proof of Theorem \ref{Th:Dr-for-even-r}]    First,
      we have 
    $ X_{V_{2-r}(\fing)}\subset 
\overline{\SS_{\l^{I}}} \cup 
 \overline{\SS_{\l^{II}}}
      $
      by Proposition~\ref{Pro:Dzero}.
      Suppose 
      that  $    X_{V_{2-r}(\fing)}\subsetneqq \mc{N}$.
      As in the proof of Theorem \ref{Th:reducible},
      it follows that 
      either $\C\varpi_{r-1}\subset X_{V_{2-r}(\fing)}$
      or $\C\varpi_{r}\subset X_{V_{2-r}(\fing)}$.
Thus
          $  X_{V_{2-r}(\fing)}=\overline{\SS_{\l^{I}}} \cup 
 \overline{\SS_{\l^{II}}}$.
But 
       we have ${\mathbb{O}_{(2^r)}^I, \mathbb{O}_{(2^r)}^{II} \not\subset 
    X_{V_{2-r}(\fing)}}$ 
    by
  Theorem~\ref{Th:W-algebra-variety}
  and 
      Theorem~\ref{Th:image-type-D}.
Since this is a contradiction we get that
      $    X_{V_{2-r}(\fing)}\subset \mc{N}$,
      and hence,
  $$  {X_{V_{2-r}(\fing)}\subsetneqq
\overline{\mathbb{O}_{(2^r)}^I},\
      \overline{\mathbb{O}_{(2^r)}^{II}}.}$$
We conclude that
    $X_{V_{2-r}(\fing)}
    \subset \overline{\mathbb{O}_{(2^{r-2},1^4)}}=
    \overline{\mathbb{O}_{(2^r)}^I}\cap
    \overline{\mathbb{O}_{(2^r)}^{II}}$.

On the other hand,  Theorem \ref{Th:minimal} gives that 
    $H^{\frac{\infty}{2}+0}_{f_{\theta}}(V_{r-2}(\fing))=\W_{r-2}(\fing,f_{\theta})$,
    which is not lisse by \cite[Theorem 7.1]{AM15} unless $r=4$.
    Therefore,
    $\overline{\mathbb{O}_{min}}\subsetneqq
    X_{V_{2-r}(\fing)}$
    by  Theorem~\ref{Th:W-algebra-variety}.

    The last assertion follows from  \cite[Corollary 5.3]{AM15}.
   \end{proof}
\begin{Conj}\label{Conj:r-even}
 Let $r$  be even.
 Then
 $ X_{V_{2-r}(\fing)}=\overline{\mathbb{O}_{(2^{r-2},
 1^4)}}$.
 Therefore,
 the $W$-algebra
 $\W_{2-r}(\fing,f)$
 is lisse
for  $f\in \mathbb{O}_{(2^{r-2},
 1^4)}$.
\end{Conj}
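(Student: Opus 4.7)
The plan is to combine the upper bound of Theorem~\ref{Th:Dr-for-even-r},
\[
X_{V_{2-r}(\fing)}\ \subset\ \overline{\mathbb{O}_{(2^{r-2},1^4)}},
\]
with a matching lower bound. By Theorem~\ref{Th:W-algebra-variety}~(3), the reverse inclusion $\overline{\mathbb{O}_{(2^{r-2},1^4)}}\subset X_{V_{2-r}(\fing)}$ is equivalent to the non-vanishing $H^{\frac{\infty}{2}+0}_f(V_{2-r}(\fing))\ne 0$ for some (equivalently, every) $f\in \mathbb{O}_{(2^{r-2},1^4)}$. Once the equality $X_{V_{2-r}(\fing)}=\overline{\mathbb{O}_{(2^{r-2},1^4)}}$ is established, the second assertion---lisseness of $\W_{2-r}(\fing,f)$ for $f\in \mathbb{O}_{(2^{r-2},1^4)}$---follows at once from Theorem~\ref{Th:W-algebra-variety}~(4).

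To produce the non-vanishing I would exhibit a simple $V_{2-r}(\fing)$-module $L(\lam)$ in $\mc{O}_{2-r}$ with $H^{\frac{\infty}{2}+0}_f(L(\lam))\ne 0$. By Proposition~\ref{Pro:ch-variety-vs-Zhu}, the condition that $L(\lam)$ be a $V_{2-r}(\fing)$-module amounts to $\bar\lam\in \mc{V}(\mc{I}_{2-r})$, so candidates are drawn from the characteristic variety of the two-sided ideal $\mc{I}_{2-r}$; a natural starting point are the dominant weights on the rays $\C\varpi_{r-1}$ and $\C\varpi_r$, which already lie in $\mc{V}(\JJ_{W_1})\subset \mc{V}(\mc{I}_{2-r})$ by Lemma~\ref{lem:Dss} and Per\v se's computation in~\cite{Per13}. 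Given such an $L(\lam)$, exactness of the BRST reduction functor (Theorem~\ref{Th:W-algebra-variety}~(1)) applied to $0\to \widehat{\mc{J}}_{2-r}\to V^{2-r}(\fing)\to V_{2-r}(\fing)\to 0$ makes $H^{\frac{\infty}{2}+0}_f(L(\lam))$ a module over the quotient vertex algebra $H^{\frac{\infty}{2}+0}_f(V_{2-r}(\fing))$, which must then itself be nonzero. The non-vanishing of $H^{\frac{\infty}{2}+0}_f(L(\lam))$ in turn should be verified by a Gelfand--Kirillov dimension argument on a suitable $h$-graded top component of $L(\lam)$, in the spirit of Theorem~\ref{Th:short}~(2); since $f\in \mathbb{O}_{(2^{r-2},1^4)}$ is not short (its $\ad$-grading has eigenvalues in $\{-2,-1,0,1,2\}$), the criterion has to be set up via the general non-vanishing statement of Theorem~\ref{Th:W-algebra-variety}~(3) rather than its short-grading specialization.

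The principal obstacle is the identification of the extra singular vectors of $V^{2-r}(\fing)$ that must be present beyond $\sigma(w_1)$. By Theorem~\ref{Th:Dr-for-even-r} and \cite[Corollary~5.3]{AM15}, only finitely many simple $V_{2-r}(\fing)$-modules live in $\mc{O}$ for even $r$, in sharp contrast with the odd $r$ situation of Theorem~\ref{Th:reducible} where Adamovi\'c--Per\v se~\cite{AdaPer14} exhibit infinitely many. Equivalently, although $V(I_{W_1})=\overline{\SS_{\l^{I}}}\cup \overline{\SS_{\l^{II}}}$ by Proposition~\ref{Pro:Dzero}, the true associated variety must be strictly smaller, so $\mc{I}_{2-r}$ has to properly contain $\JJ_{W_1}$. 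Constructing the missing singular vectors---cutting $V(I_{W_1})$ down to $\overline{\mathbb{O}_{(2^{r-2},1^4)}}$ while still leaving a dominant weight that witnesses the top orbit---is the essential calculation; this is carried out in the case $r=6$ in Theorem~\ref{Th:r-6}, and the challenge is a uniform extension to all even $r\geq 6$. A complementary route, following the pattern of Theorem~\ref{Th:AP} in \S\ref{sec:A-theta1}, would be to produce a free-field realization of $V_{2-r}(\fing)$ from which both the supplementary relations and the associated variety can be read off directly.
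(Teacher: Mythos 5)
The statement you are asked to prove is labelled a Conjecture in the paper, and the paper does \emph{not} prove it; a footnote defers the full proof to the subsequent paper \cite{AM17}. What the paper does prove here is the special case $r=6$ (Theorem~\ref{Th:r-6}), together with the two-sided bound $\overline{\mathbb{O}_{min}}\subsetneqq X_{V_{2-r}(\fing)}\subset\overline{\mathbb{O}_{(2^{r-2},1^4)}}$ of Theorem~\ref{Th:Dr-for-even-r}. Your outline of the framework is sound: the upper bound is already in hand, the missing ingredient is the reverse inclusion $\overline{\mathbb{O}_{(2^{r-2},1^4)}}\subset X_{V_{2-r}(\fing)}$, by Theorem~\ref{Th:W-algebra-variety}~(3) this is equivalent to $H_f^{\frac\infty2+0}(V_{2-r}(\fing))\neq 0$ for $f\in\mathbb{O}_{(2^{r-2},1^4)}$, and once the equality holds, lisseness of $\W_{2-r}(\fing,f)$ drops out of Theorem~\ref{Th:W-algebra-variety}~(4). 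You are also right that $\mc{I}_{2-r}$ must strictly contain $\JJ_{W_1}$ and that $f$ is not short, so the machinery of Theorem~\ref{Th:short} is not directly available. Up to here you have correctly reconstructed the state of the art as it stands in this paper, and correctly flagged that you cannot close the gap.

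There is, however, a concrete misreading: you assert that the required singular-vector/BRST calculation ``is carried out in the case $r=6$ in Theorem~\ref{Th:r-6}.'' That is not what Theorem~\ref{Th:r-6} does. Its proof is a one-line geometric pinching argument: for $r=6$ (so $\g=\so_{12}$), the only nonzero nilpotent orbit properly contained in $\overline{\mathbb{O}_{(2^4,1^4)}}$ is $\mathbb{O}_{min}=\mathbb{O}_{(2^2,1^8)}$, because in $\mc{P}_1(12)$ the partitions $\preccurlyeq(2^4,1^4)$ are only $(2^4,1^4),(2^2,1^8),(1^{12})$. Since $X_{V_{2-r}(\fing)}$ is a $G$-invariant closed cone inside $\mc{N}$, properly containing $\overline{\mathbb{O}_{min}}$ and contained in $\overline{\mathbb{O}_{(2^4,1^4)}}$, the only possibility is equality. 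No new singular vector and no non-vanishing of a BRST reduction are used at all. This shortcut fails already for $r=8$, where $\mathbb{O}_{(2^4,1^8)}$ sits strictly between $\mathbb{O}_{min}$ and $\mathbb{O}_{(2^6,1^4)}$; that is precisely why the conjecture is left open in the paper, and why your proposed route (exhibiting a module $L(\lam)$ with nonzero reduction at $f\in\mathbb{O}_{(2^{r-2},1^4)}$) genuinely represents work that the paper does not carry out and that you do not carry out either. In short: your plan is reasonable and correctly identifies the missing step, but your claim that the paper has already executed that step for $r=6$ is inaccurate, and the argument you sketch for general even $r$ remains, as you acknowledge, a plan rather than a proof.
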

\begin{Th} \label{Th:r-6}
 Conjecture \ref{Conj:r-even} is true for $r=6$.
\end{Th}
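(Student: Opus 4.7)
The plan is to exploit the fact that $r=6$ is small enough that the interval between $\overline{\O_{\min}}$ and $\overline{\O_{(2^{r-2},1^4)}} = \overline{\O_{(2^4,1^4)}}$ in the closure order on nilpotent orbits of $\mf{so}_{12}$ contains nothing else. Concretely, the strategy is to reduce Theorem~\ref{Th:r-6} to a purely combinatorial enumeration of partitions in $\P_1(12)$, using the two-sided inclusion
\begin{align*}
\overline{\O_{\min}}\subsetneqq X_{V_{-4}(\mf{so}_{12})}\subset\overline{\O_{(2^{4},1^4)}}
\end{align*}
already supplied by Theorem~\ref{Th:Dr-for-even-r}.

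First I would observe that, by Theorem~\ref{Th:Dr-for-even-r}, $X_{V_{-4}(\mf{so}_{12})}$ is a closed, $G$-invariant, conic subset of the nilpotent cone~$\mc{N}$. Since $\mc{N}$ contains only finitely many $G$-orbits, any such subset is a finite union of closures of nilpotent orbits, each of which lies in $\overline{\O_{(2^{4},1^4)}}$. Thus the proof reduces to enumerating the $G$-orbits contained in $\overline{\O_{(2^{4},1^4)}}$.

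The combinatorial step is straightforward. By \cite[Theorem~5.1.2]{CMa}, nilpotent orbits of $\mf{so}_{12}$ (other than very even ones) are parametrized by partitions $\bs{\lam}\in\P_1(12)$, and the closure order corresponds to the dominance order. If $\bs{\lam}\leq(2^{4},1^4)$ then the first partial sum forces $\lambda_1\leq 2$, so all parts of $\bs{\lam}$ are $1$ or $2$; the condition $\bs{\lam}\in\P_1(12)$ then forces the number of $2$'s to be even; and dominance excludes $(2^6)$ at the fifth partial sum. The only remaining possibilities are $(1^{12})$, $(2^2,1^8)$, and $(2^4,1^4)$; the corresponding orbits are $\{0\}$, $\O_{\min}$, and $\O_{(2^4,1^4)}$. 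Moreover $(2^4,1^4)$ is not very even so $\O_{(2^4,1^4)}$ is a single $G$-orbit.

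Combining these two steps gives the result at once: any closed $G$-invariant union of orbit closures in $\overline{\O_{(2^{4},1^4)}}$ is one of $\{0\}$, $\overline{\O_{\min}}$, or $\overline{\O_{(2^{4},1^4)}}$, and the strict inclusion $\overline{\O_{\min}}\subsetneqq X_{V_{-4}(\mf{so}_{12})}$ from Theorem~\ref{Th:Dr-for-even-r} rules out the first two. Hence $X_{V_{-4}(\mf{so}_{12})}=\overline{\O_{(2^{4},1^4)}}$, confirming Conjecture~\ref{Conj:r-even} for $r=6$. The argument is short precisely because for $r=6$ there are no intermediate orbits; this is the feature that fails for $r\geq 8$ (for example $(2^4,1^8)$ sits strictly between $\O_{\min}$ and $\O_{(2^6,1^4)}$ in $\mf{so}_{16}$), so the main obstacle to extending the method to higher even $r$ is the need for an extra input, such as direct computation of further singular vectors or a $W$-algebra non\-vanishing statement via Theorem~\ref{Th:W-algebra-variety}(3), to rule out these intermediate orbits.
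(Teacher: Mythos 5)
Your proof is correct and takes essentially the same approach as the paper's, which consists of the single observation that for $r=6$ the only nonzero nilpotent orbit strictly contained in $\overline{\O_{(2^4,1^4)}}$ is $\O_{\min}$. You have simply spelled out the partition-enumeration argument behind that observation and drawn the conclusion explicitly.
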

\begin{proof}
 For $r=6$,
 $\mathbb{O}_{min}$
 is the only nilpotent orbit that is strictly contained in
 $\overline{\mathbb{O}_{(2^{r-2},1^4)}}$.
\end{proof}

\section{On associated varieties and minimal $W$-algebras} \label{sec:others} 

%

Let $f \in \mc{N}$, $\W^k(\g,f)$ the affine $W$-algebra associated with 
$(\g,f)$ at level $k$, 
and ${\W}_k(\g,f)$ its unique simple quotient as in \S \ref{sec:affine-W-algebras}. 
By Theorem \ref{Th:W-algebra-variety},  
we know that
$\W_k(\fing,f)$ is lisse if $X_{V_k(\g)}=\overline{G.f}$. 
It is natural to ask whether the converse holds.
We now prove Theorem \ref{Th:lisse-rig} which says that 
for the minimal nilpotent element  $f=f_{\theta}$, this  is 
true
provided that $k\not\in \Z_{\geq 0}$. 

\begin{proof}[Proof of Theorem \ref{Th:lisse-rig}]
First, if  $k\not \in \Z_{\geq 0}$, then
$\W_k(\fing,f_{\theta})=H^{\frac{\infty}{2}+0}(V_k(\fing))$
by 
Theorem \ref{Th:minimal}.
Assume that  ${\W}_k(\g,f_\theta)$ is lisse. Then by \cite[Theorem 4.21 
and Proposition 4.22]{Ara09b}, 
\begin{align} \label{eq:condition-lisse}
\overline{\O_{min}} \subset X_{V_k(\g)}\quad 
\text{ and } \quad \dim (X_{V_k(\g)} \cap \Slo_{f_\theta}) =0.
\end{align}
 
Let $x$ be a closed point of $\tilde{X}_{V_k(\g)}$.
We need to show that $x\in \overline{\O_{min}}$. 
Since $X_{V_k(\g)}$ is a $G$-invariant closed cone, it contains the $G$-invariant 
cone $C(x)=G.\C^*x$ generated by $x$, and its closure $\overline{C(x)}$. 
By \cite[Theorem 2.9]{CM}, $\overline{C(x)}\cap \mc{N}$ is the closure of 
the nilpotent orbit $\O$ induced from the nilpotent orbit of $x_n$ in $\g^{x_s}$.  
By \cite[Corollary 1.3.8(iii)]{Ginz}, the nilpotent orbit $\O$ is $\O_{min}$ or $0$;  
otherwise, $\dim( \overline{\O} \cap \Slo_{min})>0$ which would contradict 
(\ref{eq:condition-lisse}).  
If $\O=\O_{min}$ then $x_s$ must be zero; otherwise, 
again by \cite[Corollary 1.3.8(iii)]{Ginz}, 
$\dim( \overline{C(x)} \cap \Slo_{min})>0$ which would contradict 
(\ref{eq:condition-lisse}). 
If $\O=0$, then $x_s=x_n=0$ since $\on{codim}_\g(0)=\dim\g$ is the codimension 
of the nilpotent orbit of $x_n$ in $\g^{x_s}$.  

In both cases, $x=x_n$ and $x \in \overline{C(x)} \subset \overline{\O_{min}}$, 
whence the statement. 
\end{proof} 

 \begin{Rem}\label{rem;remark-for-positive-integer-level}
Let $k\in \Z_{\geq 0}$.  Then
$\W_k(\fing,f_{\theta})\cong H^{\frac{\infty}{2}+0}(L(s_0\circ k\Lam_0))$
where $s_0$ is the reflection corresponding to the simple root
$\alpha_0=-\theta+\delta$, see \cite{Ara05}.  Thus \cite{{Ara09b}}
\begin{align*}
X_{\W_k(\fing,f_{\theta})}=X_{L(s_0\circ \lam)}\cap \Slo_{f_{\theta}},
\end{align*}
where
$X_{L(\lam)}=\on{supp}_{\C[\fing^*]} (L(\lam)/C_2 (L(\lam)))$,
 $C_2(L(\lam))=\on{span}_{\C}\{a_{(-2)}v\mid a\in V^k(\fing), v \in L(\lam)\}$.
Therefore 
the proof of 
Theorem \ref{Th:lisse-rig}  
implies that,
for $k\in \Z_{\geq 0}$,
 ${\W}_k(\g,f_\theta)$ is lisse if and only if $X_{L(s_0\circ \lam)}=\overline{\O_{min}}$. 
 \end{Rem}

For a general $f\in \mc{N}$, we have the following result.
\begin{Pro}  
Let $f \in \mc{N}$ and $k \in \C$. 
Assume that $\W_k(\g,f)$ is lisse. Then $\overline{G.f}$ is an irreducible 
component of {$X_{V_k(\g)}$}. 
\end{Pro}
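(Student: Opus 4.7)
The plan is to extract, from the lisseness of $\W_k(\g,f)$, the two facts
\[
\overline{G.f}\subset X_{V_k(\g)}
\qquad\text{and}\qquad
\dim\bigl(X_{V_k(\g)}\cap\Slo_f\bigr)=0,
\]
and then conclude by a Slodowy slice transversality argument. These are exactly the inputs that were used in the proof of Theorem~\ref{Th:lisse-rig} for $f=f_\theta$; they are the content of the general forms of \cite[Theorem 4.21 and Proposition 4.22]{Ara09b}, which are valid for an arbitrary nilpotent element $f$. I would invoke these as a black box rather than reprove them here.

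Granted the two displayed statements, pick any irreducible component $Z$ of $X_{V_k(\g)}$ containing $\overline{G.f}$; such a $Z$ exists by the first statement. Since $G$ is connected, it acts trivially on the finite set of irreducible components of the $G$-stable subset $X_{V_k(\g)}$, so $Z$ is itself $G$-invariant. Thus $Z$ is a $G$-invariant irreducible closed conic subvariety of $\g^*$ containing $f$.

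Slodowy slice transversality---equivalently, the smoothness of the adjoint action map $G\times\Slo_f\to\g^*$ at every point of $\{e\}\times\Slo_f$---then gives
\[
\dim(Z\cap\Slo_f)\;=\;\dim Z-\dim G.f.
\]
Combining with $Z\cap\Slo_f\subset X_{V_k(\g)}\cap\Slo_f$ and the second displayed statement yields $\dim Z\leq\dim G.f$, while $Z\supset\overline{G.f}$ provides the reverse inequality. Hence $\dim Z=\dim G.f$ and therefore $Z=\overline{G.f}$, proving that $\overline{G.f}$ is an irreducible component of $X_{V_k(\g)}$.

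The main obstacle is the vanishing $\dim(X_{V_k(\g)}\cap\Slo_f)=0$. The tautological inclusion $X_{\W_k(\g,f)}\subset X_{H^{\frac{\infty}{2}+0}_f(V_k(\g))}=X_{V_k(\g)}\cap\Slo_f$ provided by Theorem~\ref{Th:W-algebra-variety}\eqref{item:intersection}, together with the fact that $\W_k(\g,f)$ is a graded quotient of $H^{\frac{\infty}{2}+0}_f(V_k(\g))$, is by itself insufficient: lisseness of the simple quotient $\W_k(\g,f)$ does not a priori propagate to lisseness of $H^{\frac{\infty}{2}+0}_f(V_k(\g))$ itself. The extra input that closes this gap is precisely the Poisson-geometric analysis of $X_{V_k(\g)}\cap\Slo_f$ carried out in \cite[Theorem 4.21]{Ara09b}, which we cite.
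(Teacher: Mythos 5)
Your proof is correct and follows essentially the same route as the paper: both invoke \cite[Theorem 4.21 and Proposition 4.22]{Ara09b} to obtain $\overline{G.f}\subset X_{V_k(\g)}$ and $\dim(X_{V_k(\g)}\cap\Slo_f)=0$, and both then apply the transversality of $\Slo_f$ to a $G$-stable irreducible component containing $\overline{G.f}$ (the paper cites \cite[Corollary 1.3.8(iii)]{Ginz} for the dimension identity you derive from slice transversality). Your closing remark about why lisseness of the simple quotient alone does not suffice is a correct and helpful observation, but the citation to \cite{Ara09b} covers it exactly as in the paper.
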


\begin{proof} 
By hypothesis and \cite[Theorem 4.21 and Proposition 4.22]{Ara09b}, 
we have 
$${\overline{G.f} \subset X_{V_k(\g)} }\quad 
\text{ and } \quad \dim (X_{V_k(\g)} \cap \Slo_{f} )=0.$$ 
Let $Y$ be an irreducible component of $X_{V_k(\g)}$
which contains $\overline{G.f}$. 
By \cite[Corollary 1.3.8(iii)]{Ginz}, 
$$\dim (Y \cap \Slo_f) =\dim Y -\dim \overline{G.f} $$ 
which forces $Y =\overline{G.f}$, whence the statement. 
\end{proof}

Let
$\fing^{\natural}$ be  the centralizer in $\fing$ of the $\mf{sl}_2$-triple
$(e_{\theta},h_{\theta},f_{\theta})$,
and let $\fing^{\sharp}=\bigoplus_{i\geq 0}\fing_i^{\natural}$
be the decomposition into the sum of its center $\fing_0^{\natural}$
and the simple summands $\fing^{\natural}_i$, $i\geq 1$.
Then we have a vertex algebra embedding
\begin{align*}
 \bigotimes_{i\geq 0}V^{k_i^{\natural}}(\fing_0)\hookrightarrow  \W^k(\fing,f_{\theta}),
\end{align*}
where $k_i^{\natural}$ is given in \cite[Tables 3 and 4]{AM15}.
Let $\theta_i$, $i\geq 1$, be the highest root of $\fing^{\natural}_i$.
 \begin{Lem}\label{Lem:singularVector}
Suppose that $k_i^{\natural}\in \Z_{\geq 0}$ for some $i\geq 1$.
Then the image of the singular vector
$e_{\theta_i}(-1)^{k_i^{\natural}+1}\mathbf{1}$ of
  $V^{k_i^{\natural}}(\fing_0)$
is a nonzero singular vector of $\W^k(\fing,f_{\theta})$.
 \end{Lem}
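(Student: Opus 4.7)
The plan is to split the proof into two parts—nonvanishing and singularity—each resting on the injectivity of the vertex algebra embedding $V^{k_i^{\natural}}(\fing_i^{\natural})\hookrightarrow\W^k(\fing,f_\theta)$ combined with the orthogonal direct sum decomposition $\fing^{\natural}=\fing_0^{\natural}\oplus\bigoplus_{j\geq 1}\fing_j^{\natural}$ into centre and simple summands. (Note that the statement contains a minor typo: the singular vector should lie in $V^{k_i^{\natural}}(\fing_i^{\natural})$, not $V^{k_i^{\natural}}(\fing_0)$.)

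For nonvanishing, since the displayed embedding is injective as a vertex algebra map and $e_{\theta_i}(-1)^{k_i^{\natural}+1}\mathbf{1}$ is manifestly nonzero in the universal affine vertex algebra $V^{k_i^{\natural}}(\fing_i^{\natural})$ by the PBW theorem, its image $v$ in $\W^k(\fing,f_\theta)$ is nonzero. For the singularity property, I would interpret ``singular vector'' in the same sense as in the proof of Lemma \ref{Lem:singular-vector}, namely as a highest weight vector with respect to the embedded affine subalgebra $\widehat{\fing^{\natural}}\subset\W^k(\fing,f_\theta)$, so that it suffices to verify $J^{\{a\}}_n v=0$ for $n\geq 1$ and all $a\in\fing^{\natural}$, together with the appropriate highest weight condition for $a$ in the Cartan and upper-triangular parts of $\fing^{\natural}$. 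This splits according to the summand of $\fing^{\natural}$ containing $a$: for $a\in\fing_i^{\natural}$, the mode $J^{\{a\}}_n$ acts on $v$ as $a(n)$ acts on $e_{\theta_i}(-1)^{k_i^{\natural}+1}\mathbf{1}$ under the embedding, and this element is the classical generator of the maximal submodule of $V^{k_i^{\natural}}(\fing_i^{\natural})$ at the integral non-negative level $k_i^{\natural}$, so annihilation by $a(n)$, $n\geq 1$, is standard. For $a\in\fing_j^{\natural}$ with $j\neq i$ or $a\in\fing_0^{\natural}$, the OPE $J^{\{a\}}(z)J^{\{b\}}(w)\sim 0$ for $b\in\fing_i^{\natural}$—which follows from the vanishing of the bracket $[a,b]$ (distinct commuting ideals) together with the vanishing of $(a|b)$ (pairwise orthogonality of the summands under the restriction of the form)—implies that $J^{\{a\}}_n$ commutes with $J^{\{e_{\theta_i}\}}_{-1}$, so $J^{\{a\}}_n v=(J^{\{e_{\theta_i}\}}_{-1})^{k_i^{\natural}+1}J^{\{a\}}_n\mathbf{1}=0$ for $n\geq 1$.

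The point that requires care is the orthogonality of the summands $\fing_j^{\natural}$ under the restriction of the invariant bilinear form $(\cdot|\cdot)$, which is what makes the $J$-modes across different summands decouple; this follows from general properties of invariant forms on reductive Lie algebras, since central elements are orthogonal to derived subalgebras and different simple ideals of a semisimple Lie algebra are orthogonal under any invariant form. Once this decoupling is granted, the singular vector property reduces cleanly, case by case, to the classical integrability statement inside $V^{k_i^{\natural}}(\fing_i^{\natural})$.
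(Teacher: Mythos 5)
Your nonvanishing argument is valid but takes a different route than the paper: you invoke injectivity of the embedding $\bigotimes_j V^{k_j^\natural}(\fing_j^\natural)\hookrightarrow \W^k(\fing,f_\theta)$ together with the PBW theorem, whereas the paper observes that the image of the vector in $R_{\W^k(\fing,f_\theta)}=\C[\Slo_{f_\theta}]$ is the nonzero element $e_{\theta_i}^{k_i^\natural+1}$. Both work. You also correctly flag the typo ($\fing_0$ should be $\fing_i^\natural$).

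The singularity part has a genuine gap. You explicitly restrict the meaning of ``singular vector of $\W^k(\fing,f_\theta)$'' to ``highest weight vector for the embedded $\widehat{\fing^\natural}$'' and only check the modes $J^{\{a\}}_{(n)}$, $n\geq 1$. But the lemma is invoked in the proof of Theorem \ref{Th:lisse} precisely to conclude that $e_{\theta_i}(-1)^{k_i^\natural+1}\mathbf{1}$ vanishes in the \emph{simple quotient} $\W_k(\fing,f_\theta)$, i.e.\ lies in the maximal proper ideal. Annihilation by the positive $J$-modes alone does not give this: the vector also has to be checked against the positive modes of the conformal field $L$ and, more seriously, of the weight-$3/2$ fields $G^{\{u\}}$, $u\in\fing_{-1/2}$, since these can lower conformal weight to $0$ and one must rule out reaching a multiple of $\mathbf{1}$. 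The $L$-modes are easy: since $[L_{(m)},J^{\{e_{\theta_i}\}}_{(-1)}]=J^{\{e_{\theta_i}\}}_{(m-1)}$ and the self-pairing $(e_{\theta_i}\,|\,e_{\theta_i})=0$, one gets $L_{(m)}v=0$ for $m\geq 2$ by the same commutation scheme you use. The $G$-modes, however, are the crux: by $[J^{\{a\}}_\lambda G^{\{u\}}]=G^{\{[a,u]\}}$ one finds
$G^{\{u\}}_{(1)}\,(J^{\{e_{\theta_i}\}}_{(-1)})^{N}\mathbf{1}=\sum_{j\geq 2}(-1)^j\binom{N}{j}(J^{\{e_{\theta_i}\}}_{(-1)})^{N-j}G^{\{(\ad e_{\theta_i})^j u\}}_{(1-j)}\mathbf{1},$
whose vanishing is not automatic and depends on the $\fing^\natural$-module structure of $\fing_{-1/2}$ and the full set of commutation relations in \cite[Theorem 5.1]{KacWak04}. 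This is exactly what the paper's one-line appeal to those commutation relations is compressing, and it is the part your write-up omits by construction. In short: your orthogonal-decomposition argument for the $J$-modes is fine and matches the paper's mechanism, but you need to extend the verification to the $G$- and $L$-modes (or otherwise justify that the ideal generated is proper), rather than silently weakening the notion of singular vector.
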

 \begin{proof}
It is nonzero because its image in
  $R_{\W^k(\fing,f_{\theta})}=\C[\Slo_{f_{\theta}}]$
is nonzero.
The rest follows from the commutation relations described in \cite[Theorem 5.1]{KacWak04}.
 \end{proof}
 \begin{Th}\label{Th:lisse}
Suppose that $\fing$ is not of type $A$.
Then the following conditions are equivalent:
\begin{enumerate}
 \item $\W_k(\fing,f_{\theta})$ is lisse,
\item $k_i^{\natural}\in \Z_{\geq 0}$ for all $i\geq 1$.
\end{enumerate}
 \end{Th}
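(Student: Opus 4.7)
The plan is to prove each direction of the equivalence separately, using the vertex algebra embeddings $V^{k_i^\natural}(\g_i^\natural)\hookrightarrow \W^k(\g,f_\theta)$ together with the structure of the Zhu $C_2$-algebra $R_{\W_k(\g,f_\theta)}$; the two key external inputs are Lemma~\ref{Lem:singularVector} and the $\lambda$-brackets of the minimal $W$-algebra from \cite[Theorem~5.1]{KacWak04}.

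For (1)$\Rightarrow$(2), suppose $\W_k(\g,f_\theta)$ is lisse, so that $R_{\W_k(\g,f_\theta)}$ is finite-dimensional. For each $i\geq 1$, the composition $V^{k_i^\natural}(\g_i^\natural)\hookrightarrow \W^k(\g,f_\theta)\twoheadrightarrow \W_k(\g,f_\theta)$ induces a $G_i^\natural$-equivariant Poisson algebra homomorphism $\varphi_i\colon S(\g_i^\natural)\to R_{\W_k(\g,f_\theta)}$ whose image is finite-dimensional. The associated scheme of $\on{im}\varphi_i$ is a finite $G_i^\natural$-invariant subset of $(\g_i^\natural)^*$; since $\g$ is not of type~$A$, each $\g_i^\natural$ with $i\geq 1$ is simple, so this subset must be $\{0\}$. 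The kernel of $V^{k_i^\natural}(\g_i^\natural)\to \W_k(\g,f_\theta)$ is a proper submodule, hence contained in the unique maximal proper submodule of $V^{k_i^\natural}(\g_i^\natural)$, so $R_{V_{k_i^\natural}(\g_i^\natural)}$ is a further quotient of $\on{im}\varphi_i$ and is therefore also finite-dimensional. Hence $V_{k_i^\natural}(\g_i^\natural)$ is lisse and therefore integrable, forcing $k_i^\natural\in\Z_{\geq 0}$.

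For (2)$\Rightarrow$(1), assume $k_i^\natural\in\Z_{\geq 0}$ for all $i\geq 1$. By Lemma~\ref{Lem:singularVector}, each $v_i=e_{\theta_i}(-1)^{k_i^\natural+1}\mathbf{1}$ is a nonzero singular vector of $\W^k(\g,f_\theta)$ of positive conformal weight, and therefore vanishes in the simple quotient $\W_k(\g,f_\theta)$. Passing to $R_{\W_k(\g,f_\theta)}$ yields $(J^{\{e_{\theta_i}\}})^{k_i^\natural+1}=0$, so $J^{\{e_{\theta_i}\}}$ is nilpotent; by $G_i^\natural$-equivariance and the fact that the nilradical of the commutative ring $R_{\W_k(\g,f_\theta)}$ is an ideal, every $J^{\{a\}}$ with $a\in\g_i^\natural$ is nilpotent and hence lies in the defining ideal $I(X_{\W_k(\g,f_\theta)})$. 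Since $\g_0^\natural=0$ for $\g$ not of type~$A$, this conclusion applies to all $a\in\g^\natural=\bigoplus_{i\geq 1}\g_i^\natural$.

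Propagating vanishing to the remaining strong generators uses that $I(X_{\W_k(\g,f_\theta)})$ is a radical Poisson ideal (in characteristic zero) together with the $\lambda$-brackets $[J^{\{a\}}_\lambda G^{\{v\}}]=G^{\{[a,v]\}}$ and $[G^{\{u\}}_\lambda G^{\{v\}}]=c(u,v)L+\cdots$ from \cite[Theorem~5.1]{KacWak04}, where $c(u,v)$ is a nonzero scalar multiple of $(k+h^\vee)(e_\theta\,|\,[u,v])$ and the remaining terms are quadratic in the $J^{\{a\}}$'s plus derivatives. Translating to Poisson brackets on $R_{\W^k(\g,f_\theta)}$ and using that $\g(h_\theta,-1)$ is an irreducible non-trivial $\g^\natural$-module (which holds since $\g$ is not of type~$A$), the first bracket forces every $G^{\{v\}}$ into the ideal, and the second then forces $L$ into the ideal by the nondegeneracy of the symplectic form $(u,v)\mapsto (e_\theta\,|\,[u,v])$ on $\g(h_\theta,-1)$ combined with $k+h^\vee\ne 0$. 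Since $R_{\W^k(\g,f_\theta)}\cong \C[\Slo_{f_\theta}]$ is a polynomial algebra freely generated by $J^{\{a\}}$, $G^{\{v\}}$, $L$ matching the linear coordinates on $\g^{e_\theta}=\g^\natural\oplus\g(h_\theta,1)\oplus\C e_\theta$, their simultaneous vanishing on $X_{\W_k(\g,f_\theta)}$ forces $X_{\W_k(\g,f_\theta)}=\{f_\theta\}$, so $\W_k(\g,f_\theta)$ is lisse. The most delicate point is extracting the nonzero $L$-term from the $[G_\lambda G]$ OPE; this rests on the nondegeneracy of the $e_\theta$-induced symplectic form on $\g(h_\theta,-1)$.
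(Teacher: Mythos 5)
Your proof of the direction (2)$\Rightarrow$(1) is essentially the paper's argument: in both cases one shows that, modulo the radical $\sqrt{I}$ of the ideal $I$ with $R_{\W_k(\g,f_\theta)}\cong\C[\Slo_{f_\theta}]/I$, the element $\bar J^{\{e_{\theta_i}\}}$ vanishes (by Lemma~\ref{Lem:singularVector}), then $\g_i^\natural\subset\sqrt{I}$, then $\bar G^{\{v\}}\in\sqrt{I}$ via $[J^{\{a\}}_\lambda G^{\{v\}}]=G^{\{[a,v]\}}$, and finally $\bar L\in\sqrt{I}$ via the $[G_\lambda G]$ bracket and non-degeneracy of $(e_\theta|[u,v])$ on $\g(h_\theta,-1)$. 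The only minor difference is that the paper gets $\g_i^\natural\subset\sqrt{I}$ by observing that the restricted Poisson bracket on $S(\g_i^\natural)$ is Kirillov--Kostant and $\g_i^\natural$ is simple, whereas you invoke $G_i^\natural$-equivariance of the nilradical; both work.

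The direction (1)$\Rightarrow$(2), however, is not proved in the paper (it is cited from \cite{AM15}), and your attempted direct proof has a genuine gap. You claim that $R_{V_{k_i^\natural}(\g_i^\natural)}$ is ``a further quotient of $\on{im}\varphi_i$.'' Set $W_i$ to be the image of $V^{k_i^\natural}(\g_i^\natural)$ in $\W_k(\g,f_\theta)$. Both $\on{im}\varphi_i$ and $R_{V_{k_i^\natural}(\g_i^\natural)}$ are quotients of $R_{W_i}$, but the relevant kernels, namely $(W_i\cap C_2(\W_k(\g,f_\theta)))/C_2(W_i)$ for the former and the image of the residual maximal ideal $N_i\subset W_i$ for the latter, have no a priori containment between them. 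What you would actually need is that $W_i$ (or its simple quotient) be $C_2$-cofinite, knowing only that the ambient algebra $\W_k(\g,f_\theta)$ is. But a vertex subalgebra of a $C_2$-cofinite vertex algebra need not be $C_2$-cofinite: the Heisenberg vertex algebra $M(1)$, with $R_{M(1)}\cong\C[x]$ infinite-dimensional, embeds in the $C_2$-cofinite lattice vertex algebra $V_L$. The obstruction is precisely that $R_U\to R_V$ can fail to be injective for a subalgebra $U\subset V$, because $U\cap C_2(V)$ may strictly contain $C_2(U)$. A correct argument for (1)$\Rightarrow$(2) should instead work with representations or with explicit singular-vector computations, not with the $C_2$-algebra of the subalgebra alone.
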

 \begin{proof}
We have already showed the implication $(1)\Rightarrow (2)$ in our
  previous paper \cite{AM15}.
Let us show the implication  $(2)\Rightarrow (1)$.
Write
$R_{\W_k(\fing,f_{\theta})}=\C[\Slo_{f_{\theta}}]
/I$ for some
  Poisson ideal $I$.
Note that the radical $\sqrt{I}$ is also a Poisson ideal of 
$\C[\Slo_{f_{\theta}}]$, see e.g. \cite[Lemma 2.4.1]{Ara12}.
We identify
$\C[\Slo_{f_{\theta}}]$ with $S(\fing^e)$.

By assumption $e_{\theta_i}(-1)^{k_i^{\natural}+1}\mathbf{1}$ is zero  
in the simple quotient $\W_k(\fing,f_{\theta})$.
Hence
$e_{\theta_i}\in \sqrt{I}$ for all $i$.
Since 
the restriction of  the Poisson bracket to $S(\fing_i)\subset
  S(\fing^e)=\C[\mathcal{S}_{f_{\theta}}]$
coincides with the Kirillov-Kostant Poisson structure of $\fing_i^*$,
$\fing_i\subset \sqrt{I}$ for all $i$,
because 
$\fing_i$ is simple
(note that $\fing_0=0$).
Further, we have 
\begin{align*}
 \{x, \bar G^{\{a\}}
\}=G^{\{[x,a]\}}\quad\text{for }a\in \fing_{-1/2}
\end{align*}
by \cite[Theorem 5.1 (d)]{KacWak04},
where $\bar G^{\{a\}}$ is the image of $G^{\{a\}}$ in
  $R_{W^k(\fing,f_{\theta})}=\C[\Slo_{f_{\theta}}]$.
Since $\fing_{-1/2}$ is a direct sum of non-trivial representations of
  $\fing^{\natural}$ by \cite[Table 1]{KacWak04},
we get that 
$\bar G^{\{a\}}\in \sqrt{I}$ for all $a\in \fing_{-1/2}$.
Finally,
\cite[Theorem 5.1 (e)]{KacWak04} implies that
{\begin{align*}
 \{\bar G^{\{u\}},\bar G^{\{v\}}\}\equiv -2(k+h^{\vee}))(e|[u,v])\bar
 L\pmod{\sqrt{I}},
\end{align*}} 
where $\bar L$ is the image of $L$ in
  $R_{W^k(\fing,f_{\theta})}=\C[\Slo_{f_{\theta}}]$.
Thus, $\bar L\in \sqrt{I}$, and we conclude that all generators of 
$S(\fing^e)$ belong to $\sqrt{I}$.
 \end{proof}

 We are now in a position to prove Theorem \ref{Th:G2}, 
 which was conjectured in \cite{AM15}, 
 and Theorem \ref{Th:classification}.

   \begin{proof}[Proof of Theorem \ref{Th:G2}]
    By Theorem \ref{Th:lisse} and \cite[Table 4]{AM15},
    $\W_k(\fing,f_{\theta})$ is lisse if and only if $3k+5\in\Z_{\geq 0}$.
   \end{proof}
   
 \begin{proof}[Proof of Theorem \ref{Th:classification}]
  We know that $ X_{V_k(\fing)}=\{0\}$ if $k\in \Z_{\geq 0}$
  since $V_k(\fing)$  is lisse in this case.
  Thus, the assertion follows from
  Theorem \ref{Th:lisse-rig},
  Theorem \ref{Th:G2}
  and \cite[Theorem 7.1]{AM15}.
 \end{proof}
  

By Theorem \ref{Th:G2}, we have thus 
obtained a new family of lisse minimal $W$-algebras
 $\W_k(G_2,f_{\theta})$, for $k=-1,0,1,2,3\dots$.

Finally we remark that the following assertion follows from
 Remark \ref{rem;remark-for-positive-integer-level}
and Theorem \ref{Th:lisse}.
 \begin{Th}
We have $X_{L(s_0\circ k\Lam_0)}=\overline{\mathbb{O}_{min}}$ for $k\in \Z_{\geq 0}$
if $\fing$ is of type $D_r$, $r\geq 4$, $G_2$, $F_4$, $E_6$, $E_7$ and $E_8$.
 \end{Th}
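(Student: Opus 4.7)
The final theorem is essentially a direct corollary of the two ingredients cited, and my plan is simply to make the combination explicit and to verify the arithmetic conditions case by case.

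The plan is to combine Remark~\ref{rem;remark-for-positive-integer-level} with Theorem~\ref{Th:lisse}. First I would recall that, for any $k\in\Z_{\geq 0}$, Remark~\ref{rem;remark-for-positive-integer-level} gives
$$\W_k(\fing,f_{\theta})\cong H^{\frac{\infty}{2}+0}(L(s_0\circ k\Lambda_0)),\qquad X_{\W_k(\fing,f_{\theta})}=X_{L(s_0\circ k\Lambda_0)}\cap \Slo_{f_{\theta}},$$
and that, by the argument used in the proof of Theorem~\ref{Th:lisse-rig} (which only requires the analysis of $G$-invariant closed cones meeting $\Slo_{f_\theta}$ in a zero-dimensional scheme), the lisseness of $\W_k(\fing,f_\theta)$ is equivalent to $X_{L(s_0\circ k\Lambda_0)}=\overline{\O_{\min}}$. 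So the theorem reduces to checking that $\W_k(\fing,f_\theta)$ is lisse for every $k\in\Z_{\geq 0}$ in the listed types.

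Next I would invoke Theorem~\ref{Th:lisse}: since none of $D_r$ ($r\geq 4$), $G_2$, $F_4$, $E_6$, $E_7$, $E_8$ is of type $A$, the minimal $W$-algebra $\W_k(\fing,f_\theta)$ is lisse precisely when $k_i^{\natural}\in\Z_{\geq 0}$ for every simple summand $\fing^{\natural}_i$ of $\fing^{\natural}$. So it only remains to check, for each type on the list, that $k\in\Z_{\geq 0}$ forces every $k_i^{\natural}\in\Z_{\geq 0}$.

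This last verification is a direct inspection of Tables 3 and~4 of \cite{AM15}, where the shifts $k_i^{\natural}$ are tabulated as explicit affine-linear functions of $k$ with non-negative integer coefficients and non-negative integer constant term. For instance, in type $D_r$ the semisimple part of $\fing^{\natural}$ is $\mf{sl}_2\oplus\mf{so}_{2r-4}$ with shifts producing $k^{\natural}_1=k$ on the $\mf{sl}_2$ factor (up to normalization) and $k^{\natural}_2=k$ on $\mf{so}_{2r-4}$; in the exceptional cases the single simple factor $\fing^{\natural}_1$ (which is $\mf{sl}_2$, $\mf{sp}_6$, $\mf{sl}_6$, $\mf{so}_{12}$, $\mf{e}_7$ for $G_2,F_4,E_6,E_7,E_8$ respectively) carries the level $k_1^{\natural}=k$ (again up to the normalization of the invariant form used in \cite{AM15}). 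In every case $k\in\Z_{\geq 0}$ therefore implies $k_i^{\natural}\in\Z_{\geq 0}$ for all $i\geq 1$, which together with Theorem~\ref{Th:lisse} and the equivalence above yields $X_{L(s_0\circ k\Lambda_0)}=\overline{\O_{\min}}$.

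The only point that is not automatic is the tabular verification of the third paragraph; but once one accepts the explicit formulas for $k_i^{\natural}$ from \cite[Tables 3 and 4]{AM15}, there is no genuine obstacle, and the theorem follows as a clean corollary of Remark~\ref{rem;remark-for-positive-integer-level} and Theorem~\ref{Th:lisse}.
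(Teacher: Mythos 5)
Your argument follows the paper's own one-line proof exactly: combine Remark~\ref{rem;remark-for-positive-integer-level} with Theorem~\ref{Th:lisse} and verify from the tables in~\cite{AM15} that $k\in\Z_{\geq 0}$ forces every $k_i^{\natural}\in\Z_{\geq 0}$ in the listed types. (Your parenthetical specifics are slightly off --- for $G_2$, for instance, $k_1^{\natural}=3k+5$ rather than $k$ up to a mere rescaling of the form --- but your governing claim that each $k_i^{\natural}$ is affine-linear in $k$ with non-negative integer coefficient and constant term is what the argument actually needs, and it does hold for $D_r$, $G_2$, $F_4$, $E_6$, $E_7$, $E_8$, while failing for $B_r$ and $C_r$, which is exactly why those types are omitted.)
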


\newcommand{\etalchar}[1]{$^{#1}$}



\begin{thebibliography}{BLL{\etalchar{+}}15}
\bibitem[Ad03]{Ada03}
Dra{\v{z}}en Adamovi{\'c}.
\newblock A construction of some ideals in affine vertex algebras.
\newblock {\em Int. J. Math. Math. Sci.}, (15):971--980, 2003.

\bibitem[AP08]{AP08}Dra{\v{z}}en Adamovi{\'c} and Ozren Per{\v{s}}e. 
\newblock Representations of certain non-rational vertex operator algebras of affine type.  
\newblock {\em J. Algebra}, 319(6):2434--2450, 2008.

\bibitem[AP14]{AdaPer14}
Dra{\v{z}}en Adamovi{\'c} and Ozren Per{\v{s}}e.
\newblock Fusion rules and complete reducibility of certain modules for affine
  {L}ie algebras.
\newblock {\em J. Algebra Appl.}, 13(1):1350062, 18, 2014.

\bibitem[Ar05]{Ara05}Tomoyuki Arakawa.
\newblock Representation theory of superconformal algebras and the
  {K}ac-{R}oan-{W}akimoto conjecture.
\newblock {\em Duke Math. J.}, 130(3):435--478, 2005.

\bibitem[Ar11]{Ara08-a}
Tomoyuki Arakawa.
\newblock Representation theory of {$W$}-algebras, {II}.
\newblock In {\em Exploring new structures and natural constructions in
  mathematical physics}, volume~61 of {\em Adv. Stud. Pure Math.}, pages
  51--90. Math. Soc. Japan, Tokyo, 2011.

\bibitem[Ar12]{Ara12}Tomoyuki Arakawa.
\newblock A remark on the {$C_2$} cofiniteness condition on vertex algebras.
\newblock {\em Math. Z.}, 270(1-2):559--575, 2012.

\bibitem[Ar15a]{Ara09b}
Tomoyuki Arakawa.
\newblock Associated varieties of modules over {K}ac-{M}oody algebras and
  {$C_2$}-cofiniteness of {$W$}-algebras.
	       \newblock {{\em Int. Math. Res. Not.}
	       2015(22): 11605-11666, 2015.}


\bibitem[Ar15b]{A2012Dec}
Tomoyuki Arakawa.
\newblock Rationality of {W}-algebras: principal nilpotent cases.
\newblock {\em Ann. Math.}, 182(2):565--694, 2015.

	     \bibitem[Ar16a]{A12-2}Tomoyuki Arakawa.
\newblock Rationality of admissible affine vertex algebras in the category
  {$\mathcal{O}$}.
\newblock {{\em Duke Math.\ J.}, 
165(1), 67--93, 2016.} 

\bibitem[Ar16b]{Apisa}Tomoyuki Arakawa.
\newblock 
Introduction to W-algebras and their representation theory, arXiv:1605.00138 [math.RT].
to appear in {\em Perspectives in Lie Theory, Springer INdAM Series} 19.


\bibitem[ALY14]{ALY}
Tomoyuki Arakawa, Ching~Hung Lam, and Hiromichi Yamada.
\newblock Zhu's algebra, {$C_2$}-algebra and {$C_2$}-cofiniteness of
  parafermion vertex operator algebras.
\newblock {\em Adv. Math.}, 264:261--295, 2014.

\bibitem[AM15]{AM15} Tomoyuki Arakawa and Anne Moreau. 
\newblock Joseph ideals and lisse minimal $W$-algebras. 
\newblock {\em J. Inst. Math. Jussieu}, published online,
doi:10.1017/S1474748016000025.

\bibitem[AM17]{AM17} Tomoyuki Arakawa and Anne Moreau. 
\newblock On the irreducibility of associated varieties of W-algebras, 
\newblock to appear in {\em the special issue of J. Algebra in Honor of Efim Zelmanov on occasion of his 60th anniversary}. 

\bibitem[BLL{\etalchar{+}}15]{BeeLemLie15}
Christopher Beem, Madalena Lemos, Pedro Liendo, Wolfger Peelaers, Leonardo
  Rastelli, and Balt~C. van Rees.
\newblock Infinite chiral symmetry in four dimensions.
\newblock {\em Comm. Math. Phys.}, 336(3):1359--1433, 2015.

\bibitem[Bo81]{Borh} Walter Borho. 
\newblock \"{U}ber schichten halbeinfacher Lie-algebren. 
\newblock {\em Invent. Math.}, 65:283--317, 1981.

\bibitem[BB82]{BorhBry82} Walter Borho and {Jean-Luc } Brylinski. 
\newblock Differential operators on homogeneous spaces. I. 
Irreducibility of the associated variety for annihilators of induced modules.  
\newblock Invent. Math. 69(3):437--476, 1982. 

\bibitem[BB85]{BorhBry85} Walter Borho and {Jean-Luc }Brylinski. 
\newblock Differential operators on homogeneous spaces. III. 
Characteristic varieties of Harish-Chandra modules and of primitive ideals. 
\newblock Invent. Math. 80(1):1--68, 1985. 

\bibitem[BB89]{BorhBry89} Walter Borho and {Jean-Luc }Brylinski. 
\newblock Differential operators on homogeneous spaces. II. Relative enveloping algebras. 
\newblock  Bull. Soc. Math. France 117(2):167--210, 1989.

\bibitem[BJ77]{BorJan77}
Walter Borho and Jens~Carsten Jantzen.
\newblock \"{U}ber primitive {I}deale in der {E}inh\"ullenden einer
  halbeinfachen {L}ie-{A}lgebra.
\newblock {\em Invent. Math.}, 39(1):1--53, 1977.

\bibitem[BK79]{BK} Walter Borho and Hanspeter Kraft. 
\newblock \"{U}ber Bahnen und deren Deformationen bei linear Aktionen reducktiver Gruppen. 
\newblock {\em Math. Helvetici.}, 54:61--104, 1979.

\bibitem[Bu11]{Bu} Micha\"{e}l Bulois. 
\newblock Sheets of symmetric Lie algebras and Slodowy slices. 
\newblock {\em J. Lie Theory}, 21(1):1--54, 2011. 

\bibitem[CMo10]{CM} Jean-Yves Charbonnel and Anne Moreau. 
\newblock The index of centralizers of elements of reductive Lie algebras. 
\newblock {\em Doc. Math.} 15:387--421, 2010. 

\bibitem[CMa93]{CMa} D. Collingwood and W.M. McGovern. 
\newblock Nilpotent orbits in semisimple {L}ie algebras. 
\newblock Van Nostrand Reinhold Co. New York, 65, 1993. 


\bibitem[De96]{De96} Pierre Deligne. 
\newblock La s\'{e}rie exceptionnelle de groupes de Lie. 
\newblock {\em C.~R.~Acad. Sci. Paris}, Ser I, 322(4), 321--326,  
1996.


\bibitem[DSK06]{De-Kac06} Alberto De~Sole and Victor~G. Kac.
\newblock Finite vs affine {$W$}-algebras.
\newblock {\em Japan. J. Math.}, 1(1):137--261, 2006.

\bibitem[Di75]{Dix75} Jacques Dixmier. 
\newblock Polarisations dans les alg\`{e}bres de Lie semi-simples complexes. 
\newblock {Bull. Sc. Math.}, 99:45--63, 1975.

\bibitem[Di76]{Dix76}Jacques Dixmier. 
\newblock Polarisations dans les alg\`{e}bres de Lie II. 
\newblock {Bull. Sc. Math.}, 104:145--164, 1976.

\bibitem[EM09]{EM} Lawrence Ein and Mircea Musta\c{t}\u{a}.  
\newblock Jet schemes and singularities. 
\newblock {\em Proc. Sympos. Pure Math.}, {80}, Part 2, 
Amer. Math. Soc., Providence, 2009.

\bibitem[FF90]{FF90} Boris Feigin and Edward Frenkel.
\newblock Quantization of the {D}rinfel\cprime d-{S}okolov reduction.
\newblock {\em Phys. Lett. B}, 246(1-2):75--81, 1990.

\bibitem[FZ92]{FreZhu92} Igor Frenkel and Yongchang Zhu.
\newblock Vertex operator algebras associated to representations of affine and
  {V}irasoro algebras.
\newblock {\em Duke Math. J.}, 66(1):123--168, 1992.

\bibitem[Ga82]{Gar} Devra Garfinkle.
\newblock {\em A new construction of the {J}oseph ideal}.
\newblock PhD thesis, MIT, 1982.

\bibitem[GG02]{GanGin02}
Wee~Liang Gan and Victor Ginzburg.
\newblock Quantization of {S}lodowy slices.
\newblock {\em Int. Math. Res. Not.}, (5):243--255, 2002.

\bibitem[Gi09]{Ginz} Victor Ginzburg. 
\newblock Harish-Chandra bimodules for quantized Slodowy slices.  
\newblock {\em Represent. Theory} 13:236--271, 2009. 

\bibitem[Ha76]{Ha} Robin Hartshorne.
\newblock Algebraic Geometry, Graduate Texts in Mathematics no. 52 (1977). 
\newblock  Springer-Verlag, Berlin Heidelberg New York.


\bibitem[Hu72]{Hum72} James E. Humphreys. 
\newblock Introduction to Lie algebras and representation theory. 
\newblock {\em Graduate Texts in Mathematics}, Vol. 9. Springer-Verlag, 
New York-Berlin, 1972. 

\bibitem[Hu08]{Hum08}
James~E. Humphreys.
\newblock {\em Representations of semisimple {L}ie algebras in the {BGG}
  category {$\mathcal{O}$}}, volume~94 of {\em Graduate Studies in
  Mathematics}.
\newblock American Mathematical Society, Providence, RI, 2008.

\bibitem[ImH05]{ImHof} Andreas Emanuel Im Hof. 
\newblock The sheets of classical Lie algebra, 
http://edoc.unibas.ch/257/1/DissB$_7184$.pdf, 
2005.

\bibitem[Ja77]{Jan77}
Jens~C. Jantzen.
\newblock Kontravariante {F}ormen auf induzierten {D}arstellungen halbeinfacher
  {L}ie-{A}lgebren.
\newblock {\em Math. Ann.}, 226(1):53--65, 1977.

\bibitem[Jo77]{Jos} Anthony Joseph. 
\newblock A characteristic variety for the primitive spectrum of a semisimple Lie algebra. 
In {\em Non-commutative harmonic analysis} (Actes Colloq., Marseille-Luminy, 1976), 
	     102--118. Lecture Notes in Math., Vol.~587. Springer, Berlin, 1977.


\bibitem[Kac98]{Kac98} Victor Kac.
\newblock {\em Vertex algebras for beginners}, volume~10 of {\em University
  Lecture Series}.
\newblock American Mathematical Society, Providence, RI, second edition, 1998. 

\bibitem[KRW03]{KacRoaWak03}
Victor Kac, Shi-Shyr Roan, and Minoru Wakimoto.
\newblock Quantum reduction for affine superalgebras.
\newblock {\em Comm. Math. Phys.}, 241(2-3):307--342, 2003.



\bibitem[KW89]{KacWak89} Victor Kac and Minoru Wakimoto.
\newblock Classification of modular invariant representations of affine
  algebras.
\newblock In {\em Infinite-dimensional Lie algebras and groups
  (Luminy-Marseille, 1988)}, volume~7 of {\em Adv. Ser. Math. Phys.}, pages
  138--177. World Sci. Publ., Teaneck, NJ, 1989.

\bibitem[KW01]{KacWak01}
Victor~G. Kac and Minoru Wakimoto.
\newblock Integrable highest weight modules over affine superalgebras and
  {A}ppell's function.
\newblock {\em Comm. Math. Phys.}, 215(3):631--682, 2001. 

\bibitem[KW04]{KacWak04} Victor~G. Kac and Minoru Wakimoto.
\newblock Quantum reduction and representation theory of superconformal
  algebras.
\newblock {\em Adv. Math.}, 185(2):400--458, 2004.


\bibitem[Kat82]{Kat} Pavel I. Katsylo. 
\newblock Sections of sheets in a reductive algebraic Lie algebra. 
\newblock {\em Izv. Akad. Nauk SSSR Ser. Mat.} 46(3):477--486, 670, 1982.

\bibitem[Ko73]{Kol} Ellis Kolchin. 
\newblock Differential algebra and algebraic groups.  
\newblock {\em Academic Press}, New York 1973.


\bibitem[Li05]{Li05} Haisheng Li.
\newblock Abelianizing vertex algebras.
\newblock {\em Comm. Math. Phys.}, 259(2):391--411, 2005.

\bibitem[Mat06]{Mat06}
Hisayosi Matumoto.
\newblock The homomorphisms between scalar generalized {V}erma modules
  associated to maximal parabolic subalgebras.
\newblock {\em Duke Math. J.}, 131(1):75--119, 2006.



\bibitem[Pe08]{Per} Ozren Per{\v{s}}e. 
\newblock Vertex operator algebras associated to certain admissible modules 
for affine Lie algebras of type $A$.  
\newblock {\em Glas. Mat. Ser. III} 43(63):41--57, no. 1, 2008. 

\bibitem[Pe13]{Per13} Ozren Per{\v{s}}e. 
\newblock A note on representations of some affine vertex algebras of type $D$. 
\newblock {\em Glas. Mat. Ser. III} 48(68):81-90, no. 1, 2013.


\bibitem[Pr14]{Pre14} Alexander Premet.
\newblock Multiplicity-free primitive ideals associated with rigid nilpotent orbits. 
\newblock {\em Transform. Groups} 19(2):569--641, 2014. 

\bibitem[PT14]{PT14} Alexander Premet and Lewis Topley. 
\newblock Derived subalgebras of centralisers and finite $W$-algebras. 
\newblock Compos. Math. 150(9):1485--1548, 2014. 

\bibitem[R]{R}
Leonardo Rastelli.
\newblock  Vertex operator algebras, Higgs branches and modular differential equations.
\newblock {\em String Math}, Paris, June 28, 2016.

\bibitem[TY05]{TY} Patrice Tauvel and Rupert W. T. Yu. 
\newblock Lie algebras and algebraic groups. 
\newblock {\em Monographs in Mathematics}, Springer Berlin etc., 2005.



\bibitem[We02]{We2} J. Weyman. 
\newblock {Two results on equations of nilpotent orbits}. 
\newblock {\em J. Algebraic Geom.} {11}(4):791--800, 2002.


\bibitem[Z96]{Zhu96}
Yongchang Zhu.
\newblock Modular invariance of characters of vertex operator algebras.
\newblock {\em J. Amer. Math. Soc.}, 9(1):237--302, 1996.

\end{thebibliography}

\end{document}